\pgfplotsset{compat=1.14}
\pgfplotsset{/pgf/number format/.cd,fixed,precision=4}
\newlength\fwidth
\newlength\fheight
\newtheorem{thm}{Theorem}[section]  \newtheorem{cor}[thm]{Corollary}
\newtheorem{lem}[thm]{Lemma}   \newtheorem{defn}[thm]{Definition}
  \newtheorem{prop}[thm]{Proposition}
\def\ccc{{\mathcal C}}\def\ddd{{\mathcal D}}
\def\eee{{\mathcal E}}\def\fff{{\mathcal F}}\def\ggg{{\mathcal G}} \def\hhh{{\mathcal H}}
\def\iii{{\mathcal I}} \def\jjj{{\mathcal J}}
\def\rrr{{\mathcal R}}
\def\norm#1{\left\Vert#1\right\Vert}
\def\norms#1{\left\Vert#1\right\Vert_\sharp}
\def\abs#1{\left\vert#1\right\vert}
\def\set#1{\left\{#1\right\}}
\def\seq#1{\left<#1\right>}
\def\seqs#1{\seq{#1}_{\sharp}}
\def\sep#1{\left(#1\right)}\def\adf#1{\left[#1\right]}
\def\R{\mathbb R}
\def\N{\mathbb N}\def\Z{\mathbb Z}
\def\T{\mathbb T}
\def\D{\partial}
\def\eps{\varepsilon}
\def\phi{\varphi}
\def\defegal{\stackrel{\text{\rm def}}{=}}
\renewcommand{\iint}{ \int \!\!\!\!\! \int}
\def\exp{\textrm{e}}
\newcommand{\dv}{{{\rm d}v}}
\newcommand{\dx}{{{\rm d}x}}
\newcommand{\dw}{{{\rm d}w}}
\newcommand{\dxh}{{\delta \! x}}
\newcommand{\dvh}{{\delta \! v}}
\newcommand{\dth}{{\delta \! t}}
\newcommand{\mus}{\mu^\sharp}
\newcommand{\muh}{\mu^\dvh}
\newcommand{\mudv}{\mu \dv}
\newcommand{\mudvp}{\mu'\dv'}
\newcommand{\mudvdx}{\mu \dv\dx}
\newcommand{\Ldeuxdx}{{L^2(\dx)}}
\newcommand{\Ldeuxdxh}{{\ell^2(\dxh)}}
\newcommand{\Ldeuxmudv}{{L^2(\mu\dv)}}
\newcommand{\Ldeuxmudvb}{{L^2(I,\mu\dv)}}
\newcommand{\Ldeuxmudvdx}{{L^2(\T\times\R,\mu\dv\dx)}}
\newcommand{\Ldeuxmudvdxb}{{L^2(\T\times I,\mu\dv\dx)}}
\newcommand{\Ldeuxmudvh}{{\ell^2( \muh \dvh )}}
\newcommand{\Ldeuxmudvhb}{{\ell^2( \muh \dvh )}}
\newcommand{\Ldeuxmudvs}{{\ell^2(\mus\dvh)}}
\newcommand{\Ldeuxmudvsb}{{\ell^2(\iiis, \mus\dvh)}}
\newcommand{\Ldeuxmudvdxh}{{\ell^2(\muh\dvh\dxh)}}
\newcommand{\Ldeuxmudvdxs}{{\ell^2(\mus\dvh\dxh)}}
\newcommand{\Hunmudv}{{H^1(\mu\dv )}}
\newcommand{\Hunmudvb}{{H^1(I,\mu\dv )}}
\newcommand{\Hunmudvh}{{{\bf h}^1(\muh\dvh)}}
\newcommand{\Hunmudvdx}{{H^1(\T\times\R ,\mu\dv\dx)}}
\newcommand{\Hunmudvdxb}{{H^1(\T\times I,\mu\dv\dx)}}
\newcommand{\Hunmudvdxh}{{{\bf h}^1(\muh\dvh\dxh)}}
\newcommand{\cfl}{{\alpha_{\rm CFL}}}
\newcommand{\bcfl}{{\beta_{\rm CFL}}}
\newcommand{\ddt}{{\frac{{{\rm d}}}{{{\rm d}t}}}}
\newcommand{\Ss}{S^\sharp}
\newcommand{\Dv}{\mathsf{D}_v}
\newcommand{\Dx}{\mathsf{D}_x}
\newcommand{\Dvs}{\mathsf{D}^\sharp_v}
\newcommand{\Xzero}{X_0}
\newcommand{\Xzerod}{X_0^\delta}
\newcommand{\Id}{{\rm I}_d}
\newcommand{\Pd}{{P^\delta}}
\newcommand{\vs}{{v^\sharp}}
\newcommand{\vmax}{v_{\mathrm {max}}}
\newcommand{\imax}{i_{\mathrm {max}}}
\newcommand{\iiis}{{\iii^\sharp}}
\newcommand{\phid}{\varphi^\delta}
\newcommand{\dddd}{\ddd^\delta}
\newcommand{\eeed}{\eee^\delta}
\newcommand{\hhhd}{\hhh^\delta}
\newcommand{\fffd}{\fff^\delta}
\newcommand{\gggd}{\ggg^\delta}
\newcommand{\rrrd}{\rrr^\delta}
\newcommand{\G}{{\bf G}}
\newcommand{\GPLUS}{{G_{\! +}}}
\newcommand{\GMOINS}{{G_{\! -}}}
\newcommand{\gtilde}{\tilde{g}}
\author{ Guillaume Dujardin } \address{\'Equipe MEPHYSTO, Inria, 40 avenue
  Halley, 59650 Villeneuve d'Ascq} \email{guillaume.dujardin@inria.fr}
\author{Fr\'ed\'eric H\'erau} \address{Laboratoire de Math\'ematiques J. Leray,
  UMR 6629 du CNRS, Universit\'e de Nantes, 2, rue de la Houssini\`ere, 44322
  Nantes Cedex 03, France} \email{frederic.herau@univ-nantes.fr} 
\author{Pauline Lafitte} \address{F\'ed\'eration de Math\'ematiques FR3487, 
CentraleSup\'elec, 3 rue Joliot-Curie, 91190
  Gif-sur-Yvette} \email{pauline.lafitte@centralesupelec.fr}
\title[Trend to equilibrium for Fokker--Planck equations]{Coercivity,
  hypocoercivity, exponential time decay and simulations for discrete
  Fokker-Planck equations}
\begin{document}

\frontmatter

 \begin{abstract} In this article, we propose and study several
discrete versions of homogeneous and inhomogeneous one-dimensional
Fokker-Planck equations.
In particular, for these discretizations of velocity and space,
we prove the exponential convergence to the equilibrium of the solutions,
for time-continuous equations as well as for time-discrete equations.
Our method uses new types of discrete Poincar\'e inequalities for a
``two-direction'' discretization of the derivative in velocity.
For the inhomogeneous problem, we adapt  hypocoercive methods 
to the discrete cases.
\end{abstract}

\subjclass{35Q83; 35Q84;35B40}
\keywords{return to equilibrium}
\thanks{G.D. is supported by the Inria project-team MEPHYSTO
and the Labex CEMPI (ANR-11-LABX-0007-01).
F.H. is supported by the grant "NOSEVOL" ANR-2011-BS01019-01.}
\maketitle
\mainmatter

\tableofcontents

\section{Introduction}

In this article we study the long time behavior of the solutions of discrete versions of the
following {\em inhomogeneous}\footnote{{\em ie} involving the space variable $x$ and the
velocity variable $v$} Fokker--Planck equation
\begin{equation} \label{eq:IHFPF} {\D_t} F + v {\D_x} F - {\D_v}({\D_v}+v) F = 0, \qquad
  F|_{t=0} = F^0,
\end{equation}
where $F= F(t,x,v)$ with $t\geq 0$, $x$ in the one-dimensional torus $\T$, and
$v \in \R$.  In general, this problem is set with
$F^0\in L^1 (\T \times \R, \dx\,\dv)$ with norm $1$, non-negative, and one looks
for solutions of \eqref{eq:IHFPF} with values in the same set at all time
$t\geq 0$.

To begin with, we study discretizations of the much simpler {\em
  homogeneous}\footnote{{\em ie} involving the variable $v$ but not the variable
  $x$} Fokker--Planck equation, set {\it a priori} in $L^1 (\dv)$
\begin{equation} \label{eq:HFPF} {\D_t} F - {\D_v} ({\D_v}+v) F = 0, \qquad F|_{t=0} =
  F^0,
\end{equation}
where $F= F(t,v)$ is unknown for $t>0$ and $ v \in \R$.  In particular, we use
this equation to introduce a first discretization of the operator ${\D_v}$ in
Section \ref{sec:homogeneous}, that we later generalize to the inhomogeneous
case in Section \ref{sec:eqinhomo}.

We include in this paper the theoretical study of these discretizations of the two
equations above when the one-dimensional velocity variable $v$ stays in a
bounded symmetric interval of the form $(-\vmax,\vmax)$ for some $\vmax>0$. In
this case, these equations are supplemented with homogeneous boundary conditions
at $v=\pm\vmax$ in the form $({\D_v} + v)F(\cdot,\cdot,\pm\vmax)=0$.  As in the
unbounded velocity case, we first introduce a discretization of the operator
${\D_v}$ in Section \ref{sec:homobounded} that we later generalize to the
inhomogeneous case in Section \ref{sec:eqinhomoboundedvelocity}.

All sections but the Introduction share the same structure. We first recall the
statements for the continuous solutions of the continuous equation, as well as
the continuous tools that allow to prove the results in the continuous setting:
one usually works in a Hilbertian subspace of $L^1$, uses the equilibrium of the
equation to write a rescaled equation, and derives the exponential convergence
of the continuous solutions to equilibrium using estimates on well-adapted
entropies.  Then, we introduce discretized operators together with a functional
framework dedicated to the equation at hand and we introduce the analogous tools
that allow to mimic the continuous setting and prove the exponential convergence
to equilibrium for the {\it discretized} equations, in space, time and velocity.
The main goal of this article is to introduce and analyze these discretizations
to obtain full proofs of exponential convergences to equilibrium for
discretizations of homogeneous as well as inhomogeneous Fokker--Planck
equations.  At the end of Sections \ref{sec:homobounded} and
\ref{sec:eqinhomoboundedvelocity}, we provide the reader with numerical results
that illustrate our theoretical analysis.

As in the continuous cases, our analysis starts with discrete equilibrium for
the discretized equations, that are analogous to the continuous Maxwellian
\begin{equation} \label{eq:gaussianintro} \mu(v) = c \exp^{-v^2/2},
\end{equation}
(where $c$ is a positive normalization constant) which is an equilibrium state
for the continuous equations \eqref{eq:IHFPF} and \eqref{eq:HFPF}.  Part of the
discretization and, more importantly, the functional framework, use deeply the
discrete equilibrium.  This allows in particular to obtain fundamental
functional inequalities as the discrete level, such as the Poincar\'e--Wirtinger
inequality which reads for the homogeneous unbounded continuous case
$$
\int_\R g^2 \mu \dv \leq \int_\R ({\D_v} g)^2 \mu \dv, \qquad \textrm{ when } \qquad
\int_\R g \mu \dv = 0.
$$
In all cases, this type of inequalities, together with adapted commutation
relations for the discretized operators, and mass-preservation properties,
allows for entropy dissipation control, which in the end yields exponential
convergence to equilibrium.

We propose and analyze several schemes in this paper but we present in this
introduction the two main ones and the corresponding results.  We postpone
to the end of this introduction the references to the other schemes
and results.

\bigskip The first scheme is an implicit Euler method in time for discretization
of the inhomogeneous Fokker--Planck equation \eqref{eq:IHFPF} set on the unbounded
velocity domain $\R$.  We consider the following discretization of
$\R^+ \times \T \times \R$.  For a fixed $\dth>0$ we discretize the half line
$\R^+$ by setting for all $n\in\N$, $t_n = n\dth$.  For a sequence
$(G^n)_{n\in \N}$, the discretization $D_t$ of the time-derivation operator
${\D_t}$ is defined by
$$
(D_t G)^n = \frac{G^{n+1}- G^n}{\dth}, \qquad n \in \N.
$$
For a small fixed $\dvh >0$, we discretize the real line $\R$ by setting for all
$i\in\Z$, $v_i = i\dvh$ and we work (concerning velocity only) in the set
$$
\ell^1(\Z,\dvh) = \set{ G \in \R^{\Z} \ | \ \sum_{ i \in \Z} \abs{G_{i}} \dvh <
  \infty },
$$
with the naturally associated norm. We consider the following ``two-direction''
discretization of the derivation operator in velocity: For
$G \in \ell^1(\Z,\dvh)$, we define $\Dv G\in \ell^1(\Z^*,\dvh)$ by the following
formulas
\begin{equation} \label{eq:defderivintro} (\Dv G)_i = \frac{G_{i+1}-G_{i}}{\dvh}
  \textrm{ for } i<0, \qquad (\Dv G)_i = \frac{G_i-G_{i-1}}{\dvh} \textrm{ for }
  i>0.
\end{equation} 
For $G\in \ell^1(\Z,\dvh)$ or $G\in \ell^1(\Z^*,\dvh)$ we define also $v G$ by
$ (v G)_i = v_i G_i $ (either for $i\in \Z$ or $i \in \Z^*$ depending on the
framework we work in)\footnote{Note that, in these definitions, the range of
  indices of the image $\Dv G$ is $\Z^*$ and not $\Z$, in order to keep into
  account the natural shift induced by the ``two-direction'' definition of
  $\Dv$.}.  The discretized Maxwellian ${\muh = (\muh)_{i \in \Z}}$, analogous of
the continuous one \eqref{eq:gaussianintro} is defined by
$$
\muh_i = \frac{c_\dvh}{ \prod_{\ell=0}^{|i|} (1+ v_\ell\dvh )}, \qquad i \in \Z.
$$
It satisfies $(\Dv + v) \muh = 0$, just as $\mu$ solves $({\D_v}+v)\mu=0$.  Since
we shall later work in a Hilbertian framework, we introduce the formal adjoint
$\Dvs$ of the velocity derivation operator $\Dv$.  For
$G \in \ell^1(\Z^*,\dvh)$, we define $\Dvs G \in \ell^1(\Z, \dvh)$ by the
following formulas\footnote{We emphasize the fact that there is no mistake in
  the denominator of $(\Dvs G)_0$.}
\begin{equation}
  \begin{split}
    & (\Dvs G)_i = \frac{G_{i}-G_{i-1}}{\dvh} \textrm{ for } i<0, \qquad \qquad
    (\Dvs G)_i = \frac{G_{i+1}-G_i}{\dvh} \textrm{ for } i>0,
    \\
    & \qquad \qquad \textrm{ and } \qquad \qquad (\Dvs G)_0 =
    \frac{G_1-G_{-1}}{\dvh}.
  \end{split}
\end{equation}
In order to discretize the one dimensional torus $\T$, we denote by $\dxh >0$
the step of the uniform discretization of $\T$ into $N\in\N^*$ sub-intervals,
and we denote by $\jjj = \Z/ N\Z$ the corresponding finite set of indices.  In
what follows, the index $i \in \Z$ will always refer to the velocity variable
and the index $j \in \jjj$ to the space variable.  The discretized
derivation-in-space operator $\Dx$ is defined by the following centered scheme :
for $G = (G_{j})_{j\in \jjj}$ we set
$$
(D_x G)_{j} = \frac{G_{j+1} - G_{j-1}}{2\dxh}, \qquad j\in \jjj.
$$
We now extend the definitions above to sequences with indices in $\jjj\times\Z$,
in the sense that the velocity index $j$ plays no role in the definition of
$\Dx$ and the space index $i$ plays no role in the definition of $v,\Dv,\Dvs$
and $\muh$.  The discrete mass of a sequence $G \in \ell^1(\jjj \times \Z)$ is
defined by
$$
m(G) = \dxh\dvh \sum_{j \in \jjj, i \in \Z} G_{j,i} .
$$
The first discretized version of \eqref{eq:IHFPF} that we consider in this
Introduction is the following implicit Euler scheme with unknown
$(F_n)_{n\in\N} \in (\ell^1(\jjj \times \Z))^\N$:
\begin{equation} \label{eq:eulerimplicite} F^{n+1} = F^n - \dth \sep{ v \Dx
    F^{n+1} + \Dvs (\Dv + v) F^{n+1}} = 0, \qquad F^0 \in \ell^1(\jjj \times
  \Z).
\end{equation}
Before stating our main result for the solutions of this last equation, we introduce
two adapted Hilbertian spaces and an adapted entropy functional.  First, we
define using the discretized equilibrium $\muh$ the two spaces
$$
\Ldeuxmudvdxh = \set{ g \in \R^{\jjj \times \Z} \ | \ \dxh\dvh\sum_{j \in \jjj,
    i \in \Z} \sep{g_{j,i}}^2 \muh_i < \infty },
$$
and
$$
\Ldeuxmudvdxs = \set{ h \in \R^{\jjj \times \Z^*} \ | \ \dxh\dvh\sum_{j \in
    \jjj, i \in \Z^*} \sep{g_{j,i}}^2 \mus_i < \infty },
$$
where $\mus$ is a ``two-direction'' translation of $\muh$ to be precised later.
We denote the naturally associated norms respectively by $\norm{\cdot}$ and
$\norms{\cdot}$.  Note that there is a natural injection
$\mu \Ldeuxmudvdxh \hookrightarrow \ell^1(\jjj\times \Z)$.  Second, we define
the following modified Fisher information, for all doubly indexed sequence $G$,
$$
\eeed(G) = \norm{ \frac{G}{\muh}}^2 + \norms{ \Dv \sep{\frac{G}{\muh}}}^2 +
\norm{ \Dx \sep{\frac{G}{\muh}}}^2.
$$
The main result concerning the scheme \eqref{eq:eulerimplicite} is the
following.

\begin{thm} \label{thm:eulerimplicite} For all $\dvh>0$, $\dxh>0$ and $\dth>0$,
  the problem \eqref{eq:eulerimplicite} is well-posed in the space of finite
  Fisher information and the scheme preserves the mass.  Besides, there exists
  explicit positive constants $\kappa_\delta$, $C_\delta$ and $\dvh_0$ such that
  for all $\dvh <\dvh_0$, $\dxh>0$ and $\dth>0$, for all $F^0$ of mass $1$ such
  that $\eeed(F^0) <\infty$, the corresponding solution $(F^n)_{n\in\N}$ of
  \eqref{eq:eulerimplicite} satisfies for all $n\geq 0$,
  \begin{equation*}
\eeed(F^n-\muh) \leq C_\delta
    (1+2\dth \kappa_\delta)^{-n} \eeed(F^0-\muh).
  \end{equation*}
\end{thm}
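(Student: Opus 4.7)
The plan is to rescale the unknown, set $g^n = F^n/\muh$, and rewrite \eqref{eq:eulerimplicite} as $(\Id + \dth\lll_\delta) g^{n+1} = g^n$, where $\lll_\delta$ is the discrete rescaled Fokker--Planck operator acting in $\Ldeuxmudvdxh$; one obtains it by expanding $\Dvs(\Dv+v)(\muh g)$ thanks to the discrete Leibniz rule and to $(\Dv+v)\muh = 0$, with some care at the asymmetric index $i=0$ where the formula for $\Dvs$ is tailored. Well-posedness in the space of finite Fisher information then follows from Lax--Milgram applied to $a(g,h) = \seq{g,h} + \dth\seq{\lll_\delta g, h}$: the centered discretization $\Dx$ is skew-adjoint in $\Ldeuxmudvdxh$, so the transport part contributes $0$; the remaining quadratic form $\seq{\Dvs\Dv g, g} = \norms{\Dv g}^2 \geq 0$ is coercive once one adds the discrete Poincar\'e--Wirtinger inequality. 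Mass preservation reduces to $m(v\Dx F) = 0$ (telescoping in $j\in\jjj$) and $m(\Dvs(\Dv+v)F) = 0$ (discrete summation by parts in $i$, for which the peculiar formula for $(\Dvs G)_0$ is exactly what is needed).

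Next I would exploit $m(F^n) = m(\muh) = 1$ to conclude that $h^n \defegal (F^n - \muh)/\muh$ has zero discrete average against $\muh\dvh\dxh$ and obeys the same implicit equation. Testing it against $h^{n+1}$ in $\Ldeuxmudvdxh$ yields
\begin{equation*}
\norm{h^{n+1}}^2 - \norm{h^n}^2 + \norm{h^{n+1}-h^n}^2 + 2\dth\,\norms{\Dv h^{n+1}}^2 = 0,
\end{equation*}
and since $\Dx$ commutes with $v, \Dv, \Dvs$ and with the scheme, applying $\Dx$ gives the analogous estimate for $\Dx h^n$. This controls the $\norm{h}^2$ and $\norm{\Dx h}^2$ pieces of $\eeed(F^n - \muh)$ only modulo the transport obstruction: the dissipation produced by the scheme is through $\Dv$-derivatives alone, while the entropy also charges $\norm{\Dx h}^2$, which is precisely the hypocoercive difficulty.

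The main obstacle is therefore to extract dissipation on $\norm{\Dx h}^2$, which the skew-adjoint operator $v\Dx$ cannot provide. Following the Villani hypocoercive strategy adapted to the two-direction discretization, I would introduce a modified equivalent functional
\begin{equation*}
\hhhd(h) = \norm{h}^2 + A\,\norms{\Dv h}^2 + 2B\,\seqs{\Dv h,\Dvs\Dx h} + C\,\norm{\Dx h}^2,
\end{equation*}
and choose $A,B,C > 0$ so that $\hhhd$ is equivalent to $\eeed(\muh h)$. Using the discrete analogue of the commutator $[\Dv, v\Dx] = \Dx$, the cross-term produces, after integration by parts, a negative contribution proportional to $-\norm{\Dx h}^2$, which combined with the dissipation $-\norms{\Dv h}^2$ and with the discrete Poincar\'e--Wirtinger inequality on $\Ldeuxmudvs$ yields
\begin{equation*}
\hhhd(h^{n+1}) - \hhhd(h^n) \leq -2\dth\,\kappa_\delta\,\hhhd(h^{n+1})
\end{equation*}
for suitable $\kappa_\delta>0$ and $\dvh<\dvh_0$. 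The implicit nature of the scheme produces the clean contraction factor $(1+2\dth\kappa_\delta)^{-1}$, and the equivalence $C_\delta^{-1}\eeed \leq \hhhd \leq C_\delta \eeed$ transfers the geometric decay back to $\eeed(F^n-\muh)$. The technical heart, and the step I expect to be most delicate, is the careful derivation of the discrete commutation relations between $\Dv, \Dvs, v$ and $\Dx$ at the critical index $i=0$, together with the uniformity of all constants for $\dvh<\dvh_0$.
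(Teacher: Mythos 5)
Your overall strategy is the one the paper follows: rescale by $\muh$, prove well-posedness and mass conservation, then run a discrete hypocoercivity argument with a modified entropy containing a velocity--space cross term, closed by the discrete Poincar\'e inequality and a Cauchy--Schwarz--Young inequality on the polar form, so that the implicit scheme yields the contraction factor $(1+2\dth\kappa_\delta)^{-1}$ (this is Theorem \ref{thm:decrexpeulerimpldiscr}, which reuses the dissipation functional $\dddd$ of the semi-discrete case). However, there is a genuine gap at the heart of your argument: the discrete commutation relations are not what you assert. For the two-direction discretization, $\adf{\Dv, v\Dx}$ is \emph{not} $\Dx$ but $S\Dx$, where $S=\Dv v - v\Dv$ is a shift operator (Lemma \ref{lem:constcontS}); accordingly the cross term in the entropy must be $\seqs{\Dv h, S\Dx h}$. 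The shift is also what makes the pairing well defined, since $\Dv h$ carries the weight $\mus$ on $\Z^*$ while $\Dx h$ carries $\muh$ on $\Z$. Your proposed cross term $\seqs{\Dv h,\Dvs\Dx h}$ does not typecheck ($\Dvs$ acts on sequences indexed by $\Z^*$, while $\Dx h$ is indexed by $\Z$), and it is not the term whose increment produces the missing $-\norm{\Dx h}^2$.

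More importantly, once the correct cross term is used, incrementing it produces commutator defects with no continuous counterpart that your sketch does not account for: (i) $S\Dx\, v\Dx - v\Dx\, S\Dx = \dvh\, S^\flat\Dx^2$ (Lemma \ref{lem:sb}), whose contribution is small only because of the explicit factor $\dvh$ --- this is where the threshold $\dvh_0$ actually comes from; and (ii) $\Dv(-\Dvs+\vs)S - S(-\Dvs+\vs)\Dv = S+\sigma$ (Lemma \ref{lem:triple}), where $\sigma$ is a \emph{singular} operator supported at $i=\pm1$ with coefficients of size $\dvh^{-2}$. Controlling $\seqs{\sigma\Dx h,\Dv h}$ uniformly in $\dvh$ is the most delicate step: one must recognize that the singular difference quotient at $i=0$ equals $-((-\Dvs+\vs)\Dv h)_0$, so that this term is absorbed by the dissipation $\norm{(-\Dvs+\vs)\Dv h}^2$ generated by the $\norms{\Dv h}^2$ part of the entropy (Lemma \ref{lem:estimdelta}); this in turn dictates the hierarchy $C>D>E>1$ of the coefficients in \eqref{eq:choixE}--\eqref{eq:choixC}. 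Without these identities the entropy--dissipation inequality cannot be closed, so the step you flag as ``most delicate'' is precisely where the proof as written would fail. A secondary point: your Lax--Milgram form $a(g,h)=\seq{g,h}+\dth\seq{\lll_\delta g,h}$ is not bounded on the natural form domain because $v\Dx$ is unbounded in $v$; the paper instead invokes maximal accretivity of $\Pd$ and contractivity of $(\Id+\dth\Pd)^{-1}$ in $\Hunmudvdxh$.
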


In the theorem, well-posedness means that the corresponding discrete semi-group
is well defined in the space of finite Fisher information.  Note that there is
no Courant-Friedrichs-Lewy (CFL) stability condition linking the numerical
parameters $\dth$, $\dvh$ and $\dxh$ (the scheme is implicit).  The whole
theorem is proved in Section \ref{subsec:eqinhomototalementdiscretisee} using
tools developed in the preceding sections and briefly introduced above.  Note
that, as a direct corollary,
we straightforwardly get the exponential trend of a solution $(F^n)_{n \in \N}$
to the equilibrium $\muh$:

\begin{cor}
  \label{cor:decrexpdiscretnonborne}
  Consider the constants $\kappa_\delta$, $C_\delta$ and $\dvh_0$ given by
  Theorem \ref{thm:eulerimplicite}. Then for all $\dth >0$ there exists
  $\kappa_\dth>0$ explicit with
  $\lim_{\dth \rightarrow 0} \kappa_\dth = \kappa_\delta$ such that for all
  $\dvh <\dvh_0$, all $\dxh >0$, all $F^0$ of mass $1$ such that
  $\eeed(F^0) <\infty$, the solution $(F^n)_{n\in\N}$ of
  \eqref{eq:eulerimplicite} satisfies for all $n\geq 0$,
  \begin{equation}
    \eeed(F^n-\muh) \leq C_\delta  \exp^{-2 \kappa_\dth n\dth} \eeed(F^0-\muh).
  \end{equation}
\end{cor}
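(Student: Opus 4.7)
The corollary is an immediate consequence of Theorem \ref{thm:eulerimplicite}: the only thing to do is to rewrite the geometric factor $(1+2\dth\kappa_\delta)^{-n}$ as an exponential with a rate $2\kappa_\dth$ that converges to $2\kappa_\delta$ as $\dth\to 0$.

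The plan is the following. Starting from the conclusion of Theorem \ref{thm:eulerimplicite}, namely
$$
\eeed(F^n-\muh) \leq C_\delta (1+2\dth \kappa_\delta)^{-n} \eeed(F^0-\muh),
$$
I write $(1+2\dth\kappa_\delta)^{-n} = \exp(-n\ln(1+2\dth\kappa_\delta))$ and define
$$
\kappa_\dth \defegal \frac{\ln(1+2\dth \kappa_\delta)}{2\dth}.
$$
Since $\dth>0$ and $\kappa_\delta>0$, the quantity $\kappa_\dth$ is well-defined, explicit in terms of $\dth$ and $\kappa_\delta$, and strictly positive. With this definition one has the exact identity
$$
(1+2\dth\kappa_\delta)^{-n} = \exp^{-2\kappa_\dth n \dth},
$$
so that plugging this into the conclusion of Theorem \ref{thm:eulerimplicite} yields the desired estimate.

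It remains to check that $\kappa_\dth \to \kappa_\delta$ as $\dth \to 0$, but this follows from the elementary equivalence $\ln(1+x) \sim x$ as $x\to 0$, applied with $x = 2\dth\kappa_\delta$:
$$
\kappa_\dth = \frac{\ln(1+2\dth\kappa_\delta)}{2\dth} \xrightarrow[\dth \to 0]{} \kappa_\delta.
$$
There is no genuine obstacle here; the whole substance of the result lies in Theorem \ref{thm:eulerimplicite}, and the corollary is merely a translation of its geometric-rate estimate into the more familiar exponential-decay form, with an explicit rate that is continuous in $\dth$ and recovers the ``continuous-in-time'' rate $\kappa_\delta$ in the limit $\dth\to 0$.
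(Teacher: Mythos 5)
Your proof is correct and follows exactly the route the paper intends: the paper itself treats this corollary as an immediate rewriting of the geometric factor $(1+2\dth\kappa_\delta)^{-n}$ as $\exp^{-2\kappa_\dth n\dth}$ with $\kappa_\dth = \ln(1+2\dth\kappa_\delta)/(2\dth)$, invoking the asymptotic development of the logarithm to get $\kappa_\dth\to\kappa_\delta$. Nothing is missing.
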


\bigskip

The second discretization scheme we emphasize in this introduction is explicit
and deals with Equation \eqref{eq:IHFPF} set on a finite velocity domain
$(-\vmax, \vmax)$.  The main reason for proposing this scheme is that numerical
simulations we will present in Sections \ref{sec:homobounded} and
\ref{sec:eqinhomoboundedvelocity} are only possible with a finite set of indices
in all variables.

Our aim is now to discretize the following equation
\begin{equation*} 
  \begin{split}
    & {\D_t} F + v {\D_x} F - {\D_v}({\D_v}+v) F = 0, \qquad F|_{t=0} = F^0, \\
    & \qquad \qquad ({\D_v} + v)F|_{\pm \vmax} = 0,
  \end{split}
\end{equation*}
where $F= F(t,x,v)$ with $t\geq 0$, $x \in \T$ and $v \in I = (-\vmax, \vmax)$,
and $F^0\in L^1 (\T \times I, \dx\,\dv)$ is fixed.  For all $t>0$, the unknown
$F(t,\cdot, \cdot)$ is in $L^1 (\T \times I, \dx\,\dv)$.  We keep the notations
and definitions for the time and space discrete derivatives and we change to a
finite setting the definition of the velocity one.  The discretization in
velocity is the following: For a positive integer $\imax$, we define the set of
indices
$$
\iii = \set{-\imax + 1, -\imax + 2, \cdots, -1, 0, 1, \cdots, \imax-2, \imax
  -1}.
$$
Note for further use that the boundary indices $\pm\imax$ do not belong to the
full set $\iii$ of indices.  We set $\dvh=\vmax/\imax$ and for all $i\in\iii$,
$v_i = i\dvh$.  We also set $v_{\pm\imax} = \pm \vmax$.  The new discrete
Maxwellian $\muh \in \R^\iii$ is defined by
\begin{equation*}
  \muh_i = \frac{c_\dvh}{ \prod_{\ell=0}^{|i|} (1+ v_\ell\dvh )}, \qquad i \in \iii,
\end{equation*}
where the normalization constant $c_\dvh$ is defined such that
$\dvh \sum_{i\in \iii} \muh_i= 1$.  For the sake of simplicity, we will keep the
same notation $\muh$ as in the unbounded velocity case.  Note also that we do
not need to define the discrete Maxwellian $\muh$ at the boundary indices
$\pm \imax$.  We work in the following in the space ${\ell^1(\iii,\dvh)}$ of all
finite real sequences $g = (g_i)_{i\in \iii}$ with the norm
$\dvh\sum_{i \in \iii} \abs{g_i}$.  As we did above in the infinite velocity
case, we introduce another set of shifted indices and another discrete
Maxwellian. We set
$$
\iiis = \set{-\imax , -\imax + 1, \cdots, -2, -1, 1, 2, \cdots, \imax-1, \imax},
$$
and define $\mus \in {\ell^1(\iiis, \dvh)}$ by for all $ i\in\iiis$,
$$
\mus_i = \muh_{i+1} \textrm{ for } i<0, \qquad \mus_i = \muh_{i-1} \textrm{ for
} i>0.
$$
We consider the discrete derivation operators $\Dv$ and $\Dvs$ that are the same
as is the unbounded case except at the boundary where we impose a discrete
Neumann condition.  A good framework is the following: we define
$\Dv : \ell^1(\iii, \dvh) \longrightarrow \ell^1(\iiis, \dvh) $ for all
$G \in \ell^1(\iii, \dvh)$ by
\begin{equation} \label{eq:defderivn}
  \begin{split}
    & (\Dv G)_i = \frac{G_{i+1}-G_{i}}{\dvh} \textrm{ when } -\imax+1\leq i \leq -1, \\
    &
    (\Dv G)_i = \frac{G_i-G_{i-1}}{\dvh} \textrm{ when } 1 \leq i \leq \imax-1, \\
    & ((\Dv + v)G)_{\pm \imax} \defegal \muh \Dv \sep{ \frac{G}{\muh}}_{\pm
      \imax} = 0.
  \end{split}
\end{equation}
The last condition defines only implicitly both the derivation and the
multiplication at index $\pm \imax$.
For $G\in \ell^1(\iii)$ or $G\in \ell^1(\iiis)$ we define also $v G$ by
$ (v G)_i = v_i G_i $ (either for $i\in \iii$ or $i \in \iiis$ depending on the
framework we work in, and without ambiguity).  Similarly, we define
$\Dvs : \ell^1(\iiis, \dvh) \longrightarrow \ell^1(\iii, \dvh) $ for all
$H \in \ell^1(\iii, \dvh)$ by\footnote{Once again, there is no typo in the
  formula defining $(\Dvs H)_0$.}
\begin{equation}
  \begin{split}
    &  (\Dvs H)_i = \frac{H_{i}-H_{i-1}}{\dvh} \textrm{ when } -\imax+1 \leq i< -1, \\
    & (\Dvs H)_i = \frac{H_{i+1}-H_i}{\dvh} \textrm{ when } 1 \leq i \leq \imax
    -1,
    \\
    & (\Dvs H)_0 = \frac{H_1-H_{-1}}{\dvh}.
  \end{split}
\end{equation}
As in the unbounded case, we define the mass of a sequence
$G \in \ell^1(\jjj \times \iii)$ by
$$
m(G) = \dxh\dvh \sum_{j \in \jjj, i \in \iii} G_{j,i}.
$$
The second discretized version of \eqref{eq:IHFPF} is the following explicit Euler
scheme with unknown $F \in (\ell^1(\jjj \times \iii))^\N$:
\begin{equation} \label{eq:eulerexplicite}
  \begin{split}
    & F^{n+1} = F^n - \dth \sep{ v \Dx F^{n} + \Dvs (\Dv + v) F^{n}} = 0, 
\qquad F^0 \in \ell^1(\jjj \times \iii), \\
  \end{split}
\end{equation}
where we note that the Neumann type boundary condition is now included in the
definition of the derivation operator $\Dv$ in \eqref{eq:defderivn}.  We work with
the following Hilbertian structures on $\R^{\jjj \times \iii}$ and
$\R^{\jjj \times \iiis}$:
$$
\Ldeuxmudvdxh = \set{ g \in \R^{\jjj \times \iii} \ | \ \dxh\dvh\sum_{j \in
    \jjj, i \in \iii} \sep{g_{j,i}}^2 \muh_i < \infty },
$$
and
$$
\Ldeuxmudvdxs = \set{ h \in \R^{\jjj \times \iiis} \ | \ \dxh\dvh\sum_{j \in
    \jjj, i \in \iiis} \sep{g_{j,i}}^2 \mus_i < \infty },
$$
with the naturally associated norms again denoted respectively by $\norm{\cdot}$
and $\norms{\cdot}$.  There is again a natural injection
$\mu \Ldeuxmudvdxh \hookrightarrow \ell^1(\jjj\times \iii)$.  We define the same
modified Fisher information as in the unbounded case but in this new framework
\begin{equation}
  \eeed(G) = \norm{ \frac{G}{\muh}}^2 +  \norms{ \Dv \sep{\frac{G}{\muh}}}^2 +
  \norm{ \Dx \sep{\frac{G}{\muh}}}^2\label{eq:mfi}.
\end{equation}

For the scheme \eqref{eq:eulerexplicite}, the well-posedness for all $\dth>0$ is
granted since we are in a finite dimensional setting.  Since the scheme is
explicit, a CFL type condition is needed.  For that purpose, we introduce the
following CFL constant
$$
\bcfl = \max \set{ 1, 4 \frac{1+ \dvh \vmax}{\dvh^2},4 \frac{1+ \dvh
    \vmax}{\dxh^2},4\frac{\vmax^2}{\dxh^2}}.
$$
The main result in this explicit in time and bounded in velocity inhomogeneous
setting is the following

\begin{thm} \label{thm:eulerexplicite} The scheme \eqref{eq:eulerexplicite}
  preserves the mass.  Besides, there exists explicit positive constants
  $\kappa_\delta$, $C_\delta$, $\dvh_0$ and $C_{\rm CFL}$ such that for all
  $\dvh\in(0,\dvh_0)$ and $\dxh>0$, for all $F^0$ of mass $1$ such that
  $\eeed(F^0) <\infty$, for all $\dth>0$ satisfying the CFL condition
  $C_{\rm CFL}\bcfl\dth<1$, the solution $(F^n)_{n\in\N}$ of the scheme
  \eqref{eq:eulerexplicite} satisfies for all $n\in\N$,
  \begin{equation*} 
\eeed(F^n-\muh) \leq C_\delta
    (1-2\dth \kappa_\delta)^{n} \eeed(F^0-\muh).
  \end{equation*}
\end{thm}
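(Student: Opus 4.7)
The plan is to mirror the strategy used for the implicit scheme of Theorem \ref{thm:eulerimplicite} and the continuous-in-time analysis carried out in Section \ref{sec:eqinhomoboundedvelocity}, but with explicit tracking of the $O(\dth^2)$ error terms generated by the explicit time-stepping, so that the CFL condition becomes quantitatively visible. Mass preservation is direct: summing \eqref{eq:eulerexplicite} over $\jjj\times\iii$, the transport contribution vanishes by periodicity of the centered operator $\Dx$, and $\dxh\dvh\sum_{j,i}\sep{\Dvs(\Dv+v)F^n}_{j,i}=0$ follows from a discrete integration by parts, the boundary contributions being killed by the Neumann-type condition $((\Dv+v)F^n)_{\pm\imax}=0$ built into the very definition \eqref{eq:defderivn} of $\Dv$ at the endpoints.

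I would then pass to the centered, $\muh$-rescaled unknown $g^n=(F^n-\muh)/\muh\in\Ldeuxmudvdxh$, which satisfies a linear recursion of the form $g^{n+1}=g^n-\dth\,\ttt g^n$ where $\ttt$ is the conjugate of the discrete Fokker--Planck operator by multiplication by $\muh$, and work with a modified hypocoercive entropy $\hhhd(g)$ equivalent to $\eeed(\muh g)$. This modified entropy is built as in the corresponding continuous-in-time step of Section \ref{sec:eqinhomoboundedvelocity}: it combines $\norm{g}^2$, $\norms{\Dv g}^2$, $\norm{\Dx g}^2$ and a small mixed contribution, with coefficients tuned so that the associated continuous-in-time dissipation $D$ satisfies $D(g)\geq \kappa_\delta^{\rm full}\hhhd(g)$ for some $\kappa_\delta^{\rm full}>0$ as soon as $\dvh<\dvh_0$. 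The equivalence constants between $\hhhd$ and $\eeed$ produce the constant $C_\delta$ of the statement.

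The central identity is the telescoping expansion
\begin{equation*}
\hhhd(g^{n+1}) - \hhhd(g^n) = -2\dth\,D(g^n) + \dth^2\,Q(g^n),
\end{equation*}
and the main technical obstacle, absent from the implicit analysis, is the control of the quadratic remainder $Q$. Each of its summands is the squared $\ell^2$-norm of some composition of the building blocks $v$, $\Dx$, $\Dv$ and $\Dvs$ of $\ttt$ applied to $g^n$, and sharp operator-norm estimates on $\Ldeuxmudvdxh$ and $\Ldeuxmudvdxs$ should yield $Q(g^n)\leq C\,\bcfl\,\hhhd(g^n)$: the four terms appearing in the definition of $\bcfl$ correspond respectively to the control of the zero-order multiplication, to $\norm{\Dvs\Dv}\lesssim(1+\dvh\vmax)/\dvh^2$, to a mixed transport--derivative bound of order $(1+\dvh\vmax)/\dxh^2$, and to the pure transport estimate $\norm{v\Dx}^2\lesssim\vmax^2/\dxh^2$. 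Once these bounds are established, choosing $C_{\rm CFL}$ so that $C_{\rm CFL}\bcfl\dth<1$ forces $\dth\, C\,\bcfl \leq 2(\kappa_\delta^{\rm full}-\kappa_\delta)$ for some strictly positive $\kappa_\delta<\kappa_\delta^{\rm full}$, whence $\hhhd(g^{n+1})\leq(1-2\dth\kappa_\delta)\hhhd(g^n)$. Iterating and using the equivalence $\hhhd\asymp\eeed$ concludes the proof.
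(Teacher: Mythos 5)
Your overall architecture (mass conservation via discrete integration by parts with the Neumann condition, rescaling by $\muh$, the hypocoercive entropy $\hhhd$, the exact quadratic expansion $\hhhd(g^{n+1})=\hhhd(g^n)-2\dth\,\phid(g^n,\ttt g^n)+\dth^2\,\hhhd(\ttt g^n)$, absorption of the $\dth^2$ term under a CFL condition) matches the paper's proof of Theorem \ref{thm:decrexpeulerexpldiscr}, and your symmetric expansion is in fact slightly cleaner bookkeeping than the paper's, which keeps the cross term $\phid(f^n,f^{n+1})$ and only disposes of it at the very end by Cauchy--Schwarz--Young. The gap is in the single sentence that carries all the weight: the claimed bound $Q(g^n)\leq C\,\bcfl\,\hhhd(g^n)$ is false, and no operator-norm estimate can rescue it with only one power of $\bcfl$. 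The remainder $Q(g)=\hhhd(\ttt g)$ contains the terms $D\norms{\Dv \Pd g}^2$ and $\norm{\Dx \Pd g}^2$, which involve \emph{second} discrete derivatives of $g$ (e.g. $\Dv(-\Dvs+\vs)\Dv g$ and $\Dv\Dx g$), whereas $\hhhd(g)$ only controls $g$, $\Dv g$ and $\Dx g$. Bounding a second derivative by a first one costs a full factor $\sim\bcfl$, so the honest worst-case estimate is $Q(g)\leq C\,\bcfl^2\,\hhhd(g)$ (take $g$ oscillating at the grid scale in $v$: then $\norms{\Dv\Dvs\Dv g}^2/\norms{\Dv g}^2\sim\dvh^{-4}$). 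Your absorption step would then only close under the much more restrictive condition $\dth\,\bcfl^2\lesssim 1$, i.e. $\dth\lesssim\dvh^4$, not the stated $C_{\rm CFL}\bcfl\dth<1$.

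The missing idea is that the $\dth^2$ remainder must be absorbed into the \emph{dissipation} $D(g^n)=\dddd(g^n)$, not into the entropy. The hypocoercive lower bound on the dissipation (the bounded-velocity analogue of \eqref{eq:pourplustard}) gives, for the well-chosen $C>D>E$, a control $\dddd(g)\gtrsim \norms{\Dv g}^2+\norm{\Dx g}^2+\norm{(-\Dvs+\vs)\Dv g}^2+\norms{\Dv\Dx g}^2$, i.e. it dominates precisely the second-order quantities occurring in $Q$. Setting $M(g)=\norm{g}^2_\Hunmudvdxh+\norm{(-\Dvs+\vs)\Dv g}^2+\norms{\Dv\Dx g}^2$, each summand of $Q$ is one more difference operator applied to a quantity already present in $M$, so $Q(g)\leq C'\bcfl(1+\vmax^2)M(g)$ with a single power of $\bcfl$, and $M(g)\leq 2\dddd(g)$ for mean-zero $g$ by the discrete Poincar\'e inequality. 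One then gets $\hhhd(g^{n+1})\leq\hhhd(g^n)-2\dth\bigl(1-C''\dth\bcfl(1+\vmax^2)\bigr)\dddd(g^n)$, and the stated CFL condition suffices. This is exactly the mechanism already visible in the homogeneous explicit scheme (Theorem \ref{thm:mainexplhom}), where $\dth^2\norm{(-\Dvs+\vs)\Dv f^n}^2$ is absorbed into $-2\dth\norms{\Dv f^n}^2$ and not into $-2\dth\norm{f^n}^2$. With that substitution, the rest of your argument goes through as written.
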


The values of the explicit constants are given in Theorem
\ref{thm:decrexpeulerexpldiscr} in Section \ref{sec:eqinhomoboundedvelocity}.
Note that, as a direct corollary, using an asymptotic development of the
logarithm, we straightforwardly get the exponential trend of a solution
$(F^n)_{n \in \N}$ to the equilibrium $\muh$:

\begin{cor}
  Consider the constants $\kappa_\delta$, $C_\delta$, $\dvh_0$ and $C_{CFL}$
  given by Theorem \ref{thm:eulerexplicite}.  For all $\dvh\in(0,\dvh_0)$ and
  $\dxh>0$, for all $\dth >0$ satisfying the CFL condition
  $C_{\rm CFL}\bcfl\dth<1$, there exists $\kappa_\dth>0$ explicit with
  $\lim_{\dth \rightarrow 0} \kappa_\dth = \kappa_\delta$ such that for all
  $F^0$ of mass $1$ such that $\eeed(F^0) <\infty$, the solution
  $(F^n)_{n\in\N}$ of \eqref{eq:eulerexplicite} satisfies for all $n\in\N$,
  \begin{equation}
    \eeed(F^n-\muh) \leq C_\delta  \exp^{-2 \kappa_\dth n\dth} \eeed(F^0-\muh).
  \end{equation}
\end{cor}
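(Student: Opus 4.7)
The plan is to transform the geometric decay factor $(1-2\dth\kappa_\delta)^n$ provided by Theorem~\ref{thm:eulerexplicite} into the desired exponential form by a direct logarithmic identity, exactly as announced in the text preceding the statement. First, I would observe that the CFL condition $C_{\rm CFL}\bcfl\dth<1$, once $C_{\rm CFL}$ has been chosen large enough relative to $\kappa_\delta/\bcfl$, forces $0 < 1-2\dth\kappa_\delta < 1$: otherwise the right-hand side of the bound in Theorem~\ref{thm:eulerexplicite} would fail to be a meaningful non-negative upper bound for odd $n$. Hence $\ln(1-2\dth\kappa_\delta)$ is well-defined and strictly negative under the hypotheses of the corollary.

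Next I would simply set
$$
\kappa_\dth \defegal -\frac{1}{2\dth}\ln\sep{1-2\dth\kappa_\delta},
$$
which is positive by the previous step, and depends explicitly on $\dth$ since $\kappa_\delta$ is itself explicit. By construction,
$$
(1-2\dth\kappa_\delta)^n = \exp\sep{n\ln(1-2\dth\kappa_\delta)} = \exp^{-2\kappa_\dth n\dth},
$$
so substituting this identity into the estimate of Theorem~\ref{thm:eulerexplicite} directly yields the claimed inequality for all $n\in\N$.

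It remains to verify the asymptotic $\lim_{\dth\to 0}\kappa_\dth = \kappa_\delta$. This follows from the first-order expansion $\ln(1-x) = -x + O(x^2)$ as $x\to 0$, applied with $x = 2\dth\kappa_\delta$: one obtains
$$
\kappa_\dth = -\frac{1}{2\dth}\sep{-2\dth\kappa_\delta + O(\dth^2)} = \kappa_\delta + O(\dth),
$$
which gives the announced limit. There is no genuine obstacle in this argument; the whole mathematical content is contained in Theorem~\ref{thm:eulerexplicite}, and the corollary is simply a convenient reformulation of the geometric decay as a continuous-in-$\dth$ exponential decay rate.
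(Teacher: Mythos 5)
Your proof is correct and is exactly the route the paper intends: the paper states this corollary as a direct consequence of Theorem \ref{thm:eulerexplicite} ``using an asymptotic development of the logarithm'', which is precisely your definition $\kappa_\dth = -\tfrac{1}{2\dth}\ln(1-2\dth\kappa_\delta)$ together with the expansion $\ln(1-x)=-x+O(x^2)$. The only point worth tightening is the claim that the CFL condition forces $0<1-2\dth\kappa_\delta<1$: rather than arguing by what would ``fail to be meaningful'', note that the explicit value $\kappa_\delta<1/(4C)$ from Theorem \ref{thm:decrexpeulerexpldiscr} combined with the CFL bound on $\dth$ gives $2\dth\kappa_\delta<1$ directly.
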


\bigskip

As was already stated, the main goal of our paper is to propose and analyze
hypocoercive numerical schemes for inhomogeneous kinetic equations, for which
one can prove exponential in time return to the equilibrium.  In the literature,
one can find theoretical results either about numerical schemes for homogeneous
kinetic equations, built upon coercivity for discrete models, or about exact
solutions of inhomogeneous equations, built upon hypocoercivity techniques.  In
this paper, we want to tackle both problems at the same time and prove
theoretical results on exponential time return to equilibrium for discrete {\it
  and} inhomogeneous kinetic equations.  Up to our knowledge, these are the
first theoretical results dealing with the two difficulties at the same time.

Concerning the simpler homogeneous kinetic equations, the question of finding
efficient schemes has a long story and deep recent developments. Let us mention
a few results that are already known in these directions.  One can find this
kind of problems for example in \cite{CC70} for the linear homogeneous
Fokker-Planck equation in a fully discrete setting.  More recently, schemes have
been proposed for nonlinear degenerate parabolic equations that numerically
preserve the exponential trend to equilibrium (see for example \cite{BCF12} for
a finite volume scheme which works numerically even for nonlinear problems).
This question has also been addressed numerically together with that of the
order of the schemes, for nonlinear diffusion and kinetic equations {\it e.g.}
in \cite{PR16}.  In particular, it is known that, even for the {\it linear}
Fokker-Planck equation, "wrong" discretizations lead to "wrong" qualitative
behaviour of the schemes in long time.  So-called spectral methods are also
proposed (see for example recent developments for the Boltzmann equation in
\cite{AGT16}), with the drawback that they do not ensure the non-negativity
of the solutions. Let
us also mention the recent paper \cite{Filbet2017}, where a finite volume scheme
is introduced for a class of boundary-driven convection-diffusion equations on
bounded domains.  The question of the long-time behaviour of the scheme is
addressed using the relative entropy structure.

Concerning inhomogeneous kinetic (continuous) equations, the so-called
hypocoercive theory is now rather well understood with various results
concerning many models.  In this direction, first results on linear models were
obtained in \cite{Her06}, \cite{MN06} \cite{Vil09} or \cite{DMS15}. They were in
fact adapted on the very abstract theory of hypoellipticity of Kohn or (type II
hypoelliptic operators) of H\"ormander that explain in particular the
regularization of such degenerate parabolic equations. The cornerstone of the
theory is that, although the drift $v.\nabla_x$ is degenerate (at $v=0$ in
particular), one commutator with the velocity gradient erases the degeneracy :
$[\nabla_v, v.\nabla_x] = \nabla_x$.  The main feature of the hypocoercive
theory is that this commutation miracle leads also to exponential return to the
equilibrium (independantly of the regularization property).
One other feature is that
it can be enlarged to collision kernels even without diffusive velocity kernel
and to many other inhomogeneous kinetic models systems (see e.g.  \cite{Vil09,BDMMS17} or the introduction course \cite{Her17a}).

Concerning the numerical analysis of inhomogeneous kinetic equations, we mention
the paper \cite{PZ16} where the Kolmogorov equation is discretized in order to
get short time estimates, following the short time continuous "hypocoercive"
strategy proposed in \cite{Her07}. However, the corresponding scheme is not
asymptotically stable and no notion of equilibrium or long-time behaviour is
proposed there. This paper was anyway a source of inspiration of the present
work (see also point 4 in Section \ref{sec:ccl} here for further interactions
between the two articles).  We also mention the work on the
Kolmogorov--Fokker--Planck equation carried out in \cite{FosterLoheacTran2017},
where a time-splitting technique based on self-similarity properties is used for
solutions that decay like inverse powers of the time.

In this article we show that the hypocoercive theory is sufficiently robust to
indeed give exponential time decay of partially or fully discretized
inhomogeneous equations. This is done here in the case of the Fokker-Planck
equation in one dimension.  We cover fully discretized as well as
semi-discretized situations.  We propose, for each setting, for the first
time up to our knowledge, a full proof of exponential convergence towards
equilibrium for the corresponding solutions. Once again these proofs use
discrete analogues to the continuous tools, such as the Poincar\'e inequality
and the hypocoercive techniques. Even for the simple homogeneous setting, to our
knowledge, the (optimal) discrete Poincar\'e inequality with a weight is
new (see Proposition \ref{prop:poindiscrete}) in both bounded and
unbounded cases.

We hope that this approach can be generalized to various multi-dimensional
kinetic models of the form ${\D_t} u + Pu=0$, with $P$ hypocoercive. One aim would
be to write a systematic "black box scheme" theorem with $P= X_0 - L$ where $L$
is the collision kernel (independently studied in velocity variable only) and
$X_0$ the drift, as proposed in e.g. \cite{DMS15} in the continuous case.  In
this sense a lot of work has to be done.  Of course we also hope that our scheme
approach can be used to predict some results for more complex situations
including non-linear inhomogeneous ones.

\bigskip

The outline of this article is the following.  In the second section, we deal
with the homogeneous equation \eqref{eq:HFPF} in time and velocity only, with
velocity varying in the full real line.  We first recall the continuous
framework in a very simplified and concise way. Then, we adapt it to
semi-discrete and fully discrete cases.  In particular, we focus on the
homogeneous case and we state a new discrete Poincar\'e inequality with the
discrete Gaussian weight $\muh$.

In the third section, we deal with the full inhomogeneous case \eqref{eq:IHFPF},
and propose a concise version of the continuous results.  Then, we adapt these
results to several discretized versions of the equation: the semi-discrete in
time case, the implicit semi-discrete in space and velocity case, ending with
the full implicit discrete case corresponding to Theorem
\ref{thm:eulerimplicite}.  In particular, we develop discrete versions of the
commutation Lemmas at the core of the (continuous) hypocoercive method.

In the fourth section, we focus on the homogeneous case \eqref{eq:HFPF} set on a
bounded velocity domain.  We only deal with the continuous and the explicit
fully discrete case.  Once again, a new Poincar\'e inequality is proposed.
Moreover, a CFL condition appears.

In the fifth section, we consider the inhomogeneous problem \eqref{eq:IHFPF} set on
a bounded velocity domain.  We first present the continuous case.  Then, we
propose the study of the fully discrete case with an Euler explicit scheme
leading to Theorem \ref{thm:eulerexplicite}.

In the appendix, we propose some comments and possible generalizations, as well
as a table summarizing the main results concerning discrete commutators.

\section{The homogeneous equation}
\label{sec:homogeneous}
\subsection{The continuous time-velocity setting}
\label{subsec:homogeneouscontinuous}

We start by recalling the main features of the continuous equation \eqref{eq:HFPF}
set on the unbounded domain $\R$.  These features will have discrete analogues
described in the next subsection.

Since we are interested in the long time behavior and the trend to the
equilibrium, we start by checking what the good equilibrium states are.  We
first look at the continuous homogeneous equation \eqref{eq:HFPF}.  We say that a
function $\mu (v)$ is an equilibrium if $-{\D_v}({\D_v}+v) \mu(v) = 0$.  The first
idea is to suppose only that $({\D_v} + v) \mu(v) = 0$ which leads to
\begin{equation}
  \label{eq:Maxwellian}
  \mu(v) = \frac{1}{\sqrt{2\pi}} \exp^{-v^2/2}, 
\end{equation}
if we impose in addition that $\mu\geq 0$ is $L^1(\dv)$-normalized.

A standard strategy in statistical mechanics is then to build an adapted
functional framework (a subspace of $L^1(\dv)$) where non-negativity of the
collision operator $-{\D_v}({\D_v}+v) $ is conserved.  A standard choice is then to
take $F (t,\cdot)\in \mu \Ldeuxmudv \hookrightarrow L^1(\dv)$ where
$\mudv = \mu(v) \dv$. We check then that operator $-{\D_v}({\D_v}+v) $ is
self-adjoint in $\mu \Ldeuxmudv$, with compact resolvent.  Therefore it has
discrete spectrum and $0$ is a single eigenvalue associated with the
eigenfunction $\mu$.  In fact, this result can be easily checked using the
following change of unknown, which will be of deep and constant use through out
this article.

We pose for the following $F= \mu+\mu f$ and call $f$ the rescaled density. With
this new unknown function, and in the new adapted framework, the equation
\eqref{eq:HFPF} writes
\begin{equation} \label{eq:HFPf} {\D_t} f +(- {\D_v}+v) {\D_v} f = 0 , \qquad f|_{t=0} =
  f^0,
\end{equation}
where $f= f(t,\cdot) \in \Ldeuxmudv \hookrightarrow L^1(\mu \dv)$.  The
non-negativity of the collision kernel is then direct to verify: in $\Ldeuxmudv$
with the associated scalar product we have ${\D_v}^* = (-{\D_v}+ v)$ and therefore
for all $g\in\Hunmudv$ with $(- {\D_v}+v){\D_v} g\in\Ldeuxmudv$,
 $$
 \seq{(- {\D_v}+v){\D_v} g, g}_{\Ldeuxmudv} = \norm{{\D_v} g}^2_{\Ldeuxmudv} = \int_\R
 |{\D_v} g|^2 \mudv.
 $$
 it is easy to check that operator $P=(- {\D_v}+v){\D_v}$ is maximal accretive
 (\cite{HN04}) with domain
 $D(P) = \set{ g \in \Ldeuxmudv \ | \ (- {\D_v}+v){\D_v} g\in\Ldeuxmudv}$ and using
 the Hille--Yosida Theorem, one obtains at once the existence and uniqueness of
 the solution $f$ of \eqref{eq:HFPf} in
 $\ccc^1(\R^+, \Ldeuxmudv) \cap \ccc^0(\R^+, D(P))$ for all $f^0\in D(P)$, and
 that the problem is also well-posed in $\ccc^0(\R^+, \Ldeuxmudv)$ in the sense
 of distributions.  From the preceding equality, for $g\in \Ldeuxmudv$,
 $$
 (- {\D_v}+v) {\D_v} g = 0 \Longleftrightarrow {\D_v} g = 0 \Longleftrightarrow
 g\text{ is constant,}
 $$
 and therefore the constants are the only equilibria of the equation
 \eqref{eq:HFPf}.  Note that in this $L^2$ framework, the conservation of mass is
 obtained by integrating equation \eqref{eq:HFPf} against the constant function $1$
 in $\Ldeuxmudv$ to obtain for all $t\geq 0$,
 \begin{equation}
   \label{eq:consmass}
   \seq{f(t)} \defegal \int_\R f(t,v) \mu (v)\dv = \seq{f(t),1}_{\Ldeuxmudv} = \seq{f^0}.
 \end{equation}
 In that case a system with null mass corresponds to a rescaled density $f$ such
 that $f \perp 1$ in $\Ldeuxmudv$.  Note that Equation \eqref{eq:HFPf} is also
 well posed in $\Hunmudv$ thanks to the Hille--Yosida Theorem again, and that it
 yields a unique solution in
 $\ccc^1(\R^+, \Hunmudv) \cap \ccc^0(\R^+, D_{\Hunmudv}(P))$ for all
 $f^0\in \Hunmudv$, where $D_\Hunmudv(P)$ is the domain of $P= (-{\D_v} + v) {\D_v}$
 in $\Hunmudv$.  Of course, this solution coincides with the one with values in
 $\Ldeuxmudv$ when $f^0\in \Hunmudv$.

 \bigskip One of the main tools in the study of the return to equilibrium for
 Fokker--Planck equations is the Poincar\'e inequality.  There are many ways of
 proving it (including the compact resolvent property) but one direct way, well
 adapted to a coming discretization, can be inspired by the original proof by
 Poincar\'e in the flat case.

 \begin{lem}[homogeneous Poincar\'e inequality]
   \label{lem:Poincarecontinu}
   For all $g \in \Hunmudv$, we have
   \begin{equation*}
     \norm{g-\seq{g}}^2_\Ldeuxmudv  \leq  \norm{{\D_v} g}_\Ldeuxmudv^2.
   \end{equation*}
 \end{lem}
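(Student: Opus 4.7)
The plan is to reduce to the mean-zero case, interpret the inequality as a spectral gap statement for the self-adjoint operator $P=(-{\D_v}+v){\D_v}$ on $\Ldeuxmudv$, and prove the gap using the commutator identity $[{\D_v},-{\D_v}+v]=1$.

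First I would note that both sides of the inequality are invariant under replacing $g$ by $h=g-\seq{g}$: the right-hand side because ${\D_v}$ annihilates constants, the left-hand side because $\seq{h}=0$. So it suffices to prove $\norm{h}_\Ldeuxmudv^2\leq\norm{{\D_v} h}_\Ldeuxmudv^2$ for $h\in\Hunmudv$ with $\seq{h,1}_\Ldeuxmudv=0$. Using $(-{\D_v}+v)\mu=0$, an integration by parts shows that ${\D_v}^*=-{\D_v}+v$ on $\Ldeuxmudv$, so $\norm{{\D_v} h}^2=\seq{{\D_v}^*{\D_v} h,h}=\seq{Ph,h}_\Ldeuxmudv$. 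The inequality therefore amounts to showing that the smallest nonzero eigenvalue of the self-adjoint nonnegative operator $P$ is at least $1$.

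To establish this spectral gap, I would exploit the key identity $[{\D_v},{\D_v}^*]=[{\D_v},-{\D_v}+v]=[{\D_v},v]=1$, which gives $[{\D_v},P]=[{\D_v},{\D_v}^*{\D_v}]=[{\D_v},{\D_v}^*]{\D_v}={\D_v}$, i.e.\ ${\D_v} P=(P+1){\D_v}$. Since $P$ has compact resolvent (standard for Ornstein--Uhlenbeck-type operators) its spectrum is discrete; if $Pu=\lambda u$ with $\lambda>0$, then $u$ is not constant so ${\D_v} u\neq 0$, and the commutation relation yields $P({\D_v} u)=(\lambda-1){\D_v} u$. As $P\geq 0$ on its whole domain, necessarily $\lambda-1\geq 0$, which gives the bound. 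Equivalently, one can expand $h$ in the orthonormal Hermite basis $(H_n)_{n\geq 0}$ of $\Ldeuxmudv$, noting $H_0=1$ and ${\D_v} H_n=\sqrt{n}\,H_{n-1}$, to obtain directly $\sum_{n\geq 1}|a_n|^2\leq\sum_{n\geq 1}n|a_n|^2$.

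The main obstacle is essentially cosmetic: making sure the spectral argument is applied in the right domain (density issues, justification of compact resolvent). The Hermite expansion route sidesteps this but relies on completeness of the Hermite basis. Given the authors' wish to produce a proof that transfers well to the discrete setting, I would favour writing the argument around the commutator identity $[{\D_v},-{\D_v}+v]=1$, since its discrete counterpart—built into the two-direction definition \eqref{eq:defderivintro} of $\Dv$ and the adjoint $\Dvs$—is precisely what will be needed to establish the discrete Poincaré--Wirtinger inequality announced in Proposition \ref{prop:poindiscrete}.
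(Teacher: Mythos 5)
Your argument is correct, but it is genuinely different from the one in the paper. You prove the spectral gap of $P=(-{\D_v}+v){\D_v}$ via the exact commutation relation $[{\D_v},-{\D_v}+v]=1$ (equivalently via the Hermite basis), whereas the paper reproduces Poincar\'e's original elementary argument: write $\norm{g}^2_\Ldeuxmudv=\frac12\iint (g'-g)^2\mudv\mudvp$ for mean-zero $g$, apply the flat Cauchy--Schwarz inequality to $g'-g=\int_v^{v'}{\D_v} g$, introduce an anti-derivative $\G$ of $\abs{{\D_v} g}^2$, and conclude by one integration by parts using ${\D_v}\mu=-v\mu$. Both routes yield the sharp constant $1$. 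What the paper's approach buys is exactly what you gesture at in your last paragraph, but your prediction about which ingredient discretizes is the wrong way around: the proof of Proposition \ref{prop:poindiscrete} does \emph{not} go through a discrete commutator identity; it replicates the double-sum/telescoping/Abel-transform argument line by line, using only the defining relation $(\Dv+v)\muh=0$ and the parity and normalization of $\muh$. The algebraic structure your proof relies on degrades in the discrete setting: the two-direction operators satisfy commutation relations only up to singular correction terms (see Lemma \ref{lem:triple} and the operator $\sigma$), $(-\Dvs+\vs)\Dv$ has no obvious discrete Hermite diagonalization, and the ladder argument $P({\D_v} u)=(\lambda-1){\D_v} u$ would not survive those corrections. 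Within the continuous setting your proof is fine (modulo the domain and compact-resolvent justifications you yourself flag, which are standard for this Ornstein--Uhlenbeck operator), but as a template for the discrete theory the paper's elementary proof is the more robust choice.
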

 \begin{proof}
   Replacing if necessary $g$ by $g-\seq{g}$, it is sufficient to prove the
   result for $\seq{g}=0$.  In the following, we denote for simplicity
   $g(v) = g$, $g(v') = g'$, $\mu(v) = \mu $ and $\mu(v') = \mu'$.  We first
   note that
   \begin{equation*}
     \begin{split}
       \int_\R g^2\mudv & = \frac{1}{2} \iint_{\R^2} (g'-g)^2 \mudv \mudvp,
     \end{split}
   \end{equation*}  since
   $2 \iint g g' \mudv \mudvp = 2 \int g\mudv \int g' \mudvp = 0$.
   Using that $g'-g= \int_v^{v'} {\D_v} g (w) \dw$ we can write
   \begin{multline*}
     \int_\R g^2\mudv
     = \frac{1}{2} \iint_{\R^2} \sep{ \int_v^{v'} {\D_v} g (w) \dw}^2 \mudv \mudvp \\
     \leq \frac{1}{2} \iint_{\R^2} \sep{ \int_{v}^{v'} \abs{{\D_v} g (w)}^2
       \dw}(v'-v) \mudv \mudvp
   \end{multline*}
   where we used the Cauchy--Schwarz inequality in the flat space.  Let us
   denote by $\G$ an anti-derivative of $\abs{{\D_v} g}^2$, for example this one :
   $\G(v) = \int_{0}^v \abs{{\D_v} g (w)}^2 \dw$.  We have then
   \begin{equation} \label{eq:eqintermedpoincarecontinu}
     \begin{split}
       &  \int_\R g^2\mudv \\
       & \leq \frac{1}{2} \iint_{\R^2} \sep{ \G'-\G}(v'-v) \mudv \mudvp
       =  \frac{1}{2} \iint_{\R^2} \sep{ \G'-\G}(v'-v)\mu \mu' \dv \dv' \\
       & = \frac{1}{2} \sep{ \iint_{\R^2} \G'v'\mudv \mudvp + \iint_{\R^2} \G v
         \mudv \mudvp -\iint_{\R^2} \G v' \mudv\mudvp
         -\iint_{\R^2} \G' v \mudv \mudvp  } \\
       & = \int_{\R} \G v \mudv,
     \end{split}
   \end{equation}
   where we used the Fubini Theorem and the fact that $\int v \mudv = 0$ and
   $\int \mudv = 1$ (and their counterparts in variable $v'$).  At this point,
   it is sufficient to note that ${\D_v} \mu = -v \mu$ and perform an
   integration by parts to obtain with the inequality above,
   \begin{equation*}
     \begin{split}
       \int_\R g^2\mudv & \leq \int_\R (\G v \mu) \dv = - \int_\R \G ({\D_v} \mu) \dv
       =\int_\R ({\D_v} \G) \mudv = \int_\R |{\D_v} g|^2 \mudv.
     \end{split}
   \end{equation*}
   The proof is complete. \end{proof}

 A direct consequence of this Poincar\'e inequality is the exponential
 convergence to the equilibrium in the space $\Ldeuxmudv$ of the solution $f$ of
 \eqref{eq:HFPf}, that we prove below.
 In Section \ref{sec:inhomc}, we will use an entropy formulation to prove the
 exponential convergence to the equilibrium of the solutions of the {\it
   inhomogeneous} Fokker--Planck equation.
 For this reason, we decide to adopt the same framework in this section, devoted
 to the (simpler) homogeneous case.
 We define the two following entropies for $g\in \Ldeuxmudv$ and $g\in\Hunmudv$
 respectively :
$$
\fff(g) = \norm{g}_\Ldeuxmudv^2, \qquad \ggg(g) = \norm{g}_\Ldeuxmudv^2 +
\norm{{\D_v} g}_\Ldeuxmudv^2.
$$
Note that these entropies are exactly the squared norms of $g$ in $\Ldeuxmudv$
and $\Hunmudv$ respectively.  To keep notations short, in the remaining of this
section, we denote by $\norm{\cdot}$ the $\Ldeuxmudv$ norm.  The exponential
convergence to the equilibrium of the solutions of \eqref{eq:HFPf} is stated in the
following easy Theorem.

\begin{thm}
  \label{thm:exponentialtrendtoequilibrium}
  Let $f^0 \in \Ldeuxmudv$ such that $\seq{f^0} = 0$ and let $f$ be the solution
  in $\ccc^0(\R^+,\Ldeuxmudv)$ of \eqref{eq:HFPf} (in the semi-group sense). Then
  $\seq{f(t)} = 0$ for all $t\geq 0$, and we have
  \begin{equation}
    \label{eq:decrFhomo}
    \forall t\geq 0,\qquad
    \fff(f(t)) \leq \exp^{-2t}\fff(f^0).
  \end{equation}

  If in addition $f^0 \in \Hunmudv$, then $f \in \ccc^0(\R^+, \Hunmudv)$ and we
  have
  \begin{equation}
    \label{eq:decrGhomo}
    \forall t\geq 0,\qquad 
    \ggg(f(t)) \leq \exp^{-t}\ggg(f^0).
  \end{equation}
\end{thm}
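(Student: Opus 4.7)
The plan is to combine the Poincaré inequality of Lemma \ref{lem:Poincarecontinu} with a direct entropy dissipation computation, plus a commutator identity for the $\Hunmudv$ bound.

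First I would establish mass conservation. Testing the semi-group equation against the constant function $1 \in \Ldeuxmudv$ and using that $(-\D_v+v)\D_v \cdot 1 = 0$ (so by self-adjointness $\seq{Pf,1}=\seq{f,P\cdot 1}=0$), we get $\tfrac{d}{dt}\seq{f(t)}=0$, hence $\seq{f(t)}=\seq{f^0}=0$ for all $t\geq 0$. Then, for $f^0$ in the domain $D(P)$ so that time-differentiation is licit, I would compute
\begin{equation*}
\tfrac{d}{dt}\fff(f) = 2\seq{\D_t f, f} = -2\seq{(-\D_v+v)\D_v f, f} = -2\norm{\D_v f}^2,
\end{equation*}
using the adjoint identity $\D_v^*=(-\D_v+v)$ in $\Ldeuxmudv$. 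Since $\seq{f(t)}=0$, Lemma \ref{lem:Poincarecontinu} yields $\norm{\D_v f}^2 \geq \norm{f}^2 = \fff(f)$, so $\tfrac{d}{dt}\fff(f) \leq -2\fff(f)$ and Grönwall gives \eqref{eq:decrFhomo}. For general $f^0\in\Ldeuxmudv$ with zero mean, I would conclude by density, since the semi-group is a contraction and $\fff$ is continuous on $\Ldeuxmudv$.

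For the $\Hunmudv$ estimate \eqref{eq:decrGhomo}, the key observation is the commutator
\begin{equation*}
[\D_v, P] = [\D_v,\, -\D_v^2 + v\D_v] = \D_v,
\end{equation*}
so differentiating \eqref{eq:HFPf} in $v$ yields $\D_t(\D_v f) + P(\D_v f) = -\D_v f$. Taking the scalar product in $\Ldeuxmudv$ against $\D_v f$ and using again $\seq{Pg,g} = \norm{\D_v g}^2$, I get
\begin{equation*}
\tfrac{d}{dt}\norm{\D_v f}^2 = -2\norm{\D_v^2 f}^2 - 2\norm{\D_v f}^2.
\end{equation*}
Combining with the previous identity for $\tfrac{d}{dt}\norm{f}^2$,
\begin{equation*}
\tfrac{d}{dt}\ggg(f) = -4\norm{\D_v f}^2 - 2\norm{\D_v^2 f}^2.
\end{equation*}
By Poincaré (applied to $f$, which has zero mean), $\norm{\D_v f}^2 \geq \norm{f}^2$, so $3\norm{\D_v f}^2 \geq \norm{f}^2$, which gives $-4\norm{\D_v f}^2 \leq -\norm{f}^2 - \norm{\D_v f}^2$, and in particular $\tfrac{d}{dt}\ggg(f) \leq -\ggg(f)$. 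Grönwall then yields \eqref{eq:decrGhomo}.

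The only subtlety, which is the main technical nuisance rather than a true obstacle, is the justification of the formal manipulations (differentiating under the integral sign, using $\D_v^2 f \in \Ldeuxmudv$, etc.): these are legitimate for $f^0\in D_{\Hunmudv}(P)$ as recalled before the theorem, and the general case of $f^0\in\Hunmudv$ (resp.\ $f^0\in\Ldeuxmudv$) follows by density and continuity of $\ggg$ (resp.\ $\fff$) together with the continuity of the semi-group in the relevant space.
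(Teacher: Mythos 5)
Your proof is correct and follows essentially the same route as the paper: mass conservation, the dissipation identities obtained from $\seq{Pg,g}=\norm{\D_v g}^2$, the Poincar\'e inequality of Lemma \ref{lem:Poincarecontinu}, and a density argument from $D_{\Hunmudv}(P)$. The only difference is cosmetic: where the paper keeps the term $-2\norm{(-\D_v+v)\D_v f}^2$ and splits $2\norm{\D_v f}^2\geq\norm{\D_v f}^2+\norm{f}^2$, you use the commutator $[\D_v,P]=\D_v$ to write the same dissipation as $-4\norm{\D_v f}^2-2\norm{\D_v^2 f}^2$ (equivalent via $\norm{(-\D_v+v)\D_v f}^2=\norm{\D_v^2 f}^2+\norm{\D_v f}^2$), leading to the identical bound $\ddt\ggg(f)\leq-\ggg(f)$.
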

\begin{proof}
  We first recall that operator $P = (-{\D_v} + v) {\D_v}$ is the generator of a
  semi-group of contractions in both $\Ldeuxmudv$ and $\Hunmudv$.  This is
  direct to check that $\Hunmudv$ is dense in $\Ldeuxmudv$ and that when both
  defined, the solutions of the heat problem ${\D_t} f + P f = 0$ coincide.  In
  the following, we therefore focus on the $\Hunmudv$ case corresponding to
  solutions with finite modified entropy $\ggg$.

  We denote by $D_{\Hunmudv}(P) $ the domain of $P$ in $\Hunmudv$.  We note
  again that $D_{\Hunmudv}(P) $ is dense in $\Hunmudv $, and we consider a
  solution $f$ of \eqref{eq:HFPf} which satisfies
$$
f \in \ccc^1(\R^+, \Hunmudv) \cap \ccc^0(\R^+, D_{\Hunmudv}(P)).
$$
All the computations below are therefore authorized.  The main inequalities
\eqref{eq:decrFhomo} and \eqref{eq:decrGhomo} are then consequences of the above
mentioned density properties and of the definition of a bounded semi-group.

We compute the time derivative of the corresponding entropies along the exact
solution $f$ of \eqref{eq:HFPf}.  Using \eqref{eq:consmass}, we have for all
$t\geq 0$, $\seq{f(t)} = \seq{f^0} = 0$.  For the first entropy, we have
$$
\ddt \fff(f) = -2 \seq{ (-{\D_v}+v){\D_v} f , f} = -2 \norm{{\D_v} f}^2 \leq -2
\norm{f}^2 = -2 \fff(f),
$$
where we used the Poincar\'e Lemma \ref{lem:Poincarecontinu}. This directly
gives \eqref{eq:decrFhomo}.  For the second entropy $\ggg$, we do the same:
\begin{equation*}
  \begin{split}
    \ddt \ggg(f) & = -2 \seq{ (-{\D_v}+v){\D_v} f , f} -2 
\seq{ {\D_v}(-{\D_v}+v){\D_v} f , {\D_v} f} \\
    & = -2 \norm{{\D_v} f}^2 - 2 \norm{(-{\D_v} +v){\D_v} f}^2 \\
    & \leq - \norm{f}^2 - \norm{{\D_v} f}^2 - 2 \norm{(-{\D_v} +v){\D_v} f}^2 \leq -
    \ggg(f),
  \end{split}
\end{equation*}
where we used the following splitting~:
$2 \norm{{\D_v} f}^2 \geq \norm{{\D_v} f}^2 + \norm{f}^2$, obtained again with Lemma
\ref{lem:Poincarecontinu}.  We therefore get the result \eqref{eq:decrGhomo}.
The proof is complete.\end{proof}

The following corollary is then straightforward, as a reformulation of the
preceding Theorem.
\begin{cor} \label{cor:exponentialtrendtoequilibrium} Let $f^0 \in \Ldeuxmudv$
  and let $f$ be the solution in $\ccc^0(\R^+, \Ldeuxmudv)$ of
  \eqref{eq:HFPf}. Then for all $t\geq 0$,
$$\norm{f(t)- \seq{f^0}}_\Ldeuxmudv \leq \exp^{-t}\norm{f^0- \seq{f^0}}_\Ldeuxmudv.
 $$
 If in addition $f^0 \in \Hunmudv$ then $f \in \ccc^0(\R^+, \Hunmudv)$ and we
 have for all $t\geq 0$,
 $$
 \norm{f(t)- \seq{f^0}}_\Hunmudv \leq \exp^{-\frac{t}{2}}\norm{f^0-
   \seq{f^0}}_\Hunmudv.
 $$
\end{cor}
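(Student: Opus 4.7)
The plan is to reduce Corollary \ref{cor:exponentialtrendtoequilibrium} to Theorem \ref{thm:exponentialtrendtoequilibrium} by centering the solution around its (conserved) mean. Set $\tilde f(t) = f(t) - \seq{f^0}$. Since the equation \eqref{eq:HFPf} is linear and constants lie in the kernel of the operator $P = (-{\D_v}+v){\D_v}$, the function $\tilde f$ is again a semi-group solution of \eqref{eq:HFPf}, with initial datum $\tilde f^0 = f^0 - \seq{f^0}$.

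Next I would check that $\seq{\tilde f^0} = 0$. Indeed, using the definition \eqref{eq:consmass} and the fact that $\seq{1,1}_{\Ldeuxmudv} = \int \mu \dv = 1$, we have
\[
\seq{\tilde f^0} = \seq{f^0} - \seq{f^0}\seq{1,1}_{\Ldeuxmudv} = 0.
\]
Moreover the mass-conservation identity \eqref{eq:consmass} applied to $\tilde f$ ensures $\seq{\tilde f(t)} = 0$ for all $t\geq 0$, so the hypothesis of Theorem \ref{thm:exponentialtrendtoequilibrium} is met.

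Applying the first part of that theorem to $\tilde f$ gives $\fff(\tilde f(t)) \leq \exp^{-2t}\fff(\tilde f^0)$, i.e.\ $\norm{\tilde f(t)}_\Ldeuxmudv^2 \leq \exp^{-2t}\norm{\tilde f^0}_\Ldeuxmudv^2$. Taking square roots yields the first claimed inequality. If in addition $f^0 \in \Hunmudv$, then $\tilde f^0 \in \Hunmudv$ as well since constants belong to $\Hunmudv$ (the underlying measure is finite). The second part of Theorem \ref{thm:exponentialtrendtoequilibrium} then delivers $\ggg(\tilde f(t)) \leq \exp^{-t}\ggg(\tilde f^0)$, and taking square roots gives the $\exp^{-t/2}$ contraction in the $\Hunmudv$ norm.

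There is essentially no obstacle: the only point to be slightly careful about is that one uses the halved exponential rate upon passing from the squared-norm entropies $\fff,\ggg$ to the norms themselves, which is why the rate in the corollary is $t$ (resp.\ $t/2$) rather than $2t$ (resp.\ $t$) as in the theorem.
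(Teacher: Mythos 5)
Your proposal is correct and follows exactly the route the paper intends: the corollary is stated there as a ``straightforward reformulation'' of Theorem \ref{thm:exponentialtrendtoequilibrium}, obtained by applying that theorem to $f-\seq{f^0}$ (which solves the same equation with zero mean) and using that $\fff$ and $\ggg$ are precisely the squared $\Ldeuxmudv$ and $\Hunmudv$ norms before taking square roots. No gaps.
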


\subsection{Discretizing the velocity variable}
\label{subsec:homsd}
 
In the discrete and semi-discrete cases, the main difficulty is to find a
suitable discretization of the equation that will mimic the qualitative
asymptotic properties of the continuous equation, see {\it e.g} Theorem
\ref{thm:exponentialtrendtoequilibrium}.  In particular, one has to decide how
to discretize the differential operators in $v$.  For a small fixed $\dvh >0$,
we discretize the real line $\R_v$ by setting for all $i\in\Z$, $v_i = i\dvh$.

We work now step by step, and look first at what could be a suitable equilibrium
state $\muh$ replacing $\mu$ in the continuous case.  As in the continuous case,
$\muh$ has to satisfy elementary structural properties.  The first ones are to
be positive and to be normalized in the (discrete) probability space
$\ell^1(\Z, \dvh)$ which means 
\begin{equation*}
  \label{eq:normalizationdemudv}
  \norm{\muh}_{\ell^1(\Z, \dvh)} = \dvh \sum_i \muh_i = 1.
\end{equation*}
Mimicking the continuous case, we also require $\muh$ to be even and to satisfy
the equation $(\Dv + v)\muh = 0$ where $\Dv$ is a discretization of ${\D_v}$ and
$v$ stands for the sequence $( v_i)_{i\in \Z}$ or by extension the
multiplication term by term by it.  A good choice for $\Dv$ leading to this
property is the following :

\begin{defn} \label{def:dv} Let $G \in \ell^1(\Z,\dvh)$, we define
  $\Dv G\in \ell^1(\Z^*,\dvh)$ by the following formulas
  $$
  (\Dv G)_i = \frac{G_{i+1}-G_{i}}{\dvh} \textrm{ for } i<0, \qquad (\Dv G)_i =
  \frac{G_i-G_{i-1}}{\dvh} \textrm{ for } i>0,
  $$
  and $v G\in \ell^1(\Z^*,\dvh)$ by
  $$
  (v G)_i = v_i G_i \textrm{ for } i\neq 0,
  $$
  when this series is absolutely convergent.
\end{defn}

\noindent
With this definition, solving the equation $(\Dv + v)\muh = 0$ leads to the
following proposition.

\begin{lem} \label{lem:muh} Assume $\dvh>0$ is fixed.  Then there exists a unique
  positive, $\ell^1(\Z, \dvh)$~-~normalized, solution $\nu$ of
  $(\Dv + v)\nu = 0$.  We denote this solution by $\muh$.  There exists a unique
  positive constant $c_\dvh$ such that
$$
\muh_i = \frac{c_\dvh}{ \prod_{\ell=0}^{|i|} (1+ v_\ell\dvh )}, \qquad i \in \Z.
$$
Moreover, $\muh$ is even.
\end{lem}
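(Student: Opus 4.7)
The strategy is to read the equation $(\Dv + v)\nu = 0$ as a two-sided recurrence on $(\nu_i)_{i \in \Z}$, solve it explicitly starting from $\nu_0$, and close the argument by imposing positivity and the $\ell^1$-normalization. First, since both $\Dv \nu$ and $v\nu$ are defined only at indices in $\Z^*$, the equation $(\Dv + v)\nu = 0$ is really a family of scalar equations indexed by $i \neq 0$, and unfolding Definition \ref{def:dv} it reads $\nu_i(1 + v_i \dvh) = \nu_{i-1}$ for $i \geq 1$, and $\nu_{i+1} = \nu_i(1 - v_i \dvh)$ for $i \leq -1$. In particular $\nu_0$ is not constrained by any of these equations; the two recurrences then determine the rest of the sequence from $\nu_0$.

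Next I would solve the two recurrences by induction. On the positive side, the first recurrence yields $\nu_i = \nu_0 / \prod_{\ell=1}^{i}(1 + v_\ell \dvh)$ for $i \geq 1$. On the negative side, since $v_i = i\dvh < 0$ one has $1 - v_i \dvh = 1 + v_{-i}\dvh$, so the recurrence becomes $\nu_{i+1} = \nu_i(1 + v_{-i}\dvh)$ and induction gives $\nu_i = \nu_0 / \prod_{\ell=1}^{|i|}(1 + v_\ell \dvh)$ for $i \leq -1$. Since $v_0 = 0$ the factor at $\ell = 0$ equals $1$, so both cases, together with the trivial case $i = 0$, can be put in the uniform form $\nu_i = \nu_0 / \prod_{\ell=0}^{|i|}(1 + v_\ell \dvh)$, which already exhibits evenness because the right-hand side depends only on $|i|$.

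It then remains to check the qualitative properties and to fix $\nu_0$. Since $v_\ell \geq 0$ for $\ell \geq 0$, every factor $1 + v_\ell \dvh$ is $\geq 1$, so all $\nu_i$ share the sign of $\nu_0$, and positivity reduces to $\nu_0 > 0$. For the same reason, $1 + v_\ell \dvh = 1 + \ell \dvh^2 \geq 1 + \dvh^2 > 1$ for $\ell \geq 1$, so the denominator grows at least geometrically in $|i|$, ensuring that $(\nu_i)$ lies in $\ell^1(\Z,\dvh)$ with very strong decay. The positive quantity $S_\dvh \defegal \dvh \sum_{i \in \Z} \prod_{\ell=0}^{|i|}(1 + v_\ell \dvh)^{-1}$ is therefore finite, and $\nu_0 = c_\dvh \defegal 1/S_\dvh > 0$ is the unique choice making $\nu$ positive and of $\ell^1(\Z,\dvh)$-norm one. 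There is no real obstacle in this argument; the only subtle point is to notice that the equation $(\Dv + v)\nu = 0$ imposes no condition at index $0$, so that $\nu_0$ is a free parameter which is fixed precisely by the normalization.
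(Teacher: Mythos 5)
Your proof is correct and follows essentially the same route as the paper, which simply writes the term-by-term recurrences for $i>0$ and $i<0$ and asserts that they determine $\muh$ up to a normalization constant. You supply the details the paper leaves implicit (the free parameter $\nu_0$ at the unconstrained index $0$, the induction in both directions, evenness, positivity, and the geometric decay guaranteeing summability), all of which check out.
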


\begin{remark}
  Note that the discrete Maxwellian $\muh$ converges to the continuous
  Maxwellian $\mu$ defined in \eqref{eq:Maxwellian} when $\dvh$ tends to 0 in
  the following sense :
  \begin{equation*}
    {\sup_{i\in\Z}} |\muh_i-\mu(v_i)| \underset{\dvh\to 0}{\longrightarrow} 0.
  \end{equation*}
\end{remark}

\begin{proof} The proof is a direct computation.  The fundamental equations term
  by term solved by $\muh$ are indeed
  \begin{equation}
    \label{eq:relationsmudv}
    \left\{
      \begin{array}{ll}
        \dfrac{\muh_i  -\muh_{i-1} }{\dvh} + v_i \muh_i = 0 & \quad \textrm{ for  } i>0 \\
        \dfrac{\muh_{i+1}  -\muh_{i} }{\dvh} + v_i \muh_i =0 & \quad \textrm{ for  } i<0,
      \end{array}
    \right.
  \end{equation}
  which give the expression of $\muh$ up to a normalization constant.
\end{proof}

With the discretization $\Dv+v$ of the operator ${\D_v}+v$ above, we propose the
following discretization $-\Dvs$ of $-{\D_v}$ so that the discretized version of
\eqref{eq:HFPF}, with operator $\Pd= -\Dvs(\Dv+v)$, has a non-negative collision
kernel.

\begin{defn} \label{def:dvs} Let $G \in \ell^1(\Z^*, \dvh)$, we define
  $\Dvs G \in \ell^1(\Z, \dvh)$ by the following formulas
  \begin{equation}
    \begin{split}
      & (\Dvs G)_i = \frac{G_{i}-G_{i-1}}{\dvh} \textrm{ for } i<0, \qquad
      \qquad (\Dvs G)_i = \frac{G_{i+1}-G_i}{\dvh} \textrm{ for } i>0
      \\
      & \qquad \qquad \textrm{ and } \qquad \qquad (\Dvs G)_0 =
      \frac{G_1-G_{-1}}{\dvh},
    \end{split}
  \end{equation}
  (be careful, there is no mistake in the denominator of $(\Dvs G)_0$).  We also
  define the operator $\vs$ from $\ell^1(\Z^*, \dvh)$ to $\ell^1(\Z, \dvh)$ by
  setting for $G\in \ell^1(\Z^*, \dvh)$,
  \begin{equation*}
    \forall i\neq 0,\quad (\vs G)_i=v_i G_i \qquad \text{and} \qquad
    (\vs G)_0=0.
  \end{equation*}
\end{defn}

We are now in position to define a good discretization of the main equation
\eqref{eq:HFPF} and the adapted discretized framework.

\begin{defn} For a given $F^0\in\ell^1(\Z,\dvh)$, we shall say that a function
  $F \in \ccc^0(\R^+, \ell^1(\Z, \dvh))$ satisfies the (flat) semi-discrete
  homogeneous Fokker--Planck equation if
  \begin{equation} \label{eq:DHFPF} {\D_t} F - \Dvs(\Dv+v) F = 0, \qquad F|_{t=0} =
    F^0,
  \end{equation}   
  in the sense of distributions.
\end{defn}

\noindent
As in the continuous case, we perform the change of unknown, thanks to the
discrete equilibrium state $\muh$: $G = \muh g$ so that
$$
G \in \ell^1(\Z, \dvh) \Longleftrightarrow g \in \ell^1(\Z, \muh \dvh).
$$
Let us perform this change of unknown in the differential operator
$-\Dvs(\Dv+v)$. For $i>0$, we have
\begin{equation*}
  \begin{split}
    ((\Dv+v)G)_i & = ((\Dv+v)\muh g)_i
    = \frac{\muh_i g_i -\muh_{i-1} g_{i-1}}{\dvh} + v_i \muh_i g_i \\
    & = \Bigg(\underbrace{ \frac{\muh_i -\muh_{i-1} }{\dvh} + v_i \muh_i
    }_{=0}\Bigg) g_i +\muh_{i-1} \frac{ g_i - g_{i-1}}{\dvh} = \muh_{i-1} (\Dv
    g)_i.
  \end{split}
\end{equation*}
Similarly, we find for $i<0$,
\begin{equation*}
  \begin{split}
    ((\Dv+v)G)_i & = ((\Dv+v)\muh g)_i
    = \frac{\muh_{i+1} g_{i+1} -\muh_{i} g_{i}}{\dvh} + v_i \muh_i g_i \\
    & = \Bigg( \underbrace{\frac{\muh_{i+1} -\muh_{i} }{\dvh} + v_i \muh_i
    }_{=0}\Bigg) g_i +\muh_{i+1} \frac{ g_{i+1} - g_{i}}{\dvh} = \muh_{i+1} (\Dv
    g)_i.
  \end{split}
\end{equation*}
From the computation above, we get that
\begin{equation} \label{eq:double}
  \begin{split}
    -\Dvs((\Dv+v)G) & = \muh (-\Dvs + \vs) \Dv g,
  \end{split}
\end{equation}
Therefore, for any $F\in\ccc^0(\R^+, \ell^1(\Z, \dvh))$, setting for all
$t\geq 0$, $f(t,\cdot)=(F(t,\cdot)-\muh)/\muh$, we have
$$
{\D_t} F - \Dvs(\Dv+v) F = \muh ({\D_t} f + ( - \Dvs +\vs) \Dv f),
$$
where we recall that the multiplication is done term by term.  This computation
motivates the definition of the following rescaled equation.

\begin{defn} For a given $f^0\in\ell^1(\Z,\muh\dvh)$, we shall say that a
  function $f \in \ccc^0(\R^+, \ell^1(\Z, \muh\dvh))$ satisfies the (scaled)
  semi-discrete homogeneous Fokker--Planck equation if
  \begin{equation} \label{eq:DHFPf} {\D_t} f + ( - \Dvs +\vs) \Dv f = 0, \qquad
    f|_{t=0} = f^0,
  \end{equation}
  in the sense of distributions.
\end{defn}

\noindent
With the definitions and computations above, $F$ is a solution of the flat
semi-discrete Fokker--Planck equation \eqref{eq:DHFPF} if and only if $f$ defined
by $F = \muh + \muh f$ is a solution of the scaled semi-discrete Fokker--Planck
equation \eqref{eq:DHFPf}.

Just as we recalled in the continuous velocity setting in Section
\ref{sec:homogeneous}, the next step in the discrete velocity setting is to find
a suitable subspace of $\ell^1(\Z, \muh\dvh)$, with a Hilbertian structure, in
which the non-negativity property of the collision operator is satisfied.  We
mimic the continuous case and choose the space
$ \ell^2(\Z, \muh \dvh) \hookrightarrow \ell^1(\Z, \mu^h \dvh)$ denoted for
short $\Ldeuxmudvh$.

\begin{defn} \label{def:homd} We define the space $\Ldeuxmudvh$ to be the Hilbertian
  subspace of $\R^\Z$ of sequences $g$ such that
$$
\norm{ g}_{\Ldeuxmudvh }^2 \defegal \dvh \sum_{i\in \Z} (g_i)^2 \muh_i <\infty.
$$
This defines a Hilbertian norm, and the related scalar product will be denoted
by $\seq{ \cdot, \cdot}$. For $g \in \Ldeuxmudvh$, we also define
$$
\seq{g} \defegal \sum_{i\in \Z} g_i \mu_i^h \dvh = \seq{g, 1}_{\Ldeuxmudvh },
$$
the mean of $g$ (with respect to this weighted scalar product).
\end{defn}

\noindent
In order to give achieve a useful functional framework for the (scaled)
homogeneous Fokker-Planck equation \eqref{eq:DHFPf} in this discrete velocity
setting, we introduce now a shifted Maxwellian $\mus \in \ell^1(\Z^*, \dvh)$ and
a new suitable Hilbert subspace that appears naturally in the computations:

\begin{defn} \label{def:homds} Let us define $\mus \in \ell^1(\Z^*, \dvh)$ by
 $$
 \mus_i = \muh_{i+1} \textrm{ for } i<0, \qquad \mus_i = \muh_{i-1} \textrm{ for
 } i>0.
$$
We define the space $\Ldeuxmudvs$ to be the subspace of $\R^{\Z^*}$ of sequences
$g \in \ell^1(\Z^*, \mus\dvh)$ such that
$$
\norm{g}_{\Ldeuxmudvs }^2 \defegal \dvh \sum_{i \in \Z^*} (g_i)^2 \mus_i
<\infty.
$$
This defines a Hilbertian norm, and the related scalar product will be denoted
by $\seq{ \cdot, \cdot}_\sharp$.  Eventually, we define
$$
\Hunmudvh = \set{ g \in \Ldeuxmudvh, \textrm{ s.t. } \Dv g \in \Ldeuxmudvs }.
$$
\end{defn}

\begin{remark}
  In contrast to the classical finite differences setting where the
  discretizations of ${\D_v}$ give rise to {\it bounded} linear operators (with
  continuity constants of size $1/\dvh$), the above definition makes $D_v$ an
  {\it unbounded} linear operator from $\Ldeuxmudvh$ to $\Ldeuxmudvs$, with
  domain $\Hunmudvh$.  Moreover, the multiplication operator $\vs$ is a {\it
    bounded} linear operator from $\Ldeuxmudvs$ to $\Ldeuxmudvh$, with constant
  of size $1/\dvh$.
\end{remark}

We now summarize the structural properties of Equation \eqref{eq:DHFPf} and the
involved operator in the following Proposition:

\begin{prop}\label{prop:hstruct} The following properties hold true for all $\dvh>0$.
  \begin{enumerate}
  \item Let us consider $ \Pd = ( - \Dvs +\vs) \Dv$ with domain
$$
D(\Pd) =\set{g \in \Ldeuxmudvh, \ | \ ( - \Dvs +\vs) \Dv f \in \Ldeuxmudvh}.
$$
Then $\Pd$ is self-adjoint non-negative with dense domain and is maximal
accretive in $\Ldeuxmudvh $. Moreover, for all $h \in \Ldeuxmudvs$,
$g \in \Ldeuxmudvh $ for which it makes sense
\begin{equation} \label{eq:of} \seq{ ( - \Dvs +\vs) h, g} = \seq{h, \Dv g}_\sharp,
  \quad \textrm{ and } \quad
  \seq{ ( - \Dvs +\vs) \Dv g, g} = \norm{\Dv g}_\Ldeuxmudvs^2.
\end{equation}
\item For an initial data $f^0
  \in D(\Pd)$, there exists a unique solution of \eqref{eq:DHFPf} in $\ccc^1(\R^+,
  \Ldeuxmudvh) \cap \ccc^0(\R^+,
  D(\Pd))$, and the associated semi-group naturally defines a solution in
  $\ccc^0(\R^+, \Ldeuxmudvh)$ when $f^0\in\Ldeuxmudvh$.
\item The preceding properties remain true if we consider operator $\Pd$
  in $\Hunmudvh$
  with domain $D_\Hunmudvh(\Pd)$.
  In particular it defines a unique solution of \eqref{eq:DHFPf} in $\ccc^1(\R^+,
  \Hunmudvh) \cap \ccc^0(\R^+,D_\Hunmudvh(\Pd)) $ if $f^0 \in
  D_\Hunmudvh(\Pd)$ and a semi-group solution $f \in \ccc^0(\R^+,
  \Hunmudvh)$ if $f^0 \in \Hunmudvh$.
\item Constant sequences are the only equilibrium states of equation
  \eqref{eq:DHFPf} and the evolution preserves the mass $\seq{f(t)}
  = \seq{f^0}$ for all $t\geq 0$.
\end{enumerate}
\end{prop}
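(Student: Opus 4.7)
The plan is to reduce every assertion of the proposition to the discrete integration-by-parts identity \eqref{eq:of}, which is the analytical heart of item (1). For its first equality, I would carry out a term-by-term computation of $\muh_i((-\Dvs+\vs)h)_i$. Using the fundamental relations \eqref{eq:relationsmudv} satisfied by the discrete Maxwellian, one rewrites this quantity for $i>0$ as the discrete backward/forward difference $(\mus_i h_i-\mus_{i+1}h_{i+1})/\dvh$, and symmetrically for $i<0$ as $(\mus_{i-1}h_{i-1}-\mus_i h_i)/\dvh$. Abel summation then telescopes each half-axis, and the apparently ad hoc formula $(\Dvs h)_0=(h_1-h_{-1})/\dvh$ is tailored precisely to supply the boundary contributions at $i=0$ that glue together the two halves so that the total sum equals $\seqs{h,\Dv g}$. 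Setting $h=\Dv g$ yields the second equality as a direct corollary.

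Once \eqref{eq:of} is established, item (1) follows from abstract Hilbert space theory: with $\Dv:\Ldeuxmudvh\to\Ldeuxmudvs$ closable and $-\Dvs+\vs\subset\Dv^*$, the operator $\Pd=\Dv^*\Dv$ defined on its natural domain is automatically self-adjoint and non-negative, hence maximal accretive. Density of $D(\Pd)$ in $\Ldeuxmudvh$ comes from the trivial inclusion of finitely supported sequences. Item (2) is then an immediate application of the Hille--Yosida theorem to $-\Pd$, which generates a contraction semigroup on $\Ldeuxmudvh$. For item (4), if $\Pd f=0$ then \eqref{eq:of} gives $\norms{\Dv f}=0$, hence $f$ is constant; mass conservation follows by testing equation \eqref{eq:DHFPf} against the constant sequence $1\in\Ldeuxmudvh$ and invoking \eqref{eq:of} together with $\Dv 1=0$ to kill the collision term.

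The main obstacle is item (3), the transfer of well-posedness from $\Ldeuxmudvh$ to $\Hunmudvh$. My approach would be to establish a differential energy estimate of the form $\ddt\ggg(f(t))\leq C\,\ggg(f(t))$ along the trajectories of the $\Ldeuxmudvh$ semigroup, in the spirit of the continuous computation carried out in the proof of Theorem \ref{thm:exponentialtrendtoequilibrium}. Such an estimate, together with the density of $D(\Pd)\cap\Hunmudvh$ in $\Hunmudvh$, forces the $\Ldeuxmudvh$ semigroup to preserve $\Hunmudvh$ and to act as a strongly continuous semigroup there, and a standard identification argument then shows that its generator is the restriction of $\Pd$ to $D_{\Hunmudvh}(\Pd)$, after which Hille--Yosida delivers the regular solutions when $f^0\in D_{\Hunmudvh}(\Pd)$. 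The subtle point in this last step is the appearance of commutator terms between $\Dv$ and $\Pd$ in the computation of $\ddt\norms{\Dv f}^2$; the ``two-direction'' design of $\Dv$ is precisely what keeps these commutators under control, so this is where the careful choice of discretization pays off.
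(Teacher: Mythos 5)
Your proof of the key identity \eqref{eq:of} is correct and is in substance the same discrete integration by parts as the paper's: the paper Abel-sums $\sum_{i>0}((-\Dvs+\vs)h)_i g_i\muh_i$ directly and invokes $(\Dv+v)\muh=0$ to make the zero-order terms vanish, while you first use \eqref{eq:relationsmudv} to collapse $\muh_i((-\Dvs+\vs)h)_i$ into the pure difference $(\mus_i h_i-\mus_{i+1}h_{i+1})/\dvh$ and then telescope; both produce the same boundary terms $\pm\muh_0 h_{\pm1}g_0/\dvh$ at $i=0$, cancelled by the central term $-(\Dvs h)_0 g_0\muh_0$, which is indeed the whole point of the definition of $(\Dvs h)_0$. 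Items (2) and (4) are handled exactly as in the paper. Where you genuinely diverge is in two places. First, for self-adjointness you invoke von Neumann's theorem for $\Dv^*\Dv$; this is cleaner and more explicit than the paper's terse ``leads directly to self-adjointness'', though you should say \emph{closed} (or pass to the closure) rather than merely closable, and you need $-\Dvs+\vs$ to be all of $\Dv^*$, not just contained in it, for $\Pd$ to coincide with $\Dv^*\Dv$ on the stated domain. Second, for item (3) the paper does not propagate an energy estimate along the $\Ldeuxmudvh$ semigroup: it works directly in $\Hunmudvh$, proving accretivity there from the single identity $\seqs{\Dv(-\Dvs+\vs)\Dv g,\Dv g}=\norm{(-\Dvs+\vs)\Dv g}^2\geq 0$ (itself an instance of \eqref{eq:of}), and then applies Hille--Yosida in $\Hunmudvh$. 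Your invariance-plus-generator-identification route is workable but more roundabout, and it has a mild circularity to manage (differentiating $\norms{\Dv f(t)}^2$ presupposes the regularity you are trying to establish, so the estimate must first be run on a core and then passed to the limit). Note also that in this \emph{homogeneous} setting the commutator terms you worry about do not actually arise: the same adjunction gives $\ddt\norms{\Dv f}^2=-2\norm{\Pd f}^2\leq 0$ with no commutator at all; the commutators $[\Dv,\Xzero]$ only enter in the inhomogeneous sections. So your proposal is correct, but the paper's treatment of the $\Hunmudvh$ well-posedness is both shorter and avoids the technical identification step.
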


\begin{proof}
  The proof of the second equality in \eqref{eq:of} is a direct consequence of the
  first equality there, and leads directly to the self-adjointness and the
  non-negativity of $( - \Dvs +\vs) \Dv$.

  The proof of the first equality in \eqref{eq:of} is very similar to the one of
  \eqref{eq:double} but we propose it for completeness.  We write for
  $h \in \Ldeuxmudvs$ and $g \in \Ldeuxmudvh$ with finite supports
  \begin{equation} \label{eq:calcof}
    \begin{split}
      \dvh^{-1} \seq{ ( - \Dvs +\vs)  h, g} & =  \sum_i ((-\Dvs +\vs) h)_i g_i \mu_i  \\
      & = \sum_{i>0} ((-\Dvs +\vs) h)_i g_i \mu_i -(\Dvs h)_0 g_0 \mu_0
      +\sum_{i<0} ((-\Dvs +\vs) h)_i g_i \mu_i
    \end{split}
  \end{equation}
  The first term in the last right hand side of \eqref{eq:calcof} reads
  \begin{equation*}
    \begin{split}
      & \sum_{i>0} ((-\Dvs +\vs) h)_i g_i \mu_i \\
      & = \sum_{i>0} \sep{ -\frac{h_{i+1} - h_i}{\dvh} + v_i h_i} g_i \mu_i \\
      &  = \sum_{i>0} h_i \sep{ \frac{-g_{i-1}\mu_{i-1} + g_i \mu_i }{\dvh} 
+ v_i g_i \mu_i} + \frac{h_1 g_0}{\dvh} \mu_0 \\
      & = \sum_{i>0} h_i g_i \sep{ \frac{-\mu_{i-1} +  \mu_i }{\dvh} + v_i
        \mu_i}  
+ \sum_{i>0} h_i \sep{ - \frac{g_{i-1}- g_i}{\dvh}}\mu_{i-1}   + \frac{h_1 g_0}{\dvh} \mu_0 \\
      & = \sum_{i>0} h_i (\Dv g)_i \mu_{i-1} + \frac{h_1 g_0}{\dvh} \mu_0,
    \end{split}
  \end{equation*}
  where for the last equality we used the fact that $(\Dv+ v)\muh = 0$.
  Similarly for the third term in the last right hand side of \eqref{eq:calcof}, we
  get
  \begin{equation*}
    \begin{split}
      & \sum_{i<0} ((-\Dvs +\vs) h)_i g_i \mu_i \\
      & = \sum_{i<0} \sep{ -\frac{h_{i} - h_{i-1}}{\dvh} + v_i h_i} g_i \mu_i \\
      &  = \sum_{i<0} h_i \sep{ \frac{-g_i \mu_i +g_{i+1}\mu_{i+1} }{\dvh} 
+ v_i g_i \mu_i} - \frac{h_{-1} g_0}{\dvh} \mu_0 \\
      & = \sum_{i<0} h_i g_i \sep{ \frac{-\mu_{i} +  \mu_{i+1} }{\dvh} 
+ v_i  \mu_i}  + \sum_{i<0} h_i \sep{ - \frac{g_{i+1}- g_i}{\dvh}}\mu_{i+1}   
- \frac{h_{-1} g_0}{\dvh} \mu_0 \\
      & = \sum_{i<0} h_i (\Dv g)_i \mu_{i+1} - \frac{h_{-1} g_0}{\dvh} \mu_0.
    \end{split}
  \end{equation*}
  The center term in \eqref{eq:calcof} is then
  \begin{equation*}
    -(\Dvs h) g_0 \mu_0 = -\frac{ h_1- h_{-1}}{\dvh} g_0 \mu_0.
  \end{equation*}
  Therefore the sum of the 3 terms in the last right hand side of \eqref{eq:calcof}
  reads
$$
\dvh^{-1} \seq{ ( - \Dvs +\vs) h, g} = \sum_{i>0} h_i (\Dv g)_i \mu_{i-1} +
\sum_{i<0} h_i (\Dv g)_i \mu_{i+1} = \dvh^{-1} \seqs{h, \Dv g},
$$
since the boundary terms disappear. This is the first equality in \eqref{eq:of}.

Concerning the functional analysis and existence of solutions, we observe that
the maximal accretivity of $( - \Dvs +\vs) \Dv$ in both $\Ldeuxmudvh$ and
$\Hunmudvh$ is then direct to get.  In particular, the non-negativity in
$\Hunmudvh$ follows from the following identity for $g \in D_\Hunmudvh(\Pd)$:
$$
\seq{ \Dv( - \Dvs +\vs) \Dv g, \Dv g} = \norm{( - \Dvs +\vs)\Dv g}_\Ldeuxmudv^2
\geq 0.
$$
The fact that the equation is well-posed is then a direct consequence of the
Hille--Yosida Theorem.  The fact that constant sequences are the only
equilibrium solutions comes from the fact that for any solution
$f\in \ccc^1(\R^+, \Hunmudvh)$,
$$
\ddt \norm{f}^2 = - \norms{\Dv f}^2,
$$
and the preservation of mass comes from the fact that
$$
{\D_t} \seq{f} = \seq{(-\Dv +v)\Dv f, 1} = \seq{\Dv f, \Dv 1} = 0,
$$
for any solution $f$ such that $f^0 \in D(\Pd)$, and then in general by density
of $D(\Pd)$ in $\Ldeuxmudvh$.  The proof is complete. \end{proof}

\bigskip As in the continuous case, the Poincar\'e inequality is a fundamental
tool to prove the exponential convergence of the solution.  It appears that such
an inequality is true with $\norm{ \cdot }_{\Ldeuxmudvs }^2$ in the right-hand
side, even though the index $0$ is missing in the definition of this norm.

\begin{prop}[Discrete Poincar\'e inequality] \label{prop:poindiscrete} Let
  $g\in\Hunmudvh$.  Then,
  $$\norm{g-\seq{g}}^2_{\Ldeuxmudvh } \leq \norm{ \Dv g}_{\Ldeuxmudvs }^2.$$
\end{prop}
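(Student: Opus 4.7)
My strategy is to mimic, step by step, the continuous proof of Lemma \ref{lem:Poincarecontinu}, being careful about the two-direction discrete derivative and about the fact that index $0$ belongs to the domain of $g$ but not to the domain of $\Dv g$. First I reduce to the zero-mean case by replacing $g$ with $g-\seq{g}$ (which does not affect $\Dv g$). Using $\dvh\sum_i \muh_i = 1$ and $\seq{g}=0$, one expands the square and obtains the discrete analogue of the symmetrization identity
$$\norm{g}^2 = \frac{1}{2}\dvh^2 \sum_{i,j \in \Z}(g_i - g_j)^2 \muh_i \muh_j.$$

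Next, the integral representation $g' - g = \int_v^{v'} {\D_v} g(w)\dw$ is replaced by the telescoping identity $g_j - g_i = \dvh\sum_{k=i+1}^{j}(\Dv g)_{s(k)}$ for $i<j$, where $s(k)\in\Z^*$ equals $k$ when $k\geq 1$ and $k-1$ when $k\leq 0$; this is precisely what the two-direction definition of $\Dv$ enforces. The discrete Cauchy--Schwarz inequality then yields $(g_j - g_i)^2 \leq (v_j - v_i)(\G_j - \G_i)$ for $i<j$, where $\G:\Z\to\R$ is the discrete antiderivative defined by $\G_0 = 0$ and $\G_j - \G_{j-1} = \dvh (\Dv g)_{s(j)}^2$. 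Plugging this estimate into the double sum, expanding $(v_j - v_i)(\G_j - \G_i)$ into four pieces, and using $\dvh\sum_j \muh_j = 1$ and $\dvh\sum_j v_j \muh_j = 0$ (the latter by symmetry of $\muh$), two of the four pieces vanish and the remaining two coincide, giving
$$\norm{g}^2 \leq \dvh \sum_{j\in\Z} v_j \G_j \muh_j.$$

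The last step is the discrete counterpart of the integration by parts in \eqref{eq:eqintermedpoincarecontinu}. The defining relations \eqref{eq:relationsmudv} identify $v_j \muh_j$ with a finite difference of $\muh$ separately on the two sides of the origin (with $v_0\muh_0=0$). Performing Abel summation on $j\geq 1$ and on $j\leq -1$ converts the right-hand side into sums of the increments of $\G$, which are in turn equal to $\dvh (\Dv g)_k^2$ at the index $k = s(j)$ or $s(j\pm 1)$. Recognising the weights $\muh_{k-1}$ (for $k>0$) and $\muh_{k+1}$ (for $k<0$) as the shifted Maxwellian $\mus_k$ collapses the result to $\dvh\sum_{k\in\Z^*}\mus_k (\Dv g)_k^2 = \norms{\Dv g}^2$, which is the desired inequality.

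The main obstacle is the bookkeeping at index $0$: since $(\Dv g)_0$ is not defined, every boundary term produced by Abel summation near $j=\pm 1$ must recombine exactly with the contribution at $j=0$ so that no stray term involving $(\Dv g)_0$ survives and the remaining weights match $\mus$ exactly. In effect, one has to check that the two-direction scheme, the shift $s$, and the definition of $\mus$ were engineered to make this cancellation automatic; once that combinatorial check is in place, the chain of inequalities closes and the proof is finished. A brief density argument (reducing to finitely supported $g$ to justify the rearrangements) takes care of the convergence issues.
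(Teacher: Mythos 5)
Your proposal is correct and follows essentially the same route as the paper's own proof: symmetrization of $\norm{g}^2$ via the zero-mean condition, telescoping $g_j-g_i$ through the two-direction increments, discrete Cauchy--Schwarz with the antiderivative $\G$, the four-way expansion using parity and normalization of $\muh$, and the final Abel summation driven by the relations \eqref{eq:relationsmudv}, with the recombination at index $0$ (via $\G_1-\G_{-1}=(g_1-g_0)^2+(g_0-g_{-1})^2$) producing exactly the two $\muh_0$-weighted terms needed to identify $\norms{\Dv g}^2$. The only differences are cosmetic normalizations of $\G$ and the explicit shift map $s$, which encodes the same bookkeeping the paper carries out in \eqref{eq:sommemirac}.
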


\begin{proof}
  We essentially follow the proof of the continuous case done before in Section
  \ref{subsec:homogeneouscontinuous}.  Let us take $g\in \Hunmudvh$.  Replacing if
  necessary $g$ by $g-\seq{g}$, it is sufficient to prove the result for
  $\seq{g}=0$. We first note that, with the normalization
  \eqref{eq:normalizationdemudv} of $\muh$, we have
  \begin{equation*}
    \begin{split}
      \dvh^{-1} \norm{g}^2 = \sum_i g_i^2 \muh_i & = \frac{\dvh}{2} \sum_{i,j} (
      g_j-g_i)^2 \muh_i \muh_j = \dvh \sum_{i<j} ( g_j-g_i)^2 \muh_i \muh_j,
    \end{split}
  \end{equation*}
  since
  $2 \sum_{i,j} g_i g_j \muh_i \muh_j = 2 \sum_i g_i \muh_i \sum_j g_j \muh_j =
  0$
  implies that the diagonal terms are zero. Now for $i<j$, we can write the
  telescopic sum
$$
g_j-g_i = \sum_{\ell=i+1}^j (g_\ell-g_{\ell-1}),
$$
so that
\begin{equation} \label{eq:interm}
  \begin{split}
    \dvh^{-1}\sum_i g_i^2 \muh_i & = \sum_{i < j} \sep{ \sum_{\ell=i+1}^j
      (g_\ell-g_{\ell-1}) }^2 \muh_i \muh_j \leq \sum_{i < j} \sep{
      \sum_{\ell=i+1}^j (g_\ell-g_{\ell-1})^2 } (j-i) \muh_i \muh_j,
  \end{split}
\end{equation}
where we used the discrete flat Cauchy--Schwarz inequality.  Let us now
introduce $\G$ a discrete anti-derivative of $(g_\ell-g_{\ell-1})^2$, for
example this one:
$$
\G_j = - \sum_{\ell=j+1}^{-1} (g_\ell-g_{\ell-1})^2 \textrm{ for } j\leq -1,
\qquad \G_j = \sum_{\ell=0}^j (g_\ell-g_{\ell-1})^2 \textrm{ for } j\geq 0,
$$
so that for all $i<j$ we have
$\G_j - \G_i = \sum_{\ell=i+1}^{j} (g_\ell-g_{\ell-1})^2$.  We infer from
\eqref{eq:interm}
\begin{equation*}
  \begin{split}
    \dvh^{-1}\sum_i g_i^2 \muh_i & \leq \sum_{i < j} \sep{ \G_j-\G_i } (j-i)
    \muh_i \muh_j = \frac{1}{2} \sum_{i,j} \sep{ \G_j-\G_i } (j-i) \muh_i
    \muh_j,
  \end{split}
\end{equation*}
where in the last equality we used that
$\sep{\G_j-\G_i } (j-i) = \sep{\G_i-\G_j } (i-j)$ and the fact that the diagonal
terms vanish.  We can now split the last sum into four parts:
\begin{equation*}
  \begin{split}
    \dvh^{-1}\sum_i g_i^2 \muh_i & \leq \frac{1}{2} \sep{
      \sum_{i,j} \G_j j  \muh_i \muh_j +\sum_{i,j} \G_i i \muh_i \muh_j
      -\sum_{i,j} 
\G_i j \muh_i \muh_j - \sum_{i,j} \G_j i \muh_i \muh_j} \\
    & \leq \dvh^{-1} \sum_{i} \G_i i \muh_i = \dvh^{-1}\sum_{i \neq 0} \G_i i
    \muh_i,
  \end{split}
\end{equation*}
where we used the discrete Fubini Theorem and the fact that
$\sum_{j} j\muh_j = 0$ and $\dvh\sum_j \muh_j = 1$
(and their counterparts in variable $i$), by parity and normalization of $\muh$.
The last step is to perform a discrete integration by part (Abel transform)
using deeply the functional equation \eqref{eq:relationsmudv} satisfied by
$\muh$ that we recall now :
$$
i\muh_i = -\frac{\muh_i- \muh_{i-1}}{\dvh^2} \textrm{ for } i>0, \qquad i\muh_i
= -\frac{\muh_{i+1} - \muh_{i}}{\dvh^2} \textrm{ for } i<0.
$$
We therefore get
\begin{equation*}
  \begin{split}
    \sum_{i \neq 0} \G_i i \muh_i& = \sum_{i > 0} \G_i i \muh_i
    +  \sum_{i < 0} \G_i i \muh_i\\
    & = -  \sum_{i > 0} \G_i \frac{\muh_i- \muh_{i-1}}{\dvh^2}  
-  \sum_{i <0 } \G_i \frac{\muh_{i+1} - \muh_{i}}{\dvh^2} \\
    & = - \sum_{i > 0} \frac{ \G_i - \G_{i+1}}{\dvh^2} \muh_i +
    \frac{\G_1}{\dvh^2} \muh_0 - \sum_{i < 0} \frac{ \G_{i-1} - \G_{i}}{\dvh^2}
    \muh_i - \frac{\G_{-1}}{\dvh^2} \muh_0.
  \end{split}
\end{equation*}
Now, using the definition of $\G$ and in particular the fact that
$$
\G_1 - \G_{-1} = (g_{1}- g_0)^2 + (g_{0}- g_{-1})^2,
$$
we obtain
\begin{equation} \label{eq:sommemirac}
  \begin{split}
    \sum_{i \neq 0} \G_i i \muh_i
    & = \sum_{i > 0}  \sep{ \frac{ g_{i+1}- g_i}{\dvh}}^2 \muh_i 
+  \sum_{i < 0}  \sep{ \frac{ g_{i}- g_{i-1}}{\dvh}}^2 \muh_i \\
    & \qquad \qquad \qquad \qquad + \sep{\frac{ g_{1}- g_0}{\dvh}}^2 \muh_0 +
    \sep{\frac{ g_{0}- g_{-1}}{\dvh}}^2
    \muh_0 \\
    & = \dvh^{-1} \norm{ \Dv g}_{\Ldeuxmudvs }^2
  \end{split}
\end{equation}
and therefore $ \norm{g}_{\Ldeuxmudvh }^2 \leq \norm{\Dv g}_{\Ldeuxmudvs }^2$.
The proof is complete. \end{proof}

We can now study the exponential convergence to the equilibrium in the spaces
$\Ldeuxmudvh$ and $\Hunmudvh$ of the solution $f$ of \eqref{eq:DHFPf}, for
$f^0\in \Ldeuxmudvh$ and $f^0\in \Hunmudvh$ respectively.
As in the continuous case of Section \ref{subsec:homogeneouscontinuous}, we
propose two different entropies well-adapted to the coming discretization case:
$$
\fffd(g) = \norm{g}_{\Ldeuxmudvh }^2, \qquad \gggd(g) = \norm{g}_{\Ldeuxmudvh
}^2 + \norm{\Dv g}_{\Ldeuxmudvs}^2,
$$
defined for $g\in\Ldeuxmudvh$ and $g\in\Hunmudvh$ respectively.

Our result for the exponential convergence to equilibrium of the exact solution
of the discrete evolution equation \eqref{eq:DHFPf} is the following.

\begin{thm} Let $f^0 \in \Ldeuxmudvh$ such that $\seq{f^0} = 0$ and let $f$ be
  the solution of \eqref{eq:DHFPf} (in the semi-group sense) in
  $\ccc^0(\R^+, \Ldeuxmudvh)$ with initial data $f^0$.  Then for all $t\geq0$,
  \begin{equation*}
    \fffd(f(t)) \leq \exp^{-2t}\fffd(f^0).
  \end{equation*}
  If in addition $f^0 \in \Hunmudvh$ and $f$ is the semi-group solution in
  $f \in \ccc^0(\R^+, \Hunmudvh)$, then for all $t\geq 0$
$$
\gggd(f(t)) \leq \exp^{-t}\gggd(f^0).
$$
\end{thm}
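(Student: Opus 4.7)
The plan is to mimic line by line the proof of Theorem \ref{thm:exponentialtrendtoequilibrium} in the continuous setting, using now the discrete tools established in this subsection: the identity \eqref{eq:of}, the mass conservation, and the discrete Poincar\'e inequality of Proposition \ref{prop:poindiscrete}. By Proposition \ref{prop:hstruct}, the operator $\Pd = (-\Dvs+\vs)\Dv$ generates a contraction semigroup in both $\Ldeuxmudvh$ and $\Hunmudvh$, and the domains $D(\Pd)$ and $D_{\Hunmudvh}(\Pd)$ are dense in the respective spaces. I will therefore first carry out the formal computations assuming $f^0 \in D(\Pd)$ (resp.\ $D_{\Hunmudvh}(\Pd)$) so that $f\in \ccc^1(\R^+,\Ldeuxmudvh)\cap\ccc^0(\R^+,D(\Pd))$ (resp.\ the $\Hunmudvh$ analogue), and then extend the final inequality to arbitrary initial data in $\Ldeuxmudvh$ (resp.\ $\Hunmudvh$) by density and continuity of the semigroup.

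For the first entropy $\fffd$, I compute along a classical solution
$$
\ddt \fffd(f) = 2\seq{{\D_t} f, f} = -2\seq{(-\Dvs+\vs)\Dv f, f} = -2\norms{\Dv f}^2,
$$
where the last equality uses the second identity in \eqref{eq:of}. By Proposition \ref{prop:hstruct}, the mass $\seq{f(t)} = \seq{f^0} = 0$ is preserved, so the discrete Poincar\'e inequality of Proposition \ref{prop:poindiscrete} yields $\norm{f}_{\Ldeuxmudvh}^2 \leq \norms{\Dv f}^2$. Plugging this in gives $\ddt\fffd(f) \leq -2\fffd(f)$, and Gr\"onwall's lemma yields the desired exponential decay $\fffd(f(t))\leq \exp^{-2t}\fffd(f^0)$.

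For the entropy $\gggd$, I apply $\ddt$ to the two terms $\norm{f}^2$ and $\norms{\Dv f}^2$. The first derivative is as above, and for the second I use the identity \eqref{eq:of} applied with $g$ replaced by $\Dv f$ (which belongs to the relevant space since $f\in D_\Hunmudvh(\Pd)$):
$$
\ddt \norms{\Dv f}^2 = 2\seqs{\Dv {\D_t} f, \Dv f} = -2\seqs{\Dv(-\Dvs+\vs)\Dv f,\Dv f} = -2\norm{(-\Dvs+\vs)\Dv f}^2.
$$
Combining,
$$
\ddt \gggd(f) = -2\norms{\Dv f}^2 - 2\norm{(-\Dvs+\vs)\Dv f}^2.
$$
I then split $2\norms{\Dv f}^2 = \norms{\Dv f}^2 + \norms{\Dv f}^2$ and bound the second copy from below by $\norm{f}^2$ via the discrete Poincar\'e inequality (using again $\seq{f}=\seq{f^0}=0$, or the trivial bound $\norm{f-\seq{f}}\leq\norm{f}$ when $\seq{f^0}$ need not vanish if we first subtract the constant). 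Dropping the nonnegative term $2\norm{(-\Dvs+\vs)\Dv f}^2$ gives $\ddt\gggd(f)\leq -\gggd(f)$, whence $\gggd(f(t))\leq \exp^{-t}\gggd(f^0)$ by Gr\"onwall.

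The main (minor) obstacle is the justification that the computations above are rigorous rather than merely formal, which is handled by the density argument mentioned at the start: one first works with initial data in the appropriate operator domain, where the derivatives and pairings are all well defined by Proposition \ref{prop:hstruct}, and then passes to the general case using density of $D(\Pd)$ in $\Ldeuxmudvh$ (resp.\ $D_\Hunmudvh(\Pd)$ in $\Hunmudvh$) together with the contractivity of the semigroup. No new estimate is needed beyond what has already been proved, so the argument is essentially a discrete transcription of the continuous proof.
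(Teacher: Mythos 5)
Your proposal is correct and follows essentially the same route as the paper's own proof: restrict first to $f^0\in D_\Hunmudvh(\Pd)$, compute $\ddt\fffd(f)=-2\norms{\Dv f}^2$ via \eqref{eq:of} and apply Proposition \ref{prop:poindiscrete}, then for $\gggd$ use the same splitting $2\norms{\Dv f}^2\geq \norms{\Dv f}^2+\norm{f}^2$ together with the nonpositive term $-2\norm{(-\Dvs+\vs)\Dv f}^2$, and conclude by Gr\"onwall and density. No substantive difference from the paper's argument.
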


\begin{proof}
  We follow the steps of the proof of Theorem
  \ref{thm:exponentialtrendtoequilibrium}. In particular we take
  $f^0 \in D_\Hunmudvh(\Pd)$ in all the computations below, so that the
  computations and differentiations below are authorized, and the Theorem is
  then a consequence of the density of $D_\Hunmudvh(\Pd)$ in $\Ldeuxmudvh$ or
  $\Hunmudvh$.

  For the first entropy, we have, using \eqref{eq:DHFPf}, \eqref{eq:of}, and
  Proposition \ref{prop:poindiscrete},
$$
\ddt \fffd(f) = -2 \seq{ (-\Dv^\sharp+\vs)D_v f , f}
= -2 \norm{\Dv f}_{\Ldeuxmudvs }^2 \leq -2 \norm{f}^2
= -2 \fffd(f).
$$
Now we deal with the second entropy $\gggd$.  We use the discrete Poincar\'e
inequality of Proposition \ref{prop:poindiscrete} and the same splitting
$$
2 \norm{\Dv f}_{\Ldeuxmudvs }^2 = \norm{\Dv f}_{\Ldeuxmudvs }^2 + \norm{\Dv
  f}_{\Ldeuxmudvs }^2 \geq \norm{\Dv f}_{\Ldeuxmudvs }^2 + \norm{f}^2,
$$
as in the proof of Theorem \ref{thm:exponentialtrendtoequilibrium}.  We get next
from equations \eqref{eq:DHFPf} and \eqref{eq:of}
\begin{equation*}
  \begin{split}
    \ddt \gggd(f) & = -2 \seq{ (-\Dvs+\vs) \Dv f , f}_{\Ldeuxmudvh }
    -2 \seq{ \Dv(-\Dvs+\vs) \Dv f , \Dv f}_{\Ldeuxmudvs } \\
    & = -2 \norm{\Dv f}_{\Ldeuxmudvs }^2 - 2 \norm{(-\Dvs+\vs) \Dv f}_{\Ldeuxmudvh }^2\\
    & \leq - \norm{f}_{\Ldeuxmudvh }^2 - \norm{\Dv f}_{\Ldeuxmudvs }^2 - 2
    \norm{(-\Dvs+\vs) \Dv f}_{\Ldeuxmudvh }^2 \leq - \gggd(f).
  \end{split}
\end{equation*}
The proof is complete. \end{proof}

As in the Corollary \ref{cor:exponentialtrendtoequilibrium} we therefore
immediately get

\begin{cor} Let $f^0 \in \Ldeuxmudvh$ and let $f$ be the solution of
  \eqref{eq:DHFPf} in $\ccc^0(\R^+, \Ldeuxmudvh)$ with initial data $f^0$.  Then
  for all $t\geq 0$,
  \begin{equation*}
    \norm{f(t) - \seq{f^0}}_\Ldeuxmudvh \leq \exp^{-t}\norm{f^0 - \seq{f^0}}_\Ldeuxmudvh.
  \end{equation*}
  If in addition $f^0 \in \Hunmudvh$ then $f \in \ccc^0(\R^+, \Hunmudvh)$ and we
  have
$$
\norm{f(t) - \seq{f^0}}_\Hunmudvh \leq \exp^{-\frac{t}{2}}\norm{f^0 -
  \seq{f^0}}_\Hunmudvh.
$$
\end{cor}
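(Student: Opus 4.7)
The plan is to deduce this corollary directly from the preceding theorem by a standard reduction: subtract the (conserved) mean from $f$ and apply the theorem to the mean-zero part.

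First I would set $g(t) = f(t) - \seq{f^0}$. Since constant sequences are equilibrium states of \eqref{eq:DHFPf} (by Proposition \ref{prop:hstruct}, item 4) and the equation is linear, $g$ is itself a solution of \eqref{eq:DHFPf} with initial datum $g^0 = f^0 - \seq{f^0} \in \Ldeuxmudvh$. By bilinearity of $\seq{\cdot,\cdot}$ and the fact that $\seq{1}=1$ (from the normalization $\dvh\sum_i \muh_i = 1$), we have $\seq{g^0} = \seq{f^0} - \seq{f^0}\seq{1} = 0$, so $g^0$ has zero mean. Moreover, by the mass-preservation property of Proposition \ref{prop:hstruct}, $\seq{g(t)} = \seq{g^0} = 0$ for all $t\geq 0$, so the hypotheses of the preceding theorem apply to $g$.

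Then I would simply invoke the theorem: in the $\Ldeuxmudvh$ case, since $\fffd(g(t)) = \norm{g(t)}_{\Ldeuxmudvh}^2$, the inequality $\fffd(g(t)) \leq \exp^{-2t} \fffd(g^0)$ gives after taking square roots
\begin{equation*}
\norm{f(t) - \seq{f^0}}_{\Ldeuxmudvh} \leq \exp^{-t} \norm{f^0 - \seq{f^0}}_{\Ldeuxmudvh},
\end{equation*}
which is the first inequality. For the second part, observe that when $f^0 \in \Hunmudvh$, we likewise have $g^0 \in \Hunmudvh$ (since constants trivially belong to $\Hunmudvh$, as $\Dv$ applied to a constant sequence is zero), and the semi-group solution sits in $\ccc^0(\R^+, \Hunmudvh)$. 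The quantity $\gggd(h) = \norm{h}_{\Ldeuxmudvh}^2 + \norm{\Dv h}_{\Ldeuxmudvs}^2$ is exactly $\norm{h}_{\Hunmudvh}^2$ by definition of the $\Hunmudvh$ norm, so the theorem's bound $\gggd(g(t)) \leq \exp^{-t} \gggd(g^0)$ yields after square root
\begin{equation*}
\norm{f(t) - \seq{f^0}}_{\Hunmudvh} \leq \exp^{-t/2} \norm{f^0 - \seq{f^0}}_{\Hunmudvh}.
\end{equation*}

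There is no real obstacle here; the whole argument is a bookkeeping reduction, and the only minor points to verify carefully are (i) that the constant function $\seq{f^0}$ is genuinely an equilibrium solution in the semi-group sense (so that $g$ satisfies \eqref{eq:DHFPf}), and (ii) that $g^0$ indeed has zero mean, which both follow from definitions and Proposition \ref{prop:hstruct}. The exponential rate is halved between the two estimates precisely because $\gggd$ and $\fffd$ are squared norms but the rates inside the exponentials are $-t$ versus $-2t$.
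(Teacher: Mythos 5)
Your proof is correct and is exactly the reduction the paper has in mind: the paper states this corollary as an immediate reformulation of the preceding theorem (mirroring Corollary \ref{cor:exponentialtrendtoequilibrium} in the continuous case), namely subtracting the conserved mean, checking the shifted initial datum has zero mean, applying the entropy decay to $\fffd$ and $\gggd$, and taking square roots. Your verification of the small points (constants are equilibria, $\seq{1}=1$, $\gggd$ is the squared $\Hunmudvh$ norm) is exactly the bookkeeping the paper leaves implicit.
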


\subsection{Remark on the full discretization}\label{subsec:fulld}

A full discretization of the preceding equation \eqref{eq:HFPf}
is of course possible, using the velocity discretization introduced in
this section, and, for example the implicit Euler scheme
$$
f^{n} = f^{n+1}  -\dth ( - \Dvs +\vs) \Dv f^{n+1}.
$$
In order to describe the long time behavior of such a fully discretized
scheme, the functional framework introduced in this Section can be used,
and similar arguments work to obtain exponential convergence
to equilibrium\footnote{Note anyway that the explicit Euler scheme
$$ f^{n+1} = f^n  -\dth ( - \Dvs +\vs) \Dv f^n ,$$
is not well posed due to the fact that the discretized operator
$( - \Dvs +\vs) \Dv $ is not bounded.}.  We do not present in this paper the
corresponding statements and results since they are actually not difficult to
obtain, and may be thought as very simple versions of the results of the
following sections.  Indeed, we shall focus on the discretization on the full
inhomogeneous equation \eqref{eq:IHFPF} in Section \ref{sec:eqinhomo} and on the
discretization of the homogeneous and inhomogeneous equations \eqref{eq:HFPF}
and \eqref{eq:IHFPF} on a bounded velocity domain with Neumann conditions (in
velocity) in Sections \ref{sec:homobounded} and
\ref{sec:eqinhomoboundedvelocity}.

\section{The inhomogeneous equation in space,
velocity and time}
\label{sec:eqinhomo}

In this Section, we deal with the inhomogeneous equation \eqref{eq:IHFPF}
with velocity domain $\R$ and its discretized versions.
We present the fully continuous analysis in the first subsection.
Then, we study in Subsection \ref{subsec:discretentcontinuenxv}
the semi-discretization in time by the implicit Euler scheme.
Afterwards, we focus in Subsection \ref{sec:inhomsd}
on the semi-discretization in space and velocity only.
In particular, we introduce part of the material that will be needed
in the final study of the fully-discretized implicit Euler scheme
which is considered in Subsection \ref{subsec:eqinhomototalementdiscretisee},
where we prove Theorem \ref{thm:eulerimplicite}.

\subsection{The fully continuous analysis}\label{sec:inhomc}
In this subsection we recall briefly now standard results about the original
inhomogeneous Fokker-Planck equation with unknown $F(t,x,v)$ with $(t,x,v) \in\R^+\times\T\times\R$ and where $\T=[0,1]_{\rm per}$. The equation reads
\begin{equation*}
  {\D_t} F+v{\D_x}F-{\D_v} ({\D_v}+v)F=0, \qquad F|_{t=0} = F^0,
\end{equation*}
We assume that the initial density $F^0$ is non-negative, in $L^1(\T\times\R)$,
and satisfies $\int_{\T\times\R}F^0 \dx\dv=1$.
We directly check that
$(x,v) \longmapsto \mu(v) $ is an equilibrium of the equation,
and we shall continue to denote this function $\mu$
(in the sense that it is now a constant function w.r.t. the variable $x$).
As in the homogeneous case, it is convenient to work in the subspace
$\mu \Ldeuxmudvdx \hookrightarrow L^1(\dv\dx)$ and take benefit
of the associated Hilbertian structure.
We therefore pose for the following  $f=(F-\mu)/\mu$,
and we perform here the analysis for $f\in \Ldeuxmudvdx$
as we did in $\Ldeuxmudv$ in the homogeneous case in Section
\ref{sec:homogeneous}.
The rescaled equation writes
\begin{equation}
  \label{eq:rescaled}
   {\D_t} f+v{\D_x}f+(-{\D_v}+v){\D_v} f=0, \qquad \qquad f|_{t=0} = f^0.
\end{equation}
The non-negativity of the associated operator $P = v{\D_x}+(-{\D_v}+v){\D_v} $
is straightforward since $v{\D_x}$ is skew-adjoint in $\Ldeuxmudvdx$.  The
maximal accretivity of this operator in $\Ldeuxmudvdx$ or $\Hunmudvdx$ is not so
easy and we refer for example to \cite{HelN04}.  As in the homogeneous case,
using the Hille--Yosida Theorem, this implies that for an initial datum
$f^0 \in D(P)$ (resp. $D_\Hunmudvdx(P)$) there exists a unique solution in
$\ccc^1( \R^+, \Ldeuxmudvdx) \cap \ccc^0( \R^+, D(P)) $ (resp.
$\ccc^1(\R^+, \Hunmudvdx) \cap \ccc^0( \R^+, D_\Hunmudvdx(P) )$. As before we
will call semi-group solution the function in $\ccc^0(\R^+, \Ldeuxmudvdx)$
(resp. $\ccc^0(\R^+, \Hunmudvdx)$) given by the semi-group associated to $P$
with the suitable domain.

From now on, the norms and scalar products without subscript are taken
in $\Ldeuxmudvdx$.

As in the homogeneous case, we shall define an entropy
adapted to the $\Hunmudvdx$ framework.
Its exponential decay, however, is a bit more difficult to prove
in the inhomogeneous case.
As consequence of the maximal accretivity, we first note that,
for $f^0\in D_\Hunmudvdx(P)$,
along the corresponding solution of \eqref{eq:rescaled}, we have
$$
\ddt \norm{f}^2 = -2 \seq{ v{\D_x} + (-{\D_v}+v){\D_v} f, f} = -2 \norm{D_v f}^2 \leq 0,
$$
so that $g\mapsto \norm{g}^2$ is an entropy of the system.
Such an inequality is nevertheless not strong or precise enough to get
an exponential decay.
In order to prepare for the discrete cases in the next sections,
we again introduce and recall a particularly simple entropy
leading to the result.

For $C>D>E>1$ to be precised later,
the  modified entropy is
defined for $g\in \Hunmudvdx$ by
\begin{equation}
  \label{eq:entropyfunc}
  \hhh(g) \defegal  C\norm{g}^2+D\norm{{\D_v} g}^2+E\seq{{\D_v} g,{\D_x} g}+\norm{{\D_x}g}^2.
\end{equation}
We will show later that for well chosen $C,D,E$, $t\mapsto \hhh(f(t))$ is
exponentially decreasing when $f$ solves the rescaled equation
\eqref{eq:rescaled} with initial datum $f^0\in \Hunmudvdx$. As a norm in
$\Hunmudvdx$ we choose the standard one defined for $g\in \Hunmudvdx$ by
$$
\norm{g}_\Hunmudvdx \defegal \left(\norm{g}^2 + \norm{{\D_v} g}^2
+ \norm{{\D_x} g}^2\right)^\frac12.
$$
\bigskip
We first prove that $\sqrt{\hhh}$ is equivalent to the $\Hunmudvdx$-norm.
\begin{lem}\label{lem:equiv}
Assume $C>D>E>1$ are given such that $E^2<D$.
For all $g\in \Hunmudvdx$, one has
  \begin{equation*}
    \dfrac{1}{2}\norm{g}_{\Hunmudvdx}^2\leq\hhh(g)\leq 2C\norm{g}_{\Hunmudvdx}^2.
  \end{equation*}
\end{lem}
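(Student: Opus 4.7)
The plan is to prove the two-sided estimate by handling the only non-diagonal term in $\hhh(g)$, namely $E\seq{{\D_v} g,{\D_x} g}$, via Cauchy--Schwarz and Young's inequality, then matching coefficients thanks to the assumptions $C>D>E>1$ and $E^2<D$. There is no deep step; the main point is simply to check that the hypotheses are exactly what is needed to absorb the cross term.

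For the upper bound, I would start from $|E\seq{{\D_v} g,{\D_x} g}|\leq \tfrac{E}{2}(\norm{{\D_v} g}^2+\norm{{\D_x} g}^2)$, which gives
\begin{equation*}
\hhh(g) \leq C\norm{g}^2 + \Bigl(D+\tfrac{E}{2}\Bigr)\norm{{\D_v} g}^2 + \Bigl(1+\tfrac{E}{2}\Bigr)\norm{{\D_x} g}^2.
\end{equation*}
Using $E<D<C$ and $1<C$, each of the three coefficients is bounded by $3C/2<2C$ (for instance $1+E/2<1+C/2<3C/2$ and $D+E/2<C+C/2=3C/2$), which yields $\hhh(g)\leq 2C\norm{g}_\Hunmudvdx^2$.

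For the lower bound, since $C-1/2\geq 1/2$ by $C>1$, it suffices to show the remainder is nonnegative, i.e.
\begin{equation*}
\Bigl(D-\tfrac{1}{2}\Bigr)\norm{{\D_v} g}^2 + E\seq{{\D_v} g,{\D_x} g} + \tfrac{1}{2}\norm{{\D_x} g}^2 \geq 0.
\end{equation*}
Using Cauchy--Schwarz in the form $\seq{{\D_v} g,{\D_x} g}\geq -\norm{{\D_v} g}\norm{{\D_x} g}$, this reduces to the positivity of the real quadratic form $(u,v)\mapsto (D-\tfrac{1}{2})u^2 - Euv + \tfrac{1}{2}v^2$. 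Its discriminant equals $E^2-(2D-1)$. From $D>1$ we get $D<2D-1$, and combining with the hypothesis $E^2<D$ yields $E^2<2D-1$, so the discriminant is negative and the quadratic form is positive definite. Adding back $(C-1/2)\norm{g}^2\geq \tfrac12\norm{g}^2$ then gives $\hhh(g)\geq \tfrac{1}{2}\norm{g}_\Hunmudvdx^2$.

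The only (mild) thing to keep an eye on is the chain of strict inequalities between $1$, $E$, $D$, $C$ together with $E^2<D$; these are tailored precisely so that the cross term $E\seq{{\D_v} g,{\D_x} g}$ can be absorbed both into $\norm{{\D_v} g}^2$ and into $\norm{{\D_x} g}^2$ simultaneously. No other ingredient is needed.
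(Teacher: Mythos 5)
Your proof is correct and follows essentially the same route as the paper's: both arguments absorb the single cross term $E\seq{{\D_v} g,{\D_x} g}$ by a Cauchy--Schwarz--Young estimate and then check that the hypotheses $1<E<D<C$ and $E^2<D$ make the resulting coefficients fit between $1/2$ and $2C$. The only (immaterial) difference is that the paper uses the single splitting $2\abs{E\seq{{\D_v} g,{\D_x} g}}\leq E^2\norm{{\D_v} g}^2+\norm{{\D_x} g}^2$ for both bounds, whereas you use a symmetric split for the upper bound and a discriminant argument for the lower one.
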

\begin{proof}
Using a standard Cauchy--Schwarz--Young
inequality, we observe that
  \begin{equation*}
    2\abs{E\seq{{\D_v} g,{\D_x} g}}\leq E^2\norm{{\D_v} g}^2+\norm{{\D_x} g}^2,
  \end{equation*}
which implies for all $g\in \Hunmudvdx$
\begin{multline*}
  \underbrace{C}_{1/2\leq}\norm{g}^2
  +\underbrace{(D-E^2/2)}_{1/2\leq D/2\leq}\norm{{\D_v} g}^2
  +\dfrac{1}{2}\norm{{\D_x}g}^2 \\
  \leq \hhh(g)\leq
  C\norm{g}^2+\underbrace{(D+E^2/2)}_{\leq D+D/2\leq 3C/2\leq
    2C}\norm{{\D_v} g}^2+\underbrace{3/2}_{\leq 3C/2\leq 2C} \norm{{\D_x}g}^2,
\end{multline*}
which in turn implies the result since $E^2<D$.
\end{proof}

As in the homogeneous case, one of the main ingredients to prove the
exponential decay is again a Poincar\'e inequality,
which is essentially obtained by tensorizing
the one in velocity with the one in space.
In the following, we denote the mean of  $g\in \Ldeuxmudvdx$ with respect
to all variables by
 $$
 \seq{g} \defegal \iint g(x,v) \mudv\dx.
 $$

\begin{lem}[Inhomogeneous Poincar\'e inequality]
\label{lem:fullPoincarecontinu}
For all $g \in \Hunmudvdx$, we have
 \begin{equation*}
  \norm{g-\seq{g}}^2  \leq  \norm{{\D_v} g}^2 + \norm{{\D_x} g}^2.
\end{equation*}
\end{lem}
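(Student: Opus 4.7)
The plan is to tensorize the velocity Poincaré inequality from Lemma \ref{lem:Poincarecontinu} with the classical Poincaré--Wirtinger inequality on the torus $\T=[0,1]_{\mathrm{per}}$, using an orthogonal decomposition of $g-\seq{g}$ into its $v$-mean and its orthogonal complement.

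First I would introduce the partial average in velocity
\begin{equation*}
\bar g(x) \defegal \int_{\R} g(x,v)\,\mudv,
\end{equation*}
so that $\int_{\T}\bar g(x)\dx=\seq{g}$ and, for each fixed $x$, the function $v\mapsto g(x,v)-\bar g(x)$ has zero $\mu\dv$-mean. Writing
\begin{equation*}
g-\seq{g} = \bigl(g-\bar g\bigr) + \bigl(\bar g-\seq{g}\bigr),
\end{equation*}
the two summands are orthogonal in $\Ldeuxmudvdx$ because, for each $x$, the first has zero $v$-mean while the second depends only on $x$. Hence by Pythagoras
\begin{equation*}
\norm{g-\seq{g}}^2 = \norm{g-\bar g}^2 + \int_{\T}\bigl(\bar g(x)-\seq{g}\bigr)^2\dx,
\end{equation*}
where I used $\int\mudv=1$ to simplify the second term.

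Next I would bound the two pieces separately. For the first, I apply the homogeneous Poincaré inequality of Lemma \ref{lem:Poincarecontinu} for almost every $x\in\T$ to the function $v\mapsto g(x,v)$, whose $\mu\dv$-mean is $\bar g(x)$, and note that ${\D_v}(g-\bar g)={\D_v} g$; integrating in $x$ then yields $\norm{g-\bar g}^2\leq\norm{{\D_v} g}^2$. For the second, $\bar g-\seq{g}$ has mean zero on the unit torus, so the standard Poincaré--Wirtinger inequality gives $\int_{\T}(\bar g-\seq{g})^2\dx\leq (2\pi)^{-2}\int_{\T}(\bar g')^2\dx\leq\int_{\T}(\bar g')^2\dx$, and a Cauchy--Schwarz bound using $\int\mudv=1$ furnishes $(\bar g'(x))^2=(\int{\D_x} g(x,v)\mudv)^2\leq\int({\D_x} g)^2\mudv$. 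Summing the two estimates yields the claim.

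The only subtle point is making sure the Poincaré--Wirtinger constant on $\T$ is at most $1$, which is why the normalization $\T=[0,1]_{\mathrm{per}}$ is convenient (the sharp constant is $1/(2\pi)^2<1$); aside from this the proof is purely a tensorization argument, so I do not anticipate any real obstacle.
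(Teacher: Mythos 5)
Your proof is correct and follows essentially the same route as the paper: the paper also introduces the macroscopic density $\rho(x)=\int g(x,\cdot)\,\mudv$ (your $\bar g$), uses the orthogonal (Pythagorean) splitting $\norm{g}^2=\norm{g-\rho\otimes 1}^2+\norm{\rho}^2_\Ldeuxdx$, controls the first piece by the homogeneous Poincar\'e inequality of Lemma \ref{lem:Poincarecontinu} via Fubini, and the second by the flat Poincar\'e--Wirtinger inequality on $\T$ together with the projection bound $\norm{{\D_x}\rho}_\Ldeuxdx\leq\norm{{\D_x}g}$, which is exactly your Cauchy--Schwarz step. No substantive difference.
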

\begin{proof}
  Replacing if necessary $g$ by $g-\seq{g}$, it is sufficient to prove the
  result for $\seq{g}=0$.  For convenience, we introduce
  $\rho: x\mapsto \int g(x,\cdot) \mudv$, the macroscopic density of
  probability. Recall the standard Poincar\'e inequality in space only
$$
\norm{\rho}^2 \leq \frac{1}{4\pi^2}\norm{{\D_x} \rho}^2\leq \norm{{\D_x} \rho}^2,
$$
which is a consequence of the the fact that the torus $\T$ is compact
and the fact that $\int \rho \dx = \iint g \mu \dv\dx = 0$
(note that the proof of this last Poincar\'e inequality is very standard
and could be done following the method employed in the proof of
Lemma \ref{lem:Poincarecontinu}).
Now we observe that orthogonal projection properties and Fubini Theorem imply
$$
\norm{\rho}^2_\Ldeuxdx \leq \norm{g}^2
\qquad \text{and}\qquad
\norm{{\D_x}\rho}^2_\Ldeuxdx \leq \norm{{\D_x} g}^2,
$$
since $(x,v) \mapsto \rho(x)$ (resp. $(x,v) \mapsto {\D_x}\rho(x)$)
is the orthogonal projection of $g$ (resp. ${\D_x} g$) onto the closed space
$$
\set{ (x,v) \longmapsto  \phi(x) \ | \ \phi \in \Ldeuxdx},
$$
and
$\norm{\phi \otimes 1 }_{} = \norm{\phi}_\Ldeuxdx$
for all $\phi \in \Ldeuxdx$ since we are in probability spaces
(there is a natural injection $\Ldeuxdx \hookrightarrow \Ldeuxmudvdx$
of norm $1$).
Using the Fubini Theorem again, we also directly get from
Lemma \ref{lem:Poincarecontinu} that
$$
\norm{g- \rho\otimes 1}_{}^2 \leq  \norm{{\D_v} g}_{}^2.
$$
We therefore can write, using orthogonal projection properties again, that
\begin{equation}
\begin{split}
\norm{g}_{}^2 & = \norm{g- \rho\otimes 1}_{}^2 + \norm{ \rho\otimes 1}_{}^2 \\
& = \norm{g- \rho\otimes 1}_{}^2 + \norm{\rho}_\Ldeuxdx^2 \\
& \leq \norm{{\D_v} g}_{}^2 + \norm{{\D_x} \rho}_\Ldeuxdx^2 \\
& \leq \norm{{\D_v} g}_{}^2 + \norm{{\D_x} g}_{}^2.
\end{split}
\end{equation}
The proof is complete. \end{proof}

\noindent
For convenience, we will sometimes denote in the following
$$
  \Xzero = v {\D_x},
$$
so that the equation \eqref{eq:rescaled} satisfied by $f$ is
${\D_t} f = -\Xzero f -(-{\D_v} +v) {\D_v} f$.
We shall use intensively the fact that $\Xzero$ is skew-adjoint
and the formal adjoint of $(-{\D_v} + v)$ is ${\D_v}$,
together with the commutation relations
\begin{equation}
\label{eq:commrel}
  \adf{{\D_v}, X_0} = {\D_x}, \qquad \adf{{\D_x}, X_0} = 0,
  \qquad\text{and}\qquad
  \adf{{\D_v},(-{\D_v}+v)}=1.
\end{equation}

\begin{thm}\label{thm:decrexp}
  Assume that $C>D>E>1$ satisfy $E^2<D$ and $(2D+E)^2<2C$.
Let $ f^0 \in \Hunmudvdx$ such that $\seq{f^0} = 0$ and let $f$ be the solution
 (in the semi-group sense)  in $\ccc^0(\R^+, \Hunmudvdx)$ of Equation \eqref{eq:rescaled}. Then for all $t\geq 0$,
  \begin{equation*}
     \hhh(f(t))\leq \hhh(f^0)\exp^{-2\kappa t}.
  \end{equation*}
  with $2\kappa = \frac{E}{8C}$.
\end{thm}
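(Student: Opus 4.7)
The approach is a standard hypocoercive argument in the spirit of H\'erau--Villani \cite{Her06,Vil09}. The plan is to establish the differential inequality $\ddt \hhh(f) \leq -2\kappa \hhh(f)$ with $2\kappa = E/(8C)$ and conclude by Gronwall. By density of $D_\Hunmudvdx(P)$ in $\Hunmudvdx$ and the semi-group properties already mentioned, it suffices to prove the bound for $f^0 \in D_\Hunmudvdx(P)$, so every manipulation below is legitimate. The mass conservation analogous to \eqref{eq:consmass} (using $\Xzero \mathbf{1}=0$ and the fact that constants are killed by ${\D_v}$) ensures $\seq{f(t)}=0$ for all $t\geq 0$, which is precisely what is needed to apply the inhomogeneous Poincar\'e inequality (Lemma~\ref{lem:fullPoincarecontinu}) to $f(t)$ itself. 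The key hypocoercive mechanism is that although $\Xzero = v{\D_x}$ is skew-adjoint and dissipates nothing, the commutator $[{\D_v},\Xzero]={\D_x}$ from \eqref{eq:commrel} makes the mixed entropy term $E\seq{{\D_v} f,{\D_x} f}$ generate the missing dissipation in the $x$-direction.

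First I would compute the four basic time derivatives along the flow of \eqref{eq:rescaled}, using the skew-adjointness of $\Xzero$ in $\Ldeuxmudvdx$, the identity $({\D_v})^*=-{\D_v}+v$, and the commutators \eqref{eq:commrel}:
\begin{align*}
\ddt\norm{f}^2 &= -2\norm{{\D_v} f}^2,\\
\ddt\norm{{\D_v} f}^2 &= -2\norm{{\D_v} f}^2 - 2\norm{{\D_v}^2 f}^2 - 2\seq{{\D_v} f,{\D_x} f},\\
\ddt\norm{{\D_x} f}^2 &= -2\norm{{\D_v}{\D_x} f}^2,\\
\ddt\seq{{\D_v} f,{\D_x} f} &= -\norm{{\D_x} f}^2 - \seq{{\D_v} f,{\D_x} f} - 2\seq{{\D_v}^2 f,{\D_v}{\D_x} f}.
\end{align*}
In the last line the two antisymmetric contributions $\pm\seq{\Xzero{\D_v} f,{\D_x} f}$ cancel thanks to the skew-adjointness of $\Xzero$, and the resulting $-\norm{{\D_x} f}^2$ is precisely the hypocoercive gain produced by $[{\D_v},\Xzero]={\D_x}$. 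Assembling with the weights $C,D,E,1$ yields
\begin{equation*}
\ddt\hhh(f) = -2(C+D)\norm{{\D_v} f}^2 - E\norm{{\D_x} f}^2 - (2D+E)\seq{{\D_v} f,{\D_x} f} - 2D\norm{{\D_v}^2 f}^2 - 2E\seq{{\D_v}^2 f,{\D_v}{\D_x} f} - 2\norm{{\D_v}{\D_x} f}^2.
\end{equation*}

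Then two Cauchy--Schwarz--Young inequalities close the argument. The first, $2E\abs{\seq{{\D_v}^2 f,{\D_v}{\D_x} f}} \leq \tfrac{E^2}{2}\norm{{\D_v}^2 f}^2 + 2\norm{{\D_v}{\D_x} f}^2$, combined with the hypothesis $E^2<D$, absorbs the $\norm{{\D_v}^2 f}^2$, $\norm{{\D_v}{\D_x} f}^2$ and the mixed term into a non-positive contribution. The second, $(2D+E)\abs{\seq{{\D_v} f,{\D_x} f}} \leq \tfrac{(2D+E)^2}{2E}\norm{{\D_v} f}^2 + \tfrac{E}{2}\norm{{\D_x} f}^2$, together with $(2D+E)^2<2C$ and $E>1$ (hence $\tfrac{(2D+E)^2}{2E}<C$), reduces the coefficient of $\norm{{\D_x} f}^2$ to $-E/2$ while keeping the coefficient of $\norm{{\D_v} f}^2$ at most $-(C+2D)\leq -E/2$. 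Thus
\begin{equation*}
\ddt\hhh(f) \leq -\tfrac{E}{2}\bigl(\norm{{\D_v} f}^2 + \norm{{\D_x} f}^2\bigr).
\end{equation*}
Finally, Lemma~\ref{lem:fullPoincarecontinu} applied to $f(t)$ gives $\norm{f}_{\Hunmudvdx}^2 \leq 2\bigl(\norm{{\D_v} f}^2+\norm{{\D_x} f}^2\bigr)$, and Lemma~\ref{lem:equiv} then yields $\hhh(f) \leq 4C\bigl(\norm{{\D_v} f}^2+\norm{{\D_x} f}^2\bigr)$, from which $\ddt\hhh(f) \leq -\tfrac{E}{8C}\hhh(f)=-2\kappa\hhh(f)$; Gronwall concludes. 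The main obstacle, and the very reason behind the two algebraic smallness hypotheses $E^2<D$ and $(2D+E)^2<2C$, is the simultaneous absorption of the unwanted cross terms $\seq{{\D_v}^2 f,{\D_v}{\D_x} f}$ and $\seq{{\D_v} f,{\D_x} f}$ produced by the mixed entropy piece, without destroying the coercive gradient terms needed for the Poincar\'e-based closure.
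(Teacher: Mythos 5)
Your proposal is correct and follows essentially the same route as the paper: differentiate the four terms of $\hhh$ along the flow, exploit the skew-adjointness of $\Xzero$ and the commutators \eqref{eq:commrel}, absorb the two cross terms by Cauchy--Schwarz--Young under the hypotheses $E^2<D$ and $(2D+E)^2<2C$, then close with Lemmas \ref{lem:fullPoincarecontinu} and \ref{lem:equiv} and Gronwall. The only (harmless) cosmetic differences are that you rewrite $\norm{(-{\D_v}+v){\D_v} f}^2$ as $\norm{{\D_v} f}^2+\norm{{\D_v}^2 f}^2$ via $[{\D_v},(-{\D_v}+v)]=1$ and the vanishing of $\seq{v{\D_v} f,{\D_x}{\D_v} f}$, and you use slightly different Young weights to reach the same bound $-\tfrac{E}{2}(\norm{{\D_v} f}^2+\norm{{\D_x} f}^2)$.
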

\begin{proof}
We suppose that $f^0 \in D_\Hunmudvdx(P)$ and we consider
the corresponding solution $f$ of \eqref{eq:rescaled}
in $\ccc^1(\R^+, \Hunmudvdx) \cap \ccc^0(\R^+, D_\Hunmudvdx(P))$ with initial
datum $f^0$.
The theorem for a general $f^0\in\Hunmudvdx$
is then a consequence of the density
of $D_\Hunmudvdx(P)$ in $\Hunmudvdx$.

We compute separately the time derivatives of the four terms defining
$\hhh(f(t))$.
Omitting the dependence on $t$,
the time derivative of the first term in $\hhh(f(t))$ reads
  \begin{align*}
    \ddt \norm{f}^2_{\Ldeuxmudvdx}&
=2\seq{{\D_t}f,f}=-2\underbrace{\seq{\Xzero f,f}}_{=0}-2\seq{(-{\D_v}+v){\D_v} f,f}\\
&=-2\norm{{\D_v} f}_{\Ldeuxmudvdx}^2.
  \end{align*}
The second term writes
\begin{align*}
  \ddt \norm{{\D_v} f}^2_{\Ldeuxmudvdx}&=2\seq{{\D_v}({\D_t}f),{\D_v} f}\\
&=-2\seq{{\D_v}(\Xzero f+(-{\D_v}+v){\D_v} f),{\D_v} f}\\
&=-2\underbrace{\seq{\Xzero {\D_v} f,{\D_v} f}}_{=0}
-2\seq{\adf{{\D_v},\Xzero }f,{\D_v} f}-2\seq{{\D_v}(-{\D_v}+v){\D_v} f,{\D_v} f}.
\end{align*}
We again use the fact that $\Xzero $ is a skew-adjoint operator in
$\Ldeuxmudvdx$ and the fundamental relation
$\adf{{\D_v},\Xzero }={\D_x}$ and we get
\begin{equation*}
  \ddt \norm{{\D_v} f}^2_{\Ldeuxmudvdx}=-2 \seq{{\D_x}f,{\D_v} f}-2\norm{(-{\D_v}+v){\D_v} f}^2.
\end{equation*}
The time derivative of the third term can be computed as follows
\begin{align*}
   \ddt \seq{{\D_x}f,{\D_v} f}
   = & -\seq{{\D_x}(\Xzero f+(-{\D_v}+v){\D_v} f),{\D_v} f}
-\seq{{\D_x}f,{\D_v}(\Xzero f+(-{\D_v}+v){\D_v} f)}\\
    = &  -\seq{ {\D_x} \Xzero f,{\D_v} f}
              -\seq{ {\D_x} f,{\D_v} \Xzero f} \qquad (I) \\
     &      -\seq{{\D_x} (-{\D_v}+v){\D_v} f, {\D_v} f}
          -\seq{{\D_x} f, {\D_v} (-{\D_v}+v){\D_v} f}. \qquad (II)
\end{align*}
For the term $(I)$ we use the fact that $\Xzero$ is skew-adjoint
and the commutation relations \eqref{eq:commrel} to obtain
\begin{align*}
(I) =  & \underbrace{-\seq{ \Xzero {\D_x} f,{\D_v} f}
              -\seq{ {\D_x} f, \Xzero {\D_v} f}}_{0} -\seq{ {\D_x} f, 
\adf{ {\D_v}, \Xzero} f}  = - \norm{{\D_x} f}^2.
\end{align*}
For the term $(II)$ we use that the adjoint of ${\D_v}$ is $-({\D_v} +v)$ and
 the one of ${\D_x}$ is $-{\D_x}$  and we get
\begin{align*}
(II) = &  \seq{ (-{\D_v}+v){\D_v} f,{\D_x} {\D_v} f}
          + \seq{{\D_v} (-{\D_v}+v) f,{\D_x}{\D_v} f} \\
       =    &  2\seq{ (-{\D_v}+v){\D_v} f,{\D_x} {\D_v} f} +
             \seq{\adf{ {\D_v} , (-{\D_v}+v)}  f,{\D_x}{\D_v} f}.
\end{align*}
Now  the commutation relation \eqref{eq:commrel} yields
\begin{align*}
(II) =           &  2\seq{ (-{\D_v}+v){\D_v} f,{\D_x} {\D_v} f} +
             \seq{  f,{\D_x}{\D_v} f} \\
          =   &  2\seq{ (-{\D_v}+v){\D_v} f,{\D_x} {\D_v} f} -
             \seq{ {\D_x} f,{\D_v} f}.
\end{align*}
Form the preceding estimates on $(I)$ and $(II)$ we therefore have
\begin{equation*}
   \ddt \seq{{\D_x}f,{\D_v} f}=-\norm{{\D_x}f}^2+2
\seq{(-{\D_v}+v){\D_v} f,{\D_x}{\D_v} f}-\seq{{\D_x}f,{\D_v} f}.
\end{equation*}
Finally, observing that ${\D_x} f$ also solves \eqref{eq:rescaled}, we
obtain for the last term of $\hhh(f(t))$ the same estimate as the one we
obtained for the first term:
\begin{equation*}
    \ddt \norm{{\D_x}f}^2_{\Ldeuxmudvdx}=-2\norm{{\D_v} {\D_x} f}_{\Ldeuxmudvdx}^2.
\end{equation*}
Eventually, we obtain
\begin{align}
   \ddt 
  \hhh(f)&=-2C\norm{{\D_v} f}^2-2D\norm{(-{\D_v}+v){\D_v} f}^2
-E\norm{{\D_x}f}^2-2\norm{{\D_x}{\D_v} f}^2 \nonumber\\
\label{eq:decrentropycontinuenonborne}
&\qquad -(2D+E)\seq{{\D_x}f,{\D_v} f}+2E\seq{(-{\D_v}+v){\D_v} f,{\D_x}{\D_v} f}.
\end{align}

\noindent
Only the last two terms above do not have a sign {\it a priori}.
Using the Cauchy--Schwarz--Young inequality,
we observe that
\begin{equation*}
  \abs{(2D+E)\seq{{\D_x}f,{\D_v} f}}\leq \dfrac{1}{2}\norm{{\D_x}f}^2
+\frac{(2D+E)^2}{2}\norm{{\D_v} f}^2,
\end{equation*}
and
\begin{equation*}
  \abs{2E\seq{(-{\D_v}+v){\D_v} f,{\D_x}{\D_v} f}}\leq
  \norm{{\D_x}{\D_v} f}^2 +E^2 \norm{(-{\D_v}+v){\D_v} f}^2.
\end{equation*}
Therefore, assuming again that $1<E<D<C$, $E^2<D$ and $(2D+E)^2<2C$, we get
\begin{equation*}
   \ddt \hhh(f)\leq
   -C\norm{{\D_v} f}^2-(E-1/2)\norm{{\D_x}f}^2\leq -\dfrac{E}{2}(\norm{{\D_v} f}^2+\norm{{\D_x}f}^2).
\end{equation*}
Using the Poincar\'e inequality in space-velocity proven in Lemma \ref{lem:fullPoincarecontinu} with constant $1$, we
derive
\begin{align*}
  -\dfrac{E}{2}(\norm{{\D_v} f}^2+\norm{{\D_x}f}^2)
  &\leq    -\dfrac{E}{4}(\norm{{\D_v} f}^2+\norm{{\D_x}f}^2)-\dfrac{E}{4}\norm{f}^2
  \leq-\dfrac{E}{4}\dfrac{1}{2C}\hhh(f),
\end{align*}
using eventually the equivalence property proven in Lemma \ref{lem:equiv}.
We therefore have with $2\kappa = E/8C$
$$
 \ddt \hhh(f)\leq - 2\kappa \hhh(f),
 $$
 and Theorem \ref{thm:decrexp} is a consequence of the Gronwall Lemma.
The proof is complete.
\end{proof}

\begin{cor}
  Let $C>D>E>1$ be chosen as in Theorem \ref{thm:decrexp}, and pose
  $\kappa = E/(16C)$. Let $f^0\in \Hunmudvdx$ such that $\seq{f^0} = 0$ and let
  $f$ be the semi-group solution in $\ccc^0(\R^+, \Hunmudvdx)$ of equation
  \eqref{eq:rescaled}. Then for all $t\geq 0$, we have
  \begin{equation*}
   \norm{f(t)}_{\Hunmudvdx}\leq 2\sqrt{C} \exp^{-\kappa t} \norm{f^0 }_{\Hunmudvdx}.
  \end{equation*}
\end{cor}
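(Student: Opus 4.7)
The plan is to deduce this corollary directly from Theorem \ref{thm:decrexp} combined with the norm-equivalence Lemma \ref{lem:equiv}, so essentially the proof is a one-line chain of inequalities. Note that the factor $\kappa$ in the corollary is exactly half of the decay rate $2\kappa = E/(8C)$ given by Theorem \ref{thm:decrexp}, which is consistent with passing from a control of the squared entropy to a control of the norm.

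First I would observe that, since $f^0 \in \Hunmudvdx$ with $\seq{f^0}=0$ and $f$ is the semi-group solution of \eqref{eq:rescaled}, the conservation of mass for the inhomogeneous equation (obtained by testing against the constant function $1$, which is annihilated by both $X_0$ and $(-{\D_v}+v){\D_v}$) gives $\seq{f(t)} = 0$ for all $t\geq 0$. More importantly, by Theorem \ref{thm:decrexp} we have, with $2\kappa_{\mathrm{thm}} = E/(8C)$, i.e. $\kappa_{\mathrm{thm}} = E/(16C) = \kappa$,
\begin{equation*}
\hhh(f(t)) \leq \hhh(f^0) \exp^{-2\kappa t}, \qquad t \geq 0.
\end{equation*}

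Next I would apply the two-sided bound of Lemma \ref{lem:equiv}, which is valid under the hypotheses made on $C, D, E$. The left inequality gives $\tfrac{1}{2}\norm{f(t)}_{\Hunmudvdx}^2 \leq \hhh(f(t))$, and the right inequality applied at $t=0$ gives $\hhh(f^0) \leq 2C \norm{f^0}_{\Hunmudvdx}^2$. Combining these with the exponential decay estimate above yields
\begin{equation*}
\tfrac{1}{2}\norm{f(t)}_{\Hunmudvdx}^2 \leq \hhh(f(t)) \leq \hhh(f^0)\exp^{-2\kappa t} \leq 2C \norm{f^0}_{\Hunmudvdx}^2 \exp^{-2\kappa t},
\end{equation*}
so that $\norm{f(t)}_{\Hunmudvdx}^2 \leq 4C \exp^{-2\kappa t} \norm{f^0}_{\Hunmudvdx}^2$.

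Finally I would take square roots to obtain the announced inequality $\norm{f(t)}_{\Hunmudvdx} \leq 2\sqrt{C}\exp^{-\kappa t}\norm{f^0}_{\Hunmudvdx}$. There is no real obstacle here: all the analytical work (maximal accretivity of $P$, existence of the semi-group solution, the commutation identities, and above all the differential inequality $\ddt \hhh(f) \leq -2\kappa \hhh(f)$) has already been done in Theorem \ref{thm:decrexp}, and the equivalence of $\sqrt{\hhh}$ with the $\Hunmudvdx$-norm is precisely the content of Lemma \ref{lem:equiv}. The only point that deserves emphasis is the factor $2\sqrt{C}$, which comes from the loss $\sqrt{2 \cdot 2C} = 2\sqrt{C}$ incurred when passing from entropy estimates back to norm estimates via the two sides of Lemma \ref{lem:equiv}.
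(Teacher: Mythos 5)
Your proof is correct and follows exactly the paper's own argument: apply Theorem \ref{thm:decrexp} for the decay of $\hhh(f(t))$, then sandwich with the two sides of Lemma \ref{lem:equiv} and take square roots, yielding the factor $2\sqrt{C}$. Nothing to add.
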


\begin{proof}
  Choose $C>D>E>1$ as in Theorem \ref{thm:decrexp} and set $\kappa =E/(16C)$. We
  apply Theorem \ref{thm:decrexp} and Proposition \ref{lem:equiv} to $f$ and we
  obtain for all $t\geq 0$,
\begin{equation*}
  \norm{f(t)}^2_{\Hunmudvdx}  \leq  2  \hhh(f(t))
   \leq  2  \exp^{-2\kappa t} \hhh(f^0)
   \leq  4 C \exp^{-2\kappa t}\norm{f^0}^2_\Hunmudvdx.
\end{equation*}
The proof is complete.
\end{proof}

\subsection{The semi-discretization in time}
\label{subsec:discretentcontinuenxv}
In order to solve Equation \eqref{eq:rescaled} numerically, we consider the
one-step implicit Euler method.  We introduce the time step $\dth>0$ supposed to
be small.

\begin{defn} We shall say that a sequence $(f^n)_{n\in\N} \in (\Ldeuxmudvdx)^\N$
  (resp. $(\Hunmudvdx)^\N$) satisfies the (scaled) time-discrete inhomogeneous
  Fokker-Planck equation if for a given $f^0$ in $\Ldeuxmudvdx$
  (resp. $\Hunmudvdx$), for all $n \in \N$,
  \begin{equation}
    \label{eq:eulerimpl}
    f^{n+1} = f^n - \dth (\Xzero  f^{n+1}+(-{\D_v}+v){\D_v} f^{n+1}),
  \end{equation}
  for some $\dth>0$.
\end{defn}

The main goal of this section is to prove that this numerical scheme has the
same asymptotic behavior as that of the exact flow, in the sense that it
satisfies a discrete analogue of Theorem \ref{thm:decrexp} (see Theorem
\ref{thm:decrexpeulerimpl}).

We first check that this implicit scheme is well posed.

\begin{prop}
  \label{prop:stabiliteetmoyennepreservee}
  For all given initial condition $f^0$ in $\Ldeuxmudvdx$ (resp. $\Hunmudvdx$),
  and all $\dth>0$, there exists a unique solution $f \in (\Ldeuxmudvdx)^\N$
  (resp. $(\Hunmudvdx)^\N$) of the time-discrete evolution equation
  \eqref{eq:eulerimpl}. Moreover it satisfies for all $n\in \N$,
$$
\norm{f^n} \leq \norm{f^0}, \qquad \seq{f^n} = \seq{f^0}.
$$
\end{prop}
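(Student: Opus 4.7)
The plan is to recast the scheme in the resolvent form $(I + \dth P) f^{n+1} = f^n$, where $P = \Xzero + (-{\D_v}+v){\D_v}$ is the inhomogeneous Fokker--Planck operator studied in Section~\ref{sec:inhomc}. Since $P$ was shown there to be maximal accretive in both $\Ldeuxmudvdx$ and $\Hunmudvdx$, the Hille--Yosida framework guarantees that for every $\dth>0$ the operator $(I+\dth P)$ is a bijection of the appropriate domain onto the ambient Hilbert space, with bounded (actually contractive) inverse. Hence, given $f^n$ in $\Ldeuxmudvdx$ (resp.~$\Hunmudvdx$), the element $f^{n+1}=(I+\dth P)^{-1}f^n$ is uniquely defined and belongs to the same space, so well-posedness follows by induction on $n$.

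For the $L^2$-type stability I would take the scalar product in $\Ldeuxmudvdx$ of \eqref{eq:eulerimpl} with $f^{n+1}$. The transport term $\Xzero = v{\D_x}$ is skew-adjoint, so $\seq{\Xzero f^{n+1},f^{n+1}}=0$. For the collision term, the identity $\seq{(-{\D_v}+v){\D_v} f^{n+1},f^{n+1}} = \norm{{\D_v} f^{n+1}}^2\geq 0$ (already used in the continuous setting) yields
\begin{equation*}
\norm{f^{n+1}}^2 + \dth\norm{{\D_v} f^{n+1}}^2 = \seq{f^n,f^{n+1}} \leq \norm{f^n}\,\norm{f^{n+1}}
\end{equation*}
by Cauchy--Schwarz. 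Dropping the non-negative dissipation term and simplifying gives $\norm{f^{n+1}}\leq\norm{f^n}$, and iterating provides $\norm{f^n}\leq\norm{f^0}$.

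For the conservation of the mean I would instead pair \eqref{eq:eulerimpl} with the constant function $1\in\Hunmudvdx$. The transport contribution vanishes since
\begin{equation*}
\seq{\Xzero f^{n+1},1} = \iint v\,({\D_x}f^{n+1})\,\mu\,\dx\dv = 0
\end{equation*}
by integration in $x$ on the torus, and the collision contribution vanishes since ${\D_v} 1 = 0$ gives $\seq{(-{\D_v}+v){\D_v} f^{n+1},1}=\seq{{\D_v} f^{n+1},{\D_v} 1}=0$. Thus $\seq{f^{n+1}}=\seq{f^n}$ and induction gives $\seq{f^n}=\seq{f^0}$.

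There is no real obstacle: the argument is essentially a discrete-in-time rewriting of the continuous estimates of Subsection~\ref{sec:inhomc}. The only point to keep in mind is that one should first carry out the computations for $f^{n+1}$ in the domain of $P$ (where every term makes sense pointwise) and then extend by density of $D(P)$ in $\Ldeuxmudvdx$ (resp.\ $D_\Hunmudvdx(P)$ in $\Hunmudvdx$), which is exactly the same density argument already invoked in the proof of Theorem~\ref{thm:decrexp}.
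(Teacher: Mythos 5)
Your proposal is correct and follows essentially the same route as the paper: the paper likewise writes the scheme as $(\Id+\dth P)f^{n+1}=f^n$, invokes the maximal accretivity of $P$ in $\Ldeuxmudvdx$ (resp.\ $\Hunmudvdx$) to get a resolvent of norm $1$ (your energy estimate with $f^{n+1}$ is just the standard proof of that contractivity, spelled out), and obtains the conservation of the mean by the same integration by parts against the constant function.
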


\begin{proof} Let us denote $P = \Xzero +(-{\D_v}+v) {\D_v}$. Then equation
  \eqref{eq:eulerimpl} writes
$$
(\Id + \dth P) f^{n+1} = f^n.
$$
The linear operator $P$ is maximal accretive in $\Ldeuxmudvdx$
(resp. $\Hunmudvdx$, see \cite{HelN04}), so that the resolvent
$(\Id + \dth P)^{-1}$ is a well defined operator in $\Ldeuxmudvdx$
(resp. $\Hunmudvdx$) of norm $1$.  This implies the well-posedness and the
uniform boundedness of the norms of the functions $f^n$ with respect to $n$.
Similarly to the continuous case, we have in addition
$$
\seq{f^{n+1}} = \seq{f^n} + \dth \iint \sep{ \Xzero f^{n+1} + (-{\D_v}+v) {\D_v}
  f^{n+1}} \mudvdx = \seq{f^n} + 0 = \seq{f^0},
$$
by integration by parts.  The proof is complete.
\end{proof}

In order to prove the exponential (discrete-)time decay of the solutions in
Theorem \ref{thm:decrexpeulerimpl}, similar to the exponential decay of the
continuous solutions (Theorem \ref{thm:decrexp}), we shall examine the behaviour
of the same entropy $\hhh$ defined in \eqref{eq:entropyfunc} along numerical
solutions of \eqref{eq:eulerimpl}.

\begin{lem}
  \label{lem:CSplus}
  Assume $C>D>E>1$ and $E^2<D$. Let us introduce the bilinear map $\varphi$
  defined for all $g,\gtilde\in \Hunmudvdx$ by
  \begin{equation*}
    \varphi(g,\gtilde)=C\seq{g,\gtilde}+D\seq{{\D_v} g,{\D_v} \gtilde} 
+ \frac{E}{2}\seq{{\D_x}g,{\D_v}\gtilde}+\frac{E}{2}\seq{{\D_v} g,{\D_x}
  \gtilde} 
+ \seq{{\D_x} g,{\D_x} \gtilde}.
  \end{equation*}
  Then $\phi$ defines a scalar product in $\Hunmudvdx$ and the associated norm
  is $\sqrt{\hhh(\cdot)}$. In particular one has
  \begin{equation*}
    |\varphi(g,\gtilde)| \leq \sqrt{\hhh(g)}\sqrt{\hhh(\gtilde)}
    \leq \frac{1}{2} \hhh(g) + \frac{1}{2} \hhh(\gtilde).
  \end{equation*}
\end{lem}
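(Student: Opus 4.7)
The plan is to verify that $\varphi$ satisfies the axioms of a scalar product on $\Hunmudvdx$, identify its associated norm with $\sqrt{\hhh(\cdot)}$, and then derive the claimed inequalities from Cauchy--Schwarz and Young.

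First, bilinearity of $\varphi$ is inherited from bilinearity of the underlying $\Ldeuxmudvdx$ scalar product, and symmetry is built into the formula: the cross terms are written in the symmetric form $\frac{E}{2}(\seq{{\D_x}g,{\D_v}\gtilde}+\seq{{\D_v} g,{\D_x}\gtilde})$, which is manifestly invariant under the exchange of $g$ and $\gtilde$. Evaluating at $g=\gtilde$ yields
\begin{equation*}
\varphi(g,g)=C\norm{g}^2+D\norm{{\D_v} g}^2+E\seq{{\D_x}g,{\D_v} g}+\norm{{\D_x} g}^2=\hhh(g),
\end{equation*}
by the definition \eqref{eq:entropyfunc}, which already shows that if $\varphi$ is a scalar product then its induced norm is $\sqrt{\hhh(\cdot)}$.

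Positive definiteness is the only nontrivial axiom, but it is essentially already handled by Lemma \ref{lem:equiv}: the assumption $C>D>E>1$ with $E^2<D$ is precisely what that lemma requires, and it gives
\begin{equation*}
\varphi(g,g)=\hhh(g)\geq \tfrac{1}{2}\norm{g}^2_{\Hunmudvdx}\geq 0,
\end{equation*}
with equality forcing $g=0$. Hence $\varphi$ is a genuine inner product on $\Hunmudvdx$.

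With $\varphi$ established as a scalar product, the first claimed inequality is just the Cauchy--Schwarz inequality applied to $\varphi$:
\begin{equation*}
|\varphi(g,\gtilde)|\leq \sqrt{\varphi(g,g)}\,\sqrt{\varphi(\gtilde,\gtilde)}=\sqrt{\hhh(g)}\,\sqrt{\hhh(\gtilde)}.
\end{equation*}
The second inequality follows from the elementary Young inequality $ab\leq \tfrac{1}{2}a^2+\tfrac{1}{2}b^2$ applied with $a=\sqrt{\hhh(g)}$ and $b=\sqrt{\hhh(\gtilde)}$. There is no real obstacle here; the only subtle point is having identified the assumptions $C>D>E>1$ and $E^2<D$ as exactly those needed to invoke Lemma \ref{lem:equiv} for positive definiteness, everything else being routine.
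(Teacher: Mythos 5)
Your proof is correct and follows essentially the same route as the paper: symmetry and bilinearity by inspection, positive definiteness via Lemma \ref{lem:equiv} under the hypothesis $E^2<D$, then Cauchy--Schwarz for the inner product $\varphi$ followed by Young's inequality. The explicit check that $\varphi(g,g)=\hhh(g)$ is a welcome (if routine) addition that the paper leaves implicit.
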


\begin{proof}
  The map $\varphi$ is bilinear and symmetric on $\Hunmudvdx$. It is positive
  definite on $\Hunmudvdx$ provided $E^2<D$ using Proposition \ref{lem:equiv}.
  In particular, it is non-negative and one has the Cauchy--Schwarz' inequality
  \begin{equation*}
    \forall g,\gtilde\in \Hunmudvdx,\qquad |\varphi(g,\gtilde)| 
\leq\sqrt{\mathcal H(g)}\sqrt{\mathcal H(\gtilde)}.
  \end{equation*}
  The last inequality is just another Young's inequality.
\end{proof}

We now state the main Theorem of this section.

\begin{thm}
  \label{thm:decrexpeulerimpl}
  Assume that $C>D>E>1$ satisfy $E^2<D$ and $(2D+E)^2<2C$.  For all $\dth>0$ and
  $f^0\in \Hunmudvdx$, we denote by $(f^n)_{n\in\N}$ the sequence solution of
  the implicit Euler scheme \eqref{eq:eulerimpl}.  If $\seq{f^0}=0$, then
  \begin{equation*}
    \forall n\in\N,\qquad \hhh(f^n)\leq (1+ 2 \kappa \dth)^{-n} \hhh(f^0).
  \end{equation*}
  with $\kappa = E/(16C)$.

  In addition, for all $\dth >0$ there exists $k>0$ (explicit) with
  $\lim_{\dth \rightarrow 0} k = \kappa$ such that
  \begin{equation*}
    \forall n\in\N,\qquad \hhh(f^n)\leq \hhh(f^0)\exp^{-2 k n\dth}.
  \end{equation*}
\end{thm}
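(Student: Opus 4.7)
The plan is to mimic the continuous computation carried out in the proof of Theorem \ref{thm:decrexp}, but to replace the time derivative $\ddt\hhh(f(t))$ by a discrete telescoping identity adapted to the implicit Euler step. The key observation is that the entropy $\hhh$ is a quadratic form associated with the bilinear form $\varphi$ introduced in Lemma \ref{lem:CSplus}, so $\hhh(g)=\varphi(g,g)$ and the polarization identity is available.

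Set $P = \Xzero + (-{\D_v}+v){\D_v}$ as in Proposition \ref{prop:stabiliteetmoyennepreservee}, so that the scheme \eqref{eq:eulerimpl} reads $f^n=f^{n+1}+\dth P f^{n+1}$. As a first step, I would restrict to $f^0\in D_{\Hunmudvdx}(P)$ so that all iterates $f^{n+1}$ lie in $D_{\Hunmudvdx}(P)$ (by boundedness of the resolvent of $P$ on this domain) and all quantities below are well defined; the general case $f^0\in\Hunmudvdx$ then follows by density. Applying $\varphi$ to both sides one obtains
\begin{equation*}
\hhh(f^n) = \hhh(f^{n+1}) + 2\dth\,\varphi(Pf^{n+1},f^{n+1}) + \dth^2\,\hhh(Pf^{n+1}).
\end{equation*}
Since $\hhh\geq 0$ the last term can be dropped, leaving $\hhh(f^n)\geq \hhh(f^{n+1}) + 2\dth\,\varphi(Pf^{n+1},f^{n+1})$.

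The second step is to recognise that the whole computation performed in the proof of Theorem \ref{thm:decrexp} does not use that $f$ solves the evolution equation: it is purely algebraic in $g=f(t)$, provided one identifies $\ddt\hhh(f(t))$ with $-2\varphi(Pf(t),f(t))$ via the chain rule. In other words, the integration by parts and the Cauchy--Schwarz--Young arguments there establish the \emph{pointwise} coercivity estimate
\begin{equation*}
\varphi(Pg,g)\ \geq\ \kappa\,\hhh(g),\qquad g\in D_{\Hunmudvdx}(P),
\end{equation*}
with $\kappa=E/(16C)$, under the assumptions $C>D>E>1$, $E^2<D$ and $(2D+E)^2<2C$, and using the Poincar\'e inequality of Lemma \ref{lem:fullPoincarecontinu} together with the equivalence of Lemma \ref{lem:equiv}. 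I would state this coercivity as a short intermediate lemma, deduced by reading the proof of Theorem \ref{thm:decrexp} backwards.

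Combining the two steps yields $\hhh(f^n)\geq (1+2\dth\kappa)\hhh(f^{n+1})$, that is
\begin{equation*}
\hhh(f^{n+1})\ \leq\ (1+2\dth\kappa)^{-1}\hhh(f^n),
\end{equation*}
and a straightforward induction on $n$ gives the first inequality of the theorem. For the second statement, I would simply write $(1+2\dth\kappa)^{-n}=\exp^{-2 k n\dth}$ with
\begin{equation*}
k\ =\ \frac{\log(1+2\dth\kappa)}{2\dth},
\end{equation*}
which is explicit, positive, and satisfies $k\to\kappa$ as $\dth\to 0$ by Taylor expansion of the logarithm.

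The only real obstacle is the bookkeeping for the first step: one must justify that $f^{n+1}\in D_{\Hunmudvdx}(P)$ so that $Pf^{n+1}\in\Hunmudvdx$ and $\hhh(Pf^{n+1})$ is finite, and that the polarization expansion is licit in the Hilbertian structure carried by $\varphi$. This is handled by restricting initial data to $D_{\Hunmudvdx}(P)$, using that $(I+\dth P)^{-1}$ preserves this domain (as recalled in the proof of Proposition \ref{prop:stabiliteetmoyennepreservee}), and concluding for general $f^0\in\Hunmudvdx$ by density and continuity of $\hhh$ on $\Hunmudvdx$ (Lemma \ref{lem:equiv}). Everything else is algebra reused from Theorem \ref{thm:decrexp} without any additional effort.
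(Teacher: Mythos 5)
Your proof is correct and is essentially the paper's own argument in a more compact packaging: the identity $\hhh(f^n)=\hhh(f^{n+1})+2\dth\,\varphi(Pf^{n+1},f^{n+1})+\dth^2\,\hhh(Pf^{n+1})$ followed by dropping the non-negative $\dth^2$ term is algebraically identical to the paper's step of writing $\hhh(f^{n+1})=\varphi(f^n,f^{n+1})-\dth\,\varphi(Pf^{n+1},f^{n+1})$ and applying the Cauchy--Schwarz--Young inequality of Lemma \ref{lem:CSplus} (whose deficit is exactly $\tfrac{\dth^2}{2}\hhh(f^n-f^{n+1})=\tfrac{\dth^2}{2}\hhh(Pf^{n+1})$), while your coercivity lemma $\varphi(Pg,g)\geq\kappa\hhh(g)$ is precisely what the paper re-derives term by term at $g=f^{n+1}$. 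Only make explicit that this coercivity holds for \emph{zero-mean} $g$ (it rests on the Poincar\'e inequality of Lemma \ref{lem:fullPoincarecontinu}), which is guaranteed along the iteration by the mean preservation of Proposition \ref{prop:stabiliteetmoyennepreservee}.
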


\begin{proof}
  Using Proposition \ref{prop:stabiliteetmoyennepreservee}, the sequence
  $(f^n)_{n\in\N}$ satisfies for all $n\in\N$ $\seq{f^n} = \seq{f^0} =0$.  Fix
  $n\in\N$.  We evaluate the four terms in the definition of
  $\mathcal H(f^{n+1})$ as follows.  Taking the $\Ldeuxmudvdx$-scalar product of
  relation \eqref{eq:eulerimpl} with $f^{n+1}$ yields
  \begin{equation*}
    \norm{f^{n+1}}^2 = \seq{f^n,f^{n+1}}-\dth\seq{\Xzero  f^{n+1},f^{n+1}}
    - \dth \seq{(-{\D_v}+v){\D_v} f^{n+1},f^{n+1}}.
  \end{equation*}
  The first term in $\dth$ above vanishes by skew-adjointness of the operator
  $\Xzero $. The second term in $\dth$ above can be rewritten to obtain
  \begin{equation}
    \label{eq:estim1eulerimpl}
    \norm{f^{n+1}}^2 = \seq{f^n,f^{n+1}}
    - \dth \norm{{\D_v} f^{n+1}}^2,
  \end{equation}
  since $-{\D_v}+v$ is the formal adjoint of ${\D_v}$.  Differentiating relation
  \eqref{eq:eulerimpl} with respect to $v$ and taking the $\Ldeuxmudvdx$-scalar
  product with ${\D_v} f^{n+1}$ allows to write
  \begin{multline*}
    \norm{{\D_v} f^{n+1}}^2 = \seq{{\D_v} f^n,{\D_v} f^{n+1}}
    -\dth\seq{\Xzero  {\D_v} f^{n+1},{\D_v} f^{n+1}} \\
    - \dth \seq{{\D_x} f^{n+1},{\D_v} f^{n+1}} - \dth
    \seq{{\D_v}(-{\D_v}+v){\D_v} f^{n+1},{\D_v} f^{n+1}}.
  \end{multline*}

\noindent
As before, the skew-adjointness of $\Xzero $ makes the first term in $\dth$
vanish. The third term in $\dth$ can be rewritten as before so that
\begin{equation}
  \label{eq:estim2eulerimpl}
  \norm{{\D_v} f^{n+1}}^2 = \seq{{\D_v} f^n,{\D_v} f^{n+1}}
  - \dth \seq{{\D_x} f^{n+1},{\D_v} f^{n+1}}
  - \dth \norm{(-{\D_v}+v){\D_v} f^{n+1}}^2.
\end{equation}

\noindent
For the third term in $\mathcal H(f^{n+1})$, we first compute ${\D_v} f^{n+1}$
with \eqref{eq:eulerimpl} and take its $\Ldeuxmudvdx$-scalar product with
${\D_x} f^{n+1}$ to write
\begin{eqnarray*}
  \lefteqn{\seq{{\D_v} f^{n+1},{\D_x} f^{n+1}} = } \\
& &  \seq{{\D_v} f^n,{\D_x} f^{n+1}}
    - \dth \seq{\Xzero {\D_v} f^{n+1},{\D_x} f^{n+1}}
    - \dth \seq{{\D_x}f^{n+1},{\D_x} f^{n+1}}\\
& &  - \dth \seq{{\D_v} (-{\D_v}+v){\D_v} f^{n+1},{\D_x} f^{n+1}}.
\end{eqnarray*}
Using that $[{\D_v},(-{\D_v}+v)]=1$, we obtain
\begin{eqnarray*}
  \lefteqn{\seq{{\D_v} f^{n+1},{\D_x} f^{n+1}} =}\\
& &  \seq{{\D_v} f^n,{\D_x} f^{n+1}}
    - \dth \seq{\Xzero {\D_v} f^{n+1},{\D_x} f^{n+1}}
    - \dth \norm{{\D_x}f^{n+1}}^2\\
& &  - \dth \seq{{\D_v} f^{n+1},{\D_x} f^{n+1}}
    - \dth \seq{(-{\D_v}+v)\partial^2_v f^{n+1},{\D_x} f^{n+1}}.
\end{eqnarray*}
Then, we compute ${\D_x} f^{n+1}$ with \eqref{eq:eulerimpl} and take its
$\Ldeuxmudvdx$-scalar product with ${\D_v} f^{n+1}$ to write
\begin{equation*}
  \seq{{\D_x} f^{n+1},{\D_v} f^{n+1}} =
  \seq{{\D_x} f^n,{\D_v} f^{n+1}}
  - \dth \seq{v{\D_x} ^2 f^{n+1},{\D_v} f^{n+1}}
  - \dth \seq{{\D_x}(-{\D_v}+v){\D_v} f^{n+1},{\D_v} f^{n+1}}.
\end{equation*}

\noindent
Summing up the last two identities yields
\begin{eqnarray*}
  \lefteqn{\seq{{\D_v} f^{n+1},{\D_x} f^{n+1}}
  + \seq{{\D_x} f^{n+1},{\D_v} f^{n+1}}=}\\
& & \seq{{\D_v} f^n,{\D_x} f^{n+1}} + \seq{{\D_x} f^n,{\D_v} f^{n+1}} \\
& & - \dth \norm{{\D_x}f^{n+1}}^2 - \dth \seq{{\D_v} f^{n+1},{\D_x} f^{n+1}}\\
& & - \dth \seq{\Xzero {\D_v} f^{n+1},{\D_x} f^{n+1}}
    - \dth \seq{(-{\D_v}+v)\partial^2_v f^{n+1},{\D_x} f^{n+1}}\\
& &  + \dth \seq{{\D_x} f^{n+1},\Xzero {\D_v} f^{n+1}}
    - \dth \seq{{\D_x}(-{\D_v}+v){\D_v} f^{n+1},{\D_v} f^{n+1}}.
\end{eqnarray*}
Using the skew-adjointness of ${\D_x}$ and the fact that $(-{\D_v}+v)^\star={\D_v}$
twice, we obtain
\begin{eqnarray}
  \lefteqn{\seq{{\D_v} f^{n+1},{\D_x} f^{n+1}}
  + \seq{{\D_x} f^{n+1},{\D_v} f^{n+1}}=\seq{{\D_v} f^n,{\D_x} f^{n+1}} 
+ \seq{{\D_x} f^n,{\D_v} f^{n+1}} }\label{eq:estim3eulerimpl}\\
& & - \dth \norm{{\D_x}f^{n+1}}^2 - \dth \seq{{\D_v} f^{n+1},{\D_x} f^{n+1}}
    +2 \dth \seq{(-{\D_v}+v){\D_v} f^{n+1},{\D_x} {\D_v} f^{n+1}}
    \nonumber.
\end{eqnarray}
For the last term in $\mathcal H(f^{n+1})$, we compute the $\Ldeuxmudvdx$-scalar
product of ${\D_x} f^{n+1}$ computed with relation \eqref{eq:eulerimpl} with
${\D_x} f^{n+1}$. This yields directly using the skew-adjointness of ${\D_x}$ and
the fact that $(-{\D_v}+v)^\star={\D_v}$,
\begin{equation}
  \label{eq:estim4eulerimpl}
  \norm{{\D_x} f^{n+1}}^2 = \seq{{\D_x} f^{n},{\D_x} f^{n+1}}
  - \dth \norm{{\D_v} {\D_x} f^{n+1}}^2.
\end{equation}
Summing up relations \eqref{eq:estim1eulerimpl}, \eqref{eq:estim2eulerimpl},
\eqref{eq:estim3eulerimpl} and \eqref{eq:estim4eulerimpl} with respective
coefficients $C$, $D$, $E/2$ and $1$, we obtain
\begin{eqnarray*}
  \lefteqn{\mathcal H(f^{n+1}) = \varphi(f^{n},f^{n+1})
  -\dth \left( C \norm{{\D_v} f^{n+1}}^2
  + \left(D+\frac{E}{2}\right)\seq{{\D_x} f^{n+1},{\D_v} f^{n+1}}\right.}\\
& & \left.
    + D \norm{(-{\D_v} + v ){\D_v} f^{n+1}}^2
    + \frac{E}{2}\norm{{\D_x} f^{n+1}}^2 - E\seq{(-{\D_v}+v){\D_v} f^{n+1},{\D_v}{\D_x}  
f^{n+1}} + \norm{{\D_v}{\D_x}  f^{n+1}}^2\right).
\end{eqnarray*}

\noindent
Using Lemma \ref{lem:CSplus}, we may write
\begin{eqnarray*}
  \lefteqn{\mathcal H(f^{n+1}) \leq \frac{1}{2}\mathcal H(f^n)
  +\frac{1}{2}\mathcal H(f^{n+1})
  -\dth \left( C \norm{{\D_v} f^{n+1}}^2
  + \left(D+\frac{E}{2}\right)\seq{{\D_x} f^{n+1},{\D_v} f^{n+1}}\right.}\\
& & \left.
    + D \norm{(-{\D_v} + v ){\D_v} f^{n+1}}^2
    + \frac{E}{2}\norm{{\D_x} f^{n+1}}^2 - E\seq{(-{\D_v}+v){\D_v} f^{n+1},{\D_v}{\D_x}
    f^{n+1}} 
    + \norm{{\D_v}{\D_x}  f^{n+1}}^2\right).
\end{eqnarray*}

\noindent
This relation is to be compared with the (time)-continuous one
\eqref{eq:decrentropycontinuenonborne}.  The very same estimates as that used in
the end of the proof of Theorem \ref{thm:decrexp}, with $f$ replaced with
$f^{n+1}$, ensure that
\begin{equation*}
  \mathcal H(f^{n+1}) \leq \mathcal H(f^n) - \dth\frac{E}{4}\frac{1}{2C} \mathcal H(f^{n+1}).
\end{equation*}
This gives by induction
$$
\forall n\in\N,\qquad \mathcal H(f^{n}) \leq (1+ 2\kappa \dth)^{-n}\mathcal
H(f^0).
$$
Using a Taylor development of the exponential function we get Theorem
\ref{thm:decrexpeulerimpl}.
\end{proof}

As in the continuous case, we can state as a corollary of the preceding Theorem
the exponential decay in $\Hunmudvdx$ norm, which is a direct consequence of the
equivalence of the norms $\sqrt{\hhh(\cdot)}$ and $\norm{\cdot}_\Hunmudvdx$
stated in Lemma \ref{lem:equiv}.

\begin{cor}
  Let $C>D>E>1$ be chosen as in Theorem \ref{thm:decrexpeulerimpl}.  Let $\kappa$
  be defined as in the same Theorem.  For all $\dth >0$ there exists
  $\kappa_\dth >0$ (explicit) with
  $\lim_{\dth \rightarrow 0} \kappa_\dth = \kappa$ such that for all
  $f^0\in \Hunmudvdx$ with $\seq{f^0} = 0$, the sequence solution
  $(f^n)_{n\in\N}$ of the implicit Euler scheme \eqref{eq:eulerimpl} satisfies
  for all $n\in \N$,
  \begin{equation*}
    \norm{f^n}_\Hunmudvdx \leq 2 \sqrt{C} \exp^{- \kappa_\dth n\dth}  \norm{f^0}_\Hunmudvdx.
  \end{equation*}
\end{cor}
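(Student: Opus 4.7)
The plan is to obtain the $\Hunmudvdx$-norm decay by combining the entropy decay from Theorem \ref{thm:decrexpeulerimpl} with the two-sided equivalence between $\hhh$ and $\norm{\cdot}_\Hunmudvdx^2$ established in Lemma \ref{lem:equiv}, and then to recast the geometric factor $(1+2\kappa\dth)^{-n}$ as a true exponential $\exp^{-2\kappa_\dth n\dth}$ by taking a logarithm.

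More precisely, first I would fix $C>D>E>1$ as in Theorem \ref{thm:decrexpeulerimpl} and pick $f^0\in\Hunmudvdx$ with $\seq{f^0}=0$. By Proposition \ref{prop:stabiliteetmoyennepreservee}, the sequence $(f^n)_{n\in\N}$ produced by the scheme \eqref{eq:eulerimpl} satisfies $\seq{f^n}=\seq{f^0}=0$ for all $n\in\N$, so Theorem \ref{thm:decrexpeulerimpl} applies and gives $\hhh(f^n)\leq (1+2\kappa\dth)^{-n}\hhh(f^0)$.

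Second, the lower bound $\frac{1}{2}\norm{g}_\Hunmudvdx^2\leq \hhh(g)$ of Lemma \ref{lem:equiv} applied to $g=f^n$ and the upper bound $\hhh(f^0)\leq 2C\norm{f^0}_\Hunmudvdx^2$ applied at initial time yield
\begin{equation*}
\norm{f^n}_\Hunmudvdx^2 \leq 2\hhh(f^n)\leq 2(1+2\kappa\dth)^{-n}\hhh(f^0)\leq 4C(1+2\kappa\dth)^{-n}\norm{f^0}_\Hunmudvdx^2,
\end{equation*}
hence after taking square roots,
\begin{equation*}
\norm{f^n}_\Hunmudvdx \leq 2\sqrt{C}(1+2\kappa\dth)^{-n/2}\norm{f^0}_\Hunmudvdx.
\end{equation*}

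Third, I would define explicitly
\begin{equation*}
\kappa_\dth \defegal \frac{\ln(1+2\kappa\dth)}{2\dth}>0,
\end{equation*}
so that $(1+2\kappa\dth)^{-n/2}=\exp^{-\kappa_\dth n\dth}$, which gives the desired bound. A standard Taylor expansion $\ln(1+2\kappa\dth)=2\kappa\dth-2\kappa^2\dth^2+O(\dth^3)$ then shows $\kappa_\dth\to\kappa$ as $\dth\to 0$, completing the proof.

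There is essentially no hard step: the corollary is a purely cosmetic reformulation of Theorem \ref{thm:decrexpeulerimpl} obtained via the norm equivalence and a logarithm, exactly as in the continuous counterpart. The only point requiring a little care is to apply the lower constant of Lemma \ref{lem:equiv} to $f^n$ and the upper constant to $f^0$ (rather than the reverse) in order to collect the sharp prefactor $2\sqrt{C}$.
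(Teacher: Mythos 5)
Your proof is correct and follows exactly the route the paper intends: the corollary is stated there as a direct consequence of the norm equivalence of Lemma \ref{lem:equiv} combined with Theorem \ref{thm:decrexpeulerimpl}, and your explicit choice $\kappa_\dth=\ln(1+2\kappa\dth)/(2\dth)$ is precisely the conversion already carried out at the end of the proof of that theorem. The care you take in applying the lower constant of Lemma \ref{lem:equiv} to $f^n$ and the upper constant to $f^0$ is exactly what produces the prefactor $2\sqrt{C}$.
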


\subsection{The semi-discretization in space and velocity}\label{sec:inhomsd}
In this subsection we are interested in the semi-discretized equation in space
and velocity.  The time is a continuous variable again.

We denote by $\dxh >0$ the step of the uniform discretization of the torus $\T$
into $N$ subintervals, and denote $\jjj = \Z/ N\Z$ the finite set of indices of
the discretization in $x \in \T$. In what follows, the index $i \in \Z$ will
always refer to the velocity variable and the index $j \in \jjj$ to the space
variable.  As mentioned in the introduction, the derivation-in-space discretized
operator is defined by the following centered scheme

\begin{defn}
  For a sequence $G = (G_{i,j})_{i\in \Z, j\in \jjj}$ we define $\Dx G$ by
$$
\forall i \in \Z, j\in \jjj, \qquad (\Dx G)_{j,i} = \frac{G_{j+1,i} -
  G_{j-1,i}}{2\dxh}.
$$
For a sequence $G = (G_{i,j})_{i\in \Z^*, j\in \jjj}$ we define $\Dx G$ by
$$
\forall i \in \Z^*, j\in \jjj, \qquad (\Dx G)_{j,i} = \frac{G_{j+1,i} -
  G_{j-1,i}}{2\dxh}.
$$
Depending on the context, we will use the first definition or the other.
Similarly, we will keep on writing $v$ the pointwise multiplication by $v_i$
from the set of sequences indexed by $\jjj\times\Z$ to itself {\bf and} from the
set of sequences indexed by $\jjj\times\Z^*$ to itself depending on the
context. However, we use the notation $\vs$ from Subsection \ref{subsec:homsd} (see
Definition \ref{def:dvs}, and add $j\in\jjj$ as a parameter) of the pointwise
multiplication operator by $v_i$ from the set of sequences indexed by
$\jjj\times\Z^*$ to the set of sequences indexed by $\jjj\times\Z$.
\end{defn}

Concerning the velocity discretization, we stick on the one corresponding to the
homogeneous case introduced in Subsection \ref{subsec:homsd}.  The definition of
$\Dv$, $\Dvs$, $\muh$ and $\mus$ are the same (with the space index $j$ playing
the role of a parameter) as in Definitions \ref{def:dv}, \ref{def:dvs}, \ref{def:homd}
and \ref{def:homds}.

\noindent
The original semi-discretized equation that we consider is
\begin{equation*}
  {\D_t}F+v \Dx F-\Dvs(\Dv+v)F=0, \qquad F|_{t=0} = F^0,
\end{equation*}
where $F^0\in \ell^1(\jjj\times\Z)$ is a non-negative function with
$\norm{F^0}_{\ell^1(\jjj\times\Z)}=1$ and the unknown $F$ is such that for all
$t>0$, $F(t) \in \ell^1(\jjj\times\Z)$.  As in Section \ref{subsec:homsd}, we
rather work with the rescaled function $f$ defined by
$$
F = \muh + \muh f,
$$ where $\muh$ is the Maxwellian introduced in Lemma \ref{lem:muh},
now considered as a function of both $i$ and $j$.  In that case for all $t>0$ we
have the equivalence
$$
F\in \ell^1(\jjj\times\Z, \dvh \dxh) \Longleftrightarrow f \in
\ell^1(\jjj\times\Z, \muh \dvh \dxh).
 $$

\noindent
Referring again to the homogeneous setting studied in Section
\ref{sec:homogeneous}, we introduce the definition of a solution of the (scaled)
semi-discretized equation that we will study in this subsection.

\begin{defn} We shall say that a function
  $f \in \ccc^0( \R^+, \ell^1(\jjj\times\Z, \muh \dvh \dxh))$ satisfies the
  (scaled) semi-discrete inhomogeneous Fokker-Planck equation if
  \begin{equation}
    \label{eq:FPISD}
    {\D_t}f+v \Dx f+(-\Dvs+\vs)\Dv f=0,
  \end{equation}
  in the sense of distributions
\end{defn}

As in the homogeneous case of Section \ref{sec:homogeneous}, we work in
Hilbertian subspaces of $\ell^1(\jjj\times\Z, \muh \dvh \dxh)$ that we introduce
below.

\begin{defn}
  We define the space $\Ldeuxmudvdxh$ to be the Hilbertian subspace of
  $\R^{\jjj\times\Z}$ made of sequences $f $ such that
$$
\norm{ f}_{\Ldeuxmudvdxh}^2 \defegal \sum_{j\in \jjj, i\in \Z} (f_{j,i})^2
\muh_i \dvh \dxh <\infty.
$$
This defines a Hilbertian norm, and the related scalar product will be denoted
by $\seq{ \cdot, \cdot}$. For $f \in \Ldeuxmudvdxh$, we define the mean of $f$
(with respect to this weighted scalar product in both velocity and space) as
$$
\seq{f} \defegal \sum_{j\in \jjj, i\in \Z} f_{j,i} \muh_i \dvh \dxh = \seq{f,
  1}.
$$
We define the space $\Ldeuxmudvdxs$ to be the Hilbertian subspace of
$\R^{\jjj\times\Z^*}$ made of sequences $g $ such that
$$
\norm{ g}_{\Ldeuxmudvdxs }^2 \defegal \sum_{j\in \jjj, i\in \Z^*} (g_{j,i})^2
\mus_i \dvh \dxh <\infty.
$$
This defines also a Hilbertian norm, and the related scalar product will be
denoted by $\seqs{ \cdot, \cdot}$.  Eventually we define
$$
\Hunmudvdxh = \set{ f \in \Ldeuxmudvdxh, \textrm{ s.t. } \Dv f \in
  \Ldeuxmudvdxs, \ \Dx f \in \Ldeuxmudvdxh },
$$
with the norm
$$
\norm{f}^2_\Hunmudvdxh = \norm{f}^2_\Ldeuxmudvdxh + \norm{\Dv f}^2_\Ldeuxmudvdxs
+ \norm{\Dx f}^2_\Ldeuxmudvdxh.
$$
\end{defn}

We define the operator $\Pd$ involved in Equation \eqref{eq:FPISD} by
\begin{equation*}
  \Pd= \Xzerod + (-\Dvs+\vs)\Dv,
\end{equation*}
with $\Xzerod = v \Dx : \Ldeuxmudvdxh \hookrightarrow \Ldeuxmudvdxh$ defined
by
$$ ( \Xzerod f)_{j,i} = (v \Dx f)_{j,i} \textrm{ when } i\neq 0, \qquad ( \Xzerod
f)_{j,0} =0.
 $$

 This way, Equation \eqref{eq:FPISD} reads ${\D_t} f+ \Pd f = 0$.  We
 summarize the structural properties of \eqref{eq:FPISD} and of
 the operator $\Pd$ in the following Proposition.  From now on and for the rest
 of this subsection, we work in $\Ldeuxmudvdxh$ and denote (when no ambiguity
 happens) the corresponding norm $\norm{\cdot}$ without subscript. Similarly
 $\norms{\cdot}$ stands for the norm in $\Ldeuxmudvdxs$.

 \begin{prop} 
 We have
   \begin{enumerate}
   \item The operator $\Pd = \Xzerod + ( - \Dvs +\vs) \Dv$ on $\Ldeuxmudvdxh$
     equipped with its graph domain $D(\Pd)$ is maximal accretive in
     $\Ldeuxmudvdxh$.
   \item The Operator $( - \Dvs +\vs) \Dv$ is formally self-adjoint and the
     operator $\Xzerod$ is formally skew-adjoint in $\Ldeuxmudvdxh$.  Moreover,
     for all $g \in \Ldeuxmudvdxh$, $h \in \Ldeuxmudvdxs $ for which it makes
     sense
     \begin{align} \label{eq:ofin}
       \seq{ ( - \Dvs +\vs)  h, g}  = \seq{h, \Dv g}_\sharp, \\
       \label{eq:ofin2}
       \seq{\Pd g,g} = \seq{ ( - \Dvs +\vs) \Dv g, g} = \norms{\Dv g}^2.
     \end{align}
   \item For an initial data $f^0
     \in
     D(\Pd)$, there exists a unique solution of \eqref{eq:FPISD} in
     $\ccc^1(\R^+,
     \Ldeuxmudvdxh) \cap \ccc^0(\R^+,
     D(\Pd))$, and the associated semi-group naturally defines a solution in
     $\ccc^0(\R^+, \Ldeuxmudvdxh)$ for all $f^0\in \Ldeuxmudvdxh$.
   \item The preceding properties remain true if we consider the operator
     $\Pd$
     in $\Hunmudvdxh$
     with domain $D_\Hunmudvdxh(\Pd)$.
     In particular it defines a unique solution of \eqref{eq:FPISD} in
     $\ccc^1(\R^+,
     \Hunmudvdxh) \cap \ccc^0(\R^+,D_\Hunmudvdxh(\Pd)) $ if $f^0 \in
     D_\Hunmudvdxh(\Pd)$ and a semi-group solution $f \in \ccc^0(\R^+,
     \Hunmudvdxh)$ if $f^0 \in \Hunmudvdxh$.
   \item Constant sequences are the only equilibrium states of equation
     \eqref{eq:FPISD} and the evolution preserves the mass $\seq{f(t)}
     = \seq{f^0}$ for all $t\geq 0$.
   \end{enumerate}
 \end{prop}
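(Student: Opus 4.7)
The plan is to lift Proposition \ref{prop:hstruct} from the homogeneous setting to the present inhomogeneous one by treating $j\in\jjj$ as a parameter, and to handle the new drift operator $\Xzerod$ separately. For item (2), I would derive \eqref{eq:ofin} by applying \eqref{eq:of} pointwise in $j$ and summing with weight $\dxh$: the operators $\Dv$, $\Dvs$ and the multiplication by $\vs$ do not see $j$, so the inhomogeneous adjoint identity follows without new computation. Identity \eqref{eq:ofin2} is then obtained by choosing $h = \Dv g$ and invoking the formal skew-adjointness of $\Xzerod$ in $\Ldeuxmudvdxh$. That skew-adjointness reduces, since $v_i\muh_i$ is a scalar $i$-weight, to showing that the centered difference $\Dx$ is skew-adjoint in $\ell^2(\jjj,\dxh)$, which is a one-line discrete Abel transformation on the torus $\jjj = \Z/N\Z$, with no boundary term thanks to periodicity. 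The convention $(\Xzerod g)_{j,0}=0$ is harmless since $v_0 = 0$.

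For items (1) and (3), the graph domain $D(\Pd)$ is dense in $\Ldeuxmudvdxh$ (finitely supported sequences belong to it), and accretivity follows from \eqref{eq:ofin2}: $\seq{\Pd g, g} = \norms{\Dv g}^2 \geq 0$. Maximal accretivity then reduces to solving $(\Id + \Pd) g = h$ for every $h \in \Ldeuxmudvdxh$, which I would do by Lax--Milgram applied to the bilinear form $a(g, \gtilde) = \seq{g,\gtilde} + \seqs{\Dv g, \Dv \gtilde} + \seq{\Xzerod g, \gtilde}$ on $\Hunmudvdxh$: it is continuous, and coercive because the antisymmetric contribution of $\Xzerod$ vanishes on the diagonal. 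This produces a weak solution in $\Hunmudvdxh$ which item (2) upgrades to a strong solution in $D(\Pd)$, and item (3) follows from the Hille--Yosida theorem applied in $\Ldeuxmudvdxh$. For item (4), the same strategy works in $\Hunmudvdxh$ once $\Pd$ is shown to be maximal accretive there; this in turn rests on discrete analogues of the commutator relations \eqref{eq:commrel}, which are precisely what the subsequent subsections of this section develop.

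For item (5), if $\Pd g = 0$ then \eqref{eq:ofin2} yields $\norms{\Dv g}^2 = 0$, so $\Dv g = 0$ and $g$ does not depend on the velocity index $i$. Substituting back gives $v\,\Dx g = 0$ at each $i\neq 0$, which forces $(\Dx g)_{j,i} = 0$ on the torus. Combined with the mass normalization and the structure of $\muh$, only constant sequences survive as equilibria. Mass preservation is obtained by testing the equation against $1\in\Ldeuxmudvdxh$: the drift contribution $\seq{\Xzerod f, 1}$ telescopes to zero in $j$ on the torus, while \eqref{eq:ofin} with $\Dv 1 = 0$ kills the diffusion contribution. The main technical obstacle in this chain of arguments is the $\Hunmudvdxh$-maximal accretivity needed in item (4): the two-direction definition of $\Dv$ near the index $i=0$ prevents the continuous commutation identity $[{\D_v},\Xzero] = {\D_x}$ from holding exactly at the discrete level, so a careful bookkeeping of the $i=0$ correction term is required to close the energy estimate, and this bookkeeping is the real novelty that the later subsections are designed to carry out.
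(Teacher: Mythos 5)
Most of your plan coincides with the paper's own (very short) proof: the adjoint identity \eqref{eq:ofin} and the dissipation identity \eqref{eq:ofin2} are indeed obtained by summing the homogeneous computation of Proposition \ref{prop:hstruct} over $j\in\jjj$, skew-adjointness of $\Xzerod$ is exactly the periodic Abel transform for the centered $\Dx$, well-posedness is Hille--Yosida, mass conservation is the test against $1$, and the characterization of equilibria is the same two-step argument ($\Dv f=0$ forces $f_{j,i}=\rho_j$, then $\Xzerod f=0$ forces $\rho$ constant --- note that for the centered difference this last step uses the standing assumption that $N=\#\jjj$ is odd, which you should invoke explicitly). The paper does not prove maximal accretivity at all; it refers to the continuous argument of Helffer--Nier and remarks in the appendix that the adaptation is easier in the discrete setting.

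The one place where you go beyond the paper is also where your argument breaks: the Lax--Milgram step. The bilinear form $a(g,\gtilde)=\seq{g,\gtilde}+\seqs{\Dv g,\Dv\gtilde}+\seq{\Xzerod g,\gtilde}$ is \emph{not} continuous on $\Hunmudvdxh\times\Hunmudvdxh$, because $\Xzerod=v\Dx$ involves multiplication by the unbounded weight $v_i=i\dvh$ ($i$ ranges over all of $\Z$ here), and $\norm{v\Dx g}$ is not controlled by $\norm{\Dx g}$, nor by any part of the $\Hunmudvdxh$ norm. Moreover $a(g,g)=\norm{g}^2+\norms{\Dv g}^2$ is not coercive for the $\Hunmudvdxh$ norm either, since the $\norm{\Dx g}^2$ contribution is absent from the diagonal; shrinking the ambient space to $\set{g:\norm{g}+\norms{\Dv g}<\infty}$ restores coercivity but makes the drift term even less controlled. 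This is precisely the hypocoercive obstruction: the drift cannot be absorbed by the form, and maximal accretivity must instead be obtained as in \cite{HN04} (e.g.\ by showing that $\Pd$ is closed with the finitely supported sequences as a core and that the formal adjoint $-\Xzerod+(-\Dvs+\vs)\Dv$ is also accretive, or by a cutoff/regularization in $v$ and passage to the limit). As written, your items (1), (3) and (4) therefore rest on a step that does not close; either defer to the continuous argument as the paper does, or replace Lax--Milgram by one of these core/adjoint arguments.
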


\begin{proof}
  The maximal accretivity can be proven using the same scheme of proof as in the
  continuous case and we won't do it here, referring to \cite{HelN04}.  The
  skew-adjointedness of $\Xzerod$
  is clear since we chose a centered scheme in space.  The properties stated in
  \eqref{eq:ofin} and \eqref{eq:ofin2} are direct consequences of the homogeneous
  analysis (see Proposition \ref{prop:hstruct}).  The well-posedness is then a direct
  consequence of Hille--Yosida's Theorem.  In particular, we can check that if
  $f$ is a solution in $\ccc^1(\R^+, \Ldeuxmudvdxh)$
$$
\ddt \norm{f}^2 = -2 \seq{\Pd f, f} = -2 \norm{\Dv f}_\sharp^2 \leq 0,
$$
so that the $\Ldeuxmudvdxh$
norm is non-increasing.  For the last point, we first infer that if $f$
is a stationary solution then
$$
\ddt \norm{f}^2 = -2 \norm{\Dv f}_\sharp^2 =0 \Longrightarrow \Dv f = 0.
$$
Introducing the macroscopic density $\rho$
defined for all $j\in\jjj$
by $\rho_j
= \dvh \sum_{i \in \Z} f_{j,i}\muh_i$, the fact that $\Dv
f=0$ yields that for all $(j,i)\in\jjj\times\Z$, $f_{j,i} =
\rho_j$.  Then, the equation $\Xzerod f = 0$ implies that
$\rho_j$ does not depend on $j\in\jjj$ and we summarize this with
$$
f_{j,i} = \rho_j = \seq{f} = \seq{f^0}, \qquad \forall (j,i) \in \jjj \times \Z,
$$
so that constant sequences are the only equilibrium states of the equation.  The
remaining parts of the proof follow the ones of the continuous case.  The proof
is complete.
\end{proof}

For later use, we introduce the operator $S=\Dv v - v \Dv$ from $\Ldeuxmudvdxh$
to $\Ldeuxmudvdxs$, where the first operator $v$ is the pointwise multiplication
by $v_i$ at each $(j,i)\in\jjj\times\Z$ and the second one is the pointwise
multiplication by $v_i$ at each $(j,i)\in\jjj\times\Z^*$.  The operator $S$ will
essentially play the role of $\adf{{\\Dv},v}$ in the continuous case.  We
observe that $S$ is a shift operator in the velocity variable and we have the
following lemma:

\begin{lem}
  \label{lem:constcontS}
  Operator $S : \Ldeuxmudvdxh \hookrightarrow \Ldeuxmudvdxs$ satisfies the
  following: for all $g\in \Ldeuxmudvdxh$, we have for all $j\in \jjj$,
  $$
  (S g)_{j,i} = g_{j,i+1} \textrm{ for } i \leq -1, \qquad (S g)_{j,i} = g_{j,
    i-1} \textrm{ for } i \geq 1,
  $$
  and
  \begin{equation*}
    \norm{g}^2\leq \norms{S g}^2\leq 2\norm{g}^2.
  \end{equation*}
\end{lem}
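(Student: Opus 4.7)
My plan is to prove the explicit formula for $Sg$ first, then use it to express $\norms{Sg}^2$ in terms of $\norm{g}^2$ plus a non-negative correction, from which both inequalities fall out.

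For the explicit shift formula, I would just expand $(Sg)_{j,i}$ using the definitions of $\Dv$ and of the pointwise multiplication by $v$. For $i\geq 1$, the upwind formula $(\Dv G)_{j,i} = (G_{j,i}-G_{j,i-1})/\dvh$ gives
\begin{equation*}
(Sg)_{j,i}=\frac{v_i g_{j,i}-v_{i-1}g_{j,i-1}}{\dvh} - v_i\,\frac{g_{j,i}-g_{j,i-1}}{\dvh}
=\frac{v_i-v_{i-1}}{\dvh}\,g_{j,i-1}=g_{j,i-1},
\end{equation*}
since $v_i-v_{i-1}=\dvh$. The case $i\leq -1$ is symmetric using the other branch of $\Dv$ and the identity $v_{i+1}-v_i=\dvh$, yielding $(Sg)_{j,i}=g_{j,i+1}$.

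For the norm estimates, I would plug the shift formula into the definition of $\norms{\,\cdot\,}$ and use $\mus_i=\muh_{i-1}$ for $i\geq 1$ and $\mus_i=\muh_{i+1}$ for $i\leq -1$. After shifting the summation indices ($k=i-1$ on the positive side, $k=i+1$ on the negative side) I expect the two ranges to merge and recover the sum $\sum_{k\in\Z}g_{j,k}^2\muh_k$, with the $k=0$ term appearing \emph{twice} because it is produced by both sides. Concretely, this should give
\begin{equation*}
\norms{Sg}^2=\norm{g}^2+\dxh\dvh\,\muh_0\sum_{j\in\jjj}g_{j,0}^2.
\end{equation*}
The lower bound $\norm{g}^2\leq \norms{Sg}^2$ is then immediate, and the upper bound follows because the extra term is a single slice of $\norm{g}^2$, hence bounded by $\norm{g}^2$ itself.

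There is no real obstacle here: the only point to be a bit careful about is the bookkeeping around the index $i=0$ (which is in $\iii$ for $g$ but \emph{not} in $\iiis$ for $Sg$), and making sure the combinatorics of the shift produces the $g_{j,0}$ term once from each side; the identities $v_i-v_{i-1}=\dvh$ and $\mus_{\pm 1}=\muh_0$ are what make the argument clean. The resulting identity for $\norms{Sg}^2$ is in fact slightly stronger than the announced two-sided bound, and may be worth recording for later use.
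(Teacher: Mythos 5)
Your proposal is correct and follows essentially the same route as the paper: expand $\Dv(vg)-v\Dv g$ entrywise using $v_i-v_{i-1}=\dvh$ to get the shift formula, then re-index the two half-sums via $\mus_{\pm(i)}=\muh_{i\mp 1}$ to obtain the exact identity $\norms{Sg}^2=\norm{g}^2+\dxh\dvh\,\muh_0\sum_{j}g_{j,0}^2$, from which both bounds follow. The paper's proof records precisely this identity, including the doubled $i=0$ slice, so there is nothing to add.
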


\begin{proof}
  Let $g\in\Ldeuxmudvdxh$.  We first compute $\Dv v g$ (where the multiplication
  operator $v$ is supposed to be defined from $\Ldeuxmudvdxh $ to
  $\Ldeuxmudvdxh$).  We omit for convenience the index $j\in\jjj$ in the
  computations. We have
$$
(\Dv(v g))_{i} = \dfrac{v_{i+1}g_{i+1}-v_{i}g_{i}}{\dvh} \textrm{ for } i \leq
-1, \qquad (\Dv(v g))_{i} = \dfrac{v_i g_{i}-v_{i-1}g_{i-1}}{\dvh} \textrm{ for }
i \geq 1.
$$
Similarly we compute $v \Dv g$ (where the multiplication operator $v$ is now
supposed to be defined from $ \Ldeuxmudvdxs $ to $ \Ldeuxmudvdxs$):
  $$
  (v \Dv g)_{i} = v_i\dfrac{g_{i+1}-g_{i}}{\dvh} \textrm{ for } i \leq -1, \qquad
  (v \Dv g)_{i} = v_i\dfrac{g_{i}-g_{i-1}}{\dvh} \textrm{ for } i \geq 1.
    $$
    Comparing the two preceding results gives the expression of $S g$.  We now
    compute the norms using the definition of $\mus$ and get
    \begin{align*}
      (\dvh\dxh)^{-1} \norms{S g}^2&
           =\sum_{j\in\jjj,i\leq -1}g_{j,i+1}^2\mus_i + \sum_{j\in\jjj,i\geq 1}g_{j,i-1}^2\mus_i\\
          &=\sum_{j\in\jjj,i\leq 0}g_{j,i}^2\muh_i + \sum_{j\in\jjj,i\geq 0}g_{j,i}^2\muh_i\\
          &=(\dvh\dxh)^{-1} \norm{g}^2+\muh_0\sum_{j\in\jjj}g_{j,0}^2.
    \end{align*}
    This last term is one of the terms (the centered one) in the definition of
    the norm in $\Ldeuxmudvdxh$, and we therefore get
    \begin{equation*}
      \norm{g}^2\leq  \norms{S g}^2 \leq 2  \norm{g}^2.
    \end{equation*}
    The proof is complete.
  \end{proof}

  We define the operator $S^\sharp:\Ldeuxmudvdxs\rightarrow\Ldeuxmudvdxh$ to be
  the adjoint of the operator $S$, {\it i.e.} satisfying the relation
  \begin{equation*}
    \forall (g,h)\in \Ldeuxmudvdxh\times\Ldeuxmudvdxs,\qquad
    \seq{S g,h}_{\sharp}=\seq{g,S^\sharp h}.
  \end{equation*}

  This is again a shift operator in the velocity variable, but it is not
  injective, and we have the following lemma
  \begin{lem}
    \label{lem:constcontSsharp}
    Operator $\Ss : \Ldeuxmudvdxs \hookrightarrow \Ldeuxmudvdxh$ satisfies the
    following: For $h\in\Ldeuxmudvdxs$, we have for all $j\in\jjj$,
$$
(\Ss h)_{j,i} = h_{j,i-1} \textrm{ for } i\leq -1, \qquad (\Ss h)_{j,0} =
h_{j,-1}+h_{j,1},\qquad (\Ss h)_{j,i} = h_{j,i+1} \textrm{ for } i\geq 1.
$$
Moreover, for all $h \in\Ldeuxmudvdxs$, we have
\begin{equation*}
  \norm{S^\sharp h}^2 \leq 4 \norm{h}_\sharp^2.
\end{equation*}
\end{lem}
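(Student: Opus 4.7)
The plan is to derive the explicit formula for $\Ss$ by duality, then estimate its norm directly.

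First I would compute the bilinear form $\seq{Sg,h}_\sharp$ and rearrange it into the form $\seq{g,\Ss h}$ to read off $\Ss$. Using the description of $S$ from Lemma \ref{lem:constcontS}, we have
\begin{equation*}
\frac{\seq{Sg,h}_\sharp}{\dvh\dxh} = \sum_{j\in\jjj}\Bigl(\sum_{i\leq -1} g_{j,i+1}h_{j,i}\mus_i + \sum_{i\geq 1} g_{j,i-1}h_{j,i}\mus_i\Bigr).
\end{equation*}
In the first inner sum I would change index $k=i+1\leq 0$ and use $\mus_{k-1}=\muh_k$ (valid since $k-1\leq -1$); in the second one I would set $k=i-1\geq 0$ and use $\mus_{k+1}=\muh_k$ (valid since $k+1\geq 1$). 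Both transformations produce a term involving $g_{j,0}\muh_0$, namely $g_{j,0}h_{j,-1}\muh_0$ from the first sum (at $k=0$) and $g_{j,0}h_{j,1}\muh_0$ from the second (at $k=0$). Collecting the coefficients of $g_{j,k}\muh_k$ in the resulting expression then gives, for every $j\in\jjj$, the stated formulas
\begin{equation*}
(\Ss h)_{j,k}=h_{j,k-1}\ (k\leq -1),\qquad (\Ss h)_{j,0}=h_{j,-1}+h_{j,1},\qquad (\Ss h)_{j,k}=h_{j,k+1}\ (k\geq 1).
\end{equation*}

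For the norm bound I would compute $\|\Ss h\|^2$ from this formula:
\begin{equation*}
\frac{\norm{\Ss h}^2}{\dvh\dxh}=\sum_{j\in\jjj}\Bigl(\sum_{k\leq -1} h_{j,k-1}^2\muh_k+(h_{j,-1}+h_{j,1})^2\muh_0+\sum_{k\geq 1} h_{j,k+1}^2\muh_k\Bigr).
\end{equation*}
Reindexing the two outer sums as above ($i=k-1\leq -2$, respectively $i=k+1\geq 2$) rewrites them as $\sum_{i\leq -2} h_{j,i}^2\mus_i$ and $\sum_{i\geq 2} h_{j,i}^2\mus_i$. The middle term is handled by $(a+b)^2\leq 2(a^2+b^2)$ together with the identifications $\mus_{-1}=\muh_0=\mus_1$, giving $(h_{j,-1}+h_{j,1})^2\muh_0\leq 2 h_{j,-1}^2\mus_{-1}+2 h_{j,1}^2\mus_1$. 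Summing everything one recovers $\sum_{i\in\Z^*}h_{j,i}^2\mus_i$ plus an extra contribution $h_{j,-1}^2\mus_{-1}+h_{j,1}^2\mus_1$, hence an overall bound $2\sum_{i\in\Z^*}h_{j,i}^2\mus_i$, which yields $\norm{\Ss h}^2\leq 2\norm{h}_\sharp^2\leq 4\norm{h}_\sharp^2$.

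No step is a real obstacle; the only subtlety is bookkeeping at the boundary indices, where the $\mus$ sequence is not defined at $0$ while $\muh$ is, so one must verify carefully that every reindexing $\mus_{i\pm 1}\leftrightarrow \muh_i$ stays in the allowed range, and that both the $i=-1$ and $i=1$ contributions to $\seq{Sg,h}_\sharp$ collapse onto the same index $k=0$ on the $g$-side, which is precisely the mechanism producing the doubled value $(\Ss h)_{j,0}=h_{j,-1}+h_{j,1}$ and accounts for the non-injectivity of $\Ss$.
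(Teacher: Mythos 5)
Your proof is correct and follows exactly the route the paper intends: the paper omits the details, stating only that the argument uses ``similar tools'' to those of Lemma \ref{lem:constcontS}, and your duality computation plus the reindexings $\mus_{i\pm 1}\leftrightarrow\muh_i$ (with the two contributions collapsing onto $k=0$) is precisely that argument. Your careful bookkeeping in fact yields the sharper bound $\norm{\Ss h}^2\leq 2\norm{h}_\sharp^2$, which of course implies the stated constant $4$.
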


\begin{proof}
  The proof is straightforward, using similar tools as in the one of Lemma
  \ref{lem:constcontS}.
\end{proof}

In order to apply a procedure similar to the one we used in the continuous
inhomogeneous case in Section \ref{sec:inhomc}, we introduce the following
modified entropy defined for $g\in \Hunmudvdxh$ by
\begin{equation}
  \hhhd(g) \defegal  C\norm{g}^2+D\norms{\Dv g}^2+E\seqs{\Dv g,S\Dx g}+\norm{\Dx g}^2,
\end{equation}
for well chosen $C>D>E>1$ to be defined later.  We will show in a moment that
for these parameters, $t\mapsto \hhhd(f(t))$ is exponentially decreasing in time
when $f$ is the semi-group solution of the scaled inhomogeneous Fokker-Planck
equation \eqref{eq:FPISD} with initial datum $f^0\in\Hunmudvdxh$ of zero mean.
Before doing this, we compare this entropy $\hhhd$ with the usual $\Hunmudvdxh$
norm.

\begin{lem}\label{lem:equivd}
  If $2E^2<D$ then for all $g\in \Hunmudvdxh$,
  \begin{equation*}
    \dfrac{1}{2}\norm{g}_{\Hunmudvdxh}^2\leq\hhhd(g)\leq 2C\norm{g}_{\Hunmudvdxh}^2.
  \end{equation*}
\end{lem}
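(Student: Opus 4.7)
The plan is to mimic the proof of Lemma \ref{lem:equiv}, the only new ingredient being that the cross term now involves the shift operator $S$ applied to $\Dx g$ rather than $\Dx g$ itself, so we must pay a factor of $2$ coming from the comparison between $\norms{S\Dx g}^2$ and $\norm{\Dx g}^2$ established in Lemma \ref{lem:constcontS}.

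First I would control the cross term by Cauchy--Schwarz in $\Ldeuxmudvdxs$ followed by a Young inequality with a tunable parameter $\beta>0$:
\begin{equation*}
|E\seqs{\Dv g,S\Dx g}|\leq \frac{E^2\beta}{2}\norms{\Dv g}^2+\frac{1}{2\beta}\norms{S\Dx g}^2.
\end{equation*}
Using Lemma \ref{lem:constcontS}, which gives $\norms{S\Dx g}^2\leq 2\norm{\Dx g}^2$, the choice $\beta=2$ provides the symmetric-looking estimate
\begin{equation*}
|E\seqs{\Dv g,S\Dx g}|\leq E^2\norms{\Dv g}^2+\tfrac{1}{2}\norm{\Dx g}^2,
\end{equation*}
which is the discrete analogue (losing a factor $2$) of the bound used in Lemma \ref{lem:equiv}.

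For the lower bound I would then write
\begin{equation*}
\hhhd(g)\geq C\norm{g}^2+(D-E^2)\norms{\Dv g}^2+\tfrac{1}{2}\norm{\Dx g}^2,
\end{equation*}
and use the hypotheses $C>D>E>1$ and $2E^2<D$: the last one implies $D-E^2>E^2>1>\tfrac12$, so each coefficient is at least $\tfrac12$ and the lower bound $\tfrac12\norm{g}_{\Hunmudvdxh}^2$ follows. For the upper bound I would similarly obtain
\begin{equation*}
\hhhd(g)\leq C\norm{g}^2+(D+E^2)\norms{\Dv g}^2+\tfrac{3}{2}\norm{\Dx g}^2,
\end{equation*}
and then check that each coefficient is at most $2C$: the assumption $2E^2<D$ yields $D+E^2<\tfrac{3D}{2}<\tfrac{3C}{2}<2C$, and $\tfrac32<2C$ follows from $C>1$; the coefficient of $\norm{g}^2$ is exactly $C\leq 2C$.

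There is no real obstacle here beyond keeping track of the constants; the only conceptual point is that $S$ is not an isometry (Lemma \ref{lem:constcontS} shows it is only a quasi-isometry with ratio $\sqrt{2}$), and this is precisely what makes the stronger hypothesis $2E^2<D$ (instead of $E^2<D$ in Lemma \ref{lem:equiv}) appear. All conclusions follow from direct applications of the Cauchy--Schwarz--Young inequality and Lemma \ref{lem:constcontS}, together with the monotone ordering $1<E<D<C$.
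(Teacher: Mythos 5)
Your proof is correct and follows essentially the same route as the paper's: the same Cauchy--Schwarz--Young estimate on the cross term, the same appeal to Lemma \ref{lem:constcontS} to convert $\norms{S\Dx g}^2$ into $2\norm{\Dx g}^2$, and the same verification of the coefficients under $1<E<D<C$ and $2E^2<D$. No issues.
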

\begin{proof}
  We stick to the proof of Lemma \ref{lem:equiv}. Let $g\in \Hunmudvdxh$.  We
  use the Cauchy--Schwarz--Young inequality and observe that
  \begin{equation*}
    2\abs{E\seq{\Dv g,S\Dx g}_\sharp}\leq 2E^2\norm{\Dv g}_\sharp^2 
    +\frac{1}{2}\norms{S\Dx g}^2 \leq 2E^2\norm{\Dv g}_\sharp^2+\norm{\Dx g}^2,
  \end{equation*}
  where we used Lemma \ref{lem:constcontS} for the last inequality.  This
  implies
  \begin{multline*}
    \underbrace{C}_{1/2\leq}\norm{g}^2 +\underbrace{(D-E^2)}_{1/2\leq
      E^2\leq}\norm{\Dv g}_\sharp^2
    +\dfrac{1}{2}\norm{\Dx g}^2 \\
    \leq \hhhd(g)\leq C\norm{g}^2+\underbrace{(D+E^2)}_{\leq D+D/2\leq 3C/2\leq
      2C}\norm{\Dv g}_\sharp^2+\underbrace{3C/2}_{\leq 2C} \norm{\Dx g}^2,
  \end{multline*}
  which implies the result since $2E^2<D$.
\end{proof}

As in the continuous case, we need a full Poincar\'e inequality in space and
velocity.  We first note that, for functions $\rho$ of the space variable
$j\in\jjj$ only, provided that $N=\#\jjj$ is odd (which is assumed from now on
in this paper), the Poincar\'e inequality
\begin{equation}
  \label{eq:Poincarediscretxseulement}
  \norm{\rho-\seq{\rho}}^2_\Ldeuxdxh \leq \norm{\Dx \rho}^2_\Ldeuxdxh,
\end{equation}
is standard (and easy to reproduce following the proof of Lemma
\ref{lem:Poincarecontinu}), where
\begin{equation}
  \label{eq:defL2dxh}
  \norm{\rho}^2_\Ldeuxdxh = \sum_{j\in \jjj} \rho_j^2 \dxh,
\end{equation}
is the standard norm on the discretized torus,
$$
\seq{\rho} = \sum_{j\in \jjj} \rho_j \dxh,
$$
is the mean of $\rho$ and $\Dx$ is the centered finite difference derivation
operator defined above. In particular, for $g\in \Ldeuxmudvdxh$, one can apply
\eqref{eq:Poincarediscretxseulement} to the macroscopic density $\rho$ of $g$
defined of $j\in\jjj$ by $\rho_j = \dvh \sum_{i} g_{j,i}\muh_i$.  The fully
discrete Poincar\'e inequality of Lemma \ref{lem:fullPoincarediscret} is then a
consequence of Proposition \ref{prop:poindiscrete} in velocity only (following the
proof of the continuous case stated in Lemma \ref{lem:Poincarecontinu}).

\begin{lem}[Full Discrete Poincar\'e inequality]
  \label{lem:fullPoincarediscret}
  For all $g \in \Hunmudvdxh$, we have
  \begin{equation*}
    \norm{g-\seq{g}}^2_\Ldeuxmudvdxh  \leq  \norm{\Dv g}_\Ldeuxmudvdxs^2 + 
    \norm{\Dx g}_\Ldeuxmudvdxh^2.  \end{equation*}
\end{lem}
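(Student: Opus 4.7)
The plan is to mimic the continuous tensorization proof (Lemma \ref{lem:fullPoincarecontinu}), replacing the two continuous Poincaré inequalities by their discrete counterparts: the one in velocity from Proposition \ref{prop:poindiscrete}, and the one in space \eqref{eq:Poincarediscretxseulement} applied to the discrete macroscopic density. As always, I may replace $g$ by $g-\seq{g}$ and so reduce to the case $\seq{g}=0$.

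First I would introduce the discrete macroscopic density $\rho\in\R^{\jjj}$ defined by
\begin{equation*}
  \rho_j \defegal \dvh\sum_{i\in\Z} g_{j,i}\,\muh_i,\qquad j\in\jjj,
\end{equation*}
and view it as an element of $\R^{\jjj\times\Z}$ constant in $i$ (which I write $\rho\otimes 1$ with a slight abuse of notation). Since $\rho\otimes 1$ is the orthogonal projection of $g$ in $\Ldeuxmudvdxh$ onto the closed subspace of sequences depending only on $j$ (this uses $\dvh\sum_i\muh_i=1$), the Pythagorean identity gives
\begin{equation*}
  \norm{g}_{\Ldeuxmudvdxh}^2 = \norm{g-\rho\otimes 1}_{\Ldeuxmudvdxh}^2 + \norm{\rho}_{\Ldeuxdxh}^2.
\end{equation*}

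Next I would bound each term separately. For the first one, fix $j\in\jjj$: by construction the one-dimensional slice $(g_{j,i})_{i\in\Z}$ has discrete mean $\rho_j$ with respect to $\muh\dvh$, so Proposition \ref{prop:poindiscrete} yields
\begin{equation*}
  \dvh\sum_{i\in\Z}(g_{j,i}-\rho_j)^2\muh_i \leq \dvh\sum_{i\in\Z^*}(\Dv g)_{j,i}^2\,\mus_i.
\end{equation*}
Multiplying by $\dxh$ and summing over $j\in\jjj$ gives $\norm{g-\rho\otimes 1}_{\Ldeuxmudvdxh}^2\leq\norms{\Dv g}^2$. For the second one, the assumption $\seq{g}=0$ translates into $\sum_j\rho_j\dxh=0$, so the discrete one-dimensional Poincaré inequality \eqref{eq:Poincarediscretxseulement} gives $\norm{\rho}_{\Ldeuxdxh}^2\leq\norm{\Dx\rho}_{\Ldeuxdxh}^2$.

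Finally, since $\Dx$ and the velocity averaging commute, for every $j\in\jjj$ one has $(\Dx\rho)_j=\dvh\sum_i(\Dx g)_{j,i}\muh_i$, so that $\Dx\rho\otimes 1$ is again the orthogonal projection of $\Dx g$ onto sequences depending only on $j$, and therefore $\norm{\Dx\rho}_{\Ldeuxdxh}^2\leq\norm{\Dx g}_{\Ldeuxmudvdxh}^2$. Combining the three estimates yields the announced inequality. The main conceptual step is the use of Proposition \ref{prop:poindiscrete} slice by slice with a non-zero mean $\rho_j$; the rest is just careful bookkeeping of the projection identities, which works identically to the continuous case precisely because of the normalizations built into the definitions of $\muh$ and of the discrete scalar products.
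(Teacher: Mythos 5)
Your proof is correct and follows essentially the same route as the paper: the orthogonal (Pythagorean) decomposition of $g$ into $\rho\otimes 1$ plus a fluctuation, the velocity Poincar\'e inequality of Proposition \ref{prop:poindiscrete} applied slice by slice (the paper invokes discrete Fubini for the same step), and the space-only inequality \eqref{eq:Poincarediscretxseulement} applied to the macroscopic density together with the projection bound $\norm{\Dx\rho}_{\Ldeuxdxh}\leq\norm{\Dx g}_{\Ldeuxmudvdxh}$. The only point left implicit is the standing assumption that $N=\#\jjj$ is odd, which the paper requires for \eqref{eq:Poincarediscretxseulement} to hold with the centered difference $\Dx$.
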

\begin{proof}
  Replacing if necessary $g$ by $g-\seq{g}$, it is sufficient to prove the
  result for $\seq{g}=0$.  We observe that Parseval's formula and discrete
  Fubini's theorem imply
$$
\norm{\rho}^2_\Ldeuxdxh \leq \norm{g}^2_\Ldeuxmudvdxh \qquad \text{and} \qquad
\norm{\Dx\rho}^2_\Ldeuxdxh \leq \norm{\Dx g}^2_\Ldeuxmudvdxh,
$$
since $(j,i) \rightarrow \rho_j$ (resp. $(j,i) \rightarrow (\Dx\rho)_j$) is the
orthogonal projection of $g$ (resp. $\Dx g$) onto the closed space
$$
\set{ (j,i) \longmapsto \phi_j \ | \ \phi \in \Ldeuxdxh},
$$
and $\norm{\phi \otimes 1 }_\Ldeuxmudvdxh = \norm{\phi}_\Ldeuxdxh$ for all
$\phi \in \Ldeuxdxh$ since we are in probability spaces. We note here the
natural injection $\Ldeuxdxh \hookrightarrow \Ldeuxmudvdxh$ of norm $1$.  Using
the discrete Fubini theorem again, we also directly get from Proposition
\ref{prop:poindiscrete} that 
$$
\norm{g- \rho\otimes 1}_\Ldeuxmudvdxh^2 \leq \norm{\Dv g}_\Ldeuxmudvdxs^2.
$$
Using Parseval's formula again yields
\begin{equation}
  \begin{split}
    \norm{g}_\Ldeuxmudvdxh^2 & = \norm{g- \rho\otimes 1}_\Ldeuxmudvdxh^2 + \norm{ \rho\otimes 1}_\Ldeuxmudvdxh^2 \\
    & = \norm{g- \rho\otimes 1}_\Ldeuxmudvdxh^2 + \norm{\rho}_\Ldeuxdxh^2 \\
    & \leq \norm{\Dv g}_\Ldeuxmudvdxs^2 + \norm{\Dx \rho}_\Ldeuxdxh^2 \\
    & \leq \norm{\Dv g}_\Ldeuxmudvdxs^2 + \norm{\Dx g}_\Ldeuxmudvdxh^2.
  \end{split}
\end{equation}
The proof is complete. \end{proof}

We can now state the main Theorem of this subsection concerning the exponential
return to equilibrium of solutions of Equation \eqref{eq:FPISD}.

\begin{thm}\label{thm:decrexpd}
  There exists $C>D>E>1$, $\dvh_0>0$ and $\kappa_d>0$ explicit such that the
  following holds: For all $ f^0 \in \Hunmudvdxh$ such that $\seq{f^0} = 0$, the
  solution $f$ (in the semi-group sense) in $\ccc^0(\R^+, \Hunmudvdxh)$ of
  Equation \eqref{eq:FPISD} with initial data $f^0$ satisfies
  \begin{equation*}
    \hhhd(f(t))\leq \hhhd(f^0)\exp^{-2\kappa_d t},
  \end{equation*}
  for all $t\geq 0$, $\dvh\in(0,\dvh_0)$ and $\dxh>0$.
\end{thm}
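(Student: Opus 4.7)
By density of $D_\Hunmudvdxh(\Pd)$ in $\Hunmudvdxh$ it suffices to treat an initial datum $f^0\in D_\Hunmudvdxh(\Pd)$, so that the semi-group solution $f$ lies in $\ccc^1(\R^+,\Hunmudvdxh)\cap\ccc^0(\R^+,D_\Hunmudvdxh(\Pd))$ and the mass $\seq{f(t)}=0$ is preserved for all $t\geq 0$. The strategy mirrors exactly the one of Theorem \ref{thm:decrexp}: differentiate in time each of the four contributions to $\hhhd(f(t))$ along ${\D_t}f=-\Pd f$ with $\Pd=\Xzerod+(-\Dvs+\vs)\Dv$, absorb the sign-indefinite cross terms by Cauchy--Schwarz--Young inequalities with weights $C>D>E>1$ chosen in the regime of Lemma \ref{lem:equivd}, and close a Gronwall estimate on $\hhhd(f)$ using the full discrete Poincar\'e inequality of Lemma \ref{lem:fullPoincarediscret} together with the equivalence of norms of Lemma \ref{lem:equivd}.

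The first and fourth contributions are handled as in the continuous case: $\Xzerod=v\Dx$ is skew-adjoint in $\Ldeuxmudvdxh$ thanks to the centered choice of $\Dx$, and $\Dx$ commutes with $\Dv,\Dvs,\vs$ since space and velocity live on disjoint indices. Combined with \eqref{eq:ofin2} this gives
$$\ddt\norm{f}^2=-2\norms{\Dv f}^2,\qquad \ddt\norm{\Dx f}^2=-2\norms{\Dv\Dx f}^2.$$
For the two middle contributions the crucial ingredient is the discrete commutation identity $[\Dv,\Xzerod]=S\Dx$, a direct consequence of $S=\Dv v-v\Dv$ and of $[\Dv,\Dx]=0$; this identity is precisely what motivates the appearance of the shift $S$ in the definition of $\hhhd$. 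Differentiating $\norms{\Dv f}^2$ and $\seqs{\Dv f,S\Dx f}$ and exchanging derivatives through $\Pd$ with this identity and with direct index-by-index computations analogous to \eqref{eq:double} produces the discrete analogues of the two middle identities of the proof of Theorem \ref{thm:decrexp}, with an additional remainder: both $[\Dv,\vs]-\mathrm{Id}$ and $[\Dv,\Dvs]$ coincide with their continuous counterparts only up to shift-type corrections localized near $i=0$, whose operator norms are controlled uniformly in $\dxh$ and are $O(\dvh)$ on $\Hunmudvdxh$ via Lemmas \ref{lem:constcontS} and \ref{lem:constcontSsharp}. All these shift defects, together with the discrepancy $S\Xzerod-\Xzerod S$ (also $O(\dvh)$ for the same reason), can be collected into two remainders $\eee_1(f),\eee_2(f)$ satisfying $|\eee_k(f)|\leq C_\star\dvh\,\hhhd(f)$ for an explicit $C_\star>0$ independent of $\dxh$.

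Summing the four identities with weights $C,D,E,1$ and applying Young's inequality to the sign-indefinite cross terms as in the last step of the proof of Theorem \ref{thm:decrexp}, using $\norms{Sg}^2\leq 2\norm{g}^2$ and $\norm{\Ss h}^2\leq 4\norms{h}^2$ from Lemmas \ref{lem:constcontS} and \ref{lem:constcontSsharp} to estimate the shift terms, the same algebraic constraints $2E^2<D$ and $(2D+E)^2$ small compared to $C$ (up to universal numerical factors inherited from the shift bounds) yield
$$\ddt\hhhd(f)\leq -\frac{E}{2}\sep{\norms{\Dv f}^2+\norm{\Dx f}^2}+C_\star\dvh\,\hhhd(f).$$
Choosing $\dvh_0>0$ so small that $C_\star\dvh_0$ is less than a fixed fraction of the coercivity coefficient $E/(16C)$ extracted from Lemmas \ref{lem:fullPoincarediscret} and \ref{lem:equivd}, one concludes $\ddt\hhhd(f)\leq -2\kappa_d\,\hhhd(f)$ with an explicit $\kappa_d>0$, and Gronwall's lemma ends the proof.

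The main obstacle is the tracking of the discrete-commutator defects packed into $\eee_1$ and $\eee_2$. In the continuous setting the identities $[{\D_v},v]=1$ and $[{\D_v},-{\D_v}+v]=1$ are exact; in the discrete setting the two-direction definition of $\Dv$ at $i=0$ forces these commutators to differ from the identity by shift operators supported near the central index. Verifying that their contributions to $\ddt\hhhd(f)$ are uniformly $O(\dvh)\hhhd(f)$ and hence can be absorbed into a fixed fraction of the dissipation produced by the four ``good'' terms of the continuous-case computation is the delicate step; it is exactly what imposes the smallness assumption $\dvh<\dvh_0$ and what fixes the explicit value of $\kappa_d$.
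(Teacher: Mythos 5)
Your overall architecture (differentiating the four terms of $\hhhd$ along the flow, exploiting $[\Dv,\Xzerod]=S\Dx$, closing with the discrete Poincar\'e inequality of Lemma \ref{lem:fullPoincarediscret}, the norm equivalence of Lemma \ref{lem:equivd} and Gronwall) is indeed the one the paper follows, and your treatment of the first, second and fourth terms is correct. The gap is in your handling of the discrete commutator defects. You assert that the defects of $[\Dv,\vs]$ and $[\Dv,\Dvs]$ near $i=0$ have operator norm $O(\dvh)$ on $\Hunmudvdxh$, so that they contribute remainders $\eee_k(f)$ bounded by $C_\star\dvh\,\hhhd(f)$. This is false. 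The relevant defect, computed in Lemma \ref{lem:triple}, is $\Dv(-\Dvs+\vs)S-S(-\Dvs+\vs)\Dv=S+\sigma$ with $(\sigma g)_{j,-1}=(g_{j,1}-g_{j,0})/\dvh^2$ and $(\sigma g)_{j,1}=-(g_{j,0}-g_{j,-1})/\dvh^2$. These entries carry a factor $\dvh^{-2}$, i.e.\ $(\sigma g)_{j,\pm1}=\pm\dvh^{-1}(\Dv g)_{j,\pm1}$: on $\Hunmudvdxh$ the operator $\sigma$ has size $O(\dvh^{-1})$ and is \emph{singular} as $\dvh\to0$, not small. A bound $\abs{\seqs{\sigma\Dx f,\Dv f}}\leq C_\star\dvh\,\hhhd(f)$ is therefore unavailable, and your final differential inequality does not follow as written.

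The paper's way out (Lemma \ref{lem:estimdelta}) is an exact algebraic cancellation, not a smallness argument: after discrete integrations by parts in $x$, the pairing $\seqs{\sigma\Dx g,\Dv g}$ reduces to a multiple of $\seq{((-\Dvs+\vs)\Dv g)_0,(\Dx\Dv g)_{-1}}_{\Ldeuxdxh}\,\mu_0\dvh$, because the central difference quotient $\dvh^{-1}\bigl((\Dv g)_1-(\Dv g)_{-1}\bigr)$ is exactly $(\Dvs\Dv g)_0$. Young's inequality then bounds it by $\frac1\eps\norm{(-\Dvs+\vs)\Dv g}^2+\eps\norms{\Dv\Dx g}^2$ --- two \emph{dissipation} quantities that appear with favourable sign in $\dddd(f)$ but are not controlled by $\hhhd(f)$ --- and absorbing it requires taking $D$ large (see \eqref{eq:choixD}), not $\dvh$ small. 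The smallness $\dvh<\dvh_0$ enters elsewhere, through the other defect $S\Dx\Xzerod-\Xzerod S\Dx=\dvh S^b\Dx^2$ of Lemma \ref{lem:sb}: that one is indeed $O(\dvh)$, but it is applied to a second space derivative, so it too must be integrated by parts and Young-ed against the dissipation term $\norms{\Dv\Dx f}^2$, with coefficient $\dvh^2E^2/8$; this is where the constraint on $\dvh_0$ actually comes from. In short, your proposal misses the one genuinely delicate point of the discrete hypocoercivity argument: the singular central defect must be absorbed into the dissipation via an exact identity, and the required control by $\norms{\Dv\Dx f}^2$ and $\norm{(-\Dvs+\vs)\Dv f}^2$ cannot be replaced by a small multiple of $\hhhd(f)$.
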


\begin{proof}(of Theorem \ref{thm:decrexpd} -- 1/4) We divide the proof in four
  parts, and we insert technical lemmas in between those parts, so that the
  reader may understand why new discrete operators are introduced and studied,
  as the computations go.  Let us consider $f$ the solution in
  $\ccc^1(\R^+, D_\Hunmudvdxh(\Pd))$ with initial data
  $f^0 \in D_\Hunmudvdxh(\Pd)$.  This choice allows all the computations done
  below, and Theorem \ref{thm:decrexpd} will be a direct consequence of the
  density of $D_\Hunmudvdxh(\Pd)$ in $\Hunmudvdxh$ and the boundedness of the
  associated semi-group.

  As in the continuous case, we shall differentiate w.r.t. time the four terms
  appearing in the definition of $\hhhd$.  The derivatives of the 1st, 2nd and
  4th term are fairly easy to estimate, as we will see below. The more intricate
  estimate of the derivative of the 3rd term will require Lemmas \ref{lem:triple},
  \ref{lem:sb} and \ref{lem:estimdelta}.

  For the derivative of the first term in $\hhhd$, we compute
  \begin{align*}
    \ddt\norm{f}^2&=2\seq{f,-v \Dx f-(-\Dvs+\vs)\Dv f}\\
                  &=-2\seq{f,-v \Dx f}-2\seq{f,-(-\Dvs+\vs)\Dv f}.
  \end{align*}
  Using the fact that $v \Dx$ is skew-adjoint in $\Ldeuxmudvdxh$ and the identity
  derived from \eqref{eq:ofin2}, we obtain
  \begin{equation} \label{eq:premier} \ddt \norm{f}^2=-2\norms{\Dv f}^2.
  \end{equation}
  The second term of the time derivative can be computed as follows:
  \begin{align}
    \ddt\norms{\Dv f}^2&=2\seqs{\Dv\left(-v \Dx-(-\Dvs+\vs)\Dv\right) f,\Dv f} \nonumber \\
                       &=-2\seqs{\Dv(v \Dx)f,\Dv f}-2\seqs{\Dv(-\Dvs+\vs)\Dv f,\Dv f} \nonumber \\
                       &=-2\seqs{\underbrace{\adf{\Dv,v \Dx}}_{=\adf{\Dv,v}\Dx=S\Dx}f,\Dv f}
                         -2\underbrace{\seqs{v \Dx \Dv f,\Dv f}}_{=0}-2\underbrace{\norm{(-\Dvs+\vs)\Dv f}^2}_{\mbox{using
                         } \eqref{eq:ofin2}} \nonumber \\
                       &=-2\seqs{S\Dx f,\Dv f}-2\norm{(-\Dvs+\vs)\Dv f}^2.
                         \label{eq:second}
  \end{align}
The time derivative of the last term in $\hhh(f)$ is
\begin{equation} \label{eq:quatrieme}
 \ddt \norm{\Dx f}^2=-2\norms{\Dv \Dx f}^2.
\end{equation}
since $\Dx$ commutes with the full operator.

\bigskip
All the difficulties are concentrated in the third term.
We are going to need a few lemmas in order to be able to write
the time-derivative of that third term in \eqref{eq:troisieme}.
After that, we will get back to the proof of the Theorem
by expressing the time-derivative of $t\mapsto \hhhd(f(t))$ in \eqref{eq:defdcde}
using an entropy-dissipation term.
We will need a last lemma (Lemma \ref{lem:dissdiss}) to estimate
the entropy-dissipation term before getting to the end of the proof of
Theorem \ref{thm:decrexpd}.
\end{proof}

In order to prepare the computations, we state and prove two lemmas concerning
discrete commutators.

\begin{lem} \label{lem:triple}
 We have
$$
\Dv (-\Dv^\sharp + \vs) S - S(-\Dv^\sharp+\vs) \Dv = S+ \sigma,
$$
where $\sigma$ is the singular operator from $\Ldeuxmudvdxh$ to $\Ldeuxmudvdxs$
defined for $g\in \Ldeuxmudvdxh$ by
$$
\qquad (\sigma g)_{j, -1} = \frac{g_{j,1}-g_{j,0}}{\dvh^2}, \qquad (\sigma
g)_{j, 1} = -\frac{g_{j,0}-g_{j,-1}}{\dvh^2} \qquad \textrm{ and } (\sigma
g)_{j,i}=0 \ \textrm{ for } \ |i| \geq 2,
$$
for all $ j \in \jjj$.
\end{lem}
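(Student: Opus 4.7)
The plan is to establish the operator identity by a direct pointwise computation, evaluating both members at an arbitrary $g\in \Ldeuxmudvdxh$ and at each index $i\in\Z^*$ (the $j$-index plays no role and can be suppressed). Both sides of the claimed equality are linear operators from $\Ldeuxmudvdxh$ to $\Ldeuxmudvdxs$, so the target index set is $\Z^*$, and the verification splits naturally into four regimes: $i\geq 2$, $i=1$, $i=-1$, and $i\leq -2$. By the symmetry $v_{-i}=-v_i$, together with the symmetric definitions of $\Dv$, $\Dvs$, $\vs$ and $S$, the two negative cases will be verified by the same calculation as the positive ones up to obvious sign changes, so only two cases need to be written out in detail.

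First I would compute the intermediate object $H\defegal (-\Dvs+\vs)\Dv g$: using Definitions \ref{def:dv} and \ref{def:dvs}, a short calculation gives $H_k=-\dvh^{-2}(g_{k+1}-2g_k+g_{k-1})+v_k\dvh^{-1}(g_k-g_{k-1})$ for $k\geq 1$, a mirrored formula for $k\leq -1$, and the purely centered value $H_0=-\dvh^{-2}(g_1-2g_0+g_{-1})$ at the origin (since $\vs$ kills the $0$-mode). Then $(S H)_i$ is obtained by a single index shift: $(SH)_i=H_{i-1}$ for $i\geq 1$ and $(SH)_i=H_{i+1}$ for $i\leq -1$. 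In parallel, I would compute $Sg$ from Lemma \ref{lem:constcontS}, apply $(-\Dvs+\vs)$ to obtain a sequence $\tilde H$ (noting in particular that $\tilde H_0=0$ since $S g$ is even-symmetric across $0$), and finally apply $\Dv$.

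The algebraic core of the identity, which appears in the bulk regime $|i|\geq 2$, is the elementary relation $v_k-v_{k-1}=\dvh$. Indeed the two second-difference contributions cancel exactly, while the first-difference contributions leave a residue of the form $\dvh^{-1}(v_i-v_{i-1})g_{i-1}=g_{i-1}=(Sg)_i$, thereby reproducing the shift $S$ on the right-hand side with $\sigma$ vanishing, as claimed. The same mechanism, applied to $v_{i+1}-v_i=\dvh$, handles $i\leq -2$.

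The delicate step, which I expect to be the main obstacle (more as careful bookkeeping than as a conceptual difficulty), occurs at the boundary indices $i=\pm 1$. There the formulas for $\Dv$ switch orientation, $\Dvs$ invokes its centered definition at $0$, and the vanishing of $\vs$ at $0$ interacts with these. A careful expansion at $i=1$ gives, after cancellation, that $(\Dv(-\Dvs+\vs)Sg)_1-(S(-\Dvs+\vs)\Dv g)_1=\dvh^{-2}(g_{-1}-g_0)+\dvh^{-1}v_1 g_0$; using $v_1=\dvh$ this becomes $g_0-\dvh^{-2}(g_0-g_{-1})=(Sg)_1+(\sigma g)_1$, matching the stated $\sigma$. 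The symmetric computation at $i=-1$, using $v_{-1}=-\dvh$, yields the complementary value $\dvh^{-2}(g_1-g_0)$ for $(\sigma g)_{-1}$. Collecting the four regimes completes the identification of the commutator with $S+\sigma$.
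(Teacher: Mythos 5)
Your proposal is correct and follows the same route the paper takes: the paper's ``proof'' simply defers to the pointwise evaluations collected in Table~\ref{tab:comm}, and your case-by-case computation (bulk indices via the cancellation $v_k-v_{k-1}=\dvh$, plus the special indices $i=\pm1$ where $(\Dvs\cdot)_0$ and the vanishing of $\vs$ at $0$ produce the singular part $\sigma$) reproduces exactly those entries, including the correct values $(\sigma g)_{\pm1}$. If anything your writeup is more complete, since you verify the identity for all $|i|\geq 2$ rather than only the displayed indices $|i|\leq 2$.
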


\begin{proof}
We postpone the proof of this computational lemma to the end
of the paper, where Table \ref{tab:comm} summarizes all the computations
of commutators.
\end{proof}

The second lemma of commutation type is the following
\begin{lem}
\label{lem:sb}
We define the operator $S^b : \Ldeuxmudvdxh \longrightarrow \Ldeuxmudvdxs$ by
\begin{align*}
 & (S^b g)_{j,i} = g_{j,i+1} \textrm{ for } i\leq -1  \qquad \qquad
 (S^b g)_{j,i} = -g_{j,i-1} \textrm{ for } i\geq 1,
\end{align*}
for all $g \in
\Ldeuxmudvdxh$ and $ j \in \jjj$. Then we have
$$
S\Dx v \Dx g -v \Dx S\Dx g= \dvh S^b \Dx^2 g.
$$
Moreover
\begin{equation*}
  \norm{g}^2 \leq \norm{S^\flat g}_\sharp^2 \leq 2 \norm{g}^2.
\end{equation*}
\end{lem}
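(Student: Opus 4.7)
The plan is to reduce the commutator identity to a purely velocity-based computation by exploiting the fact that $\Dx$ is a centered finite-difference operator acting only on the space index $j$, while $S$, $v$ and $\Dv$ act only on the velocity index $i$. Consequently, $\Dx$ commutes with each of $S$ and $v$, hence $S\Dx v \Dx g = Sv\,\Dx^2 g$ and $v\Dx S\Dx g = vS\,\Dx^2 g$, so the claimed identity boils down to proving
\begin{equation*}
 (Sv - vS)\,h = \dvh\,S^b h\qquad\text{for all } h\in\Ldeuxmudvdxh,
\end{equation*}
and then applying it to $h = \Dx^2 g$.

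To establish this commutator formula, I would just unfold the definitions. Using the explicit form of $S$ given by Lemma~\ref{lem:constcontS} and recalling that multiplication by $v$ is pointwise, a direct computation yields, for $i\leq -1$,
\begin{equation*}
 \bigl((Sv-vS)h\bigr)_{j,i} = v_{i+1}h_{j,i+1} - v_i h_{j,i+1} = (v_{i+1}-v_i)\,h_{j,i+1} = \dvh\, h_{j,i+1},
\end{equation*}
and, for $i\geq 1$,
\begin{equation*}
 \bigl((Sv-vS)h\bigr)_{j,i} = v_{i-1}h_{j,i-1} - v_i h_{j,i-1} = -\dvh\, h_{j,i-1}.
\end{equation*}
These are exactly $\dvh$ times the entries of $S^b h$ prescribed in the lemma, so the commutator identity follows. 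The only subtlety here is bookkeeping the sign convention for $i\geq 1$, which is precisely the feature distinguishing $S^b$ from $S$.

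For the norm comparison, the key observation is that squaring kills the sign discrepancy between $S^b$ and $S$: one has $((S^b g)_{j,i})^2 = ((Sg)_{j,i})^2$ for every $(j,i)\in\jjj\times\Z^*$. Therefore the computation carried out in the proof of Lemma~\ref{lem:constcontS} applies verbatim, using the relations $\mus_i=\muh_{i+1}$ for $i<0$ and $\mus_i=\muh_{i-1}$ for $i>0$, to give
\begin{equation*}
 \norms{S^b g}^2 = \norm{g}^2 + \dvh\dxh\,\muh_0\sum_{j\in\jjj} g_{j,0}^2.
\end{equation*}
The extra term is non-negative and bounded from above by $\norm{g}^2$ (it is one of the summands entering the definition of $\norm{g}^2$), which yields the desired framing $\norm{g}^2 \leq \norms{S^b g}^2 \leq 2\norm{g}^2$. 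The main (modest) obstacle is simply to handle the three regimes $i\leq -1$, $i\geq 1$ and $i=0$ carefully; no inequalities beyond identities are needed.
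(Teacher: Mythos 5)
Your proof is correct and follows essentially the same route as the paper, which simply verifies the commutation identity entry by entry in Table~\ref{tab:comm} and reads the norm equivalence off the same shift computation as in Lemma~\ref{lem:constcontS}. Your reduction to the velocity-only commutator $Sv-vS=\dvh\,S^b$ by first commuting $\Dx$ past $S$ and $v$ is a slightly cleaner organization of the identical calculation, and the observation that $((S^b g)_{j,i})^2=((Sg)_{j,i})^2$ correctly transfers the norm bounds.
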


\begin{proof} We postpone the proof to the table at the end of the paper (see
  Table \ref{tab:comm}). \end{proof}

\begin{proof}(of Theorem \ref{thm:decrexpd} -- 2/4)
We go on with the proof of Theorem \ref{thm:decrexpd}, and we recall
that we consider a solution $f \in \ccc^1(\R^+, D_\Hunmudvdxh(P))$.
We want to estimate the derivative of the third term defining $\hhhd(f(t))$.
Let us compute

\begin{align*}
  &   \ddt\seq{S\Dx f,\Dv f}_\sharp \\
  & = -\seqs{S\Dx (\Xzerod  f+(-\Dvs +\vs)\Dv f),\Dv f}-
    \seqs{S\Dx f,\Dv (\Xzerod  f+(-\Dvs +\vs)\Dv f)}\\
  & = -\seqs{S\Dx  \Xzerod  f,\Dv f} -\seqs{S\Dx  f,\Dv  \Xzerod  f} \qquad (I) \\
  &\qquad - \seqs{S\Dx (-\Dvs +\vs)\Dv f,\Dv f)} - \seqs{S\Dx f,\Dv (-\Dvs
    +\vs)\Dv  f)}. 
    \qquad (II)
   \end{align*}

We first deal with the sum (I) in the previous equality.
Using Lemma \ref{lem:sb} and $[\Dv, \Xzerod ] = S\Dx$ we get
\begin{align*}
(I) =&   -\seqs{\Xzerod  S\Dx  f,\Dv f} -\seqs{S\Dx  f,\Xzerod  \Dv  f} \\
 & - \dvh \seqs{S^b \Dx^2 f, \Dv f} - \norms{S\Dx f}^2 \\
 =&   - \dvh \seqs{S^b \Dx^2 f, \Dv f} - \norms{S\Dx f}^2,
   \end{align*}
where we used that the first two terms compensate by skew-adjunction of $\Xzerod $.  Using that
$\Dx$ is skew-adjoint and commutes with $S^b$ we get
$$
(I) = \dvh \seqs{S^b \Dx f, \Dx \Dv f} - \norms{S\Dx f}^2.
$$

Now we deal with the term (II).
We first use that the adjoint of $\Dv$ is $(-\Dv^\sharp+\vs)$
two times and we get
\begin{align*}
(II) &= -\seqs{S\Dx (-\Dvs+\vs)\Dv f,\Dv f}  -\seqs{\Dv(-\Dvs+\vs) S\Dx f,\Dv f}.
\end{align*}
Now from Lemma \ref{lem:triple} applied to the second term we get
\begin{align*}
(II) &= -2\seqs{S\Dx (-\Dvs+\vs)\Dv f,\Dv f}  -\seqs{ S\Dx f,\Dv f}- \seqs{\sigma \Dx f, \Dv f}.
\end{align*}
We used also that $\Dx$ commutes with all operators. This yields
\begin{align*}
(II) &= 2 \seq{(-\Dvs+\vs)\Dv f,S^\sharp \Dx  \Dv f}  -\seqs{ S\Dx f,\Dv f}
- \seqs{\sigma \Dx f, \Dv f}.
\end{align*}

Using the relations above for $(I)$ and $(II)$, 
we get eventually for the derivative of the third term:
\begin{align} \label{eq:troisieme}
 &   \ddt\seq{S\Dx f,\Dv f}_\sharp \\
 &=   - \norms{S\Dx f}^2+ \dvh \seqs{S^b \Dx f, \Dv \Dx f} \nonumber\\
  & \qquad \qquad +  2 \seq{(-\Dvs+\vs)\Dv f,S^\sharp \Dx  \Dv f}  -\seqs{ S\Dx
    f,\Dv f}
- \seqs{\sigma \Dx f, \Dv f}.\nonumber
\end{align}

The first term in this sum has a sign.
All the other terms except the last one are easy to deal with,
as in the continuous case.
The last one involving $\sigma$ is more involved since it seems to be singular.
Anyway, it can also be controlled as shown in this last lemma.
\end{proof}

\begin{lem}
\label{lem:estimdelta}
For all $\varepsilon>0$ and $g \in \Ldeuxmudvdxh$ we have
\begin{equation}
\label{eq:estimsigma}
\abs{ \seqs{\sigma \Dx  g, \Dv  g} } \leq \frac{1}{\varepsilon}
\norm{(-\Dvs+\vs)\Dv g}^2 + \varepsilon
\norms{ \Dv  \Dx  g}^2.
\end{equation}
\end{lem}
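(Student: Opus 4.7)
The plan is to exploit the extreme localization of $\sigma$ (supported only at $i=\pm 1$) together with a symmetric/antisymmetric decomposition of $(\Dv g)_{j,\pm 1}$ centered at $i=0$, so that the apparently singular factor $1/\dvh^2$ in $\sigma$ is absorbed and the right-hand side produces exactly the two norms $\norm{(-\Dvs+\vs)\Dv g}^2$ and $\norms{\Dv \Dx g}^2$, with no stray quantity to control.

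First I would recast $(\sigma \Dx g)_{j,\pm 1}$ in the compact form $(\sigma \Dx g)_{j,-1}=(\Dv \Dx g)_{j,1}/\dvh$ and $(\sigma \Dx g)_{j,1}=-(\Dv \Dx g)_{j,-1}/\dvh$, a direct consequence of the definitions of $\sigma$ and $\Dv$. Using $\mus_{-1}=\mus_{1}=\muh_0$, the scalar product collapses to the antisymmetric bilinear expression
\begin{equation*}
\seqs{\sigma \Dx g, \Dv g} = \dxh\,\muh_0 \sum_{j\in\jjj}\left[(\Dv\Dx g)_{j,1}(\Dv g)_{j,-1} - (\Dv \Dx g)_{j,-1}(\Dv g)_{j,1}\right].
\end{equation*}

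The key step is then the pair of identities, immediate from the definitions:
\begin{equation*}
(\Dv g)_{j,-1}+(\Dv g)_{j,1}=(\Dvs g)_{j,0}, \qquad (\Dv g)_{j,-1}-(\Dv g)_{j,1}=\dvh\,((-\Dvs+\vs)\Dv g)_{j,0},
\end{equation*}
and their analogs applied to $\Dx g$ in place of $g$, which use that $\Dx$ commutes with $\Dv$ and $\Dvs$. Substituting them into the antisymmetric combination produces two terms: one proportional to $(\Dvs g)_{j,0}\,\Dx((-\Dvs+\vs)\Dv g)_{j,0}$ and one proportional to $((-\Dvs+\vs)\Dv g)_{j,0}\,(\Dvs \Dx g)_{j,0}$. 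An integration by parts in $j$, using the skew-adjointness of $\Dx$ in the one-dimensional $\ell^2(\jjj)$-scalar product, transforms the first ``bad'' term (which features the uncontrolled quantity $(\Dvs g)_{j,0}$) into a copy of the second, and I expect to obtain the clean identity
\begin{equation*}
\seqs{\sigma \Dx g, \Dv g} = \dvh\,\dxh\,\muh_0\sum_{j\in\jjj}((-\Dvs+\vs)\Dv g)_{j,0}\,(\Dvs \Dx g)_{j,0}.
\end{equation*}

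To finish, I would apply Cauchy--Schwarz in $j$ and note that restricting the full $\ell^2$-norms to a single value of $i$ gives $\dvh\,\dxh\,\muh_0\sum_j((-\Dvs+\vs)\Dv g)_{j,0}^2\leq \norm{(-\Dvs+\vs)\Dv g}^2$, while expanding $(\Dvs \Dx g)_{j,0}=(\Dv \Dx g)_{j,1}+(\Dv \Dx g)_{j,-1}$ and using $\mus_{\pm 1}=\muh_0$ yields $\dvh\,\dxh\,\muh_0\sum_j(\Dvs \Dx g)_{j,0}^2\leq 2\,\norms{\Dv \Dx g}^2$. This produces the bound $|\seqs{\sigma \Dx g,\Dv g}|\leq\sqrt{2}\,\norm{(-\Dvs+\vs)\Dv g}\,\norms{\Dv \Dx g}$, and a Young inequality with suitably rescaled parameter yields \eqref{eq:estimsigma}.

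The main obstacle is the apparent singularity of $\sigma$: a brute Cauchy--Schwarz on $\seqs{\sigma \Dx g, \Dv g}$ loses an irrecoverable factor $1/\dvh$, and none of the individual quantities $(\Dv g)_{j,\pm 1}$ can be dominated by either norm on the right-hand side. The essential algebraic miracle making the proof work is that the sign asymmetry of $\sigma$ between indices $+1$ and $-1$ produces precisely the antisymmetric structure in which the ``unbounded'' symmetric part $(\Dvs g)_{j,0}$ reorganizes, through the skew-adjointness of $\Dx$, into the controlled quantity $(\Dvs \Dx g)_{j,0}$, and the $1/\dvh$ factors from $\sigma$ are exactly matched by a $\dvh$ emerging from the antisymmetric splitting.
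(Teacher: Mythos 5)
Your proof is correct and follows essentially the same route as the paper's: both exploit the localization of $\sigma$ at $i=\pm 1$ (with $\mus_{\pm1}=\muh_0$) together with the skew-adjointness of $\Dx$ in $j$ to convert the singular factor $1/\dvh^2$ into the controlled quantity $((-\Dvs+\vs)\Dv g)_{j,0}=-(\Dvs\Dv g)_{j,0}$, and then conclude by Cauchy--Schwarz and Young. The only cosmetic difference is the order of the algebra --- the paper first reduces to $2\seq{\Dx(\Dv g)_{1}/\dvh,(\Dv g)_{-1}}_{\Ldeuxdxh}\mu_0\dvh$ and then splits $(\Dv g)_{1}=\bigl((\Dv g)_{1}-(\Dv g)_{-1}\bigr)+(\Dv g)_{-1}$, whereas you perform the symmetric/antisymmetric splitting first --- and your final identity, pairing $((-\Dvs+\vs)\Dv g)_{j,0}$ with $(\Dv\Dx g)_{j,1}+(\Dv\Dx g)_{j,-1}$, agrees with the paper's (which uses $2(\Dx\Dv g)_{j,-1}$) up to a term that vanishes by skew-adjointness of $\Dx$.
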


\begin{proof}
For all $j\in\jjj$, the contribution to the scalar product in the right-hand
side of \eqref{eq:estimsigma} reduces to two terms according
to the expression of $\sigma$ (see Lemma \ref{lem:triple}).
We denote by $\seq{ ., .}_\Ldeuxdxh $ the scalar product in the variable
$j$ only, associated to the norm defined in \eqref{eq:defL2dxh}.
In the computations below, we omit for convenience the subscript $j$
corresponding to the space discretization.
We have
\begin{align*}
\seqs{\sigma \Dx  g, \Dv  g} =
& \seq{\frac{\Dx  g_1-\Dx  g_0}{\dvh^2}, \frac{g_0-g_{-1}}{\dvh}}_\Ldeuxdxh
  \mu_0 \dvh 
- \seq{\frac{\Dx  g_0-\Dx  g_{-1}}{\dvh^2},  \frac{g_1-g_{0}}{\dvh}}_\Ldeuxdxh \mu_0 \dvh.
\end{align*}
Using that $\Dx $ is skew-adjoint (or using an Abel transform in $j$),
we get
\begin{align*}
\seqs{\sigma \Dx  g, \Dv  g} =
& 2 \seq{\frac{\Dx  g_1-\Dx  g_0}{\dvh^2}, \frac{g_0-g_{-1}}{\dvh}}_\Ldeuxdxh \mu_0 \dvh.
\end{align*}
For convenience, we set $\GPLUS= \frac{g_1- g_0}{\dvh}=(\Dv g)_{1}$
and $\GMOINS = \frac{g_0-g_{-1}}{\dvh} = (\Dv g)_{-1}$.
We have then
\begin{align*}
\seqs{\sigma \Dx  g, \Dv  g} =
& 2 \seq{ \frac{\Dx  \GPLUS}{\dvh}, \GMOINS}_\Ldeuxdxh \mu_0 \dvh \\
= & 2 \seq{ \Dx  \frac{ \GPLUS-\GMOINS}{\dvh}, \GMOINS}_\Ldeuxdxh \mu_0 \dvh 
+ \frac{2}{\dvh} \seq{ \Dx  \GMOINS, \GMOINS}_\Ldeuxdxh \mu_0 \dvh.
\end{align*}
The last term is zero and we therefore get, using a last integration by part for the first term
\begin{align*}
\seqs{\sigma \Dx  g, \Dv  g}
= & - 2 \seq{  \frac{ \GPLUS-\GMOINS}{\dvh}, \Dx  \GMOINS}_\Ldeuxdxh \mu_0 \dvh.
\end{align*}
We observe that
$$
 \frac{ \GPLUS-\GMOINS}{\dvh} = \frac{ \frac{g_1-g_0}{\dvh} 
- \frac{g_0-g_{-1}}{\dvh}}{\dvh} = (\Dvs \Dv g)_0 = -((-\Dvs +\vs) \Dv  g)_0.
 $$
Hence, for all $\varepsilon>0$
\begin{align*}
\abs{\seqs{\sigma \Dx  g, \Dv  g}} \leq
 &  2 \norm{ ((-\Dvs +\vs) \Dv  g)_0}_\Ldeuxdxh  \norm{({\Dx}\Dv g)_{-1}}_\Ldeuxdxh \mu_0 \dvh \\
 \leq
 &  \frac{1}{\varepsilon}
 \norm{ ((-\Dvs +\vs) \Dv  g)_0}^2_\Ldeuxdxh \mu_0 \dvh + \varepsilon
 \norm{({\Dx}\Dv g)_{-1}}^2_\Ldeuxdxh \mu_0 \dvh \\
 \leq & \frac{1}{\varepsilon}
 \norm{(-\Dvs+\vs)\Dv g}^2 +
\varepsilon
\norms{ \Dv  \Dx  g}^2.
\end{align*}
 The proof of the Lemma is complete.
\end{proof}

\begin{proof}(of Theorem \ref{thm:decrexpd} -- 3/4)
Now we come back to the proof of Theorem \ref{thm:decrexpd}.
We consider all the four relations \eqref{eq:premier}, \eqref{eq:second},
\eqref{eq:quatrieme} and \eqref{eq:troisieme}, and we multiply the first one by $C$,
the second one by $D$, the third more involved one by $E$, and we get by
addition
\begin{eqnarray}
  \ddt \hhhd(f(t))  & = & -2 \left( C \norm{\Dv f}_\sharp^2
+ D \seq{S\Dx f,\Dv f}_\sharp
+ D \norm{(-\Dvs+\vs)\Dv f}^2 \right.\nonumber\\
&&  \left.  +\frac{E}{2} \norm{S\Dx f}_\sharp^2
- \frac{E}{2} \dvh \seq{S^b \Dx f,\Dx \Dv f}_\sharp \right. \nonumber\\
&& \left. -E \seq{(-\Dvs+\vs)\Dv f,S^\sharp \Dx \Dv f}\right.\nonumber\\
&&   \left.
+\frac{E}{2} \seq{S\Dx f,\Dv f}_\sharp
+\frac{E}{2}\seq{\sigma \Dx f,\Dv f}_\sharp
+\norm{\Dx \Dv f}_\sharp^2 \right) \nonumber \\
&& \qquad  \defegal -2 \dddd(f). \label{eq:defdcde}
\end{eqnarray}

The term $2\dddd(f) $ is the discrete entropy-dissipation term and
we prove that it can be bounded below (so that, in particular, it has a sign)
for well chosen parameters $C$, $D$, and $E$.
This is the goal of the following lemma.
\end{proof}

\begin{lem} \label{lem:dissdiss}
There exists constants $C>D>E>1$ and $\dvh_0>0$
such that for all $g \in \Hunmudvdxh$, $\dvh\leq \dvh_0$ and $\dxh>0$,
\begin{equation}
\label{eq:entropydissipationineq}
\dddd(g) \geq \kappa_d \hhhd(g),
\end{equation}
with $\kappa_d = 1/(4C)$.
Moreover, it is sufficient for the constants above to satisfy
relations \eqref{eq:choixE}-\eqref{eq:choixC} to come to ensure that
the result above hold.
\end{lem}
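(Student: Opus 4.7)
The plan is to mimic the strategy used in the fully continuous case (Theorem \ref{thm:decrexp} / Theorem \ref{thm:decrexpeulerimpl}) but with the additional ``discrete pollution'' terms coming from the fact that $\Dv$ and $\vs$ do not commute as cleanly as $\partial_v$ and $v$. Regrouping the two contributions $D\seqs{S\Dx g,\Dv g}$ and $\frac{E}{2}\seqs{S\Dx g,\Dv g}$ in \eqref{eq:defdcde}, the positive terms available in $\dddd(g)$ are $C\norms{\Dv g}^2$, $D\norm{(-\Dvs+\vs)\Dv g}^2$, $\frac{E}{2}\norms{S\Dx g}^2$ and $\norms{\Dx\Dv g}^2$, while the four sign-indefinite terms are the cross products $(D+E/2)\seqs{S\Dx g,\Dv g}$, $\frac{E}{2}\dvh\seqs{S^b\Dx g,\Dx\Dv g}$, $E\seq{(-\Dvs+\vs)\Dv g,\Ss\Dx\Dv g}$, and $\frac{E}{2}\seqs{\sigma\Dx g,\Dv g}$. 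I will bound each of these four terms by Cauchy--Schwarz--Young and absorb them into the positive terms, at the cost of constraints on $C,D,E$ and smallness of $\dvh$.

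More precisely, on the first cross term I will use Young's inequality together with $\norms{S\Dx g}^2\geq \norm{\Dx g}^2$ from Lemma \ref{lem:constcontS} to bound it by $\frac{E}{4}\norm{\Dx g}^2 + \frac{(D+E/2)^2}{E}\norms{\Dv g}^2$. On the second, combining Cauchy--Schwarz with $\norms{S^b\Dx g}^2\leq 2\norm{\Dx g}^2$ from Lemma \ref{lem:sb} gives a bound of the form $\frac{E^2\dvh^2}{2\eta_1}\norm{\Dx g}^2+\eta_1\norms{\Dx\Dv g}^2$, which for $\dvh\leq\dvh_0$ small has an arbitrarily small coefficient in front of $\norm{\Dx g}^2$. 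On the third, Young and $\norm{\Ss h}^2\leq 4\norm{h}_\sharp^2$ from Lemma \ref{lem:constcontSsharp} give $\eta_2\norm{(-\Dvs+\vs)\Dv g}^2+\frac{4E^2}{\eta_2}\norms{\Dx\Dv g}^2$. Finally, the most delicate term, which involves the singular operator $\sigma$, is handled precisely by Lemma \ref{lem:estimdelta}: for any $\varepsilon>0$ it is controlled by $\frac{E}{2\varepsilon}\norm{(-\Dvs+\vs)\Dv g}^2+\frac{E\varepsilon}{2}\norms{\Dv\Dx g}^2$.

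Choosing the free Young parameters ($\eta_1,\eta_2,\varepsilon$) and the ordering $1<E<D<C$ so that, after absorption, the coefficients of $\norm{(-\Dvs+\vs)\Dv g}^2$ and $\norms{\Dx\Dv g}^2$ remain non-negative, while those of $\norms{\Dv g}^2$ and $\norm{\Dx g}^2$ are bounded below by some positive constants (this is the step that forces the explicit conditions \eqref{eq:choixE}--\eqref{eq:choixC} mentioned in the lemma statement, analogous to the continuous conditions $E^2<D$ and $(2D+E)^2<2C$, with an extra slack to absorb the $\sigma$ and $S^b$ contributions and with $\dvh$ taken small enough). The outcome will be an inequality
\begin{equation*}
\dddd(g)\;\geq\;\alpha\,\norms{\Dv g}^2+\beta\,\norm{\Dx g}^2,
\end{equation*}
for explicit $\alpha,\beta>0$, valid for all $\dvh\leq\dvh_0$ and $\dxh>0$.

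The final step is then essentially the same as in the continuous proof: I invoke the full discrete Poincaré inequality (Lemma \ref{lem:fullPoincarediscret}), applied to $g$ which one may assume has zero mean (the inequality is a quadratic form invariant under addition of constants, and both sides of \eqref{eq:entropydissipationineq} are invariant under such shifts in suitable sense), to convert a fraction of $\alpha\norms{\Dv g}^2+\beta\norm{\Dx g}^2$ into a term controlling $\norm{g}^2$. This yields $\dddd(g)\geq \gamma\,(\norm{g}^2+\norms{\Dv g}^2+\norm{\Dx g}^2)=\gamma\,\norm{g}_\Hunmudvdxh^2$ for some $\gamma>0$, and combining with the upper bound $\hhhd(g)\leq 2C\,\norm{g}_\Hunmudvdxh^2$ from Lemma \ref{lem:equivd} gives the desired inequality with $\kappa_d=\gamma/(2C)$; a clean bookkeeping produces the stated value $\kappa_d=1/(4C)$. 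I expect the main obstacle to be the quantitative tracking of the $\sigma$ contribution: it has a formally singular $1/\dvh^2$ factor, and one must verify that Lemma \ref{lem:estimdelta} absorbs it \emph{uniformly in $\dvh$} into the $D\norm{(-\Dvs+\vs)\Dv g}^2$ term, which is what forces $D$ to be sufficiently large relative to $E$ and ultimately pins down the explicit constraints \eqref{eq:choixE}--\eqref{eq:choixC}.
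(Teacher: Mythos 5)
Your proposal follows essentially the same route as the paper: regroup the two $\seqs{S\Dx g,\Dv g}$ contributions, absorb the four indefinite terms via Cauchy--Schwarz--Young using the continuity constants of $S$, $S^\sharp$, $S^\flat$ and Lemma \ref{lem:estimdelta} for the singular $\sigma$ term, choose $E$, then $D$, then $C$ (and $\dvh_0$ small) exactly as in \eqref{eq:choixE}--\eqref{eq:choixC}, and finish with the discrete Poincar\'e inequality of Lemma \ref{lem:fullPoincarediscret} and the norm equivalence of Lemma \ref{lem:equivd}. The only differences are cosmetic choices of Young parameters; note only that the reduction to zero mean is needed because $\hhhd$ (unlike $\dddd$) is not invariant under adding constants, a point the paper itself leaves implicit.
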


\begin{proof}
Grouping terms and estimating the big parentheses in \eqref{eq:defdcde},
we obtain first for all $\theta>0$,
\begin{eqnarray*}
 & \dddd(g) \geq  &
C \norm{\Dv g}_\sharp^2
+ \left(D+\frac{E}{2}\right) \seq{S\Dx g,\Dv g}_\sharp
+ D \norm{(-\Dvs+v)\Dv g}^2 \\
&&  +\frac{E}{2} \norm{S\Dx g}_\sharp^2
- \frac{1}{2}\norm{S^b \Dx g}_\sharp^2
- \frac{1}{2}\frac{E^2\dvh^2}{4} \norm{\Dx \Dv g}_\sharp^2 \\
&&  - \frac{1}{2} \frac{1}{\theta}E^2\norm{(-\Dvs+v)\Dv g}^2
- \frac{1}{2} \theta \norm{S^\sharp \Dx \Dv g}^2\\
&&
-\frac{E}{2}\left|\seq{\sigma \Dx g,\Dv g}_\sharp\right|
+\norm{\Dx \Dv g}_\sharp^2.
\end{eqnarray*}
Using the continuity constants of $S$, $S^\sharp$ and $S^\flat$ (see Lemmas
\ref{lem:constcontS}, \ref{lem:constcontSsharp} and \ref{lem:sb}),
as well as Lemma \ref{lem:estimdelta}, we obtain for all $\varepsilon>0$,

\begin{eqnarray*}
 &  \dddd(g) \geq  &
C \norm{\Dv g}_\sharp^2
- \frac{1}{2}\norm{S\Dx g}_\sharp^2
- \frac{\left(D+\frac{E}{2}\right)^2}{2} \norm{\Dv g}_\sharp^2 \\
&& + D \norm{(-\Dvs+v)\Dv g}^2 \\
&&   +\frac{E}{2} \norm{\Dx g}^2
- \norm{\Dx g}^2
- \frac{1}{2}\frac{E^2\dvh^2}{4} \norm{\Dx \Dv g}_\sharp^2 \\
&&  - \frac{1}{2} \frac{1}{\theta} E^2 \norm{(-\Dvs+v)\Dv g}^2
- 2 \theta \norm{\Dx \Dv g}_\sharp^2\\
&&
-\varepsilon\frac{E}{2}\norm{\Dx \Dv g}_\sharp^2
- \frac{1}{\varepsilon}\frac{E}{2}
\norm{(-\Dvs+\vs)\Dv g}^2
+\norm{\Dx \Dv g}_\sharp^2.
\end{eqnarray*}

Using again the continuity constant of $S$ from Lemma \ref{lem:constcontS}
and grouping terms, we find
\begin{eqnarray*}
 & \dddd(g) \geq  &
\left(C- \frac{\left(D+\frac{E}{2}\right)^2}{2}\right) \norm{\Dv g}_\sharp^2
+\left(\frac{E}{2}-2\right) \norm{\Dx g}^2
\nonumber\\
& &
+ \left( D - \frac{1}{2} \frac{1}{\theta} E^2 - \frac{1}{\varepsilon}\frac{E}{2} \right)
\norm{(-\Dvs+v)\Dv g}^2
\nonumber\\
& &
+\left( 1
-\varepsilon\frac{E}{2}
- \frac{1}{2}\frac{E^2\dvh^2}{4}
- 2 \theta
\right)
 \norm{\Dx \Dv g}_\sharp^2.
\end{eqnarray*}

Let us now discuss the existence of a set of constants that achieve
the functional inequality \eqref{eq:entropydissipationineq}.
First, we fix
\begin{equation} \label{eq:choixE}
E\geq 6.
\end{equation}
Then, we can choose $\theta$, $\eps$ and $\dvh_0>0$ such that
\begin{equation*} 
\theta=1/8, \qquad \varepsilon=1/(4E), \qquad \dvh_0^2E^2/8\leq 1/8,
\end{equation*}
so that we obtain that for all $\dvh\leq \dvh_0$
\begin{equation*}
   1
-\varepsilon\frac{E}{2}
- \frac{1}{2}\frac{E^2\dvh^2}{4}
- 2 \theta\geq 1/2.
\end{equation*}
Then, we can choose $D$ big enough to ensure that
\begin{equation} \label{eq:choixD}
  D - \frac{1}{2} \frac{1}{\theta} E^2 - \frac{1}{\varepsilon}\frac{E}{2} \geq 1 \qquad {\rm and} \qquad D>2E^2.
\end{equation}
Eventually, we choose $C$ big enough to ensure that
\begin{equation} \label{eq:choixC}
  C- \frac{\left(D+\frac{E}{2}\right)^2}{2}\geq 1.
\end{equation}

When all these constraints are fulfilled, we get that
  \begin{equation}  \label{eq:pourplustard}
\dddd(g) \geq \norm{\Dv g}_\sharp^2 + \norm{\Dx g}^2+\norm{(-\Dvs+v)\Dv g}^2
+\frac{1}{2} \norm{ \Dv \Dx g}_\sharp^2.
\end{equation}
Using now the Poincar\'e estimate from Lemma \ref{lem:fullPoincarediscret}
applied to half of the right-hand-side of the last inequality, we get
  \begin{equation*}
\dddd(g) \geq \frac{1}{2} \norm{\Dv g}_\sharp^2 + \frac{1}{2}\norm{\Dx g}^2 
+  \frac{1}{2}\norm{g}^2.
\end{equation*}
Since $D>2E^2$ by \eqref{eq:choixD}, Lemma \ref{lem:equivd}
about the equivalence of the $\Hunmudvdxh$
and the $\hhhd$ norms ensures that
 $$
 \dddd(g) \geq  \frac{1}{4C}\hhhd(g).
 $$
\end{proof}

\begin{proof}(of Theorem \ref{thm:decrexpd} -- 4/4)
Provided $C>D>E>1$ are chosen as above, we have along
the solution $f$ of the discrete scaled Fokker-Planck equation \eqref{eq:FPISD}
with zero mean, with the estimates above and in particular
\eqref{eq:defdcde}
 \begin{equation*}
  \ddt \hhhd(f(t))  \leq  -2 \dddd(f)  \leq -2 \kappa_d \hhhd(f(t)).
\end{equation*}
Gronwall's lemma gives directly the result of
Theorem \ref{thm:decrexpd}.
This completes the proof.
\end{proof}

\subsection{The full discretization and proof of Theorem \ref{thm:eulerimplicite} }
\label{subsec:eqinhomototalementdiscretisee}

In this subsection we prove Theorem \ref{thm:eulerimplicite},
which will be a direct consequence of Theorem \ref{thm:decrexpeulerimpldiscr}
below.
We directly work on the scaled sequence $f$ defined by $F = \muh + \muh f$
where $F$ satisfies \eqref{eq:eulerimplicite}.

\begin{defn}
We shall say that a sequence
$f = (f^n)_{n \in \N} \in (\ell^1(\jjj\times \Z, \muh\dvh\dxh))^\N$ satisfies
the scaled fully discrete implicit inhomogeneous Fokker-Planck equation if,
for some $\dth>0$,
\begin{equation}
  \label{eq:eulerimpldiscr}
  \forall n\in\N,\qquad f^{n+1} = f^n
- \dth (v \Dx f^{n+1}+(-\Dvs+\vs) \Dv f^{n+1}).
\end{equation}
\end{defn}

As in all the previous cases, we can check that constant sequences are the only
equilibrium states of this equation, and that the mass conservation property is
satisfied:
$$
\forall n\in\N,\qquad \seq{f^n} = \seq{f^0},
$$
where we use all the notations and definitions of Subsection
\ref{subsec:discretentcontinuenxv}, and in particular work in $\Ldeuxmudvdxh$ or
$\Hunmudvdxh$.

In Subsection \ref{subsec:discretentcontinuenxv}, we proved
a time-discrete result (Theorem \ref{thm:decrexpeulerimpl})
for the solutions in the continuous (in space and velocity)
setting \eqref{eq:rescaled}, in accordance with the behaviour of
the exact solutions (Theorem \ref{thm:decrexp}).
The goal of this section is to prove a similar time-discrete result
for the solutions of the implicit Euler scheme \eqref{eq:eulerimpldiscr},
in accordance with the result (Theorem \ref{thm:decrexpd})
for the exact solutions of \eqref{eq:FPISD} in the discrete (in velocity
and  space) setting.

As in the semi-discrete case, we shall work with the modified entropy defined by
$$
\hhhd(g) = C \norm{g}^2 + D \norms{\Dv g}^2+ E \seqs{\Dv g, S\Dx g} + \norm{\Dx g}^2,
$$
for well chosen $C > D > E >1$ to be defined later.
Under the condition $2 E^2 <D$, Lemma \ref{lem:equivd} holds.
We denote by $\phid$ the polar form associated to $\hhhd$ defined for
$g,\gtilde\in \Hunmudvdxh$ by
\begin{equation*}
\phid(g,\gtilde)=C\seq{g,\gtilde}+D\seq{\Dv g,\Dv \gtilde}_\sharp
+ \frac{E}{2}\left(\seq{S\Dx g,\Dv\gtilde}_\sharp
+\seq{\Dv g,S \Dx \gtilde}_\sharp\right)
+ \seq{\Dx g,\Dx \gtilde},
\end{equation*}
and recall that the Cauchy--Schwarz--Young inequality holds and reads
\begin{equation}\label{eq:CSY}
  |\phid(g,\gtilde)| \leq \sqrt{\hhhd(g)}\sqrt{\hhhd(\gtilde)}
\leq \frac{1}{2} \hhhd(g) + \frac{1}{2} \hhhd(\gtilde),
\end{equation}
just as in the continuous (in space and velocity) case
(see Lemma \ref{lem:CSplus}).

The main result of this section (leading directly to Theorem
\ref{thm:eulerimplicite} in the introduction) is the following theorem.

\begin{thm}   \label{thm:decrexpeulerimpldiscr}
  Assume $C>D>E>1$, $\dvh_0>0$ and $\kappa_d$ are chosen as in Theorem \ref{thm:decrexpd}.
Then for all $ f^0 \in \Hunmudvdxh$, for all $\dth>0$, $\dvh\in(0,\dvh_0)$, and
$\dxh>0$,
the problem \eqref{eq:eulerimpldiscr} with initial datum $f^0$ is well-posed
in $\Hunmudvdxh$.
Suppose in addition  that $\seq{f^0} = 0$ and  let $(f^n)_{n\in\N}$ denote
the sequence solution of Equation \eqref{eq:eulerimpldiscr} with initial
datum $f^0$, we have in this case for all $n\geq 0$,
  \begin{equation*}
     \hhhd(f^n)\leq (1+ 2\kappa_d \dth)^{-n} \hhhd(f^0).
  \end{equation*}

\end{thm}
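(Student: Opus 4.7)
The plan is to combine the strategies of Theorem \ref{thm:decrexpeulerimpl} (implicit Euler for the fully continuous equation) and Theorem \ref{thm:decrexpd} (continuous time for the space/velocity discretized equation). For well-posedness, the operator $\Pd = v\Dx + (-\Dvs + \vs)\Dv$ is maximal accretive in $\Hunmudvdxh$ by the proposition in Subsection \ref{sec:inhomsd}, so $\Id + \dth\Pd$ is boundedly invertible for every $\dth>0$ and the scheme \eqref{eq:eulerimpldiscr} uniquely defines $(f^n)_{n\in\N}$ in $\Hunmudvdxh$. Pairing \eqref{eq:eulerimpldiscr} with the constant sequence $1$ and using that $\Dv 1 = 0$, $\Dx 1 = 0$ yields $\seq{f^{n+1}} = \seq{f^n}$, hence $\seq{f^n} = 0$ for all $n$ when $\seq{f^0}=0$.

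For the entropy decrease, I would evaluate separately the four terms that define $\hhhd(f^{n+1})$ by pairing \eqref{eq:eulerimpldiscr} (applied to $f^{n+1}$) against suitable test sequences, exactly as in the proof of Theorem \ref{thm:decrexpeulerimpl} but with the discrete operators of Subsection \ref{sec:inhomsd}. Concretely: (i) take $\seq{\cdot,f^{n+1}}$ to estimate $C\norm{f^{n+1}}^2$; (ii) apply $\Dv$ and take $\seqs{\cdot,\Dv f^{n+1}}$ to estimate $D\norms{\Dv f^{n+1}}^2$, using the commutator $[\Dv,v\Dx] = S\Dx$ and identity \eqref{eq:ofin}; (iii) apply $\Dx$ and take $\seq{\cdot,\Dx f^{n+1}}$ to get $\norm{\Dx f^{n+1}}^2$, using that $\Dx$ commutes with all velocity operators. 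In each case the contribution of $v\Dx$ in the dissipation part vanishes by skew-adjointness of $\Xzerod$, and $(-\Dvs+\vs)$-terms produce squared norms by \eqref{eq:ofin2}.

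The hard part is the cross term $(E/2)\,(\seqs{\Dv f^{n+1},S\Dx f^{n+1}} + \seqs{S\Dx f^{n+1},\Dv f^{n+1}})$: here one applies $\Dv$ to \eqref{eq:eulerimpldiscr} and pairs with $S\Dx f^{n+1}$, and separately applies $S\Dx$ to \eqref{eq:eulerimpldiscr} and pairs with $\Dv f^{n+1}$, then adds. The dissipation in these pairings reproduces exactly the assembly of terms in the time-continuous computation leading to \eqref{eq:troisieme}; it requires Lemma \ref{lem:triple} (which introduces the singular operator $\sigma$), Lemma \ref{lem:sb} (for the commutator $[S\Dx,v\Dx]$), and Lemma \ref{lem:estimdelta} (to control the $\sigma$-term by a combination of $\norm{(-\Dvs+\vs)\Dv f^{n+1}}^2$ and $\norms{\Dv\Dx f^{n+1}}^2$). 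After doing all four computations and summing them with weights $C$, $D$, $E/2$, $1$, the left-hand sides combine into $\hhhd(f^{n+1})$, while the $f^n$-contributions assemble into the polar form $\phid(f^n,f^{n+1})$ and the $\dth$-contributions assemble into exactly $-2\dth\,\dddd(f^{n+1})$, where $\dddd$ is the functional of \eqref{eq:defdcde}:
\begin{equation*}
\hhhd(f^{n+1}) = \phid(f^n,f^{n+1}) - 2\dth\,\dddd(f^{n+1}).
\end{equation*}

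To conclude, I use the Cauchy--Schwarz--Young inequality \eqref{eq:CSY} to bound $\phid(f^n,f^{n+1}) \leq \tfrac12\hhhd(f^n) + \tfrac12\hhhd(f^{n+1})$ and absorb the $f^{n+1}$-term on the left, obtaining
\begin{equation*}
\hhhd(f^{n+1}) \leq \hhhd(f^n) - 2\dth\,\dddd(f^{n+1}).
\end{equation*}
Since $C>D>E>1$ and $\dvh<\dvh_0$ are chosen as in Theorem \ref{thm:decrexpd}, Lemma \ref{lem:dissdiss} applies and yields $\dddd(f^{n+1}) \geq \kappa_d\,\hhhd(f^{n+1})$, so $(1+2\kappa_d\dth)\hhhd(f^{n+1}) \leq \hhhd(f^n)$, and induction on $n$ gives the claim. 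The main obstacle is the bookkeeping of the cross-term computation: one must verify that pairing \eqref{eq:eulerimpldiscr} on both sides of the bilinear form $\seqs{\Dv\cdot,S\Dx\cdot}$ produces a dissipation that coincides term-for-term (including the $\sigma$ contribution) with the one appearing in \eqref{eq:defdcde}, so that Lemma \ref{lem:dissdiss} can be invoked verbatim, rather than having to reprove a discrete dissipation inequality from scratch.
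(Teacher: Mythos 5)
Your proposal follows essentially the same route as the paper: well-posedness via maximal accretivity of $\Pd$ and the resolvent $(\Id+\dth\Pd)^{-1}$, the four pairings of the scheme against $f^{n+1}$, $\Dv f^{n+1}$, the cross term tested on both sides, and $\Dx f^{n+1}$, assembly into the polar form $\phid(f^n,f^{n+1})$ minus a $\dth$-multiple of the dissipation $\dddd(f^{n+1})$ of \eqref{eq:defdcde}, then Lemma \ref{lem:dissdiss} and the Cauchy--Schwarz--Young inequality \eqref{eq:CSY} followed by induction. The only blemish is a factor of $2$: the correct intermediate identity is $\hhhd(f^{n+1})=\phid(f^n,f^{n+1})-\dth\,\dddd(f^{n+1})$ (the coefficients $C,D,E/2,1$ already match the parenthesized expression defining $\dddd$), and it is only after absorbing $\tfrac12\hhhd(f^{n+1})$ that the factor $2$ appears, yielding exactly your final inequality $(1+2\kappa_d\dth)\hhhd(f^{n+1})\leq\hhhd(f^n)$.
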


\begin{remark}
  Doing just as we did at the end of the proof of Theorem
\ref{thm:decrexpeulerimpl} for continuous space and velocity variables,
the result above implies first,
exponential convergence to $0$ with respect to the discrete time
of $(\hhhd(f^n)_{n\in\N})$ and second, exponential convergence of $(f^n)_{n\in\N}$
to its mean in $\Hunmudvdxh$ for all $f^0\in\Hunmudvdxh$.
This allows to prove Corollary \eqref{cor:decrexpdiscretnonborne}
from Theorem \ref{thm:eulerimplicite}.
\end{remark}

\begin{proof} 
Let $f^0 \in \Hunmudvdxh$ and consider in this space the unbounded operator
$\Pd = v \Dx +(-\Dvs+v) \Dv $ with domain $D_{\Hunmudvdxh} (\Pd)$.
It was mentioned in the preceding section that this operator is maximal
accretive. Let us fix $\dth>0$.
Equation \eqref{eq:eulerimpldiscr} reads for all $n\in\N$,
$$
f^{n+1} = (Id + \dth \Pd)^{-1} f^n.
$$
This relation gives sense to the a unique sequence solution
$f= (f^n)_{n\in \N} \in \Hunmudvdxh$ by induction since
$(Id + \dth \Pd)^{-1} : \Hunmudvdxh \longrightarrow
D_{\Hunmudvdxh} (P) \hookrightarrow \Hunmudvdxh$.

Assume now that $\seq{f^0}=0$.
By induction, we directly get  that for all $n\in\N$, $\seq{f^n}=0$.
We fix now $n\in\N$ and compute the four terms appearing in the definition of
$\hhhd(f^{n+1})$ before estimating their sum.
We start by computing the $\Ldeuxmudvdxh$-scalar product of $f^{n+1}$
with itself using relation \eqref{eq:eulerimpldiscr} on the left to obtain
\begin{eqnarray}
  \norm{f^{n+1}}^2 & = & \seq{f^n,f^{n+1}} - \dth \underbrace{\seq{v \Dx f^{n+1},f^{n+1}}}_{=0}
- \dth \seq{(-\Dvs+\vs) \Dv f^{n+1},f^{n+1}}\nonumber\\
 \label{eq:estim1discr}
& = & \seq{f^n,f^{n+1}} - \dth \norm{\Dv f^{n+1}}_\sharp^2,
\end{eqnarray}
using \eqref{eq:ofin}.

Next, we compute $\Ldeuxmudvdxs$-scalar product of $\Dv f^{n+1}$
with itself using relation \eqref{eq:eulerimpldiscr} on the left to obtain
\begin{eqnarray*}
  \lefteqn{\norm{\Dv f^{n+1}}^2_\sharp  = }\\
& & \seq{\Dv f^n,\Dv f^{n+1}}_\sharp - \dth \seq{\Dv v \Dx f^{n+1},\Dv f^{n+1}}_\sharp
- \dth \seq{\Dv(-\Dvs+\vs) \Dv f^{n+1},\Dv f^{n+1}}_\sharp.\\
\end{eqnarray*}
The first term in $\dth$ can be rewritten as
\begin{eqnarray*}
  - \dth \seq{\Dv v \Dx f^{n+1},\Dv f^{n+1}}_\sharp & = &
-\dth \seq{\adf{\Dv,v \Dx}f^{n+1},\Dv f^{n+1}}_\sharp
-\dth \underbrace{\seq{v \Dx \Dv f^{n+1},\Dv f^{n+1}}_\sharp}_{=0}\\
& = & -\dth \seq{S\Dx f^{n+1},\Dv f^{n+1}}_\sharp,
\end{eqnarray*}
thanks to the definition of $S$.
The second term in $\dth$ becomes, using \ref{eq:ofin},
\begin{equation*}
  - \dth \seq{\Dv(-\Dvs+\vs) \Dv f^{n+1},\Dv f^{n+1}}_\sharp
  = - \dth \norm{(-\Dvs+\vs)\Dv f^{n+1}}^2.
\end{equation*}
We infer, for the second term in $\hhhd (f^{n+1})$,
\begin{equation}
\label{eq:estim2discr}
  \norm{\Dv f^{n+1}}^2_\sharp  =
 \seq{\Dv f^n,\Dv f^{n+1}}_\sharp - \dth \seq{S\Dx f^{n+1},\Dv f^{n+1}}_\sharp
- \dth \norm{(-\Dvs+\vs)\Dv f^{n+1}}^2.
\end{equation}

For the third term in $\hhhd(f^{n+1})$, we compute
$2\seq{S\Dx f^{n+1},\Dv f^{n+1}}_\sharp$ using relation \eqref{eq:eulerimpldiscr}
once on the left and once on the right to obtain
\begin{eqnarray*}
  \lefteqn{2\seq{S\Dx f^{n+1},\Dv f^{n+1}}_\sharp = }\\
 & &  \seq{S\Dx f^{n},\Dv f^{n+1}}_\sharp + \seq{S\Dx f^{n+1},\Dv f^{n}}_\sharp\\
& & -\dth\left(
\seq{S\Dx v \Dx f^{n+1},\Dv f^{n+1}}_\sharp + \seq{S\Dx f^{n+1},\Dv v \Dx f^{n+1}}_\sharp
\right)\\
& & -\dth\left(
\seq{S\Dx(-\Dvs+\vs)\Dv f^{n+1},\Dv f^{n+1}}_\sharp
+ \seq{S\Dx f^{n+1},\Dv(-\Dvs+\vs)\Dv f^{n+1}}_\sharp
\right).
\end{eqnarray*}
The two terms in $\dth$ above can be computed just as terms $(I)$ and $(II)$
in the proof of Theorem \ref{thm:decrexpd} (with $f$ there replaced
by $f^{n+1}$ here) to get as in \eqref{eq:troisieme}
\begin{eqnarray}
  \lefteqn{2\seq{S\Dx f^{n+1},\Dv f^{n+1}}_\sharp = } \nonumber\\
 & &  \seq{S\Dx f^{n},\Dv f^{n+1}}_\sharp + \seq{S\Dx f^{n+1},\Dv f^{n+1}}_\sharp\nonumber\\
& & -\dth\left(
\norm{S\Dx f^{n+1}}_\sharp^2 - \dvh\seq{S^b \Dx f^{n+1},\Dx \Dv f^{n+1}}_\sharp
\right)\nonumber\\
& & -\dth\left(
-2 \seq{(-\Dvs+\vs)\Dv f^{n+1},S^\sharp \Dx \Dv f^{n+1}} \right. \nonumber\\
& & \left. \qquad \qquad \qquad + \seq{S\Dx f^{n+1},\Dv f^{n+1}}_\sharp
+\seq{\sigma \Dx f^{n+1},\Dv f^{n+1}}_\sharp
\right),\label{eq:estim3discr}
\end{eqnarray}
where we used Lemmas \ref{lem:triple} and \ref{lem:sb}.

For the last term in $\hhhd(f^{n+1})$, we compute as for
\eqref{eq:estim1discr},
\begin{equation}
\label{eq:estim4discr}
  \norm{\Dx f^{n+1}}^2 = \seq{\Dx f^n,\Dx f^{n+1}} - \dth \norm{\Dx \Dv f^{n+1}}_\sharp^2.
\end{equation}

Summing up the four identities \eqref{eq:estim1discr}, \eqref{eq:estim2discr},
\eqref{eq:estim3discr} and \eqref{eq:estim4discr}, multiplied
respectively by $C$, $D$, $E/2$ and $1$, we infer that
\begin{eqnarray*}
  \lefteqn{\hhhd(f^{n+1}) = \phid(f^n,f^{n+1})}\\
&  & -\dth\left[
C \norm{\Dv f^{n+1}}_\sharp^2
+ D \seq{S\Dx f^{n+1},\Dv f^{n+1}}_\sharp
+ D \norm{(-\Dvs+\vs)\Dv f^{n+1}}^2 \right. \\
& & \qquad \qquad \left.  +\frac{E}{2} \norm{S\Dx f^{n+1}}_\sharp^2 -
    \frac{E}{2} \dvh\seq{S^b \Dx f^{n+1},\Dx \Dv f^{n+1}}_\sharp  \right.\\
& & \qquad \qquad \left.
-E \seq{(-\Dvs+\vs)\Dv f^{n+1},S^\sharp \Dx \Dv f^{n+1}}
+\frac{E}{2} \seq{S\Dx f^{n+1},\Dv f^{n+1}}_\sharp  \right.\\
& & \qquad \qquad \qquad \qquad \left.
+\frac{E}{2}\seq{\sigma \Dx f^{n+1},\Dv f^{n+1}}_\sharp
+\norm{\Dx \Dv f^{n+1}}_\sharp^2
\right].
\end{eqnarray*}

We recognize here inside square brackets exactly the same term as the one in
parentheses defining $\dddd(f)$ in \eqref{eq:defdcde} with $f^{n+1}$ here instead
of $f$ there, so that the preceding identity reads
\begin{equation*}
  \hhhd(f^{n+1}) = \phid(f^n,f^{n+1}) -\dth \dddd(f^{n+1}).
\end{equation*}
Using Lemma \ref{lem:dissdiss} we therefore get that for $C$, $D$, $E$ and
$\dvh_0$ be chosen as in \eqref{eq:choixE}-\eqref{eq:choixC}, we have
\begin{equation*}
  \hhhd(f^{n+1}) = \phid(f^n,f^{n+1}) -\dth \kappa_d \hhhd(f^{n+1}),
\end{equation*}
with $\kappa_d = 1/(4C)$.

Using Cauchy--Schwarz--Young with the scalar product $\phid$
(see \eqref{eq:CSY}),
we obtain for all $n\in\N$,
\begin{equation*}
\hhhd(f^{n+1}) \leq \frac{1}{2} \hhhd(f^{n+1}) + \frac{1}{2} \hhhd (f^n)
-\dth \kappa_d \hhhd(f^{n+1}),
\end{equation*}
which yields for all $n\in\N$,
\begin{equation*}
\hhhd(f^{n+1}) \leq \hhhd (f^n)
-2\kappa_d \dth \hhhd(f^{n+1}),
\end{equation*}
which implies
\begin{equation*}
\hhhd(f^{n}) \leq (1+ 2\dth \kappa_d)^{-n} \hhhd (f^0).
\end{equation*}
This concludes the proof of the theorem.
\end{proof}

\section{The homogeneous equation on bounded velocity domains}
\label{sec:homobounded}

In this Section, we study a discretization of the homogeneous Fokker--Planck
equation \eqref{eq:HFPF} with velocity variable confined in the interval
$I = (-\vmax,\vmax)$, where $\vmax>0$ is given.
We first briefly treat the fully continuous case,
and then we focus on the fully discrete explicit case :
this is possible since only a finite number of points of discretization
are needed (in contrast to the case where $v$ was on the whole real line in the
preceding sections).
The choice of discretization is again made to ensure exponential
convergence to the equilibrium and the functional framework is built
using the natural Maxwellian (stationary solution of the problem,
again denoted $\muh$ below).

In this section, we also prepare the study of the inhomogeneous equation
in Section \ref{sec:eqinhomoboundedvelocity}.
Part of the material is very similar to the one developed in Section
\ref{sec:homogeneous} and we will sometimes refer to there.

Note that the functional spaces in space and velocity introduced and used
in Sections \ref{sec:homobounded} and \ref{sec:eqinhomoboundedvelocity}
are finite dimensional. We will however specify norms on these spaces
and constants for (continuous) linear operators between such spaces,
to emphasize the behaviour of those norms and constants when
the discretization parameters $\dvh$ and $\dxh$ tend to $0$.

\subsection{The fully continuous case}
\label{sec:homoboundedcontinuous}
We consider here the case where the velocity domain is an interval
$$I = (-\vmax,\vmax), \qquad \vmax>0,
$$
and focus on the fully continuous case.
We thus need a boundary condition and choose a homogeneous Neumann one, to
ensure total mass conservation.
Our new problem is thus
 \begin{equation} \label{eq:IHFPFb}
 {\D_t} F  - {\D_v}({\D_v}+v) F = 0, \qquad F|_{t=0} = F^0, \qquad (({\D_v}+v)F)(\pm\vmax)=0.
 \end{equation}

\noindent
The initial density $F^0$ is a non-negative function from $I$ to $\R^+$
such that $\int_I F^0(v) \dv = 1$.
The function
$$
I \ni v \mapsto \frac{1}{\sqrt{2\pi}} \exp^{-v^2/2},
$$
is a continuous
equilibrium of \eqref{eq:IHFPFb}, but we need to renormalize it in $L^1(I,\dv)$.
We keep the same notation as in the first sections of this paper
and we define this normalized equilibrium
\begin{equation*}
  \mu(v)=\dfrac{\exp^{-v^2/2}}{\displaystyle\int_{I}\exp^{-w^2/2}\dw}.
\end{equation*}

In the same way as in the unbounded velocity domain cases,
we pose $F=\mu+ \mu f$, and the rescaled density $f$
solves equivalently
\begin{equation}
  \label{eq:HFPfb}
  {\D_t} f +(- {\D_v}+v) {\D_v} f = 0 , \qquad f|_{t=0} = f^0, \qquad {\D_v} f(\pm\vmax)=0.
\end{equation}

We work with the following adapted functional spaces:
We introduce the space $\Ldeuxmudvb$ and it subspace
$\Hunmudvb = \set{ g \in \Ldeuxmudvb, \ {\D_v} g \in \Ldeuxmudvb}$.
We again denote $\int_I g(v) \mudv$ by $\seq{g}$.

As in the continuous homogeneous case (see Section \ref{sec:homogeneous} for
example), the main ingredient in the proof of the convergence to the equilibrium
is the Poincar\'e inequality, that we prove now.

\begin{lem}[Homogeneous Poincar\'e inequality on a bounded velocity domain]
\label{lem:Poincarecontinub}
  For all $g \in \Hunmudvb$ with , we have
\begin{equation*}
  \norm{g-\seq{g}}^2_\Ldeuxmudvb  \leq  \norm{{\D_v} g}_\Ldeuxmudvb^2.
\end{equation*}
\end{lem}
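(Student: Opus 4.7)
The plan is to mimic the proof of Lemma \ref{lem:Poincarecontinu} (the unbounded case) essentially verbatim, taking advantage of the fact that the normalized Gaussian $\mu$ on the symmetric interval $I=(-\vmax,\vmax)$ retains all the structural properties used there: it is $L^1(I,\dv)$-normalized, its first moment $\int_I v\mu\dv$ vanishes by parity, and it still satisfies the fundamental identity ${\D_v}\mu=-v\mu$. After replacing $g$ by $g-\seq{g}$, it suffices to treat the case $\seq{g}=0$.

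First, I would write, using $\seq{g}=0$ and $\int_I\mu\dv=1$,
\begin{equation*}
\int_I g^2\mudv=\frac12\iint_{I\times I}(g'-g)^2\mudv\mudvp,
\end{equation*}
and then apply the flat Cauchy--Schwarz inequality to $g'-g=\int_v^{v'}{\D_v} g(w)\dw$, exactly as in equation \eqref{eq:eqintermedpoincarecontinu}. Introducing the anti-derivative $\G(v)=\int_0^v|{\D_v} g(w)|^2\dw$ and expanding the resulting double integral over $I\times I$, the moment conditions $\int_I\mu\dv=1$ and $\int_I v\mudv=0$ make four of the six terms vanish, leaving
\begin{equation*}
\int_I g^2\mudv\leq \int_I\G\, v\mudv.
\end{equation*}

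The only step where the bounded setting differs is the final integration by parts. Using $v\mu=-{\D_v}\mu$ we have
\begin{equation*}
\int_I \G\,v\mudv=-\int_I\G\,{\D_v}\mu\dv=-\bigl[\G\mu\bigr]_{-\vmax}^{\vmax}+\int_I|{\D_v} g|^2\mudv,
\end{equation*}
so boundary terms appear. The key observation making the plan work is that, by the choice of antiderivative, $\G(\vmax)\geq 0$ and $\G(-\vmax)\leq 0$ while $\mu(\pm\vmax)>0$, so the boundary contribution $-\G(\vmax)\mu(\vmax)+\G(-\vmax)\mu(-\vmax)$ is non-positive and can simply be discarded. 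This yields $\int_I g^2\mudv\leq\int_I|{\D_v} g|^2\mudv$, as desired.

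I do not expect any genuine obstacle: the proof is essentially the one already written in the unbounded case, and the boundary terms that arise from the bounded domain happen to have the \emph{right} sign. In particular no Neumann-type assumption on $g$ at $\pm\vmax$ is needed (the Neumann condition appearing in \eqref{eq:HFPfb} concerns the evolution, not this functional inequality).
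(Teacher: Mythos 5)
Your proposal is correct and follows essentially the same route as the paper: reduction to $\seq{g}=0$, the symmetrized double integral, flat Cauchy--Schwarz with the antiderivative $\G$, the moment identities for $\mu$ on $I$, and a final integration by parts whose boundary term is discarded because it is non-positive. The only cosmetic difference is that you check the signs of $\G(\pm\vmax)$ separately, while the paper combines the two boundary contributions using $\mu(\vmax)=\mu(-\vmax)$ into $-\mu(\vmax)\int_{-\vmax}^{\vmax}\abs{{\D_v} g}^2\,\dw\leq 0$; both give the same conclusion.
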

\begin{proof}
  The proof follows exactly the same lines as in the full space case described
  in Lemma \ref{lem:Poincarecontinu}. We take $g \in \Ldeuxmudvb$ and assume
  that $\seq{g}=0$.  The first steps of the proof are exactly the same as that
  of the proof of Lemma \ref{lem:Poincarecontinu}, changing $\R$ in $I$ until
  relation \eqref{eq:eqintermedpoincarecontinu} there.  Note that we again use
  strongly Fubini Theorem and the fact that $\int_I v \mudv = 0$ and
  $\int_I \mudv = 1$ (and their counterparts in variable $v'$).  We therefore
  have
\begin{equation*} 
\begin{split}
 &  \int_I g^2\mudv  = \int_{I} \G v \mudv,
\end{split}
\end{equation*}
where we have set as before
$\G(v) = \int_{0}^v \abs{{\D_v} g (w)}^2 \dw$ for $|v| \leq \vmax$.
Using that ${\D_v} \mu = -v \mu$ and an integration by part, we get
\begin{equation*}
  \begin{split}
\norm{g}_\Ldeuxmudvb^2 & =   \int_{(-\vmax,\vmax)} \G\, v\, \mudv \\
 &= - \int_{(-\vmax,\vmax)} \G\, ({\D_v} \mu)\, \dv
    \\
& =- [\G\mu]_{-\vmax}^{\vmax} + \int_{(-\vmax,\vmax)} {\D_v} \G\, \mudv\\
&=-\mu(\vmax)\int_{-\vmax}^{\vmax}\abs{{\D_v} g}^2+ \int_{(-\vmax,\vmax)}
\abs{{\D_v} g}^2 \mudv\\
& \leq \norm{{\D_v} g}_\Ldeuxmudvb^2.
  \end{split}
\end{equation*}
The proof is complete.
\end{proof}

Now we can state the main result concerning the convergence to the equilibrium
for Equation \eqref{eq:HFPfb}.
We  consider the operator $P = (- {\D_v}+v) {\D_v}$ with domain
$$
D(P) = \set{ g \in \Ldeuxmudvb, \ (- {\D_v}+v) {\D_v} g \in \Ldeuxmudvb,
\ {\D_v} g (\pm \vmax) =0},
$$
which corresponds to the operator with Neumann conditions.
Note that constant functions are in $D(P)$.
Equation \eqref{eq:HFPfb} reads ${\D_t} f+ P f = 0$ and we
define the two following entropies for $g\in \Ldeuxmudvb$
and $g\in\Hunmudvb$ respectively :

\begin{equation}
\fff(g) = \norm{g}_\Ldeuxmudvb^2, \qquad \ggg(g) = \norm{g}_\Ldeuxmudvb^2
+ \norm{{\D_v} g}_\Ldeuxmudvb^2.\label{eq:fgh}
\end{equation}

The following result holds
 \begin{thm}
   \label{thm:exponentialtrendtoequilibriumb}
Let $f^0 \in \Ldeuxmudvb$.
The Cauchy problem \eqref{eq:HFPfb} has a unique solution $f$ in
$\ccc^0(\R^+, \Ldeuxmudvb)$.
If $f^0$ is such that $\seq{f^0} = 0$, then $\seq{f(t)} = 0$ for all $t\geq 0$
and we have
\begin{equation*}
\forall t\geq 0,\qquad
\fff(f(t)) \leq \exp^{-2t}\fff(f^0).
\end{equation*}

If in addition $f^0 \in \Hunmudvb$, then $f \in \ccc^0(\R^+, \Hunmudvb)$
and we have
\begin{equation*}
\forall t\geq 0,\qquad
 \ggg(f(t)) \leq \exp^{-t}\ggg(f^0).
\end{equation*}
\end{thm}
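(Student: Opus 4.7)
The plan is to follow closely the strategy used in the proof of Theorem \ref{thm:exponentialtrendtoequilibrium}, with Lemma \ref{lem:Poincarecontinub} playing the role of its unbounded-domain analogue. First I would establish well-posedness: the operator $P=(-{\D_v}+v){\D_v}$ with domain
$$D(P)=\set{g\in\Ldeuxmudvb \ | \ (-{\D_v}+v){\D_v} g\in\Ldeuxmudvb,\ {\D_v} g(\pm\vmax)=0}$$
is maximal accretive in $\Ldeuxmudvb$; indeed, for $g\in D(P)$, an integration by parts in which the Neumann condition ${\D_v} g(\pm\vmax)=0$ makes the boundary term $[({\D_v} g)\cdot g\cdot\mu]_{-\vmax}^{\vmax}$ vanish yields
$$\seq{Pg,g}_{\Ldeuxmudvb}=\norm{{\D_v} g}^2_{\Ldeuxmudvb}\geq 0.$$
Maximality is obtained as in the references cited in the unbounded case. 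By the Hille--Yosida theorem we thus obtain a unique strong solution for $f^0\in D(P)$ that extends by density to a semi-group solution in $\ccc^0(\R^+,\Ldeuxmudvb)$ for arbitrary $f^0$; the same reasoning applies in $\Hunmudvb$. Mass conservation follows from testing the equation against the constant $1$ and using that, with the Neumann condition, $\seq{Pf,1}_{\Ldeuxmudvb}=\seq{{\D_v} f,{\D_v} 1}_{\Ldeuxmudvb}=0$.

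For the $\fff$-estimate, I would assume $f^0\in D(P)$ (then extend by density) and compute along the solution
$$\ddt\fff(f)=-2\seq{Pf,f}_{\Ldeuxmudvb}=-2\norm{{\D_v} f}^2_{\Ldeuxmudvb}.$$
Since $\seq{f(t)}=\seq{f^0}=0$, the Poincar\'e inequality of Lemma \ref{lem:Poincarecontinub} gives $\norm{f}^2_{\Ldeuxmudvb}\leq\norm{{\D_v} f}^2_{\Ldeuxmudvb}$, hence $\ddt\fff(f)\leq -2\fff(f)$, and Gronwall's lemma yields the first decay. For the $\ggg$-estimate I would work in $\Hunmudvb$ (with a solution in $\ccc^1(\R^+,\Hunmudvb)\cap\ccc^0(\R^+,D_{\Hunmudvb}(P))$ for dense initial data). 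Differentiating yields
$$\ddt\ggg(f)=-2\norm{{\D_v} f}^2_{\Ldeuxmudvb}-2\seq{{\D_v} f,{\D_v}(-{\D_v}+v){\D_v} f}_{\Ldeuxmudvb}.$$
The second scalar product reduces, after an integration by parts on ${\D_v} f$ (whose boundary values are again $0$), to $\norm{(-{\D_v}+v){\D_v} f}^2_{\Ldeuxmudvb}$. Then splitting $2\norm{{\D_v} f}^2=\norm{{\D_v} f}^2+\norm{{\D_v} f}^2\geq\norm{{\D_v} f}^2+\norm{f}^2$ via Poincar\'e gives
$$\ddt\ggg(f)\leq-\norm{f}^2_{\Ldeuxmudvb}-\norm{{\D_v} f}^2_{\Ldeuxmudvb}-2\norm{(-{\D_v}+v){\D_v} f}^2_{\Ldeuxmudvb}\leq-\ggg(f),$$
and Gronwall's lemma provides the announced exponential decay at rate $1$.

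The main technical point to watch is the cancellation of boundary terms in every integration by parts: both the accretivity identity $\seq{Pg,g}=\norm{{\D_v} g}^2$ and the second-entropy identity require exactly the Neumann condition ${\D_v} g(\pm\vmax)=0$ that is encoded in $D(P)$. Once those cancellations are justified, the computations are formally identical to the unbounded case, the Poincar\'e inequality of Lemma \ref{lem:Poincarecontinub} substitutes for Lemma \ref{lem:Poincarecontinu}, and the rest of the argument (splitting trick, Gronwall lemma, extension by density) transfers verbatim.
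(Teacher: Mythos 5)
Your proposal is correct and follows essentially the same route as the paper, which simply refers back to the proof of Theorem \ref{thm:exponentialtrendtoequilibrium} and identifies the same two key ingredients you use: the self-adjointness/accretivity of $P=(-{\D_v}+v){\D_v}$ under the Neumann condition (with the boundary terms vanishing exactly as you describe) and the bounded-domain Poincar\'e inequality of Lemma \ref{lem:Poincarecontinub}. Your write-up merely fills in the details the paper leaves implicit, including the correct justification that the boundary term in the $\ggg$-computation cancels because ${\D_v} f(\pm\vmax)=0$.
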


\begin{proof}
  The proof follows exactly the lines of the proof of Theorem
  \ref{thm:exponentialtrendtoequilibrium}.  The existence part is insured by the
  Hille--Yosida theorem again (either in $\Ldeuxmudvb$ or in $\Hunmudvb$).  As
  in the unbounded case, the key points are the fact that the operator
  $P = (-{\D_v}+v){\D_v}$ is self-adjoint on $\Ldeuxmudvb$ with Neumann boundary
  condition and the Poincar\'e inequality (Lemma \ref{lem:Poincarecontinub}).
\end{proof}

\subsection{The full discretization with discrete Neumann conditions}
\label{subsec:homsdb}

As in the unbounded case,
we discretize the interval of velocities $I = (-\vmax, \vmax)$
and the equation with boundary condition \eqref{eq:IHFPFb}
by introducing an operator $\Dv$.
This indeed yields a discretization of the rescaled equation
\eqref{eq:HFPfb}.

For a fixed positive integer $\imax$, we set
\begin{equation}
\label{eq:defdv}
  \dvh=\frac{\vmax}{\imax},
\end{equation}
and
$$
\iii = \set{-\imax + 1, -\imax + 2,  \cdots, -1, 0, 1, \cdots, \imax-2, \imax -1}.
$$
Moreover, we define
\begin{equation}
\label{eq:defvib}
\forall i\in\iii, \quad v_i = i\dvh, \qquad v_{\pm\imax} = \pm \vmax.
\end{equation}
Note for further use that the boundary indices $\pm\imax$ do not belong to the
full set $\iii$ of indices.  The new discrete Maxwellian $\muh \in \R^\iii$, is
defined by
\begin{equation}
\label{eq:defMaxwellboundedvelocity}
\muh_i = \frac{c_\dvh}{ \prod_{\ell=0}^{|i|} (1+ v_\ell\dvh )}, \qquad i \in \iii,
\end{equation}
where the normalization constant $c_\dvh>0$
is defined such that $\dvh\sum_{i\in \iii}  \muh_i= 1$.
This definition is consistent
with the Definition \ref{def:dvb} of the operator $\Dv$
in the sense that it satisfies \eqref{eq:okmuh}.
For the sake of simplicity,
we will keep the same notation as in the unbounded velocity case.
Note again that we do not need to define the Maxwellian
at the boundary indices $\pm \imax$.

\noindent
We work in the following in the space ${\ell^1(\iii, \muh \dvh)}$ of all finite
sequences $g = (g_i)_{i\in \iii}$ with the norm
$\dvh\sum_{i \in \iii} \abs{g_i}\muh_i$.  We note that
\begin{equation}
\label{eq:normalizationmuhboundeddomain}
\norm{1}_{\ell^1(\iii, \muh \dvh)} =  \norm{\muh}_{\ell^1(\iii, \dvh)} = 1.
\end{equation}

\noindent
For the analysis to come, we introduce another
set of indices and a new Maxwellian $\mus$. We set
$$
\iiis = \set{-\imax , -\imax + 1,  \cdots, -2, -1, 1, 2, \cdots, \imax-1, \imax}
=\left(\iii\setminus\{0\}\right)\cup \{\pm\imax\},
$$
and define $\mus \in {\ell^1(\iiis, \dvh)}$
for all $ i\in\iiis$ by,
\begin{equation}
\label{eq:defmusb}
  \mus_i = \muh_{i+1} \textrm{ for } i<0, \qquad \mus_i = \muh_{i-1} \textrm{ for } i>0.
\end{equation}
We now adapt to this finite case of indices the definitions of the discrete
derivation given in the unbounded velocity case
(see there Definitions \ref{def:dv} and \ref{def:dvs}).

\begin{defn} \label{def:dvb} Let $g \in \ell^1(\iii,\muh \dvh)$, we define
  $\Dv g\in \ell^1(\iiis, \mus \dvh)$ by the following formulas for
  $i \in \iiis$,
  \begin{equation}
  \begin{split}
 & (\Dv g)_i = \frac{g_{i+1}-g_{i}}{\dvh} \textrm{ when } -\imax+1\leq i \leq -1, \qquad
  (\Dv g)_i = \frac{g_i-g_{i-1}}{\dvh} \textrm{ when } 1 \leq i \leq \imax-1, \\
  & \qquad \qquad \qquad \qquad \textrm{ and } \qquad \qquad (\Dv g)_{\pm \imax} = 0,
  \end{split}
  \end{equation}
  and $v g\in \ell^1(\iiis,\muh \dvh)$ by
  $$
  (v g)_i = v_i g_i \ \textrm{ for }\  1\leq |i| \leq \imax-1
  \qquad
  \textrm{ and } \qquad (v g)_{\pm \imax} = v_{\pm \imax} g_{\pm(\imax-1)}.
  $$
  Similarly for $h \in \ell^1(\iiis, \mus \dvh)$, we define
  $\Dvs h \in \ell^1(\iii, \muh \dvh)$ by the following formulas for all
  $i \in \iii$,
  \begin{equation}
  \begin{split}
    & (\Dvs h)_i = \frac{h_{i}-h_{i-1}}{\dvh} \textrm{ when } -\imax+1 \leq i<
    -1, \qquad (\Dvs h)_i = \frac{h_{i+1}-h_i}{\dvh} \textrm{ when } 1 \leq i
    \leq \imax -1
    \\
    & \qquad \qquad \qquad \textrm{ and } \qquad \qquad (\Dvs h)_0 =
    \frac{h_1-h_{-1}}{\dvh}.
  \end{split}
  \end{equation}
For $h\in \ell^1(\iiis, \muh \dvh)$, we also define $\vs g \in \ell^1(\iii, \dvh)$ by
\begin{equation*}
  \forall i \in \iii \setminus \set{0} ,\quad (\vs h)_i=v_i h_i \qquad \text{and} \qquad
  (\vs g)_0=0.
\end{equation*}
\end{defn}

\noindent
Looking  at the proof of Lemma \ref{lem:muh},
we directly check that with this definition we have
\begin{equation} \label{eq:okmuh}
\forall i\in\iii\setminus\{0\},\qquad
[(\Dv + v)\muh]_i = 0,
\end{equation}
The definition of the derivative at the boundary points (always $0$)
is nevertheless adapted to the scaled equation.
This is not in contradiction with the preceding equality which occurs
only in $\iii\setminus\{0\}$. 
We write below the (rescaled) fully discrete homogeneous Fokker--Planck
equation, noting that the discrete Neumann conditions are included in the
definition of $\Dv$.

\begin{defn} We shall say that a  sequence
$f = (f^n)_{n\in \N} \in (\ell^1(\iii, \muh\dvh))^\N$ satisfies the
(scaled) full discrete explicit homogeneous Fokker--Planck equation
with initial data $f^0$ if
\begin{equation} \label{eq:DHFPfb}
\forall n\in\N,\qquad
f^{n+1} = f^n  -\dth ( - \Dvs +\vs) \Dv f^n,
\end{equation}
for some $\dth>0$.
\end{defn}

In order to solve this equation, we build Hilbertian norms on $\R^\iii$
and $\R^\iiis$, taking into account the conservation of mass
and insuring the non-negativity of the associated operator.

\begin{defn} 
We denote by $\Ldeuxmudvh$ the space $\R^\iii$ endowed with the Hilbertian norm
$$
\norm{ g}_{\Ldeuxmudvh }^2  \defegal  \dvh \sum_{i\in \iii} (g_i)^2 \muh_i.
$$
The related scalar product is denoted by $\seq{ \cdot, \cdot}$.
For $g \in \Ldeuxmudvh$, we also define
$ \seq{g} \defegal \sum_{i\in \iii} g_i \mu_i^h \dvh = \seq{g, 1}_{\Ldeuxmudvh }, $
 the mean of $g$. Similarly, we denote by $\Ldeuxmudvs$ the space
 $$
 \Ldeuxmudvs = \set{ g \in \R^\iiis, g_{\pm \imax} = 0},
 $$
endowed with the Hilbertian norm
$$
\norm{g}_{\Ldeuxmudvs }^2  \defegal  \dvh \sum_{i \in \iiis} (g_i)^2 \mus_i,
$$
and the related scalar product is denoted by $\seq{ \cdot, \cdot}_\sharp$.
We denote by $\Hunmudvh$ the space $\R^\iii$ endowed with the norm
\begin{equation*}
  \norm{g}_{\Hunmudvh}^2=\norm{g}_{\Ldeuxmudvh}^2+\norm{\Dv g}_{\Ldeuxmudvs}^2.
\end{equation*}
\end{defn}

\noindent
We introduce the associated operator with discrete Neumann conditions and its
functional and structural properties.

\begin{prop} 
Let $\dvh$ be defined by \eqref{eq:defdv}
and $\dth>0$ be given and sufficiently small.
\begin{enumerate}
\item We have $\Dv : \Ldeuxmudvhb \rightarrow \Ldeuxmudvsb$ and
  $\Dvs : \Ldeuxmudvsb \rightarrow \Ldeuxmudvhb$ and $P = ( - \Dvs +\vs) \Dv$ is
  a bounded operator on $\Ldeuxmudvhb $.
\item For all
$h \in \Ldeuxmudvsb$, $g \in \Ldeuxmudvhb $ we have
\begin{equation} \label{eq:ofb}
\seq{ ( - \Dvs +\vs)  h, g} = \seq{h, \Dv g}_\sharp,
\qquad \textrm{ and } \qquad
\seq{ ( - \Dvs +\vs) \Dv h, h} = \norm{\Dv h}_\Ldeuxmudvs^2.
\end{equation}
\item For an initial data $f^0 \in \Ldeuxmudvh$,
there exists a unique solution of \eqref{eq:DHFPfb} in $ (\Ldeuxmudvh)^\N$.
\item  Constant sequences are the only equilibrium states of Equation \eqref{eq:DHFPfb}.
\item The mass is conserved by the discrete evolution,
{\it i.e.} for all $n\in \N$,  $\seq{f^n} = \seq{f^0}$.
\end{enumerate}
\end{prop}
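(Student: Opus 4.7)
The plan is to prove each of the five items separately, exploiting the fact that the setting is now finite-dimensional and that the computations mirror closely those already done in the unbounded-velocity case of Proposition \ref{prop:hstruct}. The only genuinely new ingredient is the careful handling of the boundary indices $\pm\imax$, which is precisely where the discrete Neumann condition $(\Dv g)_{\pm\imax}=0$ comes into play.

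For item 1, since $\iii$ and $\iiis$ are finite sets, $\Ldeuxmudvhb$ and $\Ldeuxmudvsb$ are finite-dimensional and every linear operator between them is automatically bounded; the explicit formulas in Definition \ref{def:dvb} give an operator norm of size $O(1/\dvh^2)$ for $P$. Item 3 is then immediate: writing \eqref{eq:DHFPfb} as $f^{n+1}=(\Id-\dth P)f^n$, the unique solution is simply $f^n=(\Id-\dth P)^n f^0$, well-defined for every $\dth>0$ (the smallness assumption on $\dth$ will be used only later for stability, not for mere existence).

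Item 2 is the technical core, and I would reproduce, with mild modifications, the Abel-summation argument of \eqref{eq:calcof} in the proof of Proposition \ref{prop:hstruct}. Splitting $\dvh^{-1}\seq{(-\Dvs+\vs)h,g}$ into the three contributions $i>0$, $i=0$ and $i<0$, each of the first and third sums produces, after discrete integration by parts, a factor $((\Dv+v)\muh)_i$ which vanishes by \eqref{eq:okmuh} (valid on $\iii\setminus\{0\}$), together with boundary terms at $i=\pm 1$ that combine exactly with the central contribution $-(\Dvs h)_0 g_0 \mu_0 = -(h_1-h_{-1})g_0\mu_0/\dvh$ to leave $\seqs{h,\Dv g}$. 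The new feature with respect to the unbounded case is the extra boundary term at $i=\pm\imax$ produced by the telescoping, which is precisely of the form $h_{\pm\imax}(\Dv g)_{\pm\imax}\mus_{\pm\imax}$ and therefore vanishes by the Neumann condition built into the definition of $\Dv$. The second equality in \eqref{eq:ofb} then follows by choosing $h=\Dv g\in\Ldeuxmudvsb$.

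Items 4 and 5 follow formally from item 2. If $f$ is a stationary state of \eqref{eq:DHFPfb} then $Pf=0$, so by \eqref{eq:ofb} one has $\norm{\Dv f}_{\Ldeuxmudvsb}^2=\seq{Pf,f}=0$, hence $\Dv f\equiv 0$ on $\iiis$; reading this on the explicit formulas from Definition \ref{def:dvb}, one deduces $f_i=f_{i-1}$ for $1\leq i\leq \imax-1$ and $f_{i+1}=f_i$ for $-\imax+1\leq i\leq -1$, so that $f$ is constant on $\iii$. For mass conservation, applying \eqref{eq:ofb} with $g\equiv 1$ (and noting that $\Dv 1 =0$) yields $\seq{Pf^n,1}=\seqs{\Dv f^n,\Dv 1}=0$, hence by induction $\seq{f^{n+1}}=\seq{f^n}-\dth\seq{Pf^n,1}=\seq{f^n}$. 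The main obstacle is therefore the bookkeeping of boundary terms in item 2; once this computation is cleanly done, items 1, 3, 4, 5 follow with minimal extra work.
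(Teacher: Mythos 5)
Your proposal is correct and follows essentially the same route as the paper: finite-dimensionality for item 1, the Abel-summation computation of \eqref{eq:calcof} adapted to the finite index set for item 2, explicitness of the scheme for item 3, and the identity \eqref{eq:ofb} tested against $g$ itself and against $g\equiv 1$ for items 4 and 5. One small correction on the bookkeeping you rightly identify as the crux: the boundary term produced by the telescoping at the top index is $-\dvh^{-1}h_{\imax}\,g_{\imax-1}\,\muh_{\imax-1}$ (and its mirror at $-\imax$), which is \emph{not} of the form $h_{\imax}(\Dv g)_{\imax}\mus_{\imax}$, so it is not the Neumann condition on $\Dv g$ that kills it but rather the constraint $h_{\pm\imax}=0$ built into the definition of $\Ldeuxmudvsb$ --- this is exactly the justification the paper uses, and it is available to you since you assume $h\in\Ldeuxmudvsb$ throughout.
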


\begin{proof}
  The linear operator $P$
  is a mapping from the finite dimensional linear space $\Ldeuxmudvhb$
  to itself. Hence it is bounded.  The proof of the second equality in
  \eqref{eq:ofb} is a direct consequence of the first equality, and leads directly
  to the self-adjointness and the non-negativity of $(
  - \Dvs +\vs) \Dv$. The (maximal) accretivity of $( - \Dvs +\vs)
  \Dv$ in both $\Ldeuxmudvh$ and
  $\Hunmudvh$
  is easy to get (perhaps adding a constant to the operator). The fact that the
  equation is well-posed is a direct consequence of the fact that the scheme is
  explicit.  The fact that constant sequences are the only equilibrium solutions
  is an easy consequence of the second identity in \eqref{eq:ofb}.

Due to its importance in the functional framework we give a complete
proof of the first equality in \eqref{eq:ofb}
although it is very similar to the one of \eqref{eq:of}.
We write for $h \in \Ldeuxmudvsb$ and $g \in \Ldeuxmudvhb$
\begin{equation} \label{eq:calcofb}
\begin{split}
  & \dvh^{-1} \seq{ ( - \Dvs +\vs)  h, g} \\
  & =  \sum_{i \in \iii} ((-\Dvs +\vs) h)_i g_i \mu_i  \\
  & = \sum_{1 \leq i \leq \imax -1} ((-\Dvs +\vs) h)_i g_i \mu_i -(\Dvs h)_0 g_0
  \mu_0 +\sum_{-\imax + 1 \leq i \leq -1} ((-\Dvs +\vs) h)_i g_i \mu_i.
\end{split}
\end{equation}
For the first term in the right-hand side of \eqref{eq:calcofb}, we have
\begin{equation}
\begin{split}
  & \sum_{1 \leq i \leq \imax -1} ((-\Dvs +\vs) h)_i g_i \mu_i \\
  & = \sum_{1 \leq i \leq \imax -1} \sep{ -\frac{h_{i+1} - h_i}{\dvh} + v_i h_i} g_i \mu_i \\
  & = \sum_{1 \leq i \leq \imax -1} h_i \sep{ \frac{-g_{i-1}\mu_{i-1} + g_i
      \mu_i }{\dvh} + v_i g_i \mu_i} + \frac{h_1 g_0}{\dvh} \mu_0 -
  \frac{h_{\imax} g_{\imax-1}}{\dvh} \mu_{\imax-1}.
\end{split}
\end{equation}
Since $h \in \Ldeuxmudvsb$ we have $h_{\imax}=0$.
Therefore we have
\begin{equation}
\begin{split}
  & \sum_{1 \leq i \leq \imax -1} ((-\Dvs +\vs) h)_i g_i \mu_i \\
  & = \sum_{1 \leq i \leq \imax -1} h_i g_i \sep{ \frac{-\mu_{i-1} + \mu_i
    }{\dvh} + v_i \mu_i} + \sum_{1 \leq i \leq \imax -1} h_i
  \sep{ - \frac{g_{i-1}- g_i}{\dvh}}\mu_{i-1}   + \frac{h_1 g_0}{\dvh} \mu_0 \\
  & = \sum_{1 \leq i \leq \imax -1} h_i (\Dv g)_i \mu_{i-1} + \frac{h_1 g_0}{\dvh} \mu_0 \\
  & = \sum_{1 \leq i \leq \imax} h_i (\Dv g)_i \mus_{i} + \frac{h_1 g_0}{\dvh}
  \mu_0,
\end{split}
\end{equation}
where we used \eqref{eq:okmuh}, the definition of $\mus$,
and again the fact that $h_{\imax} = 0$.
Similarly we get
\begin{equation}
\begin{split}
& \sum_{-\imax + 1 \leq i \leq -1} ((-\Dvs +\vs) h)_i g_i \mu_i
 = \sum_{-\imax  \leq i \leq -1} h_i (\Dv g)_i \mus_{i} - \frac{h_{-1} g_0}{\dvh} \mu_0.
\end{split}
\end{equation}
The center term in the right-hand side of \eqref{eq:calcofb} is
$-(\Dvs h) g_0 \mu_0 = -\frac{ h_1- h_{-1}}{\dvh} g_0 \mu_0$, 
so that we have
$$
\dvh^{-1} \seq{ ( - \Dvs +\vs) h, g} = \sum_{i \in \iiis} h_i (\Dv g)_i \mus_{i}
= \dvh^{-1} \seqs{h, \Dv g},
$$
since the boundary terms around $0$ disappear.  This is the first equality in
\eqref{eq:ofb} and the proof is complete. \end{proof}

\bigskip As in the cases with unbounded velocity domains (see Sections
\ref{sec:homogeneous} and \ref{sec:eqinhomo}), in continuous or discretized
settings, and as in the case with bounded velocity domain in the continuous
setting (see Lemma \ref{lem:Poincarecontinub}), the Poincar\'e inequality is a
fundamental tool to obtain the convergence of the solution, and we give below a
version for the bounded velocity case adapted to the velocity discretization
above.

\begin{prop}[Discrete Poincar\'e inequality on bounded velocity domain]
\label{prop:poindiscreteb}
Let $\dvh>0$ be defined as in \eqref{eq:defdv}, and let $g\in\Ldeuxmudvh$.
Then, $$\norm{g-\seq{g}}^2_{\Ldeuxmudvh } \leq \norm{ \Dv g}_{\Ldeuxmudvs }^2.$$
\end{prop}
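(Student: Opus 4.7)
The plan is to adapt the proof of the unbounded discrete Poincaré inequality (Proposition~\ref{prop:poindiscrete}) to the finite index set $\iii$, exploiting the homogeneous Neumann condition $(\Dv g)_{\pm\imax}=0$ built into Definition~\ref{def:dvb}. After the usual reduction $\seq{g}=0$, I would symmetrize the squared norm via $2\sum_{i,j\in\iii}g_ig_j\muh_i\muh_j=2\dvh^{-2}\seq{g}^2=0$ to write
\[
\dvh^{-1}\norm{g}_{\Ldeuxmudvh}^{2}=\dvh\sum_{\substack{i<j\\ i,j\in\iii}}(g_j-g_i)^{2}\muh_i\muh_j,
\]
telescope $g_j-g_i=\sum_{\ell=i+1}^{j}(g_\ell-g_{\ell-1})$, and apply the discrete flat Cauchy--Schwarz inequality. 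Introducing the discrete antiderivative $\G$ exactly as in Proposition~\ref{prop:poindiscrete} (restricted to $\iii$), a Fubini-type rearrangement together with the elementary identities $\dvh\sum_{i\in\iii}i\muh_i=0$ and $\dvh\sum_{i\in\iii}\muh_i=1$ reduces the problem to proving
\[
\dvh\sum_{i\in\iii\setminus\{0\}}\G_i\, i\,\muh_i\;\leq\;\norm{\Dv g}_{\Ldeuxmudvs}^{2}.
\]

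Next I would treat the positive and negative halves of the sum separately. On the positive side, the functional equation $i\muh_i=-(\muh_i-\muh_{i-1})/\dvh^{2}$ from \eqref{eq:relationsmudv} still holds on the entire range $1\leq i\leq \imax-1$ thanks to \eqref{eq:okmuh}. A discrete Abel transform (summation by parts) over this range produces, after shifting indices and using that $\mus_j=\muh_{j-1}$ for $j>0$,
\[
\sum_{i=1}^{\imax-1}\G_i\,i\,\muh_i\;=\;\sum_{j=1}^{\imax-1}\bigl((\Dv g)_j\bigr)^{2}\mus_j\;-\;\frac{\G_{\imax-1}\,\muh_{\imax-1}}{\dvh^{2}}.
\]
A symmetric calculation handles $i<0$. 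Because the Neumann condition forces $(\Dv g)_{\pm\imax}=0$, the indices $\pm\imax$ contribute nothing to $\norms{\Dv g}^{2}$, and the first term above is exactly the positive half of $\dvh^{-1}\norms{\Dv g}^{2}$. The boundary contribution $-\G_{\imax-1}\muh_{\imax-1}/\dvh^{2}$ (and its mirror at $-\imax+1$) is non-positive, since $\G_{\pm(\imax-1)}\geq 0$ is a sum of squares and $\muh_{\pm(\imax-1)}>0$, so it can simply be discarded to obtain the announced inequality.

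The only real difference with the unbounded case is the main obstacle: in Proposition~\ref{prop:poindiscrete} the Abel transform produced no residual, whereas here the finiteness of the range of summation yields two extra boundary terms. The key observation is that these play exactly the role of the favorable boundary contribution $-\mu(\vmax)\int_{-\vmax}^{\vmax}|{\D_v} g|^{2}\,\dv$ appearing in the continuous proof (Lemma~\ref{lem:Poincarecontinub}), and they carry the correct sign for the same reason: evenness of $\muh$ and non-negativity of $\G$. Once these boundary terms are identified and dropped, the conclusion $\norm{g-\seq{g}}_{\Ldeuxmudvh}^{2}\leq\norms{\Dv g}^{2}$ follows by combining the positive and negative halves.
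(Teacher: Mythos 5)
Your strategy is exactly the paper's: reduce to $\seq{g}=0$, symmetrize the squared norm, telescope and apply the flat Cauchy--Schwarz inequality, introduce the discrete antiderivative $\G$, Abel-transform each half of $\sum_{i\neq 0}\G_i\,i\,\muh_i$ using the functional equation for $\muh$, and discard the boundary terms created by the finiteness of the index range. Your identification of these boundary terms with the favorable term $-\mu(\vmax)\int_I\abs{{\D_v} g}^2$ in the continuous proof of Lemma \ref{lem:Poincarecontinub} is also the right picture.

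There is, however, a sign error in your justification of why the boundary terms may be discarded. With the definition of $\G$ you are importing from Proposition \ref{prop:poindiscrete}, one has $\G_j=-\sum_{\ell=j+1}^{-1}(g_\ell-g_{\ell-1})^2$ for $j\le -1$, so $\G_{-\imax+1}$ is \emph{minus} a sum of squares and hence non-positive, not non-negative as you assert. Correspondingly, the Abel transform over the negative range $-\imax+1\le i\le -1$ produces the boundary term $+\G_{-\imax+1}\,\muh_{\imax-1}/\dvh^{2}$, with a plus sign rather than the ``mirror'' minus sign you wrote; this term is non-positive precisely \emph{because} $\G_{-\imax+1}\le 0$. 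Your two errors cancel, so the conclusion you reach is correct, but neither intermediate claim is true as stated. The paper sidesteps this by using the parity $\muh_{-\imax+1}=\muh_{\imax-1}$ to combine the two boundary contributions into the single term $-\bigl(\G_{\imax-1}-\G_{-\imax+1}\bigr)\muh_{\imax-1}/\dvh^{2}$, where $\G_{\imax-1}-\G_{-\imax+1}=\sum_{\ell=-\imax+2}^{\imax-1}(g_\ell-g_{\ell-1})^2$ is a genuine sum of squares; you should either do the same or redo the negative half with the correct signs. (A smaller bookkeeping point: the term $\G_1\muh_0/\dvh^2$ arising at the inner edge of the positive half contains the contribution $((\Dv g)_{-1})^2\mus_{-1}$ belonging to the negative half of $\norms{\Dv g}^2$, so your displayed identity for the positive half alone is not quite exact; the two halves must be recombined via $\G_1-\G_{-1}=(g_1-g_0)^2+(g_0-g_{-1})^2$ as in the unbounded case.)
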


\begin{proof} Although part of the proof is similar to the proofs of previous
  Poincar\'e inequalities in this paper,
  we give a complete proof, following the lines of the one of Proposition
  \ref{prop:poindiscrete}.  This is to illustrate how our choice of discretization of
  the bounded velocity domain allows to obtain this fundamental inequality.  We
  take $g\in \Ldeuxmudvh$ with $\seq{g}=0$ (note that the boundary conditions
  are preserved by addition of a constant).  We have with the normalization
  convention \eqref{eq:normalizationmuhboundeddomain}
\begin{equation*}
\begin{split}
\dvh^{-1} \norm{g}^2_\Ldeuxmudvh  = \sum_{-\imax < i <\imax} g_i^2 \muh_i
&= \frac{\dvh}{2} \sum_{-\imax < i,j < \imax} ( g_j-g_i)^2 \muh_i \muh_j \\
& = \dvh\sum_{-\imax < i<j <\imax} ( g_j-g_i)^2 \muh_i \muh_j,
\end{split}
\end{equation*}
since $2 \sum_{-\imax < i,j < \imax} g_i g_j \muh_i \muh_j = 2 \sum_{-\imax < i < \imax} g_i \muh_i \sum_{-\imax < j < \imax} g_j \muh_j
= 0$. For $i<j$, we can write the telescopic sum
$$
 g_j-g_i = \sum_{\ell=i+1}^j (g_\ell-g_{\ell-1}),
$$
so that
\begin{equation*} 
\begin{split}
  \dvh^{-1}\sum_{-\imax < i <\imax} g_i^2 \muh_i &
=  \sum_{{-\imax < i <j<\imax}} \sep{ \sum_{\ell=i+1}^j (g_\ell-g_{\ell-1}) }^2 \muh_i \muh_j \\
& \leq \sum_{{-\imax < i <j<\imax}}  \sep{ \sum_{\ell=i+1}^j (g_\ell-g_{\ell-1})^2 } (j-i) \muh_i \muh_j,
\end{split}
\end{equation*}
where we used the discrete flat Cauchy--Schwarz inequality.
Let us  now introduce $\G$ the discrete anti-derivative of $(g_\ell-g_{\ell-1})^2$,
given by
$$
\G_j = - \sum_{\ell=j+1}^{-1} (g_\ell-g_{\ell-1})^2 \textrm{ for } j\leq -1,
\qquad   \G_j = \sum_{\ell=0}^j (g_\ell-g_{\ell-1})^2  \textrm{ for } j\geq 0,
$$
we get (exactly as after \eqref{eq:interm}) that
\begin{equation*}
\begin{split}
\dvh^{-1}\sum_{-\imax < i <\imax} g_i^2 \muh_i &   
= \dvh^{-1}\sum_{{-\imax < i <\imax}} \G_i i  \muh_i 
= \dvh^{-1}\sum_{{-\imax < i <\imax}, i\neq 0} \G_i i \muh_i,
\end{split}
\end{equation*}
where we used the fact that $\sum_{{-\imax < j <\imax}} j\muh_j = 0$ and
$\sum_{-\imax < i <\imax} \muh_j = \dvh^{-1}$.  The last step is to perform a
discrete integration by part using deeply the functional equation
\eqref{eq:okmuh} satisfied by $\muh$ and taking here the boundary terms. We write
using that functional property of $\muh$,
\begin{equation*}
\begin{split}
  \sum_{{-\imax+1 \leq  i  \leq \imax-1 }, \ i \neq 0} \G_i i \muh_i
& =  \sum_{1 \leq i \leq \imax-1 } \G_i i \muh_i
+  \sum_{-\imax+1 \leq  i \leq -1 } \G_i i \muh_i\\
  & = -  \sum_{1 \leq i \leq \imax-1} \G_i \frac{\muh_i- \muh_{i-1}}{\dvh^2}  
-  \sum_{-\imax+1 \leq  i \leq -1 } \G_i \frac{\muh_{i+1} - \muh_{i}}{\dvh^2} \\
  & = - \sum_{1 \leq i \leq \imax-2}  \frac{ \G_i - \G_{i+1}}{\dvh^2} \muh_i 
+ \frac{\G_1}{\dvh^2} \muh_0 - \frac{\G_{\imax-1}}{\dvh^2} \muh_{\imax-1} \\
 & \qquad  -
   \sum_{-\imax+2 \leq  i \leq -1}  \frac{ \G_{i-1} - \G_{i}}{\dvh^2} \muh_i 
- \frac{\G_{-1}}{\dvh^2} \muh_0 + \frac{\G_{-\imax+1}}{\dvh^2} \muh_{\imax-1}.
\end{split}
\end{equation*}
Now, using the definition of $\G$ and in particular the fact that
$$
  \G_1 - \G_{-1} =  (g_{1}- g_0)^2 + (g_{0}- g_{-1})^2,
$$
we obtain as in \eqref{eq:sommemirac} but with the additional boundary terms
\begin{equation} \label{eq:finaldeuxbordsb}
\begin{split}
 &  \sum_{{-\imax+1 \leq  i  \leq \imax-1 }, \ i \neq 0} \G_i i \muh_i
    = \dvh^{-1} \norm{ \Dv g}_{\Ldeuxmudvs }^2 - \sep{ \frac{\G_{\imax-1}}{\dvh^2} -
    \frac{\G_{-\imax+1}}{\dvh^2}} \muh_{\imax-1}.
  \end{split}
  \end{equation}
  Now we have by definition of the anti-derivative $\G$,
  \begin{equation*}
  \begin{split}
   \sep{\frac{\G_{\imax-1}}{\dvh^2} -
    \frac{\G_{-\imax+1}}{\dvh^2}} \muh_{\imax-1}
    = \frac{\G_{\imax-1} -\G_{-\imax+1}}{\dvh^2} \muh_{\imax-1} \\
    = \sep{ \sum_{l=-\imax +2}^{\imax-1} (g_l-g_{l-1})^2} \muh_{\imax-1} \geq 0.
    \end{split}
    \end{equation*}
since this term is non-negative we get from \eqref{eq:finaldeuxbordsb}
\begin{equation*}
\begin{split}
   \sum_{{-\imax+1 \leq  i  \leq \imax-1 }, \ i \neq 0} \G_i i \muh_i
    & \leq \dvh^{-1} \norm{ \Dv g}_{\Ldeuxmudvs }^2.
  \end{split}
  \end{equation*}
The proof is complete.
\end{proof}

Before stating the main result of this subsection, we
estimate the norm of the operator $\Dvs+\vs$ from
$\Ldeuxmudvs$ to $\Ldeuxmudvh$.

\begin{lem}
\label{lem:adjDvsplusvs}
Let $\dvh$ be defined in \eqref{eq:defdv}.
We have for all $g \in \Ldeuxmudvs$,
\begin{equation} \label{eq:bornedvsv}
\norm{(-\Dvs+\vs) g}_\Ldeuxmudvh^2 \leq \frac{4(1+ \dvh \vmax)}{\dvh^2} \norm{g}^2_\Ldeuxmudvs.
\end{equation}
\end{lem}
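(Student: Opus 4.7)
The plan is to estimate pointwise the quantity $((-\Dvs + \vs) g)_i$ for every $i \in \iii$, convert the resulting $\muh_i$ weights into $\mus$-weights via the recursion \eqref{eq:okmuh} satisfied by the discrete Maxwellian, and finally sum. The expected form of the constant $4(1+\dvh\vmax)/\dvh^2$ arises naturally from a crude elementary inequality $(a-b)^2 \leq 2a^2 + 2b^2$ combined with the crude bound $|v_i| \leq \vmax$, so I would not try to be clever. The boundary condition $g_{\pm\imax}=0$ from the definition of $\Ldeuxmudvs$ (which is built into the space) will be needed at the very end to discard two terms.

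First, I would split $\iii$ into $\{i\geq 1\}$, $\{i=0\}$ and $\{i\leq -1\}$. On the positive side, using Definition \ref{def:dvb} one writes
\begin{equation*}
((-\Dvs+\vs)g)_i = \tfrac{1}{\dvh}\big((1+v_i\dvh) g_i - g_{i+1}\big),
\end{equation*}
so $((-\Dvs+\vs)g)_i^2 \leq \tfrac{2}{\dvh^2}\big((1+v_i\dvh)^2 g_i^2 + g_{i+1}^2\big)$. Multiplying by $\muh_i$ and using the key relations $(1+v_i\dvh)\muh_i = \muh_{i-1} = \mus_i$ and $\muh_i = \mus_{i+1}$ (valid for $i\geq 1$), together with $v_i \leq \vmax$, I get
\begin{equation*}
((-\Dvs+\vs)g)_i^2 \muh_i \leq \tfrac{2}{\dvh^2}\big((1+\vmax\dvh)\, g_i^2 \mus_i + g_{i+1}^2\mus_{i+1}\big).
\end{equation*}
The symmetric computation on $i\leq -1$ uses the other half of \eqref{eq:relationsmudv}, namely $(1-v_i\dvh)\muh_i = \muh_{i+1} = \mus_i$ and $\muh_i = \mus_{i-1}$, and delivers the analogous bound with $g_{i-1}^2\mus_{i-1}$ replacing $g_{i+1}^2\mus_{i+1}$. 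The central term is straightforward: $((-\Dvs+\vs)g)_0 = -(g_1-g_{-1})/\dvh$, whence $((-\Dvs+\vs)g)_0^2 \muh_0 \leq \tfrac{2}{\dvh^2}(g_1^2 \mus_1 + g_{-1}^2 \mus_{-1})$ since $\mus_{\pm 1} = \muh_0$.

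Finally, I would multiply everything by $\dvh$ and sum. After reindexing the shifted sums $\sum_{i\geq 1} g_{i+1}^2 \mus_{i+1}$ and $\sum_{i\leq -1} g_{i-1}^2 \mus_{i-1}$, each $g_i^2 \mus_i$ with $|i|\geq 1$ appears with coefficient at most $(1+\vmax\dvh)+1 = 2+\vmax\dvh$ (the $i=0$ contribution fills in exactly the $|i|=1$ terms), while the indices $i=\pm\imax$ do not contribute because $g_{\pm\imax}=0$. This yields
\begin{equation*}
\norm{(-\Dvs+\vs)g}_\Ldeuxmudvh^2 \leq \tfrac{2(2+\vmax\dvh)}{\dvh^2}\,\norm{g}_\Ldeuxmudvs^2 \leq \tfrac{4(1+\vmax\dvh)}{\dvh^2}\,\norm{g}_\Ldeuxmudvs^2,
\end{equation*}
which is the claimed inequality. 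The only mildly subtle step is the careful bookkeeping of the three cases at the boundary indices $i=\pm 1, 0, \pm\imax$, where one must check that the $i=0$ contribution combines with the $i=\pm 1$ contributions without exceeding the stated constant and that the boundary values $g_{\pm\imax}=0$ eliminate the otherwise problematic term produced by the reindexed shifted sums.
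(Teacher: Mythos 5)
Your proof is correct, but it takes a different route from the paper's. The paper does not estimate $(-\Dvs+\vs)g$ directly: it invokes the adjunction identity \eqref{eq:ofb}, which exhibits $(-\Dvs+\vs)$ as the adjoint of $\Dv:\Ldeuxmudvhb\rightarrow\Ldeuxmudvsb$, so that the two operator norms coincide; it then bounds $\Dv$ in two lines via $\abs{(\Dv g)_j}^2\leq \tfrac{2}{\dvh^2}(\abs{g_j}^2+\abs{g_{j-1}}^2)$ and the single weight comparison $\mus_j\leq(1+\dvh\vmax)\muh_j$ (which is the same Maxwellian recursion you use, read in the other direction). The adjoint route buys brevity: $\Dv$ has no exceptional value at $i=0$, vanishes at $\pm\imax$ by definition, and carries no multiplication by $v$, so all of the case analysis you perform at $i=0$, $i=\pm 1$ and $i=\pm\imax$ disappears. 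Your direct computation costs more bookkeeping but is self-contained in that it does not presuppose the adjunction identity, and it makes visible exactly where each factor in the constant comes from (the $2$ from the crude Young inequality, the $(1+\dvh\vmax)$ from the recursion $(1+\abs{v_i}\dvh)\muh_i=\mus_i$, and the final $2+\vmax\dvh\leq 2(1+\vmax\dvh)$ from the overlap of the shifted and unshifted sums). I checked your three pointwise identities, the weight conversions $(1+v_i\dvh)\muh_i=\mus_i$ and $\muh_i=\mus_{i\pm1}$, and the final tally of coefficients, including the role of $g_{\pm\imax}=0$; all are sound, and both arguments land on the same constant $4(1+\dvh\vmax)/\dvh^2$.
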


\begin{proof}
The operator $(-\Dvs +v)$ is bounded from $\Ldeuxmudvs$ to $\Ldeuxmudvh$
since it is a linear mapping between finite dimensional normed spaces.
Note that it is equivalent to estimate
the norm of its adjoint
$
\Dv : \Ldeuxmudvh \longrightarrow \Ldeuxmudvs
$.
For this, we consider $1 \leq j \leq \imax$
and recall that $\mus_j = \muh_{j- 1} = (1 + v_j\dvh ) \muh_j$
from definitions \eqref{eq:defMaxwellboundedvelocity}
and \eqref{eq:defmusb}, where
$v_j = j \dvh$ by definition \eqref{eq:defvib}.
By symmetry, we infer that
\begin{equation} 
\forall j\in\iiis,\qquad
0\leq \mus_j \leq (1 + \dvh \abs{v_j}) \muh_j \leq (1 + \dvh \vmax)\muh_j.
\end{equation}
On the other hand, for $j\in\{1,\cdots,\imax\}$,
$$
\abs{(\Dv g)_j}^2 \leq \frac{2}{\dvh^2} \sep{ \abs{g_j}^2 + \abs{g_{j - 1}}^2 }.
$$
Similar estimates hold for $ -\imax  \leq j \leq -1$ with $j-1$
replaced by $j+1$ in the last inequality.
Using these results we get for $g \in \Ldeuxmudvh$ that
\begin{equation*}
\begin{split}
\dvh^{-1}\norm{\Dv g}_\Ldeuxmudvs^2
& = \sum_{i=-\imax+1, \ i\neq 0}^{\imax-1} \abs{(\Dv g)_i}^2 \mus_i \\
& \leq \frac{4}{\dvh^2} \sum_{i=-\imax+1}^{\imax-1} \abs{g_i}^2 (1 + \dvh \vmax) \muh_i,
\end{split}
\end{equation*}
which implies
\begin{equation} \label{eq:bornedv}
\begin{split}
\norm{\Dv g}_\Ldeuxmudvs^2 \leq \frac{4(1+ \dvh \vmax)}{\dvh^2} \norm{g}^2_\Ldeuxmudvh.
\end{split}
\end{equation}
Therefore, by adjunction, we have \eqref{eq:bornedvsv}.
\end{proof}

We give below the result about the exponential trend to the equilibrium
in the  $\Ldeuxmudvh$ and $\Hunmudvh$ norms
of the solution $(f^n)_{n\in\N}$ of the explicit Euler scheme \eqref{eq:DHFPfb}.
As in the continuous and unbounded cases we look at the following two entropies

\begin{equation}
\fffd(g) \defegal \norm{g}_{\Ldeuxmudvh }^2, \qquad \gggd(g) \defegal \norm{g}_{\Ldeuxmudvh
}^2 
+ \norm{\Dv g}_{\Ldeuxmudvs}^2,\label{eq:deffgh}
\end{equation}
defined for $g\in\R^\iii$. The second entropy is called the Fisher information.
 The result is the following.

\begin{thm} \label{thm:mainexplhom}
Let $\dvh>0$ be defined by \eqref{eq:defdv} and set
\begin{equation*} 
\cfl \defegal  \frac{4 (1+ \dvh \vmax)}{\dvh^2}.
\end{equation*}
Suppose that $\dth>0$ is such that the following CLF condition holds
\begin{equation}
  \label{eq:CFLhomog}
  \dth \cfl <1,
\end{equation}
and set $\kappa=1-\dth\cfl$.
For all $f^0 \in \Ldeuxmudvh$ such that $\seq{f^0} = 0$,
we denote by $(f^n)_{n\in\N}$ the solution of \eqref{eq:DHFPfb}
in $ (\Ldeuxmudvh)^\N$ with initial data $f^0$.
We have for all $n\in \N$,
\begin{equation*}
  \fffd(f^n) \leq (1-2\kappa \dth)^n \fffd(f^0),
\end{equation*}
and
$$
\gggd(f^n)  \leq (1-\kappa \dth)^n \gggd(f^0).
$$
\end{thm}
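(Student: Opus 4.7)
The plan is to mimic the strategy used in the time-continuous cases (Theorem \ref{thm:exponentialtrendtoequilibriumb}) but with a careful account of the $\dth^2$-remainder produced by the explicit Euler step, which is exactly where the CFL condition $\dth\cfl<1$ enters. First, I would observe that the scheme preserves the mass: pairing \eqref{eq:DHFPfb} with the constant $1$ in $\Ldeuxmudvh$ and using \eqref{eq:ofb}, one gets $\seq{f^{n+1}}=\seq{f^n}$ by induction, so from $\seq{f^0}=0$ we have $\seq{f^n}=0$ for all $n\in\N$, which is what allows the discrete Poincaré inequality of Proposition \ref{prop:poindiscreteb} to be applied to $f^n$.

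Next, to handle $\fffd$, I would simply square the Euler relation in $\Ldeuxmudvh$. Writing $P=(-\Dvs+\vs)\Dv$, one obtains
\begin{equation*}
\norm{f^{n+1}}^2=\norm{f^n}^2-2\dth\,\seq{Pf^n,f^n}+\dth^2\norm{Pf^n}^2.
\end{equation*}
By the identity \eqref{eq:ofb}, $\seq{Pf^n,f^n}=\norms{\Dv f^n}^2$. To absorb the $\dth^2$ term I would invoke Lemma \ref{lem:adjDvsplusvs}, which yields $\norm{Pf^n}^2=\norm{(-\Dvs+\vs)\Dv f^n}^2\le\cfl\,\norms{\Dv f^n}^2$. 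This provides
\begin{equation*}
\norm{f^{n+1}}^2\le\norm{f^n}^2-2\dth\bigl(1-\tfrac{\dth\cfl}{2}\bigr)\norms{\Dv f^n}^2.
\end{equation*}
Under the CFL condition $\dth\cfl<1$, the bracket is bounded below by $\kappa=1-\dth\cfl$. Applying the bounded-domain Poincaré inequality of Proposition \ref{prop:poindiscreteb} to $f^n$ (legitimate because $\seq{f^n}=0$) to replace $\norms{\Dv f^n}^2$ with $\norm{f^n}^2$, I conclude $\fffd(f^{n+1})\le(1-2\kappa\dth)\fffd(f^n)$, which iterates to the first claim.

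For $\gggd$, I would repeat the same expansion but now at the level of $\Dv f^{n+1}=\Dv f^n-\dth\,\Dv Pf^n$ in $\Ldeuxmudvs$. The duality \eqref{eq:ofb} (used with $g=Pf^n$, $h=\Dv f^n$) turns the cross term into $\seqs{\Dv f^n,\Dv Pf^n}=\norm{Pf^n}^2$, while the quadratic term is estimated by the continuity of $\Dv:\Ldeuxmudvhb\to\Ldeuxmudvsb$, namely \eqref{eq:bornedv}, giving $\norms{\Dv Pf^n}^2\le\cfl\,\norm{Pf^n}^2$. Hence
\begin{equation*}
\norms{\Dv f^{n+1}}^2\le\norms{\Dv f^n}^2-(2\dth-\dth^2\cfl)\norm{Pf^n}^2.
\end{equation*}
Adding this to the previous estimate for $\norm{f^{n+1}}^2$ and then using the Poincaré trick $2\norms{\Dv f^n}^2\ge\norms{\Dv f^n}^2+\norm{f^n}^2$ (as in the proof of Theorem \ref{thm:exponentialtrendtoequilibriumb}) to redistribute half of the dissipation, one gets
\begin{equation*}
\gggd(f^{n+1})\le\gggd(f^n)-\dth(1-\tfrac{\dth\cfl}{2})\gggd(f^n)\le(1-\kappa\dth)\gggd(f^n),
\end{equation*}
and a trivial induction yields the second claim.

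The only real obstacle is book-keeping: making sure the $\dth^2$ corrections from the explicit step are all absorbed by the dissipation, and picking the right splitting so that the final rate matches $\kappa=1-\dth\cfl$. All inputs (the Poincaré inequality for the discrete bounded domain, the continuity bounds \eqref{eq:bornedv}--\eqref{eq:bornedvsv}, and the duality \eqref{eq:ofb}) are at hand, so no fundamentally new ingredient is needed beyond the continuous-time proof.
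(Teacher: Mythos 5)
Your proof is correct and follows essentially the same route as the paper: same entropies, same discrete Poincar\'e inequality (Proposition \ref{prop:poindiscreteb}), same continuity bounds \eqref{eq:bornedv}--\eqref{eq:bornedvsv}, and the same splitting trick for $\gggd$. The only (harmless) difference is that you expand $\norm{f^n-\dth Pf^n}^2$ exactly, which yields the slightly sharper dissipation coefficient $1-\dth\cfl/2$ where the paper, pairing with $f^{n+1}$ and using Young's inequality, gets $2\dth^2$ in the remainder and hence $\kappa=1-\dth\cfl$ directly; bounding your coefficient below by $\kappa$ recovers the stated rates.
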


\begin{proof}
The scheme \eqref{eq:DHFPfb} is well-defined and one has for all $n\in\N$,
$\seq{f^n}=0$ by induction.
We look at the explicit scheme for some $n\in\N$
\begin{equation} \label{eq:explib}
f^{n+1} = f^n -\dth (-\Dvs +\vs)\Dv f^n,
\end{equation}
and we prove below the following estimate
\begin{equation} \label{eq:ineqb}
\norm{f^{n+1}}^2_\Ldeuxmudvh \leq \norm{f^n}^2_\Ldeuxmudvh 
-2\dth \norm{\Dv f^n}_\Ldeuxmudvs^2 + 2 \dth^2 \norm{ (-\Dvs +\vs)\Dv f^n}^2_\Ldeuxmudvh.
\end{equation}
For this, we first take the scalar product of \eqref{eq:explib} with $f^{n+1}$.
We get successively
\begin{equation*}
\begin{split}
  & \norm{f^{n+1}}^2_\Ldeuxmudvh \\
  & = \seq{ f^n, f^{n+1} } - \dth \seq{ (-\Dvs +\vs)\Dv f^n, f^{n+1}}_\Ldeuxmudvh \\
  & =  \seq{ f^n, f^{n+1} } - \dth \seq{ \Dv f^n, \Dv f^{n+1}}_\Ldeuxmudvs \\
  & \leq \frac{1}{2} \norm{f^n}^2_\Ldeuxmudvh + \frac{1}{2}
  \norm{f^{n+1}}^2_\Ldeuxmudvh -\dth \norm{\Dv f^{n}}^2_\Ldeuxmudvs
  - \dth \seq{ \Dv f^n, \Dv \sep{f^{n+1}-f^n}}_\Ldeuxmudvs \\
  & \leq \frac{1}{2} \norm{f^n}^2_\Ldeuxmudvh + \frac{1}{2}
  \norm{f^{n+1}}^2_\Ldeuxmudvh -\dth \norm{\Dv f^{n}}^2_\Ldeuxmudvs
  + \dth^2 \seq{ \Dv f^n, \Dv (-\Dvs +\vs)\Dv f^n}_\Ldeuxmudvs \\
  & \leq \frac{1}{2} \norm{f^n}^2_\Ldeuxmudvh + \frac{1}{2}
  \norm{f^{n+1}}^2_\Ldeuxmudvh -\dth \norm{\Dv f^{n}}^2_\Ldeuxmudvs + \dth^2
  \norm{ (-\Dvs +\vs) \Dv f^n}^2_\Ldeuxmudvh.
 \end{split}
 \end{equation*}
where we used again \eqref{eq:explib} to obtain the terms in $\dth^2$,
and we also used \eqref{eq:ofb}.
Multiplying the preceding inequality by $2$ gives then \eqref{eq:ineqb}.
Using Lemma \ref{lem:adjDvsplusvs} with $g = \Dv f^n$
in the last term of \eqref{eq:ineqb}, we obtain
\begin{equation} \label{eq:ineqbter} \norm{f^{n+1}}^2_\Ldeuxmudvh \leq
  \norm{f^n}^2_\Ldeuxmudvh -2\dth \norm{\Dv f^n}_\Ldeuxmudvs^2 + 2 \dth^2
  \frac{4(1+ \dvh \vmax)}{\dvh^2} \norm{ \Dv f^n}^2_\Ldeuxmudvs.
\end{equation}
Using the CFL condition \eqref{eq:CFLhomog} and the definition of $\kappa$
given in the statement of the theorem,
we infer
\begin{equation} \label{eq:ineqbquatinterm}
\norm{f^{n+1}}^2_\Ldeuxmudvh \leq \norm{f^n}^2_\Ldeuxmudvh -2\dth \kappa \norm{\Dv f^n}_\Ldeuxmudvs^2.
\end{equation}
Using the discrete Poincar\'e inequality of Proposition \ref{prop:poindiscreteb},
this implies
\begin{equation*} 
\norm{f^{n+1}}^2_\Ldeuxmudvh \leq \norm{f^n}^2_\Ldeuxmudvh -2\dth \kappa
\norm{ f^n}_\Ldeuxmudvh^2 = (1-2\kappa \dth) \norm{ f^n}^2,
\end{equation*}
so that by induction
$$
\norm{ f^n}_\Ldeuxmudvh^2 =  \fffd(f^n) \leq (1-2\kappa \dth)^n \fffd(f^0).
$$
This proves the result for the first entropy $\fffd$.

\bigskip
For the  second entropy $\gggd$, we fix $n\in\N$ and
we need to get an estimate on
$\norm{\Dv f^{n+1}}_\Ldeuxmudvs^2$.
Therefore, we apply the operator $\Dv$ to \eqref{eq:explib}, which yields
\begin{equation*} 
\Dv f^{n+1} = \Dv f^n -\dth \Dv (-\Dvs +\vs)\Dv f^n.
\end{equation*}
Following exactly the same method as in the proof of \eqref{eq:ineqbter}
with $\Dv f$ instead of $f$ and operator $\Dv (-\Dvs +\vs)$ instead of
$(-\Dvs +\vs)\Dv$, we get
\begin{multline*} 
\norm{\Dv f^{n+1}}^2_\Ldeuxmudvs \leq \norm{\Dv f^n}^2_\Ldeuxmudvs -2\dth \norm{(-\Dvs+\vs)\Dv f^n}_\Ldeuxmudvh^2  \\ + 2 \dth^2 \norm{ \Dv(-\Dvs +\vs)\Dv f^n}^2_\Ldeuxmudvs.
\end{multline*}
Using the explicit bound of $\Dv$ given in \eqref{eq:bornedv}
(at the end of the proof of Lemma \ref{lem:adjDvsplusvs}), we have
\begin{multline*} 
 \norm{\Dv f^{n+1}}^2_\Ldeuxmudvs  \leq \norm{\Dv f^n}^2_\Ldeuxmudvs -2\dth \norm{(-\Dvs+\vs)\Dv f^n}_\Ldeuxmudvh^2 \\
 + 2 \dth^2 \frac{4(1+ \dvh \vmax)}{\dvh^2} \norm{ (-\Dvs+\vs) \Dv f^n}^2_\Ldeuxmudvh,
\end{multline*}
so that under the CFL condition \eqref{eq:CFLhomog}, we get
\begin{equation*}
\norm{\Dv f^{n+1}}^2_\Ldeuxmudvs \leq \norm{\Dv f^n}^2_\Ldeuxmudvs -2\dth \kappa \norm{(-\Dvs+\vs)\Dv f^n}_\Ldeuxmudvh^2.
\end{equation*}
In particular, we have
\begin{equation} \label{eq:ineqbquatdv}
\norm{\Dv f^{n+1}}^2_\Ldeuxmudvs \leq \norm{\Dv f^n}^2_\Ldeuxmudvs.
\end{equation}
Using \eqref{eq:ineqbquatinterm}
and the discrete Poincar\'e inequality of Proposition \ref{prop:poindiscreteb},
we obtain
\begin{equation*}
\begin{split}
\norm{f^{n+1}}^2_\Ldeuxmudvh & \leq \norm{f^n}^2_\Ldeuxmudvh -2\dth \kappa \norm{\Dv f^n}_\Ldeuxmudvs^2 \\
& \leq \norm{f^n}^2_\Ldeuxmudvh -\dth \kappa \norm{\Dv f^n}_\Ldeuxmudvs^2
-\dth \kappa \norm{\Dv f^n}_\Ldeuxmudvs^2 \\
& \leq \norm{f^n}^2_\Ldeuxmudvh -\dth \kappa \norm{\Dv f^n}_\Ldeuxmudvs^2
-\dth \kappa \norm{f^n}_\Ldeuxmudvh^2.
\end{split}
\end{equation*}
Adding this inequality and \eqref{eq:ineqbquatdv} yields
\begin{equation*}
\begin{split}
\gggd(f^{n+1}) & = \norm{f^{n+1}}^2_\Ldeuxmudvh + \norm{\Dv f^{n+1}}^2_\Ldeuxmudvs \\
& \leq \norm{f^n}^2_\Ldeuxmudvh + \norm{\Dv f^{n}}^2_\Ldeuxmudvs
-\dth \kappa \norm{f^n}_\Ldeuxmudvh^2 -\dth \kappa \norm{\Dv f^n}_\Ldeuxmudvs^2 \\
& \leq (1-\dth \kappa) \gggd(f^{n}),
\end{split}
\end{equation*}
so that by induction
$$
\gggd(f^n) \leq (1-\kappa \dth)^n \gggd(f^0).
$$
The proof is complete. \end{proof}

\subsection{Numerical results}
\def\cashom{ind}
\begin{figure}
  \centering
\begin{subfigure}[t]{0.48\linewidth}
 \setlength\fwidth{5.5cm}
\setlength\fheight{5.5cm} 
%
%
\begin{tikzpicture}

\begin{axis}[%
width=0.954\fwidth,
height=\fheight,
at={(0\fwidth,0\fheight)},
scale only axis,
xmin=-20,
xmax=20,
xlabel style={font=\color{white!15!black}},
xlabel={Velocity $v$},
ymin=-1.1932904672367e-10,
ymax=2.15426642240201,
axis background/.style={fill=white},
legend style={legend cell align=left, align=left, draw=white!15!black},
legend style={font=\tiny}
]
\addplot [color=black, dashdotted, line width=1.0pt]
  table[row sep=crcr]{%
-20	1.1\\
-19.6	1.1\\
-19.2	1.1\\
-18.8	1.1\\
-18.4	1.1\\
-18	1.1\\
-17.6	1.10000000000001\\
-17.2	1.10000000000006\\
-16.8	1.10000000000044\\
-16.4	1.10000000000041\\
-16	1.10000000000394\\
-15.6	1.10000000001001\\
-15.2	1.10000000003701\\
-14.8	1.10000000009655\\
-14.4	1.10000000059264\\
-14	1.10000000290657\\
-13.6	1.10000000386335\\
-13.2	1.1000000089633\\
-12.8	1.10000005786716\\
-12.4	1.10000011591704\\
-12	1.1000001572036\\
-11.6	1.10000083890741\\
-11.2	1.10000315174014\\
-10.8	1.10000289717686\\
-10.4	1.10000555447761\\
-10	1.1000031254981\\
-9.6	1.10006772946851\\
-9.2	1.10002251871312\\
-8.8	1.10036418286973\\
-8.4	1.10049329439464\\
-8	1.10113505856335\\
-7.6	1.10156568235122\\
-7.2	1.10034424556787\\
-6.8	1.10832418492006\\
-6.4	1.10879341873658\\
-6	1.1077979953373\\
-5.6	1.13097951213827\\
-5.2	1.1057521091682\\
-4.8	1.12509129136423\\
-4.4	1.12271172769772\\
-4	1.28353234083319\\
-3.6	1.10007288347515\\
-3.2	1.35327629688072\\
-2.8	1.27434935926281\\
-2.4	1.34295166690542\\
-2	1.12729269692438\\
-1.6	1.63186933907104\\
-1.2	1.7973709648855\\
-0.800000000000001	1.57467930237753\\
-0.399999999999999	1.28116023965472\\
0	1.95131922743089\\
0.399999999999999	1.95842402036546\\
0.800000000000001	1.72664804335174\\
1.2	1.17868308324806\\
1.6	1.34847720442805\\
2	1.58474850852583\\
2.4	1.46089599158167\\
2.8	1.26106135235702\\
3.2	1.27933314754981\\
3.6	1.30119249828813\\
4	1.20504167822497\\
4.4	1.21672680493616\\
4.8	1.15998202361184\\
5.2	1.10128251142807\\
5.6	1.11083429052005\\
6	1.10240520230072\\
6.4	1.10422049956229\\
6.8	1.10745808649424\\
7.2	1.10246568560753\\
7.6	1.10169878349457\\
8	1.1013375343816\\
8.4	1.10046414713285\\
8.8	1.10042401095951\\
9.2	1.1000434853017\\
9.6	1.10009187096912\\
10	1.10000472811293\\
10.4	1.10000266031318\\
10.8	1.1000053582954\\
11.2	1.10000173554426\\
11.6	1.10000138123708\\
12	1.10000040999609\\
12.4	1.10000004857589\\
12.8	1.10000004978978\\
13.2	1.10000000437911\\
13.6	1.10000000026426\\
14	1.10000000266785\\
14.4	1.10000000015962\\
14.8	1.10000000024228\\
15.2	1.10000000000785\\
15.6	1.10000000000649\\
16	1.10000000000136\\
16.4	1.1000000000018\\
16.8	1.10000000000021\\
17.2	1.10000000000008\\
17.6	1.10000000000002\\
18	1.1\\
18.4	1.1\\
18.8	1.1\\
19.2	1.1\\
19.6	1.1\\
20	1.1\\
};
\addlegendentry{$f$ at $t=0$}

\addplot [color=black, dotted, line width=1.0pt]
  table[row sep=crcr]{%
-20	1.3459574398821\\
-19.6	1.3459574398821\\
-19.2	1.35162341712583\\
-18.8	1.35801107087726\\
-18.4	1.36455828422425\\
-18	1.37119738310219\\
-17.6	1.37791716368534\\
-17.2	1.38471269873634\\
-16.8	1.39157947889464\\
-16.4	1.39851271161596\\
-16	1.4055072316324\\
-15.6	1.41255748615668\\
-15.2	1.41965753029455\\
-14.8	1.42680102471457\\
-14.4	1.43398123450412\\
-14	1.44119102897101\\
-13.6	1.44842288223078\\
-13.2	1.45566887440799\\
-12.8	1.46292069325687\\
-12.4	1.47016963598618\\
-12	1.47740661105517\\
-11.6	1.48462213969531\\
-11.2	1.49180635690501\\
-10.8	1.49894901166443\\
-10.4	1.50603946612442\\
-10	1.51306669353885\\
-9.6	1.52001927473328\\
-9.2	1.526885392936\\
-8.8	1.5336528268388\\
-8.4	1.54030894180474\\
-8	1.54684067919689\\
-7.6	1.55323454386336\\
-7.2	1.55947658987644\\
-6.8	1.56555240468223\\
-6.4	1.57144709186383\\
-6	1.57714525274452\\
-5.6	1.58263096704146\\
-5.2	1.58788777269959\\
-4.8	1.5928986448529\\
-4.4	1.59764597351857\\
-4	1.60211153903653\\
-3.6	1.60627648327395\\
-3.2	1.61012127296936\\
-2.8	1.61362564885356\\
-2.4	1.61676854955403\\
-2	1.61952799124434\\
-1.6	1.62188086950805\\
-1.2	1.62380262255551\\
-0.800000000000001	1.62526664041579\\
-0.399999999999999	1.62624318834003\\
0	1.62669734285506\\
0.399999999999999	1.62658474700292\\
0.800000000000001	1.62594280552999\\
1.2	1.62480072970148\\
1.6	1.62318377989175\\
2	1.62111533559756\\
2.4	1.61861796116645\\
2.8	1.61571396102398\\
3.2	1.61242565538566\\
3.6	1.60877549409665\\
4	1.60478607307438\\
4.4	1.60048009104093\\
4.8	1.59588026985952\\
5.2	1.5910092536261\\
5.6	1.58588949677387\\
6	1.58054314836201\\
6.4	1.57499193767285\\
6.8	1.56925706481901\\
7.2	1.56335909902948\\
7.6	1.55731788650453\\
8	1.55115246912404\\
8.4	1.54488101481117\\
8.8	1.5385207599632\\
9.2	1.53208796404219\\
9.6	1.52559787615789\\
10	1.51906471326284\\
10.4	1.51250164940902\\
10.8	1.50592081538087\\
11.2	1.4993333079168\\
11.6	1.4927492076568\\
12	1.48617760490364\\
12.4	1.47962663225768\\
12.8	1.47310350317607\\
13.2	1.46661455551532\\
13.6	1.46016529913784\\
14	1.453760466697\\
14.4	1.44740406675917\\
14.8	1.4410994384723\\
15.2	1.43484930704846\\
15.6	1.42865583938984\\
16	1.42252069925404\\
16.4	1.41644510143416\\
16.8	1.41042986459648\\
17.2	1.4044754632744\\
17.6	1.39858208587421\\
18	1.392749756725\\
18.4	1.38697900333337\\
18.8	1.38127611730435\\
19.2	1.3756976894561\\
19.6	1.3707368981438\\
20	1.3707368981438\\
};
\addlegendentry{$f$ at $t=2$}

\addplot [color=black, line width=1.0pt]
  table[row sep=crcr]{%
-20	1.62481441431722\\
-19.6	1.62481441431722\\
-19.2	1.62481441431918\\
-18.8	1.62481441432137\\
-18.4	1.62481441432359\\
-18	1.62481441432581\\
-17.6	1.62481441432803\\
-17.2	1.62481441433025\\
-16.8	1.62481441433247\\
-16.4	1.62481441433469\\
-16	1.62481441433691\\
-15.6	1.62481441433913\\
-15.2	1.62481441434135\\
-14.8	1.62481441434357\\
-14.4	1.62481441434579\\
-14	1.62481441434801\\
-13.6	1.62481441435023\\
-13.2	1.62481441435245\\
-12.8	1.62481441435467\\
-12.4	1.62481441435689\\
-12	1.62481441435911\\
-11.6	1.62481441436132\\
-11.2	1.62481441436354\\
-10.8	1.62481441436576\\
-10.4	1.62481441436798\\
-10	1.6248144143702\\
-9.6	1.62481441437242\\
-9.2	1.62481441437464\\
-8.8	1.62481441437686\\
-8.4	1.62481441437908\\
-8	1.6248144143813\\
-7.6	1.62481441438352\\
-7.2	1.62481441438574\\
-6.8	1.62481441438796\\
-6.4	1.62481441439018\\
-6	1.6248144143924\\
-5.6	1.62481441439462\\
-5.2	1.62481441439684\\
-4.8	1.62481441439906\\
-4.4	1.62481441440128\\
-4	1.6248144144035\\
-3.6	1.62481441440572\\
-3.2	1.62481441440794\\
-2.8	1.62481441441016\\
-2.4	1.62481441441238\\
-2	1.6248144144146\\
-1.6	1.62481441441682\\
-1.2	1.62481441441904\\
-0.800000000000001	1.62481441442126\\
-0.399999999999999	1.62481441442348\\
0	1.6248144144257\\
0.399999999999999	1.62481441442792\\
0.800000000000001	1.62481441443014\\
1.2	1.62481441443236\\
1.6	1.62481441443458\\
2	1.6248144144368\\
2.4	1.62481441443902\\
2.8	1.62481441444124\\
3.2	1.62481441444346\\
3.6	1.62481441444568\\
4	1.6248144144479\\
4.4	1.62481441445012\\
4.8	1.62481441445234\\
5.2	1.62481441445456\\
5.6	1.62481441445678\\
6	1.624814414459\\
6.4	1.62481441446122\\
6.8	1.62481441446344\\
7.2	1.62481441446566\\
7.6	1.62481441446788\\
8	1.6248144144701\\
8.4	1.62481441447231\\
8.8	1.62481441447453\\
9.2	1.62481441447675\\
9.6	1.62481441447897\\
10	1.62481441448119\\
10.4	1.62481441448341\\
10.8	1.62481441448563\\
11.2	1.62481441448785\\
11.6	1.62481441449007\\
12	1.62481441449229\\
12.4	1.62481441449451\\
12.8	1.62481441449673\\
13.2	1.62481441449895\\
13.6	1.62481441450117\\
14	1.62481441450339\\
14.4	1.62481441450561\\
14.8	1.62481441450783\\
15.2	1.62481441451005\\
15.6	1.62481441451227\\
16	1.62481441451449\\
16.4	1.62481441451671\\
16.8	1.62481441451893\\
17.2	1.62481441452115\\
17.6	1.62481441452337\\
18	1.62481441452559\\
18.4	1.62481441452781\\
18.8	1.62481441453002\\
19.2	1.62481441453221\\
19.6	1.62481441453418\\
20	1.62481441453418\\
};
\addlegendentry{$f$ at $t=20$}

\end{axis}
\end{tikzpicture}%
\caption{Evolution of $f$ in the homogeneous case at three times, the velocity range is $(-20,20)$, the discretization steps are  ${\dvh=0.4}$ and ${\dth=0.01}$.}
\end{subfigure}
\hfill
\begin{subfigure}[t]{0.48\linewidth}        
  \centering 
 \setlength\fwidth{5.5cm}
\setlength\fheight{5.5cm}
%
%
\begin{tikzpicture}

\begin{axis}[%
width=0.954\fwidth,
height=\fheight,
at={(0\fwidth,0\fheight)},
scale only axis,
xmin=0,
xmax=20,
xlabel style={font=\color{white!15!black}},
xlabel={Time $t$},
ymin=-60,
ymax=0,
axis background/.style={fill=white},
legend style={legend cell align=left, align=left, draw=white!15!black},
legend style={font=\tiny}
]
\addplot [color=black, line width=1.0pt]
  table[row sep=crcr]{%
0	0\\
0.2	-2.28632170895047\\
0.4	-3.18365237441364\\
0.6	-3.97375472824633\\
0.8	-4.74014951507884\\
1	-5.49729370489057\\
1.2	-6.24994008224321\\
1.4	-6.99996595576233\\
1.6	-7.74804858300974\\
1.8	-8.49424716727599\\
2	-9.23822212836516\\
2.19999999999997	-9.97929926001781\\
2.39999999999994	-10.7164553654408\\
2.59999999999991	-11.448259893699\\
2.79999999999988	-12.1727948309077\\
2.99999999999985	-12.8875774998195\\
3.19999999999981	-13.5895227551459\\
3.39999999999978	-14.2749968374606\\
3.59999999999975	-14.9400230974343\\
3.79999999999972	-15.580680016959\\
3.99999999999969	-16.1936663519614\\
4.19999999999965	-16.7769065798656\\
4.39999999999961	-17.3299910406873\\
4.59999999999957	-17.8542682129456\\
4.79999999999953	-18.3525478650779\\
4.99999999999949	-18.8285443200868\\
5.19999999999945	-19.286271009317\\
5.39999999999941	-19.7295573057012\\
5.59999999999937	-20.1617575104145\\
5.79999999999933	-20.5856358628215\\
5.99999999999929	-21.0033722688935\\
6.19999999999925	-21.4166321063429\\
6.39999999999921	-21.8266589531169\\
6.59999999999917	-22.2343663564838\\
6.79999999999913	-22.6404176311205\\
6.99999999999909	-23.0452904166584\\
7.19999999999905	-23.4493266290027\\
7.39999999999901	-23.8527700056343\\
7.59999999999897	-24.2557937831832\\
7.79999999999893	-24.6585208322017\\
7.9999999999989	-25.0610381714996\\
8.19999999999886	-25.4634073647267\\
8.39999999999882	-25.8656719344839\\
8.59999999999878	-26.2678626331605\\
8.79999999999874	-26.6700011819152\\
8.9999999999987	-27.0721029189434\\
9.19999999999866	-27.474178673183\\
9.39999999999862	-27.8762360889959\\
9.59999999999858	-28.2782805621957\\
9.79999999999854	-28.6803159012004\\
9.9999999999985	-29.0823447939067\\
10.1999999999985	-29.4843691373109\\
10.3999999999984	-29.8863902701975\\
10.5999999999984	-30.2884091373837\\
10.7999999999983	-30.6904264056457\\
10.9999999999983	-31.092442545537\\
11.1999999999983	-31.4944578891319\\
11.3999999999982	-31.8964726707777\\
11.5999999999982	-32.2984870558544\\
11.7999999999981	-32.7005011610717\\
11.9999999999981	-33.1025150687916\\
12.1999999999981	-33.5045288371374\\
12.399999999998	-33.9065425071268\\
12.599999999998	-34.3085561077058\\
12.7999999999979	-34.710569659302\\
12.9999999999979	-35.1125831763309\\
13.1999999999979	-35.5145966689656\\
13.3999999999978	-35.9166101443854\\
13.5999999999978	-36.3186236076566\\
13.7999999999977	-36.7206370623545\\
13.9999999999977	-37.1226505110022\\
14.1999999999977	-37.5246639553803\\
14.3999999999976	-37.9266773967453\\
14.5999999999976	-38.328690835984\\
14.7999999999975	-38.7307042737222\\
14.9999999999975	-39.1327177104014\\
15.1999999999975	-39.5347311463334\\
15.3999999999974	-39.9367445817379\\
15.5999999999974	-40.3387580167703\\
15.7999999999973	-40.7407714515401\\
15.9999999999973	-41.1427848861245\\
16.1999999999973	-41.5447983205782\\
16.3999999999972	-41.9468117549395\\
16.5999999999972	-42.3488251892357\\
16.7999999999971	-42.7508386234859\\
16.9999999999971	-43.1528520577037\\
17.1999999999971	-43.5548654918986\\
17.399999999997	-43.9568789260773\\
17.599999999997	-44.3588923602446\\
17.7999999999969	-44.7609057944039\\
17.9999999999969	-45.1629192285575\\
18.1999999999969	-45.5649326627071\\
18.3999999999968	-45.9669460968539\\
18.5999999999968	-46.3689595309987\\
18.7999999999967	-46.7709729651421\\
18.9999999999967	-47.1729863992844\\
19.1999999999967	-47.5749998334261\\
19.3999999999966	-47.9770132675673\\
19.5999999999966	-48.3790267017081\\
19.7999999999965	-48.7810401358487\\
19.9999999999965	-49.1830535699891\\
};
\addlegendentry{$\log(\fffd/\fffd(0))$}

\addplot [color=black, dotted, line width=1.0pt]
  table[row sep=crcr]{%
0	0\\
0.2	-3.53203987847389\\
0.4	-4.61519124472465\\
0.6	-5.43802340111888\\
0.8	-6.21581662988031\\
1	-6.97815462396889\\
1.2	-7.73359054669822\\
1.4	-8.48545413183514\\
1.6	-9.23511553865241\\
1.8	-9.98303767204413\\
2	-10.7291830267687\\
2.19999999999997	-11.4731686151447\\
2.39999999999994	-12.2143029664042\\
2.59999999999991	-12.9515595143407\\
2.79999999999988	-13.6835125057627\\
2.99999999999985	-14.4082539752611\\
3.19999999999981	-15.1233145124689\\
3.39999999999978	-15.8256231086594\\
3.59999999999975	-16.5115575779875\\
3.79999999999972	-17.1771456131304\\
3.99999999999969	-17.8184578157911\\
4.19999999999965	-18.4321697010267\\
4.39999999999961	-19.0161684461365\\
4.59999999999957	-19.5699997694016\\
4.79999999999953	-20.0949701893274\\
4.99999999999949	-20.5938589601402\\
5.19999999999945	-21.0703653397886\\
5.39999999999941	-21.5285019064565\\
5.59999999999937	-21.972107029173\\
5.79999999999933	-22.4045489644265\\
5.99999999999929	-22.8286070588895\\
6.19999999999925	-23.2464751881783\\
6.39999999999921	-23.6598305347859\\
6.59999999999917	-24.0699260988066\\
6.79999999999913	-24.4776826681866\\
6.99999999999909	-24.8837689800987\\
7.19999999999905	-25.2886666633416\\
7.39999999999901	-25.6927205323798\\
7.59999999999897	-26.0961764126374\\
7.79999999999893	-26.4992090356572\\
7.9999999999989	-26.9019423377481\\
8.19999999999886	-27.3044640952431\\
8.39999999999882	-27.706836409088\\
8.59999999999878	-28.1091031824101\\
8.79999999999874	-28.5112954368131\\
8.9999999999987	-28.9134350837785\\
9.19999999999866	-29.3155375959808\\
9.39999999999862	-29.7176138973441\\
9.59999999999858	-30.1196716993037\\
9.79999999999854	-30.521716445028\\
9.9999999999985	-30.9237519763638\\
10.1999999999985	-31.3257810048033\\
10.3999999999984	-31.727805443997\\
10.5999999999984	-32.1298266444836\\
10.7999999999983	-32.5318455593758\\
10.9999999999983	-32.9338628613043\\
11.1999999999983	-33.3358790249542\\
11.3999999999982	-33.7378943853156\\
11.5999999999982	-34.1399091787936\\
11.7999999999981	-34.5419235722204\\
11.9999999999981	-34.9439376833302\\
12.1999999999981	-35.3459515952086\\
12.399999999998	-35.747965366489\\
12.599999999998	-36.1499790385493\\
12.7999999999979	-36.5519926405898\\
12.9999999999979	-36.9540061932174\\
13.1999999999979	-37.3560197109741\\
13.3999999999978	-37.7580332041225\\
13.5999999999978	-38.1600466799047\\
13.7999999999977	-38.5620601434317\\
13.9999999999977	-38.9640735983101\\
14.1999999999977	-39.3660870470852\\
14.3999999999976	-39.7681004915532\\
14.5999999999976	-40.1701139329817\\
14.7999999999975	-40.5721273722652\\
14.9999999999975	-40.9741408100349\\
15.1999999999975	-41.3761542467364\\
15.3999999999974	-41.7781676826841\\
15.5999999999974	-42.1801811180998\\
15.7999999999973	-42.58219455314\\
15.9999999999973	-42.9842079879153\\
16.1999999999973	-43.3862214225036\\
16.3999999999972	-43.78823485696\\
16.5999999999972	-44.1902482913233\\
16.7999999999971	-44.5922617256209\\
16.9999999999971	-44.9942751598721\\
17.1999999999971	-45.3962885940905\\
17.399999999997	-45.7983020282859\\
17.599999999997	-46.200315462465\\
17.7999999999969	-46.6023288966325\\
17.9999999999969	-47.004342330792\\
18.1999999999969	-47.4063557649457\\
18.3999999999968	-47.8083691990954\\
18.5999999999968	-48.2103826332422\\
18.7999999999967	-48.6123960673871\\
18.9999999999967	-49.0144095015305\\
19.1999999999967	-49.4164229356729\\
19.3999999999966	-49.8184363698146\\
19.5999999999966	-50.2204498039558\\
19.7999999999965	-50.6224632380966\\
19.9999999999965	-51.0244766722372\\
};
\addlegendentry{$\log(\gggd/\gggd(0))$}

\end{axis}
\end{tikzpicture}%
\caption{Normalized linearized entropy $\fffd$ and Fisher information $\gggd$ in logscale}
\end{subfigure} 
\begin{subfigure}[t]{\linewidth}
  \centering   
 \setlength\fwidth{14cm}  
\setlength\fheight{6cm}
  \input{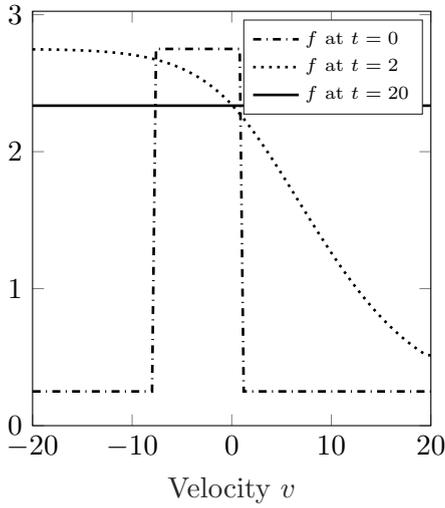}
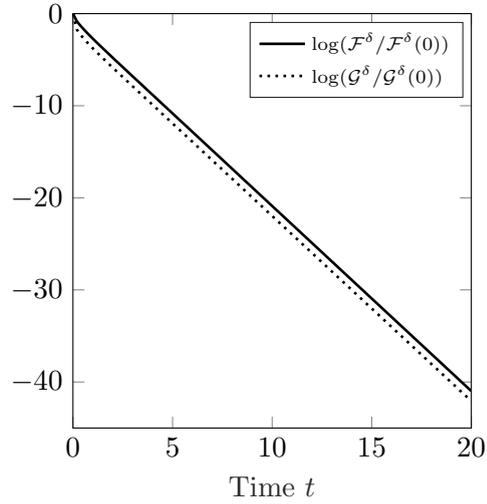
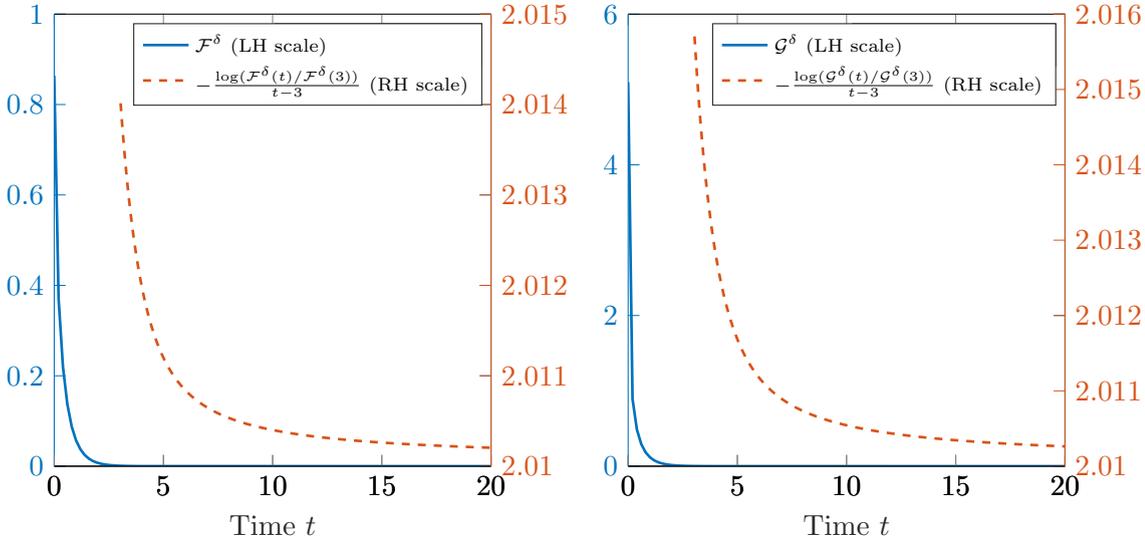
  \caption{Evolution of the Linearized Entropy $\fffd$, {\em ie} the square of the
    $\Ldeuxmudvh$-norm of $f$ (left) and of the Fisher information $\gggd$
   defined in \eqref{eq:mfi} (right). In each plot, the left-hand scale (plain line) is the linear scale and the
    right-hand scale (dashed line) is the "-log/t" scale that shows the
    numerical rate of convergence in long time. }
\end{subfigure}
\caption{Step function as the initial datum in the homogeneous case}  
 \label{fig:hom1}      
\end{figure}
\def\cashom{rand}
\begin{figure}
  \centering
\begin{subfigure}[t]{0.48\linewidth}
 \setlength\fwidth{5.5cm}
\setlength\fheight{5.5cm} 
%
%
\begin{tikzpicture}

\begin{axis}[%
width=0.954\fwidth,
height=\fheight,
at={(0\fwidth,0\fheight)},
scale only axis,
xmin=-20,
xmax=20,
xlabel style={font=\color{white!15!black}},
xlabel={Velocity $v$},
ymin=-1.1932904672367e-10,
ymax=2.15426642240201,
axis background/.style={fill=white},
legend style={legend cell align=left, align=left, draw=white!15!black},
legend style={font=\tiny}
]
\addplot [color=black, dashdotted, line width=1.0pt]
  table[row sep=crcr]{%
-20	1.1\\
-19.6	1.1\\
-19.2	1.1\\
-18.8	1.1\\
-18.4	1.1\\
-18	1.1\\
-17.6	1.10000000000001\\
-17.2	1.10000000000006\\
-16.8	1.10000000000044\\
-16.4	1.10000000000041\\
-16	1.10000000000394\\
-15.6	1.10000000001001\\
-15.2	1.10000000003701\\
-14.8	1.10000000009655\\
-14.4	1.10000000059264\\
-14	1.10000000290657\\
-13.6	1.10000000386335\\
-13.2	1.1000000089633\\
-12.8	1.10000005786716\\
-12.4	1.10000011591704\\
-12	1.1000001572036\\
-11.6	1.10000083890741\\
-11.2	1.10000315174014\\
-10.8	1.10000289717686\\
-10.4	1.10000555447761\\
-10	1.1000031254981\\
-9.6	1.10006772946851\\
-9.2	1.10002251871312\\
-8.8	1.10036418286973\\
-8.4	1.10049329439464\\
-8	1.10113505856335\\
-7.6	1.10156568235122\\
-7.2	1.10034424556787\\
-6.8	1.10832418492006\\
-6.4	1.10879341873658\\
-6	1.1077979953373\\
-5.6	1.13097951213827\\
-5.2	1.1057521091682\\
-4.8	1.12509129136423\\
-4.4	1.12271172769772\\
-4	1.28353234083319\\
-3.6	1.10007288347515\\
-3.2	1.35327629688072\\
-2.8	1.27434935926281\\
-2.4	1.34295166690542\\
-2	1.12729269692438\\
-1.6	1.63186933907104\\
-1.2	1.7973709648855\\
-0.800000000000001	1.57467930237753\\
-0.399999999999999	1.28116023965472\\
0	1.95131922743089\\
0.399999999999999	1.95842402036546\\
0.800000000000001	1.72664804335174\\
1.2	1.17868308324806\\
1.6	1.34847720442805\\
2	1.58474850852583\\
2.4	1.46089599158167\\
2.8	1.26106135235702\\
3.2	1.27933314754981\\
3.6	1.30119249828813\\
4	1.20504167822497\\
4.4	1.21672680493616\\
4.8	1.15998202361184\\
5.2	1.10128251142807\\
5.6	1.11083429052005\\
6	1.10240520230072\\
6.4	1.10422049956229\\
6.8	1.10745808649424\\
7.2	1.10246568560753\\
7.6	1.10169878349457\\
8	1.1013375343816\\
8.4	1.10046414713285\\
8.8	1.10042401095951\\
9.2	1.1000434853017\\
9.6	1.10009187096912\\
10	1.10000472811293\\
10.4	1.10000266031318\\
10.8	1.1000053582954\\
11.2	1.10000173554426\\
11.6	1.10000138123708\\
12	1.10000040999609\\
12.4	1.10000004857589\\
12.8	1.10000004978978\\
13.2	1.10000000437911\\
13.6	1.10000000026426\\
14	1.10000000266785\\
14.4	1.10000000015962\\
14.8	1.10000000024228\\
15.2	1.10000000000785\\
15.6	1.10000000000649\\
16	1.10000000000136\\
16.4	1.1000000000018\\
16.8	1.10000000000021\\
17.2	1.10000000000008\\
17.6	1.10000000000002\\
18	1.1\\
18.4	1.1\\
18.8	1.1\\
19.2	1.1\\
19.6	1.1\\
20	1.1\\
};
\addlegendentry{$f$ at $t=0$}

\addplot [color=black, dotted, line width=1.0pt]
  table[row sep=crcr]{%
-20	1.3459574398821\\
-19.6	1.3459574398821\\
-19.2	1.35162341712583\\
-18.8	1.35801107087726\\
-18.4	1.36455828422425\\
-18	1.37119738310219\\
-17.6	1.37791716368534\\
-17.2	1.38471269873634\\
-16.8	1.39157947889464\\
-16.4	1.39851271161596\\
-16	1.4055072316324\\
-15.6	1.41255748615668\\
-15.2	1.41965753029455\\
-14.8	1.42680102471457\\
-14.4	1.43398123450412\\
-14	1.44119102897101\\
-13.6	1.44842288223078\\
-13.2	1.45566887440799\\
-12.8	1.46292069325687\\
-12.4	1.47016963598618\\
-12	1.47740661105517\\
-11.6	1.48462213969531\\
-11.2	1.49180635690501\\
-10.8	1.49894901166443\\
-10.4	1.50603946612442\\
-10	1.51306669353885\\
-9.6	1.52001927473328\\
-9.2	1.526885392936\\
-8.8	1.5336528268388\\
-8.4	1.54030894180474\\
-8	1.54684067919689\\
-7.6	1.55323454386336\\
-7.2	1.55947658987644\\
-6.8	1.56555240468223\\
-6.4	1.57144709186383\\
-6	1.57714525274452\\
-5.6	1.58263096704146\\
-5.2	1.58788777269959\\
-4.8	1.5928986448529\\
-4.4	1.59764597351857\\
-4	1.60211153903653\\
-3.6	1.60627648327395\\
-3.2	1.61012127296936\\
-2.8	1.61362564885356\\
-2.4	1.61676854955403\\
-2	1.61952799124434\\
-1.6	1.62188086950805\\
-1.2	1.62380262255551\\
-0.800000000000001	1.62526664041579\\
-0.399999999999999	1.62624318834003\\
0	1.62669734285506\\
0.399999999999999	1.62658474700292\\
0.800000000000001	1.62594280552999\\
1.2	1.62480072970148\\
1.6	1.62318377989175\\
2	1.62111533559756\\
2.4	1.61861796116645\\
2.8	1.61571396102398\\
3.2	1.61242565538566\\
3.6	1.60877549409665\\
4	1.60478607307438\\
4.4	1.60048009104093\\
4.8	1.59588026985952\\
5.2	1.5910092536261\\
5.6	1.58588949677387\\
6	1.58054314836201\\
6.4	1.57499193767285\\
6.8	1.56925706481901\\
7.2	1.56335909902948\\
7.6	1.55731788650453\\
8	1.55115246912404\\
8.4	1.54488101481117\\
8.8	1.5385207599632\\
9.2	1.53208796404219\\
9.6	1.52559787615789\\
10	1.51906471326284\\
10.4	1.51250164940902\\
10.8	1.50592081538087\\
11.2	1.4993333079168\\
11.6	1.4927492076568\\
12	1.48617760490364\\
12.4	1.47962663225768\\
12.8	1.47310350317607\\
13.2	1.46661455551532\\
13.6	1.46016529913784\\
14	1.453760466697\\
14.4	1.44740406675917\\
14.8	1.4410994384723\\
15.2	1.43484930704846\\
15.6	1.42865583938984\\
16	1.42252069925404\\
16.4	1.41644510143416\\
16.8	1.41042986459648\\
17.2	1.4044754632744\\
17.6	1.39858208587421\\
18	1.392749756725\\
18.4	1.38697900333337\\
18.8	1.38127611730435\\
19.2	1.3756976894561\\
19.6	1.3707368981438\\
20	1.3707368981438\\
};
\addlegendentry{$f$ at $t=2$}

\addplot [color=black, line width=1.0pt]
  table[row sep=crcr]{%
-20	1.62481441431722\\
-19.6	1.62481441431722\\
-19.2	1.62481441431918\\
-18.8	1.62481441432137\\
-18.4	1.62481441432359\\
-18	1.62481441432581\\
-17.6	1.62481441432803\\
-17.2	1.62481441433025\\
-16.8	1.62481441433247\\
-16.4	1.62481441433469\\
-16	1.62481441433691\\
-15.6	1.62481441433913\\
-15.2	1.62481441434135\\
-14.8	1.62481441434357\\
-14.4	1.62481441434579\\
-14	1.62481441434801\\
-13.6	1.62481441435023\\
-13.2	1.62481441435245\\
-12.8	1.62481441435467\\
-12.4	1.62481441435689\\
-12	1.62481441435911\\
-11.6	1.62481441436132\\
-11.2	1.62481441436354\\
-10.8	1.62481441436576\\
-10.4	1.62481441436798\\
-10	1.6248144143702\\
-9.6	1.62481441437242\\
-9.2	1.62481441437464\\
-8.8	1.62481441437686\\
-8.4	1.62481441437908\\
-8	1.6248144143813\\
-7.6	1.62481441438352\\
-7.2	1.62481441438574\\
-6.8	1.62481441438796\\
-6.4	1.62481441439018\\
-6	1.6248144143924\\
-5.6	1.62481441439462\\
-5.2	1.62481441439684\\
-4.8	1.62481441439906\\
-4.4	1.62481441440128\\
-4	1.6248144144035\\
-3.6	1.62481441440572\\
-3.2	1.62481441440794\\
-2.8	1.62481441441016\\
-2.4	1.62481441441238\\
-2	1.6248144144146\\
-1.6	1.62481441441682\\
-1.2	1.62481441441904\\
-0.800000000000001	1.62481441442126\\
-0.399999999999999	1.62481441442348\\
0	1.6248144144257\\
0.399999999999999	1.62481441442792\\
0.800000000000001	1.62481441443014\\
1.2	1.62481441443236\\
1.6	1.62481441443458\\
2	1.6248144144368\\
2.4	1.62481441443902\\
2.8	1.62481441444124\\
3.2	1.62481441444346\\
3.6	1.62481441444568\\
4	1.6248144144479\\
4.4	1.62481441445012\\
4.8	1.62481441445234\\
5.2	1.62481441445456\\
5.6	1.62481441445678\\
6	1.624814414459\\
6.4	1.62481441446122\\
6.8	1.62481441446344\\
7.2	1.62481441446566\\
7.6	1.62481441446788\\
8	1.6248144144701\\
8.4	1.62481441447231\\
8.8	1.62481441447453\\
9.2	1.62481441447675\\
9.6	1.62481441447897\\
10	1.62481441448119\\
10.4	1.62481441448341\\
10.8	1.62481441448563\\
11.2	1.62481441448785\\
11.6	1.62481441449007\\
12	1.62481441449229\\
12.4	1.62481441449451\\
12.8	1.62481441449673\\
13.2	1.62481441449895\\
13.6	1.62481441450117\\
14	1.62481441450339\\
14.4	1.62481441450561\\
14.8	1.62481441450783\\
15.2	1.62481441451005\\
15.6	1.62481441451227\\
16	1.62481441451449\\
16.4	1.62481441451671\\
16.8	1.62481441451893\\
17.2	1.62481441452115\\
17.6	1.62481441452337\\
18	1.62481441452559\\
18.4	1.62481441452781\\
18.8	1.62481441453002\\
19.2	1.62481441453221\\
19.6	1.62481441453418\\
20	1.62481441453418\\
};
\addlegendentry{$f$ at $t=20$}

\end{axis}
\end{tikzpicture}%
\caption{Evolution of $f$ in the homogeneous case at three times, the velocity range is $(-20,20)$, the discretization steps are  ${\dvh=0.4}$ and ${\dth=0.01}$.}
\end{subfigure}
\hfill 
\begin{subfigure}[t]{0.48\linewidth}        
  \centering 
 \setlength\fwidth{5.5cm}
\setlength\fheight{5.5cm}
%
%
\begin{tikzpicture}

\begin{axis}[%
width=0.954\fwidth,
height=\fheight,
at={(0\fwidth,0\fheight)},
scale only axis,
xmin=0,
xmax=20,
xlabel style={font=\color{white!15!black}},
xlabel={Time $t$},
ymin=-60,
ymax=0,
axis background/.style={fill=white},
legend style={legend cell align=left, align=left, draw=white!15!black},
legend style={font=\tiny}
]
\addplot [color=black, line width=1.0pt]
  table[row sep=crcr]{%
0	0\\
0.2	-2.28632170895047\\
0.4	-3.18365237441364\\
0.6	-3.97375472824633\\
0.8	-4.74014951507884\\
1	-5.49729370489057\\
1.2	-6.24994008224321\\
1.4	-6.99996595576233\\
1.6	-7.74804858300974\\
1.8	-8.49424716727599\\
2	-9.23822212836516\\
2.19999999999997	-9.97929926001781\\
2.39999999999994	-10.7164553654408\\
2.59999999999991	-11.448259893699\\
2.79999999999988	-12.1727948309077\\
2.99999999999985	-12.8875774998195\\
3.19999999999981	-13.5895227551459\\
3.39999999999978	-14.2749968374606\\
3.59999999999975	-14.9400230974343\\
3.79999999999972	-15.580680016959\\
3.99999999999969	-16.1936663519614\\
4.19999999999965	-16.7769065798656\\
4.39999999999961	-17.3299910406873\\
4.59999999999957	-17.8542682129456\\
4.79999999999953	-18.3525478650779\\
4.99999999999949	-18.8285443200868\\
5.19999999999945	-19.286271009317\\
5.39999999999941	-19.7295573057012\\
5.59999999999937	-20.1617575104145\\
5.79999999999933	-20.5856358628215\\
5.99999999999929	-21.0033722688935\\
6.19999999999925	-21.4166321063429\\
6.39999999999921	-21.8266589531169\\
6.59999999999917	-22.2343663564838\\
6.79999999999913	-22.6404176311205\\
6.99999999999909	-23.0452904166584\\
7.19999999999905	-23.4493266290027\\
7.39999999999901	-23.8527700056343\\
7.59999999999897	-24.2557937831832\\
7.79999999999893	-24.6585208322017\\
7.9999999999989	-25.0610381714996\\
8.19999999999886	-25.4634073647267\\
8.39999999999882	-25.8656719344839\\
8.59999999999878	-26.2678626331605\\
8.79999999999874	-26.6700011819152\\
8.9999999999987	-27.0721029189434\\
9.19999999999866	-27.474178673183\\
9.39999999999862	-27.8762360889959\\
9.59999999999858	-28.2782805621957\\
9.79999999999854	-28.6803159012004\\
9.9999999999985	-29.0823447939067\\
10.1999999999985	-29.4843691373109\\
10.3999999999984	-29.8863902701975\\
10.5999999999984	-30.2884091373837\\
10.7999999999983	-30.6904264056457\\
10.9999999999983	-31.092442545537\\
11.1999999999983	-31.4944578891319\\
11.3999999999982	-31.8964726707777\\
11.5999999999982	-32.2984870558544\\
11.7999999999981	-32.7005011610717\\
11.9999999999981	-33.1025150687916\\
12.1999999999981	-33.5045288371374\\
12.399999999998	-33.9065425071268\\
12.599999999998	-34.3085561077058\\
12.7999999999979	-34.710569659302\\
12.9999999999979	-35.1125831763309\\
13.1999999999979	-35.5145966689656\\
13.3999999999978	-35.9166101443854\\
13.5999999999978	-36.3186236076566\\
13.7999999999977	-36.7206370623545\\
13.9999999999977	-37.1226505110022\\
14.1999999999977	-37.5246639553803\\
14.3999999999976	-37.9266773967453\\
14.5999999999976	-38.328690835984\\
14.7999999999975	-38.7307042737222\\
14.9999999999975	-39.1327177104014\\
15.1999999999975	-39.5347311463334\\
15.3999999999974	-39.9367445817379\\
15.5999999999974	-40.3387580167703\\
15.7999999999973	-40.7407714515401\\
15.9999999999973	-41.1427848861245\\
16.1999999999973	-41.5447983205782\\
16.3999999999972	-41.9468117549395\\
16.5999999999972	-42.3488251892357\\
16.7999999999971	-42.7508386234859\\
16.9999999999971	-43.1528520577037\\
17.1999999999971	-43.5548654918986\\
17.399999999997	-43.9568789260773\\
17.599999999997	-44.3588923602446\\
17.7999999999969	-44.7609057944039\\
17.9999999999969	-45.1629192285575\\
18.1999999999969	-45.5649326627071\\
18.3999999999968	-45.9669460968539\\
18.5999999999968	-46.3689595309987\\
18.7999999999967	-46.7709729651421\\
18.9999999999967	-47.1729863992844\\
19.1999999999967	-47.5749998334261\\
19.3999999999966	-47.9770132675673\\
19.5999999999966	-48.3790267017081\\
19.7999999999965	-48.7810401358487\\
19.9999999999965	-49.1830535699891\\
};
\addlegendentry{$\log(\fffd/\fffd(0))$}

\addplot [color=black, dotted, line width=1.0pt]
  table[row sep=crcr]{%
0	0\\
0.2	-3.53203987847389\\
0.4	-4.61519124472465\\
0.6	-5.43802340111888\\
0.8	-6.21581662988031\\
1	-6.97815462396889\\
1.2	-7.73359054669822\\
1.4	-8.48545413183514\\
1.6	-9.23511553865241\\
1.8	-9.98303767204413\\
2	-10.7291830267687\\
2.19999999999997	-11.4731686151447\\
2.39999999999994	-12.2143029664042\\
2.59999999999991	-12.9515595143407\\
2.79999999999988	-13.6835125057627\\
2.99999999999985	-14.4082539752611\\
3.19999999999981	-15.1233145124689\\
3.39999999999978	-15.8256231086594\\
3.59999999999975	-16.5115575779875\\
3.79999999999972	-17.1771456131304\\
3.99999999999969	-17.8184578157911\\
4.19999999999965	-18.4321697010267\\
4.39999999999961	-19.0161684461365\\
4.59999999999957	-19.5699997694016\\
4.79999999999953	-20.0949701893274\\
4.99999999999949	-20.5938589601402\\
5.19999999999945	-21.0703653397886\\
5.39999999999941	-21.5285019064565\\
5.59999999999937	-21.972107029173\\
5.79999999999933	-22.4045489644265\\
5.99999999999929	-22.8286070588895\\
6.19999999999925	-23.2464751881783\\
6.39999999999921	-23.6598305347859\\
6.59999999999917	-24.0699260988066\\
6.79999999999913	-24.4776826681866\\
6.99999999999909	-24.8837689800987\\
7.19999999999905	-25.2886666633416\\
7.39999999999901	-25.6927205323798\\
7.59999999999897	-26.0961764126374\\
7.79999999999893	-26.4992090356572\\
7.9999999999989	-26.9019423377481\\
8.19999999999886	-27.3044640952431\\
8.39999999999882	-27.706836409088\\
8.59999999999878	-28.1091031824101\\
8.79999999999874	-28.5112954368131\\
8.9999999999987	-28.9134350837785\\
9.19999999999866	-29.3155375959808\\
9.39999999999862	-29.7176138973441\\
9.59999999999858	-30.1196716993037\\
9.79999999999854	-30.521716445028\\
9.9999999999985	-30.9237519763638\\
10.1999999999985	-31.3257810048033\\
10.3999999999984	-31.727805443997\\
10.5999999999984	-32.1298266444836\\
10.7999999999983	-32.5318455593758\\
10.9999999999983	-32.9338628613043\\
11.1999999999983	-33.3358790249542\\
11.3999999999982	-33.7378943853156\\
11.5999999999982	-34.1399091787936\\
11.7999999999981	-34.5419235722204\\
11.9999999999981	-34.9439376833302\\
12.1999999999981	-35.3459515952086\\
12.399999999998	-35.747965366489\\
12.599999999998	-36.1499790385493\\
12.7999999999979	-36.5519926405898\\
12.9999999999979	-36.9540061932174\\
13.1999999999979	-37.3560197109741\\
13.3999999999978	-37.7580332041225\\
13.5999999999978	-38.1600466799047\\
13.7999999999977	-38.5620601434317\\
13.9999999999977	-38.9640735983101\\
14.1999999999977	-39.3660870470852\\
14.3999999999976	-39.7681004915532\\
14.5999999999976	-40.1701139329817\\
14.7999999999975	-40.5721273722652\\
14.9999999999975	-40.9741408100349\\
15.1999999999975	-41.3761542467364\\
15.3999999999974	-41.7781676826841\\
15.5999999999974	-42.1801811180998\\
15.7999999999973	-42.58219455314\\
15.9999999999973	-42.9842079879153\\
16.1999999999973	-43.3862214225036\\
16.3999999999972	-43.78823485696\\
16.5999999999972	-44.1902482913233\\
16.7999999999971	-44.5922617256209\\
16.9999999999971	-44.9942751598721\\
17.1999999999971	-45.3962885940905\\
17.399999999997	-45.7983020282859\\
17.599999999997	-46.200315462465\\
17.7999999999969	-46.6023288966325\\
17.9999999999969	-47.004342330792\\
18.1999999999969	-47.4063557649457\\
18.3999999999968	-47.8083691990954\\
18.5999999999968	-48.2103826332422\\
18.7999999999967	-48.6123960673871\\
18.9999999999967	-49.0144095015305\\
19.1999999999967	-49.4164229356729\\
19.3999999999966	-49.8184363698146\\
19.5999999999966	-50.2204498039558\\
19.7999999999965	-50.6224632380966\\
19.9999999999965	-51.0244766722372\\
};
\addlegendentry{$\log(\gggd/\gggd(0))$}

\end{axis}
\end{tikzpicture}%
\caption{Normalized linearized entropy $\fffd$ and Fisher information $\gggd$ in logscale}
\end{subfigure}
\begin{subfigure}[t]{\linewidth}
  \centering   
 \setlength\fwidth{14cm}  
\setlength\fheight{6cm}
  \input{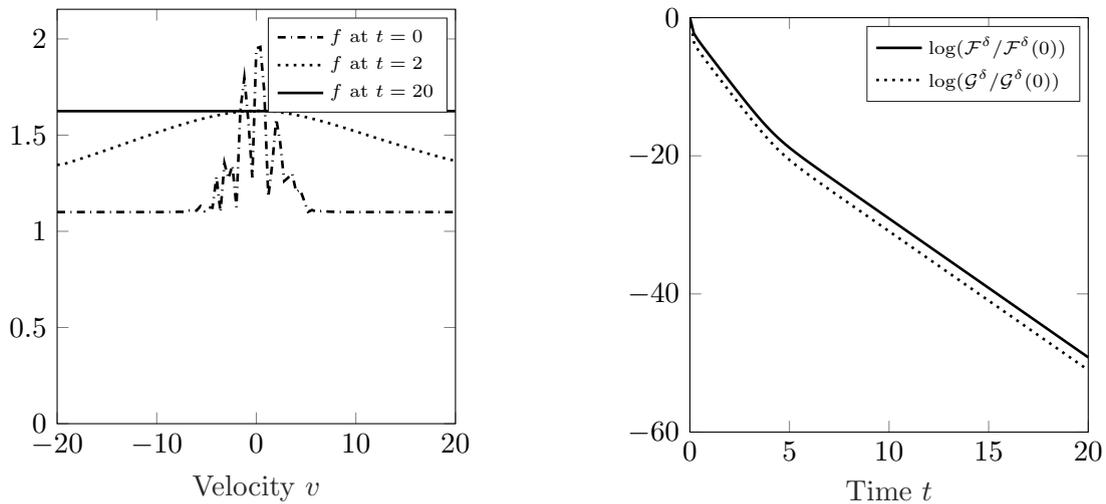}
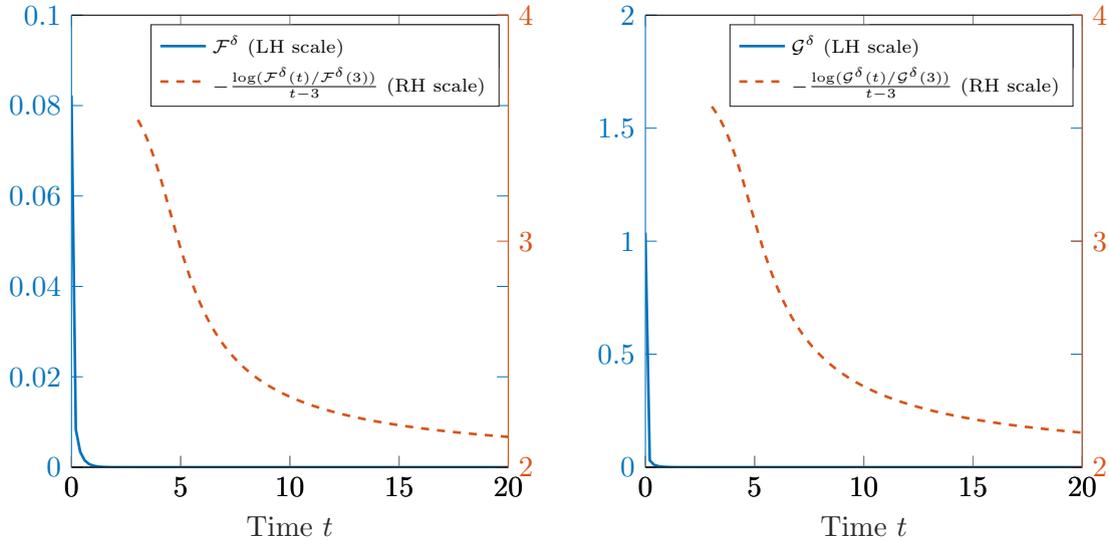
  \caption{Evolution of the Linearized Entropy $\fffd$, {\em ie} the square of the
    $\Ldeuxmudvh$-norm of $f$ (left) and of the Fisher information $\gggd$
   defined in \eqref{eq:mfi} (right). In each plot, the left-hand scale (plain line) is the linear scale and the
    right-hand scale (dashed line) is the "-log/t" scale that shows the
    numerical rate of convergence in long time. } 
\end{subfigure} 
\caption{Random function as the initial datum in the homogeneous case} 
 \label{fig:hom2}     
\end{figure}
This subsection is devoted to the
numerical results obtained through the explicit discretization \eqref{eq:DHFPfb} of
 \eqref{eq:HFPfb}.
 
The quantities of interest here are $\fffd$ and $\gggd$, defined in
\eqref{eq:deffgh}. According to Theorem \ref{thm:mainexplhom}, they are expected
to decrease geometrically fast. The tests that are presented here aim at
illustrating this fact in two cases: 
\begin{itemize}
\item the initial datum is a step function (see Figure \ref{fig:hom1}-(A)). The
  logarithms of the entropy $\fffd$ and of the Fisher information $\gggd$
  decrease linearly fast (see Figure \ref{fig:hom1}-(B)), with a rate that is
  close to $2$, as can be seen in Figures \ref{fig:hom1}-(C). The exponential
  decrease is consistent with Theorem \ref{thm:mainexplhom}, and the rate being
  close to $2$ is consistent with Theorem
  \ref{thm:exponentialtrendtoequilibriumb} for $\fffd$, and shows the bound to
  be optimal, and better than expected for $\gggd$.
\item the initial datum is a random function (see Figure \ref{fig:hom2}-(A)) The
  logarithms of the entropy $\fffd$ and of the Fisher information $\gggd$
  decrease linearly fast (see Figure \ref{fig:hom2}-(B)), with a rate that is
  close to $2$, as can be seen in Figures \ref{fig:hom2}-(C). Again, the exponential
  decrease is consistent with Theorem \ref{thm:mainexplhom}, and the rate being
  going to $2$ is consistent with Theorem
  \ref{thm:exponentialtrendtoequilibriumb} for $\fffd$ and $\gggd$.
\end{itemize}
Comparing the two previous test cases, we get a hint that there is a very fast
regularizing effect in short time, as noted in \cite{PZ16}. The second
initial datum is way less smooth that the first one and the range of the decrease rate 
is a lot larger in the second case. A perspective of our work would be to
investigate the change of slope in Figure \ref{fig:hom2}-(B).

\section{The inhomogeneous equation on bounded velocity domains}
\label{sec:eqinhomoboundedvelocity}

This section is devoted to the analysis of the inhomogeneous
Fokker-Planck equation on bounded velocity domains, in the fully discretized
setting, meaning discretized in velocity, in space and in time.
The main result is the exponential convergence to equilibrium
of numerical solutions stated in Theorem \ref{thm:decrexpeulerexpldiscr}.
We first recall briefly in Section \ref{sec:inhomcb} the statements
for the continuous equation set on a bounded velocity domain.
Next, we study in Section \ref{sec:inhomobounded}
a full discretization by an explicit Euler scheme in time,
by an extension of the operators $\Dv$ and $\Dvs$ introduced in Section
\ref{sec:homobounded} in velocity to this inhomogeneous case,
and a space discretization operator $\Dx$ similar to the one introduced
in the unbounded velocity inhomogeneous case in Section \ref{sec:inhomsd}.
In this context, we prove our main result :
Theorem \ref{thm:decrexpeulerexpldiscr}.
We conclude with numerical simulations carried out using
this numerical scheme.

\subsection{The fully continuous analysis}\label{sec:inhomcb}
In order to prepare the fully discrete inhomogeneous case
in the next subsection,
we briefly show how to extend the results of Section \ref{sec:inhomc}
for the inhomogeneous equation on an {\it unbounded} velocity domain
to the case of a {\it bounded} velocity domain.

In this bounded-velocity setting, we stick to the notations introduced
in Section \ref{sec:homoboundedcontinuous} for the homogeneous case.
In particular the velocity domain is
$I = (-\vmax,\vmax)$ for some $\vmax>0$.
We propose a suitable functional framework for the following
inhomogeneous Fokker--Planck equation with unknown
$F(t,x,v)$ where $(t,x,v) \in \R^+ \times\T\times I$
\begin{equation}\label{eq:IHFPFbinhomo}
  {\D_t} F+v{\D_x}F-{\D_v} ({\D_v}+v)F=0, \qquad F|_{t=0} = F^0, \qquad (({\D_v}+v)F)(\cdot,\cdot, \pm\vmax)=0.
\end{equation}
The initial datum $F^0$ is a non-negative function of $L^1(\T\times I,\dx\dv)$
with $\int_{\T\times I} F^0(x,v)\dx \dv=1$.
The Maxwellian function
\begin{equation*}
  \mu(x,v)=\dfrac{\exp^{-v^2/2}}{\displaystyle\int_{I}\exp^{-w^2/2}\dw},
\end{equation*}
is a continuous equilibrium of \eqref{eq:IHFPFbinhomo},
normalized  in $L^1(\T \times I,\dx \dv)$.
As we did for the unbounded velocity domain case in Section \ref{sec:inhomc},
we pose $F=\mu+\mu f$,
and the rescaled density $f$ solves
\begin{equation}
  \label{eq:IHFPfb}
  {\D_t} f + v{\D_x} f+(- {\D_v}+v) {\D_v} f = 0 , \qquad f|_{t=0} = f^0, \qquad {\D_v} f(\cdot,\cdot, \pm\vmax)=0.
\end{equation}
We introduce the corresponding
functional space $\Ldeuxmudvdxb$ and its subspace
$$
 \Hunmudvdxb \defegal \set{ g \in \Ldeuxmudvdxb, \ {\D_v} g \in \Ldeuxmudvdxb}.
$$
For $g\in L^1(\T \times I, \mu\dx \dv)$,
we denote its $(x,v)$-mean by $\seq{g} = \iint_{\T\times I} g(v) \mudv\dx$.
From now on, the norms and scalar products without subscript are taken
in $\Ldeuxmudvdxb$.
In these spaces, we have again a Poincar\'e inequality (see Lemma
\ref{lem:fullPoincarecontinub} below).
The proof of that inequality follows exactly the lines
of the one for the continuous, inhomogeneous, unbounded-velocity case presented
in Lemma \ref{lem:fullPoincarecontinu}
(but using the homogeneous Poincar\'e inequality on bounded velocity domain
of Lemma \ref{lem:Poincarecontinub} as a tool,
instead of the homogeneous Poincar\'e inequality on unbounded velocity domain
(Lemma \ref{lem:Poincarecontinu})):

\begin{lem}[Inhomogeneous Poincar\'e inequality on bounded velocity domains]
\label{lem:fullPoincarecontinub}
For all $g \in \Hunmudvdx$, we have
 \begin{equation*}
  \norm{g-\seq{g}}^2  \leq  \norm{{\D_v} g}^2 + \norm{{\D_x} g}^2.
\end{equation*}
\end{lem}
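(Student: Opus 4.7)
The plan is to mimic almost verbatim the proof of the unbounded-velocity inhomogeneous Poincar\'e inequality (Lemma \ref{lem:fullPoincarecontinu}), replacing the only nontrivial ingredient specific to the velocity domain, namely the homogeneous Poincar\'e inequality on $\R$ (Lemma \ref{lem:Poincarecontinu}), by its bounded-velocity analogue (Lemma \ref{lem:Poincarecontinub}). As usual, by replacing $g$ with $g-\seq{g}$, I may assume $\seq{g}=0$. I introduce the macroscopic density $\rho: x \mapsto \int_I g(x,v)\mu(v)\dv$, which belongs to $\Ldeuxdx$ and, by Fubini, satisfies $\int_\T \rho\, \dx=\seq{g}=0$.

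Next, I decompose $g=\rho\otimes 1 + (g-\rho\otimes 1)$ in $\Ldeuxmudvdxb$. Since $\mu\dv$ is a probability measure on $I$, the map $\phi\mapsto\phi\otimes 1$ is an isometric embedding $\Ldeuxdx\hookrightarrow\Ldeuxmudvdxb$, and $\rho\otimes 1$ is precisely the orthogonal projection of $g$ onto the subspace of functions depending on $x$ only; likewise $({\D_x}\rho)\otimes 1$ is the orthogonal projection of ${\D_x} g$. The Pythagorean identity therefore gives
\begin{equation*}
\norm{g}^2 = \norm{\rho}_\Ldeuxdx^2 + \norm{g-\rho\otimes 1}^2, \qquad \norm{{\D_x}\rho}_\Ldeuxdx^2\leq \norm{{\D_x} g}^2.
\end{equation*}

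For the ``fluctuation'' term, I apply Lemma \ref{lem:Poincarecontinub} in velocity at each fixed $x\in\T$: since $\int_I (g(x,v)-\rho(x))\mu(v)\dv=0$, it yields $\int_I (g(x,v)-\rho(x))^2\mu\dv \leq \int_I |{\D_v} g(x,v)|^2\mu\dv$, and integrating in $x$ (Fubini) gives $\norm{g-\rho\otimes 1}^2 \leq \norm{{\D_v} g}^2$. For the macroscopic term, the standard Poincar\'e inequality on the torus applied to the zero-mean function $\rho$ gives $\norm{\rho}_\Ldeuxdx^2\leq \norm{{\D_x}\rho}_\Ldeuxdx^2\leq \norm{{\D_x} g}^2$. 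Summing the two bounds completes the proof. There is no real obstacle: the key point is that the replacement of $\R$ by the bounded interval $I$ only affects the homogeneous velocity-Poincar\'e step, which has already been handled in Lemma \ref{lem:Poincarecontinub}; the orthogonal-projection/tensorization argument in $x$ is unchanged because $\mu\dv$ remains a probability measure on $I$.
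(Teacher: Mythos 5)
Your proof is correct and follows exactly the route the paper indicates for Lemma~\ref{lem:fullPoincarecontinub}: tensorize via the macroscopic density $\rho$ and the orthogonal projection onto $x$-dependent functions, use the standard Poincar\'e inequality on $\T$ for $\rho$, and replace the homogeneous velocity Poincar\'e inequality on $\R$ (Lemma~\ref{lem:Poincarecontinu}) by its bounded-domain counterpart (Lemma~\ref{lem:Poincarecontinub}) applied at each fixed $x$. You have simply written out in full the argument the paper only sketches by reference to Lemma~\ref{lem:fullPoincarecontinu}.
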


In order to state the main result concerning the convergence to the equilibrium
for the solutions of Equation \eqref{eq:IHFPfb} in Theorem \ref{thm:decrexpb},
we introduce a little more functional framework.
We  consider the operator $P = v{\D_x} + (- {\D_v}+v) {\D_v}$ with domain
$$
D(P) = \set{ g \in \Ldeuxmudvdxb,
\ (v{\D_x} + (- {\D_v}+v) {\D_v} )g \in \Ldeuxmudvdxb, \ {\D_v} g (\cdot,\pm \vmax) =0},
$$
which corresponds to the evolution operator in \eqref{eq:IHFPfb} with Neumann
conditions in velocity.  Note that constant functions are in $D(P)$.  Equation
\eqref{eq:IHFPfb} reads then ${\D_t} f+ P f = 0$ with initial condition
$f(0,\cdot,\cdot)=f^0$.

The non-negativity of the operator $P$
is straightforward since $v{\D_x}$ is skew-adjoint in $\Ldeuxmudvdxb$.
The maximal accretivity of this operator in $\Ldeuxmudvdxb$ or $\Hunmudvdxb$
is not so easy and we refer for example to \cite{HelN04}.
As in the unbounded velocity case, using the Hille--Yosida Theorem,
this implies that for an initial datum $f^0 \in \Ldeuxmudvdxb$
(resp. $\Hunmudvdxb$) there exists a unique solution in
$\ccc^0( \R^+, \Ldeuxmudvdxb)$ (resp. $\ccc^0(\R^+, \Hunmudvdxb$).
Moreover, for if $f^0 \in D(P)$
(resp. $D_\Hunmudvdxb(P)$), there exists a unique solution in
$\ccc^1( \R^+, \Ldeuxmudvdxb)$ (resp. $\ccc^1(\R^+, \Hunmudvdxb$).

As a norm in $\Hunmudvdx$ we choose
the standard, the square of which is defined for $g\in \Hunmudvdxb$ by
$$
\norm{g}_\Hunmudvdxb^2 = \norm{g}^2 + \norm{{\D_v} g}^2
+ \norm{{\D_x} g}^2.
$$

As in the unbounded velocity case for Section \ref{sec:eqinhomo},
we shall define a modified entropy
adapted to the $\Hunmudvdxb$ framework.
For $C>D>E>1$ to be precised later, it is
defined for $g\in \Hunmudvdxb$ by
\begin{equation*}
  \hhh(g) =  C\norm{g}^2+D\norm{{\D_v} g}^2+E\seq{{\D_v} g,{\D_x} g}+\norm{{\D_x}g}^2.
\end{equation*}

Following exactly the proof of  Lemma \ref{lem:equiv} we again check that
\begin{lem}
  If $E^2<D$ then for all $g\in \Hunmudvdxb$,
  \begin{equation*}
    \dfrac{1}{2}\norm{g}_{\Hunmudvdxb}^2\leq\hhh(g)\leq 2C\norm{g}_{\Hunmudvdxb}^2.
  \end{equation*}
\end{lem}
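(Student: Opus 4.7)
The plan is to mimic verbatim the proof of Lemma \ref{lem:equiv}, since the desired inequalities are pointwise-type bounds on the quadratic form $\hhh$ that only use the Cauchy--Schwarz--Young inequality on the cross term $E\seq{{\D_v} g, {\D_x} g}$; the fact that the velocity domain is now bounded plays absolutely no role, because the norms $\norm{\cdot}$ and $\norm{\cdot}_{\Hunmudvdxb}$ already account for the change of domain.

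First, I would apply the standard Cauchy--Schwarz--Young inequality to the cross term to obtain, for any $g \in \Hunmudvdxb$,
\begin{equation*}
2\abs{E\seq{{\D_v} g,{\D_x} g}}\leq E^2\norm{{\D_v} g}^2+\norm{{\D_x} g}^2.
\end{equation*}
Then I would insert this estimate in the definition of $\hhh(g)$ to produce a two-sided bound of the form
\begin{equation*}
C\norm{g}^2 + (D-E^2/2)\norm{{\D_v} g}^2 + \frac{1}{2}\norm{{\D_x} g}^2
\leq \hhh(g) \leq C\norm{g}^2 + (D+E^2/2)\norm{{\D_v} g}^2 + \frac{3}{2}\norm{{\D_x} g}^2.
\end{equation*}

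Next, using the assumption $E^2 < D$ together with the ordering $1 < E < D < C$ inherited from the earlier section, I would observe that $D - E^2/2 > D/2 > 1/2$, that $C \geq 1/2$, and that $1/2 \geq 1/2$, which yields the lower bound $\hhh(g) \geq \frac{1}{2}\norm{g}_{\Hunmudvdxb}^2$. For the upper bound, I would note that $D + E^2/2 \leq D + D/2 = 3D/2 \leq 3C/2 \leq 2C$ and that $3/2 \leq 3C/2 \leq 2C$ and $C \leq 2C$, giving $\hhh(g) \leq 2C\norm{g}_{\Hunmudvdxb}^2$.

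There is no real obstacle here: the entire argument is algebraic manipulation of a quadratic form, and the boundedness of the velocity domain enters only through the underlying $L^2$-space, not through any boundary integration. The only minor care needed is to make the hidden assumption $1 < E < D < C$ (carried over from the unbounded-velocity analysis of Section \ref{sec:inhomc}) explicit in the application so that all the chained inequalities above are justified.
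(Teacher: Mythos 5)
Your proposal is correct and is exactly the paper's own argument: the paper proves this lemma simply by noting that the proof of Lemma \ref{lem:equiv} carries over verbatim, which is precisely the Cauchy--Schwarz--Young estimate on the cross term followed by the chain of inequalities you write (using $C>D>E>1$ and $E^2<D$). Your remark that the boundedness of the velocity domain is invisible at this level, and that the ordering $C>D>E>1$ should be made explicit, matches the paper's treatment.
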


The main result is then the following theorem, the proof of which
is exactly the same as that of Theorem \ref{thm:decrexp}

\begin{thm}\label{thm:decrexpb}
  Assume that $C>D>E>1$ satisfy $E^2<D$ and $(2D+E)^2<2C$.
Let $ f^0 \in \Hunmudvdxb$ such that $\seq{f^0} = 0$ and let $f$ be the solution
in $\ccc^0(\R^+, \Hunmudvdxb)$ of Equation \eqref{eq:IHFPfb}.
Then for all $t\geq 0$,
  \begin{equation*}
     \hhh(f(t))\leq \hhh(f^0)\exp^{-2\kappa t}.
  \end{equation*}
  with $2\kappa = \frac{E}{8C}$.
\end{thm}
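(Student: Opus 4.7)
The plan is to follow verbatim the strategy used for Theorem \ref{thm:decrexp}, with the only new ingredient being the careful handling of boundary terms in velocity arising from integration by parts. By density of $D_{\Hunmudvdxb}(P)$ in $\Hunmudvdxb$ and the fact that $-P$ generates a bounded semigroup, I would first reduce to initial data in $D_{\Hunmudvdxb}(P)$, so that all the computations below make sense classically, and then conclude for general $f^0 \in \Hunmudvdxb$ by a density argument.

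With $X_0 = v{\D_x}$ and ${\D_t} f = -X_0 f - (-{\D_v}+v){\D_v} f$, I would differentiate the four terms of $\hhh(f(t))$ in the very same way as in the proof of Theorem \ref{thm:decrexp}. The purely algebraic ingredients—skew-adjointness of $X_0$ (which follows from integration by parts in $x$ on $\T$, with no boundary), the commutation relations \eqref{eq:commrel}, and the fact that ${\D_x}$ commutes with the full operator—carry over unchanged. The only non-algebraic step is the identity $({\D_v})^\star = -{\D_v}+v$ in $\Ldeuxmudvdxb$, which now reads
\begin{equation*}
\seq{{\D_v} g,h} = \seq{g,(-{\D_v}+v)h} + \iint_\T \bigl[\,g\,h\,\mu\,\bigr]_{v=-\vmax}^{v=+\vmax}\,\dx.
\end{equation*}
The Neumann condition $({\D_v}+v)F|_{\pm\vmax}=0$ translates, for the rescaled unknown $f=(F-\mu)/\mu$, into ${\D_v} f(\cdot,\cdot,\pm\vmax)=0$; this condition is preserved by the semigroup (it is encoded in $D(P)$), and is inherited by ${\D_x} f$ since ${\D_x}$ commutes with the boundary operator. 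Inspecting each of the four entropy computations in the proof of Theorem \ref{thm:decrexp}, every boundary contribution that would appear is either a multiple of ${\D_v} f(\pm\vmax)$ or of $({\D_v}+v)f(\pm\vmax)$, and hence vanishes. This is the only genuinely new point with respect to the unbounded case, and it is where I expect to spend most of the care.

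Once this is cleared, the same computation yields exactly the relation \eqref{eq:decrentropycontinuenonborne}, namely
\begin{equation*}
\ddt \hhh(f) = -2C\norm{{\D_v} f}^2 -2D\norm{(-{\D_v}+v){\D_v} f}^2 -E\norm{{\D_x} f}^2 -2\norm{{\D_x}{\D_v} f}^2 -(2D+E)\seq{{\D_x} f,{\D_v} f} +2E\seq{(-{\D_v}+v){\D_v} f,{\D_x}{\D_v} f}.
\end{equation*}
I would then apply the Cauchy--Schwarz--Young inequalities exactly as in the unbounded case to absorb the two sign-indefinite cross terms, using the assumptions $(2D+E)^2<2C$ and $E^2<D$. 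This gives
$$\ddt \hhh(f)\leq -\tfrac{E}{2}\bigl(\norm{{\D_v} f}^2+\norm{{\D_x} f}^2\bigr).$$

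The final step replaces the unbounded Poincar\'e inequality used in Theorem \ref{thm:decrexp} by its bounded-domain counterpart, Lemma \ref{lem:fullPoincarecontinub}, whose statement and constant $1$ are identical. Together with the norm-equivalence lemma (valid as long as $E^2<D$, established by a Cauchy--Schwarz--Young argument on the cross term $E\seq{{\D_v} g,{\D_x} g}$), this yields
$$\ddt \hhh(f) \leq -\tfrac{E}{8C}\,\hhh(f),$$
and Gronwall's lemma delivers the claimed decay with rate $2\kappa = E/(8C)$. The whole argument is thus a transcription of Section \ref{sec:inhomc}, the sole verification being the vanishing of boundary terms, which is guaranteed by the Neumann condition built into the functional framework.
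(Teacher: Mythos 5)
Your route is the paper's own: the authors prove Theorem \ref{thm:decrexpb} simply by declaring its proof ``exactly the same'' as that of Theorem \ref{thm:decrexp}, with the bounded-domain Poincar\'e inequality of Lemma \ref{lem:fullPoincarecontinub} replacing Lemma \ref{lem:fullPoincarecontinu}. Your reduction to $D_{\Hunmudvdxb}(P)$ by density, the treatment of the first, second and fourth entropy terms, and the concluding Cauchy--Schwarz--Young / Poincar\'e / Gronwall chain all match the intended argument.

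The gap sits exactly where you said you would ``spend most of the care'', and your resolution of it is not correct: it is \emph{not} true that every boundary contribution is a multiple of ${\D_v} f(\pm\vmax)$ or of $({\D_v}+v)f(\pm\vmax)$. The boundary terms in $\ddt\norm{f}^2$, $\ddt\norm{{\D_v} f}^2$ and $\ddt\norm{{\D_x} f}^2$ do carry a factor ${\D_v} f$ or ${\D_v}{\D_x} f$ at $v=\pm\vmax$ and vanish. But in the cross term one must integrate by parts in $v$ in $-\seq{{\D_x} f,\,{\D_v}\bigl((-{\D_v}+v){\D_v} f\bigr)}$, and the resulting boundary contribution is
\begin{equation*}
-\int_\T \Bigl[(-{\D_v}+v){\D_v} f\cdot {\D_x} f\cdot\mu\Bigr]_{v=-\vmax}^{v=+\vmax}\dx
=\int_\T \Bigl[{\D_v}^2 f\cdot {\D_x} f\cdot\mu\Bigr]_{v=-\vmax}^{v=+\vmax}\dx,
\end{equation*}
the last equality using ${\D_v} f(\cdot,\cdot,\pm\vmax)=0$. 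Neither ${\D_v}^2 f$ nor ${\D_x} f$ is forced to vanish at $v=\pm\vmax$ by the Neumann condition, and rerouting the derivatives through the commutation relation $\adf{{\D_v},-{\D_v}+v}=1$ reproduces the same trace term; it has no sign and is not controlled by the interior dissipation terms, so the identity \eqref{eq:decrentropycontinuenonborne} acquires an extra uncontrolled term of size $E$ and the differential inequality does not close as written. (This is the classical difficulty of $H^1$ hypocoercivity in the presence of velocity boundaries; the paper's one-sentence proof does not address it either, whereas the fully discrete analogue in Section \ref{sec:eqinhomoboundedvelocity} sidesteps it because $\Dv g$ is \emph{defined} to vanish at the boundary indices.) To complete the proof you would need either an argument that this trace term vanishes or can be absorbed for the solutions considered, or a different entropy.
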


The following corollary is also similar to the one proposed after the proof
of Theorem \ref{thm:decrexp}.

\begin{cor}\label{cor:decrexpb}
  Let $C>D>E>1$ be chosen as in Theorem \ref{thm:decrexp},
and pose $\kappa = E/(16C)$.
Let $f^0\in \Hunmudvdxb$ and let $f$ be the solution in
$\ccc^0(\R^+, \Hunmudvdxb)$
of Equation \eqref{eq:IHFPfb}.
Then for all $t\geq 0$, we have
  \begin{equation*}
   \norm{f(t) - \seq{f^0}}_{\Hunmudvdxb}\leq 2\sqrt{C} \exp^{-\kappa t} \norm{f^0 - \seq{f^0}}_{\Hunmudvdxb}.
  \end{equation*}
\end{cor}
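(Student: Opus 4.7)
The plan is to reduce the statement to Theorem \ref{thm:decrexpb} (which applies only to zero-mean initial data and gives exponential decay of the modified entropy $\hhh$) together with the equivalence of $\sqrt{\hhh(\cdot)}$ and $\norm{\cdot}_{\Hunmudvdxb}$ established in the bounded-velocity analogue of Lemma \ref{lem:equiv}. The proof is completely parallel to the corollary following Theorem \ref{thm:decrexp} in the unbounded-velocity setting.

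First I would verify the mass-conservation reduction. Since $\mu$ does not depend on $x$, the constant function $1$ lies in the kernel of the operator $P = v{\D_x} + (-{\D_v}+v){\D_v}$ acting on $\Hunmudvdxb$, and by integrating \eqref{eq:IHFPfb} against $1$ in $\Ldeuxmudvdxb$ and using the skew-adjointness of $v{\D_x}$, the fact that $(-{\D_v}+v)$ is the formal adjoint of ${\D_v}$, and the Neumann condition at $v=\pm\vmax$, one obtains $\seq{f(t)} = \seq{f^0}$ for all $t\geq 0$. Therefore $g(t) \defegal f(t) - \seq{f^0}$ is again a semi-group solution of \eqref{eq:IHFPfb} (by linearity and the fact that constants are stationary), with initial datum $g^0 = f^0 - \seq{f^0}$ satisfying $\seq{g^0} = 0$.

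Next I would choose $C>D>E>1$ as prescribed in Theorem \ref{thm:decrexpb} (so $E^2<D$ and $(2D+E)^2<2C$), and apply that theorem to $g$, yielding
\begin{equation*}
\hhh(g(t)) \leq \hhh(g^0) \exp^{-2\kappa' t},\qquad 2\kappa' = \frac{E}{8C}.
\end{equation*}
Then I would invoke the equivalence
\begin{equation*}
\tfrac{1}{2}\norm{\cdot}_{\Hunmudvdxb}^2 \leq \hhh(\cdot) \leq 2C\,\norm{\cdot}_{\Hunmudvdxb}^2
\end{equation*}
to bound from above the left-hand side by $\tfrac12\norm{g(t)}_{\Hunmudvdxb}^2$ and the right-hand side by $2C\norm{g^0}_{\Hunmudvdxb}^2$. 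Combining the two gives $\norm{g(t)}_{\Hunmudvdxb}^2 \leq 4C\exp^{-2\kappa' t}\norm{g^0}_{\Hunmudvdxb}^2$, and taking a square root yields
\begin{equation*}
\norm{f(t)-\seq{f^0}}_{\Hunmudvdxb} \leq 2\sqrt{C}\exp^{-\kappa t}\norm{f^0-\seq{f^0}}_{\Hunmudvdxb},
\end{equation*}
with $\kappa = \kappa' = E/(16C)$, as claimed.

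No serious obstacle is expected: the entire argument is a routine repetition of the one that proved the corollary following Theorem \ref{thm:decrexp}, and the only bounded-velocity-specific ingredient (Lemma \ref{lem:fullPoincarecontinub}) is already incorporated into Theorem \ref{thm:decrexpb}. The only point requiring mild care is the conservation of mass, where the Neumann boundary conditions in velocity must be used to discard the boundary terms produced by integration by parts; this has already been used implicitly in the well-posedness discussion preceding Theorem \ref{thm:decrexpb}.
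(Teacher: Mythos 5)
Your proposal is correct and follows essentially the same route the paper intends: the paper simply refers to the corollary after Theorem \ref{thm:decrexp}, whose proof is exactly the chain $\tfrac12\norm{g(t)}^2_{\Hunmudvdxb}\leq\hhh(g(t))\leq \exp^{-2\kappa t}\hhh(g^0)\leq 2C\exp^{-2\kappa t}\norm{g^0}^2_{\Hunmudvdxb}$ you write, with the correct constant $\kappa=E/(16C)$. Your explicit reduction to zero mean via conservation of mass (using the Neumann condition to kill the boundary terms) is a welcome detail that the paper leaves implicit, since Theorem \ref{thm:decrexpb} is stated only for $\seq{f^0}=0$ while the corollary is not.
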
 

\subsection{The full discretization and proof of Theorem \ref{thm:eulerexplicite}}
\label{sec:inhomobounded}

As we did in the unbounded case, we want to discretize the velocity domain
$I = (-\vmax, \vmax)$ and the equation and boundary conditions of
\eqref{eq:IHFPFbinhomo}.

Concerning the discretization of the velocity variable,
we use the very same definitions introduced in Subsection \ref{subsec:homsdb}
in the homogeneous setting
for $\imax$, $\dvh$, the sets $\iii$ and $\iiis$,
the operators $\Dv$, $\Dvs$, $v$ and $\vs$,
the discretized Maxwellians $\muh$ and $\mus$
(see {\it e.g.} Definition \ref{def:dvb}).
For these operators, the space index $j$ plays the role of a parameter.

Concerning the discretization of the space periodic domain $\T$,
we pick from Section \ref{sec:inhomsd} the definitions and notations.
We denote $\dxh >0$ the (uniform) step of discretization of the torus $\T$ into $N$ intervals, and  denote $\jjj = \Z/ N\Z$ the finite set of indices of the discretization in $x \in \T$.
In what follows, the index $i \in \iii$ will always refer to the velocity variable and the index $j \in \jjj$ to the space variable.
In particular, for a  sequence $f = (f_{i,j})_{i\in \Z, j\in \jjj}$ the derivative-in-space $\Dx f$ is then defined by the following centered scheme
$$
\forall i \in \iii, j\in \jjj, \qquad (\Dx f)_{j,i} = \frac{f_{j+1,i} - f_{j-1,i}}{2\dxh}.
$$

Our goal is to introduce a {\it discrete}
functional framework that allows to conclude to qualitatively correct
asymptotic behaviour for the numerical schemes in Theorem
\ref{thm:decrexpeulerexpldiscr}, by mimicking the proofs
of the results recalled in Section \ref{sec:inhomcb} for the {\it continuous}
inhomogeneous equation on bounded velocity domain.
Before introducing the time-discretization,
we equip $\R^{\jjj\times\iii}$ with
the $\ell^1(\jjj\times\iii, \muh \dvh \dxh)$ norm and
we introduce adapted Hilbertian norms.

\begin{defn}
We  denote by $\Ldeuxmudvdxh$ the space $\R^{\jjj\times\iii}$
made of finite sequences $g$ and set
$$
\norm{ g}_{\Ldeuxmudvdxh }^2  \defegal  \dvh \dxh \sum_{j\in \jjj, i\in \iii} (g_{j,i})^2 \muh_i .
$$
This defines a squared Hilbertian norm,
and the related scalar product will be denoted by $\seq{ \cdot, \cdot}$.
For $g \in \Ldeuxmudvdxh$, we also define the mean
$$
\seq{g} \defegal  \dvh \dxh\sum_{j\in \jjj, i\in \iii} g_{j,i} \muh_i = \seq{g, 1}
,
$$
of $g$ (with respect to this weighted scalar product in both velocity and space).
We define the space $\Ldeuxmudvdxs$ to be $\R^{\jjj\times\iiis}$
endowed with the Hilbertian norm defined for $h\in \R^{\jjj\times\iiis}$ by
its square
$$
\norm{ h}_{\Ldeuxmudvdxs }^2  \defegal  \dvh \dxh \sum_{j\in \jjj, i\in \iiis} (h_{j,i})^2 \mus_i .
$$
The related scalar product will be denoted by $\seqs{ \cdot, \cdot}$.
Eventually  we define $\Hunmudvdxh$ to be the space $\Ldeuxmudvdxh=\R^{\jjj\times\iii}$ with the Hilbertian norm defined by its square for $g\in\R^{\iii\times\jjj}$ as
$$
\norm{g}^2_\Hunmudvdxh \defegal \norm{g}^2_\Ldeuxmudvdxh + \norm{\Dv g}^2_\Ldeuxmudvdxs +
\norm{\Dx g}^2_\Ldeuxmudvdxh.
$$
\end{defn}

We define the operator $\Pd$ involved in the discretized rescaled
Fokker--Planck equation by
\begin{equation*} 
 \Pd = \Xzerod + (-\Dvs+\vs)\Dv
 \end{equation*}
with $\Xzerod = v \Dx : \Ldeuxmudvdxh \hookrightarrow \Ldeuxmudvdxh$
defined by for $i\in \iii$ by
 $$
( \Xzerod g)_{j,i} = (v \Dx g)_{i,j} \textrm{ when } i\neq 0, \qquad ( \Xzerod g)_{j,0} =0.
 $$

\noindent
The discretized version of the rescaled equation \eqref{eq:IHFPFbinhomo}
is therefore the linear ODE set in
$\R^{\jjj\times\iii}$ that reads
\begin{equation} 
\label{eq:inhomocontinuoustimediscreteFPbounded}
{\D_t} f+ \Pd f = 0. 
\end{equation}
We now summarize the structural properties of
\ref{eq:inhomocontinuoustimediscreteFPbounded} and of the operator $\Pd$ in the
following Proposition.  From now on and for the rest of this subsection, we work
in $\Ldeuxmudvdxh$ and denote (when no ambiguity happens) the corresponding norm
$\norm{\cdot}$ without subscript. Similarly $\norms{\cdot}$ stands for the norm
in $\Ldeuxmudvdxs$.

\begin{prop} 
 We have
\begin{enumerate}
     \item
The operator $( - \Dvs +\vs) \Dv$
is self-adjoint and
the operator $\Xzerod$ is skew-adjoint
in $\Ldeuxmudvdxh$. Moreover, for all $g \in \Ldeuxmudvdxh$,
$h \in \Ldeuxmudvdxs $, we have
\begin{align}
 \seq{ ( - \Dvs +\vs)  h, g}  = \seq{h, \Dv g}_\sharp, \\
\label{eq:ofinb2}
\seq{\Pd g,g} = \seq{ ( - \Dvs +\vs) \Dv g, g} = \norm{\Dv g}_\sharp^2.
\end{align}
\item Constant functions are the only equilibrium states of equation
  \eqref{eq:inhomocontinuoustimediscreteFPbounded} and we have the conservation
  of mass property : for all $t\geq 0$, $\seq{f(t)} = \seq{f^0}$.
\end{enumerate}
\end{prop}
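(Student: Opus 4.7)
The plan is to reduce every assertion to material already developed in Section \ref{subsec:homsdb} (for the velocity operators) and in Section \ref{sec:inhomsd} (for the space operator $\Dx$), handling the space/velocity variables as parameters for one another whenever possible.

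For the first item, the adjunction identity $\seq{(-\Dvs+\vs)h,g}=\seq{h,\Dv g}_\sharp$ is the inhomogeneous counterpart of \eqref{eq:ofb}. Since the definitions of $\Dv$, $\Dvs$, $\vs$, $\muh$ and $\mus$ do not involve the space index $j$, I would apply the homogeneous identity \eqref{eq:ofb} fiberwise in $j$, multiply by $\dxh$ and sum over $j\in\jjj$; the boundary cancellations around the indices $\pm\imax$ (which come from $h_{\pm\imax}=0$ by definition of $\Ldeuxmudvdxs$) and around $i=0$ (which come from the special definition of $\Dvs$ at $0$) occur exactly as in the homogeneous proof. The second identity \eqref{eq:ofinb2} follows by taking $h=\Dv g$. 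For skew-adjointness of $\Xzerod$, I would note that $v$ acts only on the velocity index and $\Dx$ only on the space index, so the two commute; the centered scheme $\Dx$ is skew-adjoint with respect to the unweighted discrete scalar product in $j\in\jjj=\Z/N\Z$ (a standard Abel transform on a periodic grid), and multiplication by $v_i$ is self-adjoint for the weight $\muh_i$. The modification $(\Xzerod g)_{j,0}=0$ is harmless since $v_0=0$ already.

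For the second item, suppose $f$ is a stationary solution, that is $\Pd f=0$. Testing against $f$ and using \eqref{eq:ofinb2}, together with the skew-adjointness of $\Xzerod$, yields
\begin{equation*}
0=\seq{\Pd f,f}=\norm{\Dv f}_\sharp^2,
\end{equation*}
so $\Dv f=0$. By Definition \ref{def:dvb} this forces $f_{j,i}$ to be independent of $i$, that is $f_{j,i}=\rho_j$ for some $\rho\in\R^\jjj$. Then $\Xzerod f=0$ reduces to $v_i(\Dx\rho)_j=0$ for every $i\neq 0$ and every $j\in\jjj$; since $v_i\neq 0$ for $i\neq 0$, this gives $\Dx\rho\equiv 0$ and, using the standard discrete Poincaré inequality \eqref{eq:Poincarediscretxseulement} in $x$, $\rho$ is constant. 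Hence $f$ is a constant sequence.

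Mass conservation follows directly by testing against the constant function $1\in\Ldeuxmudvdxh$: using the skew-adjointness of $\Xzerod$ one has $\seq{\Xzerod f,1}=-\seq{f,\Xzerod 1}=0$, and using the adjunction of $(-\Dvs+\vs)$ and $\Dv$ together with $\Dv 1=0$ one has $\seq{(-\Dvs+\vs)\Dv f,1}=\seqs{\Dv f,\Dv 1}=0$, so $\tfrac{\mathrm{d}}{\mathrm{d}t}\seq{f}=-\seq{\Pd f,1}=0$. The main (purely computational) obstacle is the careful bookkeeping of boundary terms in the adjunction identity at the indices $\pm\imax$ and at the singular index $0$, but this was already carried out in detail in Section \ref{subsec:homsdb} and transfers to the present setting simply by adding $j$ as a parameter.
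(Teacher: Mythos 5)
Your proposal is correct and follows exactly the route the paper intends: the paper states this proposition without an explicit proof, relying on the fiberwise-in-$j$ transfer of the bounded-velocity homogeneous adjunction identity \eqref{eq:ofb} and on the equilibrium/mass arguments already written out for the unbounded semi-discrete inhomogeneous case, which is precisely what you carry out. Your bookkeeping of the boundary indices $\pm\imax$, the singular index $0$, and the observation that $v_0=0$ makes the convention $(\Xzerod g)_{j,0}=0$ harmless are all consistent with the paper's definitions.
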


We pick from Section \ref{sec:inhomsd} the definitions of the operators $S$,
$S^\sharp$ and $S^\flat$ as well as the results and embeddings given in Lemmas
\ref{lem:constcontS} and \ref{lem:constcontSsharp} with the velocity set of
index $\Z$ or $\Z^*$ there replaced here by $\iii$ or $\iiis$ respectively.
Note that the spaces $\Ldeuxmudvdxh$ and $\Ldeuxmudvdxs$ here are exactly
adapted to the inherent shift defining $S$, $S^\sharp$ and $S^\flat$.  Moreover,
it is clear that the commutations lemmas \ref{lem:triple}, \ref{lem:sb} and
\ref{lem:estimdelta} remain true thanks to our choice of indices $\iii$, $\iiis$
and the functional associated spaces of the current section.

We pick from the same section \ref{sec:inhomsd}
the definition of the following modified entropy
defined for $g\in \Hunmudvdxh$ by
\begin{equation}
\hhhd(g) =  C\norm{g}^2+D\norms{\Dv g}^2+E\seqs{\Dv g,S\Dx g}+\norm{\Dx g}^2,
\end{equation}
for well chosen $C>D>E>1$ to be defined later.
Lemma \ref{lem:equivd} remains true in the bounded-velocity discretized context
this section
and we have again with the same proof as there.

\begin{lem}
  If $2E^2<D$ then for all $g\in \Hunmudvdxh$,
  \begin{equation}
    \label{eq:equivdb}
    \dfrac{1}{2}\norm{g}_{\Hunmudvdxh}^2\leq\hhhd(g)\leq 2C\norm{g}_{\Hunmudvdxh}^2.
  \end{equation}
\end{lem}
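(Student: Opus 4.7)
The plan is to mimic the proof of Lemma \ref{lem:equivd} from the unbounded-velocity setting verbatim, since the only ingredients used there are algebraic (Cauchy--Schwarz--Young), together with the continuity estimate for the shift operator $S$ provided by Lemma \ref{lem:constcontS}. These ingredients carry over without change to the bounded-velocity setting once we agree that all the definitions of $\Ldeuxmudvdxh$, $\Ldeuxmudvdxs$, $\Dv$, $\Dx$, $\mus$, and $S$ are the bounded ones introduced earlier in this subsection. In particular, the fact that $S:\Ldeuxmudvdxh\to\Ldeuxmudvdxs$ still satisfies $\norms{Sh}^2\le 2\norm{h}^2$ (as noted after Lemma \ref{lem:constcontS}) is what makes the argument go through identically.

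The first step is to control the cross term. For any $g\in\Hunmudvdxh$ and any parameter $\lambda>0$, Cauchy--Schwarz--Young gives
\begin{equation*}
  2\abs{E\seqs{\Dv g,S\Dx g}}\le \lambda E^2\norms{\Dv g}^2+\frac{1}{\lambda}\norms{S\Dx g}^2.
\end{equation*}
Choosing $\lambda=2$ and invoking the continuity estimate $\norms{S\Dx g}^2\le 2\norm{\Dx g}^2$ (bounded-velocity analogue of Lemma \ref{lem:constcontS}) yields
\begin{equation*}
  2\abs{E\seqs{\Dv g,S\Dx g}}\le 2E^2\norms{\Dv g}^2+\norm{\Dx g}^2,
\end{equation*}
which is exactly the key inequality used in the unbounded case.

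Plugging this two-sided estimate into the definition of $\hhhd(g)$, I would then obtain the enclosure
\begin{equation*}
  C\norm{g}^2+(D-E^2)\norms{\Dv g}^2+\tfrac12\norm{\Dx g}^2 \le \hhhd(g)\le C\norm{g}^2+(D+E^2)\norms{\Dv g}^2+\tfrac{3}{2}\norm{\Dx g}^2.
\end{equation*}
The hypothesis $2E^2<D$ forces $D-E^2\ge D/2\ge 1/2$ and also $D+E^2\le 3D/2$; combined with the standing ordering $1<E<D<C$ this bounds the coefficients on the left from below by $1/2$ and on the right from above by $2C$, giving the claimed equivalence \eqref{eq:equivdb}.

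There is no real obstacle here: the argument is a routine transcription of Lemma \ref{lem:equivd}. The only thing worth checking is that the continuity constant of $S$ in the bounded-velocity framework is still $\le 2$ (so that the choice $\lambda=2$ in Young's inequality still absorbs the $\norms{S\Dx g}^2$ term into $\norm{\Dx g}^2$), and this is exactly what the authors recall just before stating the lemma when they note that Lemmas \ref{lem:constcontS}, \ref{lem:constcontSsharp}, \ref{lem:triple}, \ref{lem:sb} and \ref{lem:estimdelta} all remain valid with $\iii$ and $\iiis$ in place of $\Z$ and $\Z^*$. Hence the proof reduces to simply repeating the computation of Lemma \ref{lem:equivd} in the new functional spaces.
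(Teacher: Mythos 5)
Your proof is correct and is essentially identical to the paper's own argument: the authors simply state that Lemma \ref{lem:equivd} carries over "with the same proof as there", which is exactly the Cauchy--Schwarz--Young estimate with the continuity bound $\norms{S\Dx g}^2\leq 2\norm{\Dx g}^2$ that you reproduce. Your explicit check that the constant of $S$ is unchanged in the bounded-velocity framework is precisely the point the paper relies on.
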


\noindent
Provided that $2E^2<D$, the modified entropy $\hhhd$ defines
a Hilbertian norm on $\R^{\jjj\times\iii}$, associated with
the following polar form
\begin{equation*}
\phid(g,\gtilde)=C\seq{g,\gtilde}+D\seq{\Dv g,\Dv \gtilde}_\sharp
+ \frac{E}{2}\left(\seq{S\Dx g,\Dv \gtilde}_\sharp+\seq{\Dv g,S \Dx \gtilde}_\sharp\right)
+ \seq{\Dx g,\Dx \gtilde},
\end{equation*}
defined for $g$, $\gtilde \in \R^{\jjj\times\iii}$.
The Cauchy--Schwarz--Young inequality holds true
\begin{equation}
\label{eq:CSYbounded}
  |\phid(g,\gtilde)| \leq \sqrt{\hhhd(g)}\sqrt{\hhhd(\gtilde)}
\leq \frac{1}{2} \hhhd(g) + \frac{1}{2} \hhhd(\gtilde),
\end{equation}
for all $g,\tilde g$.
Moreover, the Poincar\'e inequality in space holds true as well.
First, in the form of \eqref{eq:Poincarediscretxseulement}
in the discretized space variable, and then, following
exactly the lines of the proof of Lemma \ref{lem:fullPoincarediscret},
in the form of the following Lemma.

\begin{lem}[Fully discrete inhomogeneous Poincar\'e inequality for bounded
velocity domains]
\label{lem:fullPoincarediscretb}
For all $g \in \Hunmudvdxh$, we have
 \begin{equation*}
   \norm{g-\seq{g}}^2_\Ldeuxmudvdxh  \leq  \norm{\Dv g}_\Ldeuxmudvdxs^2 + \norm{\Dx g}_\Ldeuxmudvdxh^2.
  \end{equation*}
\end{lem}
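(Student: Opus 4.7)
The plan is to follow the same tensorization argument as in the proof of Lemma \ref{lem:fullPoincarediscret}, simply replacing the index set $\Z$ by the finite set $\iii$ and invoking the bounded-velocity homogeneous discrete Poincaré inequality (Proposition \ref{prop:poindiscreteb}) instead of its unbounded counterpart (Proposition \ref{prop:poindiscrete}).

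First I would reduce to the case $\seq{g}=0$ by replacing $g$ with $g-\seq{g}$, noting that both sides of the inequality are invariant under addition of a constant. Then I would introduce the discrete macroscopic density $\rho\in\R^\jjj$ defined for all $j\in\jjj$ by $\rho_j = \dvh \sum_{i\in\iii} g_{j,i}\muh_i$, and decompose $g = (g-\rho\otimes 1) + \rho\otimes 1$ in $\Ldeuxmudvdxh$. Since $(j,i)\mapsto \rho_j$ is the orthogonal projection of $g$ onto the closed subspace $\set{(j,i)\mapsto \phi_j \ |\ \phi\in\Ldeuxdxh}$ (this uses the normalization $\dvh\sum_i \muh_i = 1$), the Pythagoras identity yields
\begin{equation*}
\norm{g}_\Ldeuxmudvdxh^2 = \norm{g-\rho\otimes 1}_\Ldeuxmudvdxh^2 + \norm{\rho}_\Ldeuxdxh^2,
\end{equation*}
and similarly $\norm{\Dx \rho}_\Ldeuxdxh^2 \leq \norm{\Dx g}_\Ldeuxmudvdxh^2$ since $\Dx$ acts only on the space index and commutes with the projection.

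Next, I would handle each of the two pieces separately. For the velocity piece $g-\rho\otimes 1$, for every fixed $j\in\jjj$, the $i$-slice has zero $\muh$-mean by definition of $\rho_j$, so Proposition \ref{prop:poindiscreteb} applied slice by slice (with the parameter $j$) together with a discrete Fubini summation yields
\begin{equation*}
\norm{g-\rho\otimes 1}_\Ldeuxmudvdxh^2 \leq \norm{\Dv g}_\Ldeuxmudvdxs^2,
\end{equation*}
using that $\Dv(\rho\otimes 1)=0$. For the space piece, the hypothesis $\seq{g}=0$ translates into $\dxh\sum_{j\in\jjj}\rho_j=0$, so the one-dimensional discrete Poincaré inequality on the torus \eqref{eq:Poincarediscretxseulement} gives $\norm{\rho}_\Ldeuxdxh^2 \leq \norm{\Dx\rho}_\Ldeuxdxh^2$.

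Combining these three estimates in the order: Pythagoras, then Poincaré in $v$ and Poincaré in $x$, then the projection bound $\norm{\Dx\rho}_\Ldeuxdxh^2 \leq \norm{\Dx g}^2$, concludes the proof. No real obstacle is expected since every ingredient has been established earlier in the paper; the only point requiring a bit of care is to check that the homogeneous Poincaré inequality of Proposition \ref{prop:poindiscreteb} can be applied with $j$ as a parameter and then summed over $j$ (with weight $\dxh$) to produce the norms $\norms{\cdot}$ and $\norm{\cdot}$ in the fully discrete spaces.
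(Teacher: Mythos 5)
Your proposal is correct and follows essentially the same route as the paper, which itself proves this lemma by referring to the tensorization argument of Lemma \ref{lem:fullPoincarediscret}, with Proposition \ref{prop:poindiscreteb} replacing Proposition \ref{prop:poindiscrete} and the spatial estimate \eqref{eq:Poincarediscretxseulement} handling the macroscopic density. All the ingredients you invoke (orthogonal projection onto functions of $j$ only, Pythagoras, slice-by-slice velocity Poincar\'e with discrete Fubini, and the discrete torus Poincar\'e inequality, valid since $N=\#\jjj$ is assumed odd) are exactly those used in the paper.
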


The discretization in time of the rescaled inhomogeneous
discretized Fokker--Planck equation
\eqref{eq:inhomocontinuoustimediscreteFPbounded}
that we consider is given by the following explicit scheme

\begin{defn}
We shall say that a sequence $f = (f^n)_{n \in \N} \in (\Ldeuxmudvdxh)^\N$
satisfies
the scaled fully discrete explicit inhomogeneous Fokker-Planck equation if
for some $\dth>0$ and all $n\in\N$,
\begin{equation}
  \label{eq:eulerexpldiscrinhomo}
  f^{n+1} = f^n - \dth (v \Dx f^{n}+(-\Dvs+\vs) \Dv f^{n}).
\end{equation}
\end{defn}

\noindent
As in all the previous cases,
we can check that constant sequences are the only equilibrium states
of this equation, and that the mass conservation property is satisfied:
for all $n\in\N$, $\seq{f^n} = \seq{f^0}$.

Before getting to the main result of this section in Theorem
\ref{thm:decrexpeulerexpldiscr}, we state the following Lemma, which
provides us with explicit bounds on the norms of the linear continuous
operators in the discrete equation \eqref{eq:eulerexpldiscrinhomo}.

\begin{lem}
\label{lem:CFLinhomo}
Let us define
\begin{equation}
\label{eq:defabc}
\quad a^2 = 4 \frac{1+ \dvh \vmax}{\dvh^2}, 
\qquad b^2 = 4 \frac{1+ \dvh \vmax}{\dxh^2}, 
\qquad c^2 = 4\frac{\vmax^2}{\dxh^2},
\end{equation}
and set
$$
\bcfl = \max \set{ 1, a^2,b^2,c^2}.
$$
Then we have for all $g \in \Ldeuxmudvdxh$ and $h \in \Ldeuxmudvdxs$
\begin{equation} \label{eq:estimnorms}
\begin{split}
  \norms{\Dv g} \leq a \norm{g}, \qquad \norms{S \Dx g} & \leq b \norm{g},
  \qquad \norm{ \Dx g} \leq b \norm{g}, \\
  \norm{\Xzerod g} \leq c \norm{g}, \qquad & \norms{\Xzerod h} \leq c \norms{h}.
\end{split}
\end{equation}
\end{lem}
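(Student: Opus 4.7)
The plan is to derive each of the five inequalities in \eqref{eq:estimnorms} from a direct pointwise estimate followed by a summation against the appropriate weight, recycling as much as possible arguments already established in the paper.

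First, for the velocity-derivative bound $\norms{\Dv g}\leq a\norm{g}$, observe that the computation needed is exactly the one carried out in the proof of Lemma~\ref{lem:adjDvsplusvs}: the bound $\mus_i\leq (1+\dvh\vmax)\muh_{i\pm 1}$ coming from the definitions \eqref{eq:defMaxwellboundedvelocity}--\eqref{eq:defmusb}, combined with $|(\Dv g)_{j,i}|^{2}\leq 2\dvh^{-2}(|g_{j,i}|^{2}+|g_{j,i\mp 1}|^{2})$ (and the vanishing at $i=\pm \imax$), yields exactly \eqref{eq:bornedv} with an additional, harmless summation over $j\in\jjj$. So this estimate will be stated as an immediate consequence of \eqref{eq:bornedv}.

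For the two space-derivative estimates $\norm{\Dx g}\leq b\norm{g}$ and $\norms{S\Dx g}\leq b\norm{g}$, the key point is to apply the elementary pointwise inequality
\begin{equation*}
|(\Dx g)_{j,i}|^{2}=\Bigl|\tfrac{g_{j+1,i}-g_{j-1,i}}{2\dxh}\Bigr|^{2}
\leq \tfrac{1}{2\dxh^{2}}\bigl(|g_{j+1,i}|^{2}+|g_{j-1,i}|^{2}\bigr),
\end{equation*}
and then sum against $\muh_i\dvh\dxh$, using the cyclicity of $\jjj$ to re-index $j\pm 1\mapsto j$. This gives $\norm{\Dx g}\leq \dxh^{-1}\norm{g}\leq b\norm{g}$ (since $b^{2}\geq 4/\dxh^{2}$). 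The estimate on $\norms{S\Dx g}$ then follows by combining this with the continuity bound $\norms{Sh}\leq\sqrt{2}\norm{h}$ from Lemma~\ref{lem:constcontS}, and checking that $\sqrt{2}/\dxh\leq b$.

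Finally, for the two transport estimates on $\Xzerod=v\Dx$, it suffices to notice that $|v_i|\leq\vmax$ for all relevant indices (both $i\in\iii$, where $(\Xzerod g)_{j,0}=0$ anyway, and $i\in\iiis$, where $|v_{\pm\imax}|=\vmax$). Hence $|(\Xzerod g)_{j,i}|^{2}\leq \vmax^{2}|(\Dx g)_{j,i}|^{2}$, and summing against either $\muh_i$ or $\mus_i$ reduces the claim to the pointwise centered-difference bound repeated above, giving $\vmax/\dxh\leq c/2\leq c$ in both norms.

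I do not expect any real obstacle here: every inequality reduces to a standard Cauchy--Schwarz on the two-point finite difference, followed by a cyclic re-indexing in $j$ and the control of $\mus$ by $\muh$ already established in Section~\ref{subsec:homsdb}. The only care required is bookkeeping between the two functional settings $\Ldeuxmudvdxh$ and $\Ldeuxmudvdxs$ (respectively indexed by $\iii$ and $\iiis$) to ensure that $\Dx$ and $\Xzerod$ are interpreted on the correct side.
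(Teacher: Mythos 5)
Your proof is correct and follows essentially the same route as the paper's: the first estimate is quoted from \eqref{eq:bornedv}, the $\Dx$ bounds come from the elementary two-point inequality for the centered difference with a cyclic re-indexing in $j$, and the $\Xzerod$ bounds follow by factoring out $|v_i|\leq\vmax$. If anything, your handling of $\norms{S \Dx g}$ --- the sharp constant $1/\dxh$ for $\Dx$ combined with the $\sqrt{2}$ continuity bound of Lemma~\ref{lem:constcontS}, with the explicit check $\sqrt{2}/\dxh\leq b$ --- is more careful than the paper's one-line remark that this estimate ``follows exactly the same proof'' as the first one.
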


\begin{proof}
Let us first prove now \eqref{eq:estimnorms}.
We first note that the inequality is already proven in \eqref{eq:bornedv}.
The proof of the second one follows exactly the same proof.
For the third one, we directly have by triangular inequality that
$$
\norm{ \Dx g} \leq \frac{2}{\dxh} \norm{g} \leq b \norm{g}.
$$
For the inequalities involving $\Xzerod$, we just note that operator
multiplication by $v$ is bounded with bound $\vmax$
and use the bound for $\Dx$ above, which yields directly the result.

\end{proof}

We can now state the main Theorem of this subsection concerning the exponential
trend to equilibrium of solutions of Equation  \eqref{eq:eulerexpldiscrinhomo}.

\begin{thm}   \label{thm:decrexpeulerexpldiscr}
Assume $C>D>E>1$ and  $\dvh_0\in(0,1)$
are chosen as in Theorem \ref{thm:decrexpd}
and set
$$
\bcfl = \max \set{ 1, 4 \frac{1+ \dvh \vmax}{\dvh^2}, 4 \frac{1+ \dvh
    \vmax}{\dxh^2},4\frac{\vmax^2}{\dxh^2}}.
$$
For all $\dvh\in (0,\dvh_0)$, $\dxh>0$,
$f^0 \in \Hunmudvdxh$ such that $\seq{f^0} = 0$, and $\dth>0$
satisfying the CFL condition
\begin{equation} \label{eq:condcflinhom}
4 (C+ 4D + 9E +2) \dth \bcfl (1+\vmax^2)< 1,
\end{equation}
the solution $(f^n)_{n\in\N}$ of the discretized inhomogeneous Fokker--Planck
equation \eqref{eq:eulerexpldiscrinhomo} in $ (\Hunmudvdxh)^\N$
with initial data $f^0$ satisfies
\begin{equation*}
\forall n\in\N,\qquad  \hhhd(f^n) \leq (1-2\kappa \dth)^n \hhhd(f^0),
\end{equation*}
where $\kappa>0$ is such that
$4C\kappa = 1-4(C+ 4D + 9E +2) (1+\vmax^2)\dth \bcfl$.
\end{thm}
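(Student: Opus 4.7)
The plan is to follow the strategy used for the implicit scheme in Theorem \ref{thm:decrexpeulerimpldiscr}, while accounting for the extra $\dth^2$ correction typical of explicit schemes. Let $\phid$ denote the symmetric bilinear form associated with $\hhhd$, so that $\hhhd(g) = \phid(g,g)$. Rewriting \eqref{eq:eulerexpldiscrinhomo} as $f^{n+1} = f^n - \dth \Pd f^n$, bilinearity gives
\begin{equation*}
\hhhd(f^{n+1}) = \hhhd(f^n) - 2\dth\, \phid(f^n, \Pd f^n) + \dth^2 \hhhd(\Pd f^n).
\end{equation*}
The cross term here is precisely the entropy dissipation $\dddd(f^n)$ defined in \eqref{eq:defdcde}: indeed, differentiating $\hhhd(f(t)) = \phid(f,f)$ along the semi-discrete flow ${\D_t} f + \Pd f = 0$ yields $\ddt \hhhd(f) = -2\phid(f, \Pd f)$, which was identified with $-2\dddd(f)$ in the proof of Theorem \ref{thm:decrexpd}. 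Hence
\begin{equation*}
\hhhd(f^{n+1}) = \hhhd(f^n) - 2\dth\, \dddd(f^n) + \dth^2 \hhhd(\Pd f^n).
\end{equation*}

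The core technical step is to control $\hhhd(\Pd f^n)$ by $\bcfl(1+\vmax^2) \dddd(f^n)$, up to an explicit constant depending on $C$, $D$, $E$. Applying Lemma \ref{lem:equivd} to $\Pd f^n$ reduces this to bounding each component of $\norm{\Pd f^n}_\Hunmudvdxh^2$. Expanding $\Pd = \Xzerod + (-\Dvs+\vs)\Dv$ and using the commutation relations -- in particular $\Dv v = v\Dv + S$ from Lemma \ref{lem:constcontS} and the fact that $\Dx$ commutes with $v$, $\Dv$, $\Dvs$ and $S$ -- together with the continuity estimates provided by Lemma \ref{lem:CFLinhomo}, each of $\norm{\Pd f^n}^2$, $\norms{\Dv \Pd f^n}^2$ and $\norm{\Dx \Pd f^n}^2$ decomposes as a linear combination of the four coercive quantities $\norms{\Dv f^n}^2$, $\norm{\Dx f^n}^2$, $\norm{(-\Dvs+\vs)\Dv f^n}^2$ and $\norms{\Dv \Dx f^n}^2$ appearing in the lower bound \eqref{eq:pourplustard} of $\dddd$, multiplied by factors bounded by $\bcfl(1+\vmax^2)$. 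Any residual term of the form $\norm{f^n}^2$ (which may appear for instance via $\norms{S \Dx f^n}^2 \leq 2\norm{\Dx f^n}^2$) is absorbed using the full discrete Poincar\'e inequality of Lemma \ref{lem:fullPoincarediscretb}, valid since the mass-preservation property of the scheme ensures $\seq{f^n} = \seq{f^0} = 0$ for all $n$. Careful bookkeeping, in which the coefficients $C$, $D$, $E$ and $1$ in front of the four terms of $\hhhd(\Pd f^n)$ are multiplied respectively by factors of $1$, $4$, $9$ and $2$ through repeated applications of Young's inequality on the cross term $E\seqs{\Dv \Pd f^n, S \Dx \Pd f^n}$, yields a bound of the form $\hhhd(\Pd f^n) \leq 8(C+4D+9E+2)(1+\vmax^2)\, \bcfl\, \dddd(f^n)$.

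Combining the two preceding steps gives
\begin{equation*}
\hhhd(f^{n+1}) \leq \hhhd(f^n) - 2\dth \Bigl(1 - 4(C+4D+9E+2)(1+\vmax^2)\, \dth\, \bcfl\Bigr) \dddd(f^n),
\end{equation*}
which, by definition of $\kappa$ in the statement, reads $\hhhd(f^{n+1}) \leq \hhhd(f^n) - 8C\kappa \dth\, \dddd(f^n)$, with $\kappa>0$ thanks to the CFL condition \eqref{eq:condcflinhom}. The entropy--dissipation inequality of Lemma \ref{lem:dissdiss} holds verbatim in the present bounded-velocity framework (its proof uses only the Poincar\'e inequality of Lemma \ref{lem:fullPoincarediscretb} and the commutator lemmas, which all carry over), so that $\dddd(f^n) \geq \kappa_d \hhhd(f^n) = \hhhd(f^n)/(4C)$. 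Substituting yields $\hhhd(f^{n+1}) \leq (1 - 2\kappa \dth)\, \hhhd(f^n)$, and iterating concludes the proof.

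The main obstacle is precisely the quantitative tracking of constants in the inequality $\hhhd(\Pd f^n) \leq 8(C+4D+9E+2)(1+\vmax^2)\, \bcfl\, \dddd(f^n)$, so that the resulting CFL threshold matches the one stated in the theorem. The bound on $\norms{\Dv \Pd f^n}^2$ in particular involves three commutators ($v\Dv \Dx f^n$, $S\Dx f^n$ and $\Dv(-\Dvs+\vs)\Dv f^n$), each needing its own continuity constant from Lemma \ref{lem:CFLinhomo}; the cross term in $\hhhd$ generates additional mixed contributions that must be separated by Young's inequality to land in the four coercive terms of $\dddd$. No fundamentally new inequality is required beyond those already established in the preceding sections.
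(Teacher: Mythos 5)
Your proposal is correct and follows essentially the same route as the paper: the same modified entropy $\hhhd$, the same identification of the first-order term with the dissipation $\dddd(f^n)$ of \eqref{eq:defdcde}, the same control of the second-order remainder by $\bcfl(1+\vmax^2)$ times the dissipation via Lemma \ref{lem:CFLinhomo}, \eqref{eq:pourplustard} and the Poincar\'e inequality of Lemma \ref{lem:fullPoincarediscretb}, and the same final appeal to the adapted Lemma \ref{lem:dissdiss}. The only difference is organizational: you expand $\hhhd(f^n-\dth \Pd f^n)$ exactly as a quadratic form, which packages the paper's term-by-term derivation of \eqref{eq:bilanentropy} and renders its closing Cauchy--Schwarz--Young step unnecessary; and since the paper's own estimates give $\hhhd(\Pd f^n)\leq 4(C+4D+9E+2)(1+\vmax^2)\bcfl\,\dddd(f^n)$, the weaker bound with constant $8$ that your bookkeeping requires is certainly attainable.
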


\begin{proof}[of Theorem \ref{thm:decrexpeulerexpldiscr}]
  Fix $\dvh\in(0,\dvh_0)$, $\dxh>0$ and $\dth>0$ as in the hypotheses.  Let
  $f^0\in \Hunmudvdxh$ with zero mean.  Denote by $(f^n)_{n\in\N}$ the sequence
  in $\R^{\jjj\times\iii}$ provided by the explicit Euler scheme
  \eqref{eq:eulerexpldiscrinhomo} for which we recall that $n\in\N$,
  $\seq{f^n}=0$.  We fix $n\in\N$ and as in the proof of Theorem
  \ref{thm:decrexpeulerimpldiscr}, we compute the four terms appearing in the
  definition of $\hhhd(f^{n+1})$ before estimating their sum.  For this, we
  extensively use the computations done there and in the proof of Theorem
  \ref{thm:mainexplhom}.  Our method is the following : bound every term in
  $\hhhd(f^{n+1})$ by a sum of three terms of order $0$, $1$ and $2$ in $\dth$.
  Then, sum up the inequalities after multiplication by $C$, $D$, $E$, and $1$.
  Recognize $\dddd(f^n)$ in the sum of terms of order $1$, then transform the
  sum of the terms of order $2$ into a of order $1$ using the CFL condition
  \eqref{eq:condcflinhom} that can be integrated in the preceding term of order
  $1$ thanks to a version of \eqref{eq:pourplustard} adapted to this bounded
  velocity context. Eventually, conclude using the Cauchy--Schwarz--Young
  inequality \eqref{eq:CSYbounded}.

First, we compute the squared $\Ldeuxmudvdxh$-norm of $f^{n+1}$ using
relation \eqref{eq:eulerexpldiscrinhomo} twice. This yields

\begin{eqnarray}
\lefteqn{\norm{f^{n+1}}^2}\nonumber\\
& = & \seq{f^n,f^{n+1}} -\dth  \seq{\Pd f^n,f^{n+1}}\nonumber\\
 &  = & \seq{f^n,f^{n+1}} -\dth \seq{\Pd f^n,f^{n}}+\dth^2\seq{\Pd f^n,\Pd f^n}\nonumber\\
& = & \seq{f^n,f^{n+1}} - \dth\norm{\Dv f^n}^2_\sharp
+\dth^2\rrrd_1(f^n),
\label{eq:ineqbi}
 \end{eqnarray}
using \eqref{eq:ofinb2} for the term in $\dth$ and defining
\begin{equation*}
  \rrrd_1(f^n)=\norm{\Pd f^n}^2,
\end{equation*}
for the term in $\dth^2$.

For the second term in the definition of the discrete
entropy $\hhhd$, we compute the squared $\Ldeuxmudvdxs$-norm of $\Dv f^{n+1}$
using relation \eqref{eq:eulerexpldiscrinhomo} twice. This yields
\begin{equation} \label{eq:ineqbii}
\begin{split}
\lefteqn{\norm{\Dv f^{n+1}}^2_\sharp }\\
& =  \seq{\Dv f^n,\Dv f^{n+1}}_\sharp-
\dth\seq{\Dv v\Dx f^n,\Dv f^{n+1}}_\sharp - \dth\seq{\Dv
(-\Dvs+\vs)\Dv f^n,\Dv f^{n+1}}_\sharp\\
& =  \seq{\Dv f^n,\Dv f^{n+1}}_\sharp-\dth\seq{S\Dx f^n,\Dv f^{n+1}}_\sharp
-\dth\seq{v\Dv\Dx f^n,\Dv f^{n+1}}_\sharp\\
&  - \dth \seq{(-\Dvs+\vs)\Dv f^{n},(-\Dvs+\vs)\Dv f^{n+1}}\\
& =  \seq{\Dv f^n,\Dv f^{n+1}}_\sharp-\dth\seq{S\Dx f^n,\Dv f^{n}}_\sharp
+\dth^2 \seq{S\Dx f^n,\Dv \Pd f^{n}}_\sharp\\
&  -\dth\underbrace{\seq{v\Dx\Dv f^n,\Dv f^{n}}_\sharp}_{=0}
+\dth^2\seq{v\Dv\Dx f^n,\Dv \Pd f^{n}}_\sharp\\
&  - \dth \seq{(-\Dvs+\vs)\Dv f^{n},(-\Dvs+\vs)\Dv f^{n}}\\
&
+ \dth^2 \seq{(-\Dvs+\vs)\Dv f^{n},(-\Dvs+\vs)\Dv \Pd f^{n}}\\
& =  \seq{\Dv f^n,\Dv f^{n+1}}_\sharp
-\dth\left(
\seq{S\Dx f^n,\Dv f^{n}}_\sharp
+ \norm{(-\Dvs+\vs)\Dv f^n}^2
\right)+\dth^2 \rrrd_2(f^n),
\end{split}
\end{equation}
where we have set
\begin{eqnarray*}
  \rrrd_2(f^n) &= &
\seq{S\Dx f^n,\Dv \Pd f^{n}}_\sharp
+
\seq{v\Dv\Dx f^n,\Dv \Pd f^{n}}_\sharp\\
& & +
\seq{(-\Dvs+\vs)\Dv f^{n},(-\Dvs+\vs)\Dv \Pd f^{n}}.
\end{eqnarray*}

For the third term in $\hhhd(f^{n+1})$,
we take advantage of the computations carried out in Section \ref{sec:eqinhomo}
for the unbounded in velocity, inhomogeneous, semi-discretized
and implicit case.
In particular, we have as in \eqref{eq:estim3discr} the following relation
(with $f^n$ here instead of $f^{n+1}$ there in the right-hand side),
by using the definition \eqref{eq:eulerexpldiscrinhomo} of the explicit
Euler scheme twice
\begin{eqnarray*}
  \lefteqn{2\seq{S\Dx f^{n+1},\Dv f^{n+1}}_\sharp = }\\
 & &  \seq{S\Dx f^{n},\Dv f^{n+1}}_\sharp + \seq{S\Dx f^{n+1},\Dv f^{n}}_\sharp\\
& & -\dth\left(
\seq{S\Dx v \Dx f^{n},\Dv f^{n+1}}_\sharp + \seq{S\Dx f^{n+1},\Dv v \Dx f^{n}}_\sharp
\right)\\
& & -\dth\left(
\seq{S\Dx(-\Dvs+\vs)\Dv f^{n},\Dv f^{n+1}}_\sharp
+ \seq{S\Dx f^{n+1},\Dv(-\Dvs+\vs)\Dv f^{n}}
\right).
\end{eqnarray*}

\noindent
Using again Equation \eqref{eq:eulerexpldiscrinhomo} to replace $f^{n+1}$
in the terms in $\dth$ above, we get
\begin{equation} \label{eq:termdt}
\begin{split}
\lefteqn{  2\seq{S\Dx f^{n+1},\Dv f^{n+1}}_\sharp=}\\
 & \seq{S\Dx f^{n},\Dv f^{n+1}}_\sharp + \seq{S\Dx f^{n+1},\Dv f^{n}}_\sharp\\
&  -\dth\left(
\seq{S\Dx v \Dx f^{n},\Dv f^{n}}_\sharp + \seq{S\Dx f^{n},\Dv v \Dx f^{n}}_\sharp
\right)\\
&  -\dth\left(
\seq{S\Dx(-\Dvs+\vs)\Dv f^{n},\Dv f^{n}}_\sharp
+ \seq{S\Dx f^{n},\Dv(-\Dvs+\vs)\Dv f^{n}}_\sharp \right)\\
& + \dth^2 \rrrd(f^n),
\end{split}
\end{equation}
where $\rrrd_3(f^n) $ is given by
\begin{equation*}
\begin{split}
\rrrd_3(f^n) = &  \seq{S\Dx \Xzerod f^{n},\Dv (\Xzerod+ (-\Dvs+\vs)\Dv))  f^{n}}_\sharp
 \\
& + \seq{S\Dx (\Xzerod+ (-\Dvs+\vs)\Dv)) f^{n},\Dv \Xzerod f^{n}}_\sharp \\
& +
\seq{S\Dx(-\Dvs+\vs)\Dv f^{n},\Dv (\Xzerod+ (-\Dvs+\vs)\Dv)) f^{n}}_\sharp \\
& + \seq{S\Dx (\Xzerod+ (-\Dvs+\vs)\Dv)) f^{n},\Dv(-\Dvs+\vs)\Dv f^{n}}_\sharp.
\end{split}
\end{equation*}

\noindent
The two terms in $\dth$ in \eqref{eq:termdt} can be computed just as terms $(I)$
and $(II)$ in the proof of Theorem \ref{thm:decrexpd} (with $f$ there replaced by
$f^{n}$ here ) and we obtain
\begin{equation}\label{eq:ineqbiii}
\begin{split}
\lefteqn{2\seq{S\Dx f^{n+1},\Dv f^{n+1}}_\sharp =}\\
 & \seq{S\Dx f^{n},\Dv f^{n+1}}_\sharp + \seq{S\Dx f^{n+1},\Dv f^{n}}_\sharp\\
& -\dth\left(
\norm{S\Dx f^{n}}_\sharp^2 - \dvh\seq{S^b \Dx f^{n},\Dx \Dv f^{n}}_\sharp
\right)\\
& + 2 \dth
 \seq{(-\Dvs+v)\Dv f^{n},S^\sharp \Dx \Dv f^{n}}  \\
& -\dth\left( \seq{S\Dx f^{n},\Dv f^{n}}_\sharp+\seq{\sigma \Dx f^{n},\Dv f^{n}}_\sharp
\right)  + \dth^2 \rrrd_3(f^n),
\end{split}
\end{equation}
where we used adapted versions of Lemmas \ref{lem:triple} and \ref{lem:sb}.

Since $\Dx$ commutes with itself and with $(-\Dvs+\vs)\Dv$, the sequence
$(\Dx f^n)_{n\in\N}$ also solves the recursion relation
\eqref{eq:eulerexpldiscrinhomo}.  Adapting our the computation that led to
\eqref{eq:ineqbi} above, we infer that the last term in $\hhhd(f^{n+1})$ satisfies
\begin{eqnarray}
\lefteqn{\norm{\Dx f^{n+1}}^2}\nonumber\\
& \leq & \seq{\Dx f^n,\Dx f^{n+1}} - \dth\norm{\Dv \Dx f^n}^2_\sharp
+\dth^2\rrrd_1(\Dx f^n).
\label{eq:ineqbiiii}
 \end{eqnarray}

Summing up the four identities \eqref{eq:ineqbi}, \eqref{eq:ineqbii},
\eqref{eq:ineqbiii} and \eqref{eq:ineqbiiii}, multiplied
respectively by $C$, $D$, $E/2$ and $1$, we infer that
\begin{equation}
\label{eq:bilanentropy}
\begin{split}
  &  \hhhd(f^{n+1}) = \phid(f^n,f^{n+1})\\
  & -\dth\left[ C \norm{\Dv f^{n}}_\sharp^2 + D \seq{S\Dx f^{n},\Dv
      f^{n}}_\sharp + D \norm{(-\Dvs+v)\Dv f^{n}}^2
    +\frac{E}{2} \norm{S\Dx f^{n}}_\sharp^2 \right.\\
  & \qquad \left.  - \frac{E}{2} \dvh\seq{S^b \Dx f^{n},\Dx \Dv f^{n}}_\sharp
    - E \seq{(-\Dvs+v)\Dv f^{n},S^\sharp \Dx \Dv f^{n}}\right.\\
  & \qquad \left.  +\frac{E}{2} \seq{S\Dx f^{n},\Dv f^{n}}_\sharp
    +\frac{E}{2}\seq{\sigma \Dx f^{n},\Dv f^{n}}_\sharp +\norm{\Dx \Dv
      f^{n}}_\sharp^2
  \right] \\
  & \qquad + \dth^2 \left(C\rrrd_1(f^n)+ D\rrrd_2(f^n) + \frac{E}{2}\rrrd_3(f^n)
    +\rrrd_1(\Dx f^n)\right).
\end{split}
\end{equation}

We recognize here in square brackets in \eqref{eq:bilanentropy} the same term as
the one defining $\dddd(f)$ in \eqref{eq:defdcde} with $f^{n}$ here instead of $f$
there, and in our bounded velocity context.  It remains to show how to handle
the terms in $\dth^2$ in \eqref{eq:bilanentropy} using the CFL condition
\eqref{eq:condcflinhom}.  To do so, we set for all $g\in\Ldeuxmudvdxh$
\begin{equation*}
  M(g) = \norm{g}^2_\Hunmudvdxh + \norm{(-\Dvs+\vs)\Dv g}^2 + \norm{\Dv\Dx g}_\sharp^2.
\end{equation*}
Note that, in view of relation \eqref{eq:pourplustard} adapted to our bounded
velocity setting and of the Poincar\'e inequality of Lemma
\ref{lem:fullPoincarediscretb}, we have for all $g$ with zero mean
\begin{equation}
  \label{eq:propMD}
  M(g) \leq 2 \dddd(g).
\end{equation}
For the rest of the proof, we use the constants $a$, $b$ and $c$ defined
in \eqref{eq:defabc}
in Lemma \eqref{lem:CFLinhomo}.
For the term in \eqref{eq:ineqbi}, we have
\begin{equation}
  \label{eq:estimR1}
  \begin{split}
  |\rrrd_1(f^n)| & \leq  2 (\norm{\Xzerod f^n}^2+\norm{(-\Dvs+\vs)\Dv f^n}^2)\\
  & \leq  2 \left(c^2\norm{f^n}^2+\norm{(-\Dvs+\vs)\Dv f^n}^2\right)\\
  & \leq  2\bcfl \left(\norm{f^n}^2+\norm{(-\Dvs+\vs)\Dv f^n}^2\right) \\
 & \leq  2\bcfl M(f^n),
 \end{split}
\end{equation} 
since $\bcfl$ is greater than 1. For the term in $\dth^2$ in \eqref{eq:ineqbii},
we have first
\begin{eqnarray*}
  \lefteqn{\left| \seq{S\Dx f^n,\Dv \Pd f^{n}}_\sharp \right|}\\
& \leq & \frac12\left(
\norm{S\Dx f^n}_\sharp^2 + \norm{\Dv \Pd f^n}_\sharp^2
\right)\\
& \leq & \frac{b^2}{2}\norm{f^n}^2 + a^2 \left(\norm{v\Dx f^n}^2
+\norm{(-\Dvs+\vs)\Dv f^n}^2 \right)\\
& \leq & (a^2+b^2)(1+\vmax^2)
\left(\norm{f^n}^2 + \norm{\Dx f^n}^2 +\norm{(-\Dvs+\vs)\Dv f^n}^2 \right)\\
& \leq & 2\bcfl (1+\vmax^2) M(f^n).
\end{eqnarray*}
Second, we have
\begin{eqnarray*}
  \lefteqn{\left|\seq{v\Dv\Dx f^n,\Dv \Pd f^{n}}_\sharp\right|}\\
& \leq & \frac12\left(
\vmax^2 \norm{\Dv\Dx f^n}_\sharp^2 + a^2 \norm{(\Xzerod + (-\Dvs+\vs)\Dv) f^n}^2
\right)\\
& \leq & \frac{\vmax^2}{2} \norm{\Dv\Dx f^n}_\sharp^2
+ a^2 \left(\norm{v\Dx f^n}^2 + \norm{(-\Dvs+\vs)\Dv f^n}^2\right)\\
& \leq & (1+a^2)(1+\vmax^2) \left(
\norm{\Dv\Dx f^n}_\sharp^2 + \norm{\Dx f^n}^2 + \norm{(-\Dvs+\vs)\Dv f^n}^2
\right)\\
& \leq & 2\bcfl (1+\vmax^2) M(f^n).
\end{eqnarray*}
Third, we have
\begin{eqnarray*}
\lefteqn{\left|\seq{(-\Dvs+\vs)\Dv f^{n},(-\Dvs+\vs)\Dv \Pd f^{n}}\right|}\\
& \leq &
\norm{(-\Dvs+\vs)\Dv f^{n}} \left(\norm{(-\Dvs+\vs)\Dv \Xzerod f^{n}}
+ \norm{(-\Dvs+\vs)\Dv (-\Dvs+\vs)\Dv f^{n}}\right)\\
& \leq &
a^2
\norm{(-\Dvs+\vs)\Dv f^{n}} \left(
\norm{\Xzerod f^{n}} + \norm{(-\Dvs+\vs)\Dv f^{n}}
\right)\\
& \leq &
a^2
\left(
\vmax \norm{(-\Dvs+\vs)\Dv f^{n}}
\norm{\Dx f^{n}}
+ \norm{(-\Dvs+\vs)\Dv f^{n}}^2
\right)\\
& \leq &
2 (1+a^2)(1+\vmax^2) M(f^n)\\
 & \leq &
4 \bcfl (1+\vmax^2) M(f^n).
\end{eqnarray*}
In the end, we get
\begin{equation}
  \label{eq:estimR2}
  \left|\rrrd_2(f^n)\right| \leq 8 \bcfl (1+\vmax^2) M(f^n).
\end{equation}

\noindent
Let us get now to the third remainder term $\rrrd_3(f^n)$.
One has first
\begin{eqnarray*}
\lefteqn{\left|\seq{S\Dx \Xzerod f^n,\Dv (\Xzerod+ (-\Dvs+v)\Dv))  f^n}_\sharp\right|} \\
& = & \left| \seq{\Xzerod S\Dx f^n+ \dvh \Dx S^\flat \Dx f^n, \Xzerod \Dv f^n 
+ S \Dx  f^n + \Dv (-\Dvs+v)\Dv)  f^n}_\sharp\right|,
\end{eqnarray*}
where we used that $\adf{\Dv, \Xzerod} = S \Dx$ for the second term in the
scalar product and $S \Dx \Xzerod = \Xzerod S\Dx + \dvh \Dx S^\flat \Dx$ for the
first one. Noting that the operator norm of $S$ is equal to the one of $S^\flat$
we therefore get that
\begin{eqnarray*}
\lefteqn{\left|\seq{S\Dx \Xzerod f^n,\Dv (\Xzerod+ (-\Dvs+\vs)\Dv))  f^n}_\sharp\right|} \\
& \leq & \sep{ \norms{\Xzerod S\Dx f^n}+ \dvh \norms{\Dx S^\flat \Dx f^n}} 
\sep{\norms{\Xzerod \Dv f^n}  + \norms{S \Dx  f^n} + \norms{\Dv (-\Dvs+\vs)\Dv  f^n} } \\
& \leq & \sep{ c \norms{ S\Dx f^n}+ b \norms{ S \Dx f^n}} \sep{c \norms{ \Dv
         f^n}  
+ \norms{S \Dx  f^n} + a \norm{(-\Dvs+\vs)\Dv  f^n} } \\
& \leq &(c+b)(c+1+a) \sep{\norms{ \Dv f^n}^2  + \norms{S \Dx  f^n}^2 
+  \norm{(-\Dvs+\vs)\Dv  f^n}^2 }\\
& \leq & 12\bcfl M(f^n).
\end{eqnarray*}
Similarly, we get
\begin{eqnarray*}
\lefteqn{\left|\seq{S\Dx (\Xzerod+ (-\Dvs+v)\Dv)) f^n,\Dv \Xzerod f^n}_\sharp\right|} \\
& = & \left|\seq{\Xzerod S\Dx f^n+ \dvh \Dx S^\flat \Dx f^n + S\Dx (-\Dvs+v)\Dv
      f^n, 
\Xzerod \Dv f^n + S \Dx f^n} \right|\\
& \leq & \sep{ \norms{\Xzerod S\Dx f^n}+ \dvh \norms{S^\flat \Dx \Dx f^n} + 
\norms{S\Dx (-\Dvs+v)\Dv f^n}}\sep{ \norms{\Xzerod \Dv f^n} + \norms{S \Dx f^n}} \\
& \leq & \sep{ c\norms{ S\Dx f^n}+ b \dvh \norms{ S  \Dx f^n} 
+ b \norms{ (-\Dvs+v)\Dv f^n}}\sep{ c \norms{ \Dv f^n} + \norms{S \Dx f^n}} \\
& \leq & (c + 2b)(c+1) \sep{\norms{ \Dv f^n}^2  + \norms{S \Dx  f^n}^2 
+  \norm{(-\Dvs+v)\Dv  f^n}^2 }\\
& \leq & 12\bcfl M(f^n) .
\end{eqnarray*}
The same type of estimates also yields
\begin{eqnarray*}
\lefteqn{\left|\seq{S\Dx(-\Dvs+v)\Dv f^n,\Dv (\Xzerod+ (-\Dvs+v)\Dv)) 
f^n}_\sharp\right|} \\
& \leq & b ( 2b +a) \sep{\norms{ \Dv f^n}^2  + \norms{S \Dx  f^n}^2 
+  \norm{(-\Dvs+v)\Dv  f^n}^2 }\\
& \leq & 6 \bcfl M(f^n),
\end{eqnarray*}
and
\begin{eqnarray*}
\lefteqn{\left|\seq{S\Dx (\Xzerod+ (-\Dvs+v)\Dv)) f^n,\Dv(-\Dvs+v)\Dv f^n}\right|} \\
& \leq & 3 b a \sep{\norms{ \Dv f^n}^2  + \norms{S \Dx  f^n}^2 
+  \norm{(-\Dvs+v)\Dv  f^n}^2 }\\
& \leq & 6 \bcfl M(f^n).
\end{eqnarray*}
Adding the last four inequalities yields by triangle inequality
\begin{equation}
\label{eq:estimR3}
\rrrd_3(f^n) \leq 36 \bcfl M(f^n).
\end{equation}

\noindent
For the last remainder term, one may write
\begin{eqnarray*}
  \label{eq:estimR4}
  |R_1(\Dx f^n)| & \leq & 2 (\norm{\Xzerod \Dx f^n}^2+\norm{(-\Dvs+\vs)\Dv \Dx f^n}^2)\\
  & \leq & 2 \left(c^2\norm{\Dx f^n}^2+a^2\norm{\Dx f^n}^2\right)\\
  & \leq & 2\bcfl \left(\norm{\Dx f^n}^2+\norm{\Dx f^n}^2\right) \\
 & \leq & 4\bcfl M(f^n),
\end{eqnarray*}

From \eqref{eq:estimR1}, \eqref{eq:estimR2}, \eqref{eq:estimR3} and
\eqref{eq:estimR4}, we infer that the term in $\dth^2$ in
\eqref{eq:bilanentropy} can be bounded as follows:
\begin{eqnarray*}
\lefteqn{
\left|C\rrrd_1(f^n)+ D\rrrd_2(f^n) + \frac{E}{2}\rrrd_3(f^n) +\rrrd_1(\Dx f^n)
\right|
}\\
&\leq & \bcfl (1+\vmax^2)\left(2C + 8D+ 18E + 4\right) M(f^n).
\end{eqnarray*}
In view of \eqref{eq:propMD}, since $f^n$ has zero mean, we infer that
\begin{eqnarray*}
\lefteqn{
\left|C\rrrd_1(f^n)+ D\rrrd_2(f^n) + \frac{E}{2}\rrrd_3(f^n) +\rrrd_1(\Dx f^n)
\right|
}\\
&\leq & 4\bcfl (1+\vmax^2)\left(C + 4D+ 9E + 2\right) \ddd(f^n).
\end{eqnarray*}

\noindent
Using the inequality above, we rewrite \eqref{eq:bilanentropy} in the form
\begin{equation*}
  \hhhd(f^{n+1}) \leq \phid(f^n,f^{n+1}) -\dth \left(1-
  \dth 4 (C+ 4D + 9E +2) \bcfl (1+\vmax^2)\right)\dddd(f^{n}).
\end{equation*}

\noindent
Using the CFL condition \eqref{eq:condcflinhom} and the definition of $\kappa$
in the statement of Theorem \ref{thm:decrexpeulerexpldiscr}, we obtain from and
the last inequality that
\begin{equation*}
  \hhhd(f^{n+1}) \leq \phid(f^n,f^{n+1}) - 4C \kappa \dth \dddd(f^{n}).
\end{equation*}
Using a version of Lemma \ref{lem:dissdiss} adapted to our finite velocity context,
we get that for $C$, $D$, $E$ and $\dvh_0\in(0,1)$ chosen as in
\eqref{eq:choixE}-\eqref{eq:choixC}, we have $4C \dddd(f^{n}) \geq \hhh(f^{n})$ so that
\begin{equation*}
  \hhhd(f^{n+1}) \leq \phid(f^n,f^{n+1}) - \kappa \dth \hhhd(f^{n}).
\end{equation*}

\noindent
Using the fact Cauchy--Schwarz--Young inequality for $\phid$,
we infer that for all $n\in\N$,
\begin{equation*}
\hhhd(f^{n+1}) \leq \frac{1}{2} \hhhd(f^{n+1}) + \frac{1}{2} \hhhd (f^n)
-\dth \kappa \hhhd(f^{n}),
\end{equation*}
which yields for all $n\in\N$,
\begin{equation*}
\hhhd(f^{n+1}) \leq (1-2\kappa \dth) \hhhd (f^n),
\end{equation*}
which implies by induction that for all $n\in\N$,
\begin{equation*}
\hhhd(f^{n}) \leq (1- 2\dth \kappa)^{n} \hhhd (f^0).
\end{equation*}
This concludes the proof of Theorem \ref{thm:decrexpeulerexpldiscr}.
\end{proof}

\bigskip As noted for the homogeneous equation in bounded velocity domain at the
beginning of Section \ref{sec:homobounded}, the functional spaces
$\Ldeuxmudvdxh$, $\Ldeuxmudvdxs$ and $\Hunmudvdxh$ associated to the
discretization in space and velocity of the inhomogeneous equation are finite
dimensional in this bounded velocity setting.  Hence, linear operators are
continuous.  The next Lemma provides us with estimates on the norms of the
linear differential operators at hand, that will be helpful to establish the
result (Theorem \ref{thm:decrexpeulerexpldiscr}) on the long time behaviour of
the solutions of the explicit Euler scheme \eqref{eq:eulerexpldiscrinhomo} under
CFL condition.

\subsection{Numerical results}

\def\casinhom{L2L2}
\begin{figure}[!h]
\begin{subfigure}[t]{.4\linewidth} 
 \centering
 \setlength\fwidth{\linewidth}
   \includegraphics[keepaspectratio,height=6cm]{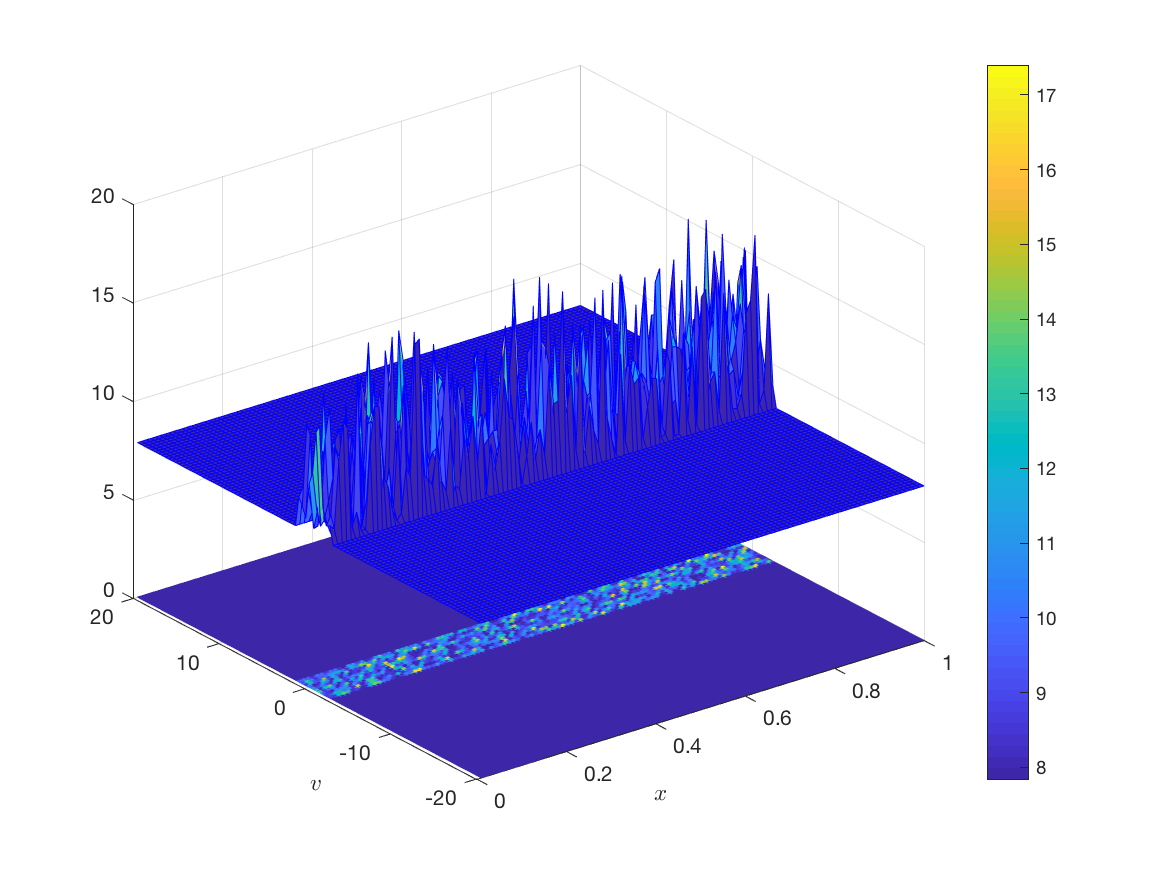}
   \input{evolfinhom_t-0_cas-\casinhom.tex}
 \end{subfigure}\hfill
  \begin{subfigure}[t]{.4\linewidth} 
  \centering  
    \setlength\fwidth{4.5cm} \setlength\fheight{4.5cm}
%
%
\begin{tikzpicture}

\begin{axis}[%
width=0.954\fwidth,
height=\fheight,
at={(0\fwidth,0\fheight)},
scale only axis,
xmin=0.000,
xmax=20.000,
xlabel style={font=\color{white!15!black}},
xlabel={Time $t$},
ymin=-80.000,
ymax=0.000,
axis background/.style={fill=white},
legend style={legend cell align=left, align=left, draw=white!15!black},
legend style={font=\tiny}
]
\addplot [color=black, line width=1.0pt]
  table[row sep=crcr]{%
0.000	0.000\\
0.182	-3.115\\
0.363	-4.751\\
0.544	-5.741\\
0.726	-7.373\\
0.907	-9.380\\
1.089	-10.720\\
1.270	-11.593\\
1.452	-12.341\\
1.633	-13.045\\
1.815	-13.732\\
1.996	-14.412\\
2.178	-15.089\\
2.359	-15.765\\
2.541	-16.441\\
2.723	-17.116\\
2.904	-17.791\\
3.086	-18.466\\
3.267	-19.142\\
3.449	-19.817\\
3.630	-20.492\\
3.812	-21.167\\
3.993	-21.842\\
4.175	-22.517\\
4.356	-23.192\\
4.538	-23.867\\
4.719	-24.542\\
4.901	-25.218\\
5.082	-25.893\\
5.263	-26.568\\
5.445	-27.243\\
5.626	-27.918\\
5.808	-28.593\\
5.989	-29.268\\
6.171	-29.943\\
6.352	-30.618\\
6.534	-31.294\\
6.715	-31.969\\
6.897	-32.644\\
7.078	-33.319\\
7.260	-33.994\\
7.441	-34.669\\
7.623	-35.344\\
7.804	-36.019\\
7.986	-36.695\\
8.167	-37.370\\
8.349	-38.045\\
8.530	-38.720\\
8.712	-39.395\\
8.893	-40.070\\
9.075	-40.745\\
9.256	-41.420\\
9.438	-42.095\\
9.620	-42.771\\
9.801	-43.446\\
9.983	-44.121\\
10.164	-44.796\\
10.346	-45.471\\
10.527	-46.146\\
10.709	-46.821\\
10.890	-47.496\\
11.072	-48.171\\
11.253	-48.847\\
11.435	-49.522\\
11.616	-50.197\\
11.798	-50.872\\
11.979	-51.547\\
12.161	-52.222\\
12.342	-52.897\\
12.524	-53.572\\
12.705	-54.247\\
12.887	-54.923\\
13.068	-55.598\\
13.250	-56.273\\
13.431	-56.948\\
13.613	-57.623\\
13.794	-58.298\\
13.976	-58.973\\
14.157	-59.648\\
14.339	-60.323\\
14.520	-60.998\\
14.702	-61.673\\
14.883	-62.348\\
15.065	-63.022\\
15.246	-63.696\\
15.428	-64.368\\
15.609	-65.038\\
15.791	-65.703\\
15.972	-66.357\\
16.154	-66.993\\
16.335	-67.596\\
16.517	-68.142\\
16.698	-68.604\\
16.880	-68.959\\
17.061	-69.204\\
17.243	-69.356\\
17.424	-69.444\\
17.606	-69.491\\
17.787	-69.517\\
17.969	-69.530\\
18.150	-69.536\\
};
\addlegendentry{$\log(\fffd/\fffd(0))$}

\addplot [color=black, dotted, line width=1.0pt]
  table[row sep=crcr]{%
0.000	0.000\\
0.182	-5.218\\
0.363	-8.611\\
0.544	-9.856\\
0.726	-11.538\\
0.907	-14.030\\
1.089	-16.230\\
1.270	-17.725\\
1.452	-18.866\\
1.633	-19.767\\
1.815	-20.527\\
1.996	-21.230\\
2.178	-21.914\\
2.359	-22.592\\
2.541	-23.269\\
2.723	-23.944\\
2.904	-24.620\\
3.086	-25.295\\
3.267	-25.970\\
3.449	-26.645\\
3.630	-27.320\\
3.812	-27.996\\
3.993	-28.671\\
4.175	-29.346\\
4.356	-30.021\\
4.538	-30.696\\
4.719	-31.371\\
4.901	-32.046\\
5.082	-32.721\\
5.263	-33.396\\
5.445	-34.072\\
5.626	-34.747\\
5.808	-35.422\\
5.989	-36.097\\
6.171	-36.772\\
6.352	-37.447\\
6.534	-38.122\\
6.715	-38.797\\
6.897	-39.473\\
7.078	-40.148\\
7.260	-40.823\\
7.441	-41.498\\
7.623	-42.173\\
7.804	-42.848\\
7.986	-43.523\\
8.167	-44.198\\
8.349	-44.873\\
8.530	-45.549\\
8.712	-46.224\\
8.893	-46.899\\
9.075	-47.574\\
9.256	-48.249\\
9.438	-48.924\\
9.620	-49.599\\
9.801	-50.274\\
9.983	-50.949\\
10.164	-51.625\\
10.346	-52.300\\
10.527	-52.975\\
10.709	-53.650\\
10.890	-54.325\\
11.072	-55.000\\
11.253	-55.675\\
11.435	-56.350\\
11.616	-57.025\\
11.798	-57.701\\
11.979	-58.376\\
12.161	-59.051\\
12.342	-59.726\\
12.524	-60.401\\
12.705	-61.076\\
12.887	-61.751\\
13.068	-62.426\\
13.250	-63.101\\
13.431	-63.777\\
13.613	-64.452\\
13.794	-65.127\\
13.976	-65.802\\
14.157	-66.477\\
14.339	-67.152\\
14.520	-67.827\\
14.702	-68.502\\
14.883	-69.177\\
15.065	-69.852\\
15.246	-70.527\\
15.428	-71.201\\
15.609	-71.874\\
15.791	-72.545\\
15.972	-73.213\\
16.154	-73.874\\
16.335	-74.522\\
16.517	-75.145\\
16.698	-75.726\\
16.880	-76.238\\
17.061	-76.655\\
17.243	-76.960\\
17.424	-77.161\\
17.606	-77.281\\
17.787	-77.348\\
17.969	-77.384\\
18.150	-77.403\\
};
\addlegendentry{$\log(\gggd/\gggd(0))$}

\end{axis}
\end{tikzpicture}%
\caption{Normalized linearized entropy $\fffd$ (plain) and Fisher information $\gggd$ (dotted) in logscale}
  \end{subfigure}
  \begin{subfigure}[t]{\linewidth}
    \centering \setlength\fwidth{14cm} \setlength\fheight{6cm}
    \input{entropyfisherinhomtaux_cas-\casinhom.tex}
    \caption{Evolution of the Linearized Entropy $\fffd$, {\em ie} the square of
      the $\Ldeuxmudvh$-norm of $f$ (left) and of the Fisher
      information $\gggd$ defined in \eqref{eq:mfi} (right). In each plot, the
      left-hand scale (plain line) is the linear scale and the right-hand scale
      (dashed line) is the "-log/t" scale that shows the numerical rate of
      convergence in long time. }
  \end{subfigure} 
\caption{Numerical simulations of Scheme \eqref{eq:eulerexpldiscrinhomo} with a random function
  as initial datum}
  \label{fig:inhom1}
\end{figure} 
   
\def\casinhom{rings}
\begin{figure}[!h]
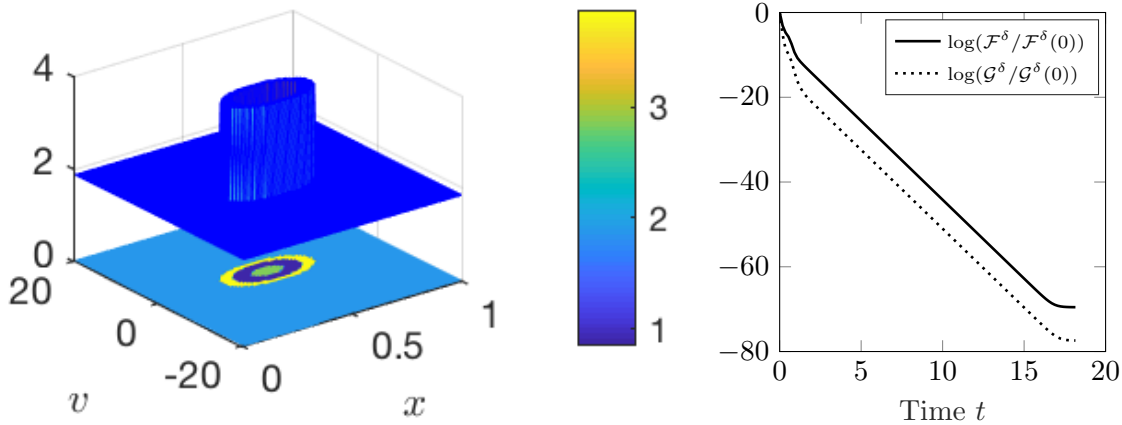
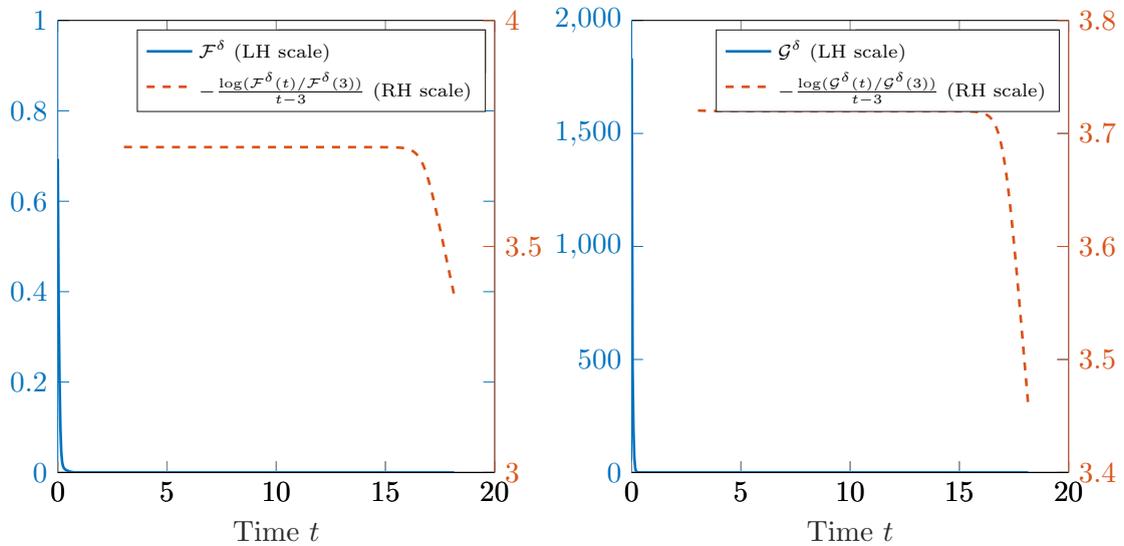

\begin{subfigure}[t]{.4\linewidth} 
 \centering
 \setlength\fwidth{\linewidth}
   \includegraphics[keepaspectratio,height=6cm]{evolfinhom_t-0_cas-\casinhom.png}
   \input{evolfinhom_t-0_cas-\casinhom.tex}
 \end{subfigure}\hfill
  \begin{subfigure}[t]{.4\linewidth} 
  \centering  
    \setlength\fwidth{4.5cm} \setlength\fheight{4.5cm}
%
%
\begin{tikzpicture}

\begin{axis}[%
width=0.954\fwidth,
height=\fheight,
at={(0\fwidth,0\fheight)},
scale only axis,
xmin=0.000,
xmax=20.000,
xlabel style={font=\color{white!15!black}},
xlabel={Time $t$},
ymin=-80.000,
ymax=0.000,
axis background/.style={fill=white},
legend style={legend cell align=left, align=left, draw=white!15!black},
legend style={font=\tiny}
]
\addplot [color=black, line width=1.0pt]
  table[row sep=crcr]{%
0.000	0.000\\
0.182	-3.115\\
0.363	-4.751\\
0.544	-5.741\\
0.726	-7.373\\
0.907	-9.380\\
1.089	-10.720\\
1.270	-11.593\\
1.452	-12.341\\
1.633	-13.045\\
1.815	-13.732\\
1.996	-14.412\\
2.178	-15.089\\
2.359	-15.765\\
2.541	-16.441\\
2.723	-17.116\\
2.904	-17.791\\
3.086	-18.466\\
3.267	-19.142\\
3.449	-19.817\\
3.630	-20.492\\
3.812	-21.167\\
3.993	-21.842\\
4.175	-22.517\\
4.356	-23.192\\
4.538	-23.867\\
4.719	-24.542\\
4.901	-25.218\\
5.082	-25.893\\
5.263	-26.568\\
5.445	-27.243\\
5.626	-27.918\\
5.808	-28.593\\
5.989	-29.268\\
6.171	-29.943\\
6.352	-30.618\\
6.534	-31.294\\
6.715	-31.969\\
6.897	-32.644\\
7.078	-33.319\\
7.260	-33.994\\
7.441	-34.669\\
7.623	-35.344\\
7.804	-36.019\\
7.986	-36.695\\
8.167	-37.370\\
8.349	-38.045\\
8.530	-38.720\\
8.712	-39.395\\
8.893	-40.070\\
9.075	-40.745\\
9.256	-41.420\\
9.438	-42.095\\
9.620	-42.771\\
9.801	-43.446\\
9.983	-44.121\\
10.164	-44.796\\
10.346	-45.471\\
10.527	-46.146\\
10.709	-46.821\\
10.890	-47.496\\
11.072	-48.171\\
11.253	-48.847\\
11.435	-49.522\\
11.616	-50.197\\
11.798	-50.872\\
11.979	-51.547\\
12.161	-52.222\\
12.342	-52.897\\
12.524	-53.572\\
12.705	-54.247\\
12.887	-54.923\\
13.068	-55.598\\
13.250	-56.273\\
13.431	-56.948\\
13.613	-57.623\\
13.794	-58.298\\
13.976	-58.973\\
14.157	-59.648\\
14.339	-60.323\\
14.520	-60.998\\
14.702	-61.673\\
14.883	-62.348\\
15.065	-63.022\\
15.246	-63.696\\
15.428	-64.368\\
15.609	-65.038\\
15.791	-65.703\\
15.972	-66.357\\
16.154	-66.993\\
16.335	-67.596\\
16.517	-68.142\\
16.698	-68.604\\
16.880	-68.959\\
17.061	-69.204\\
17.243	-69.356\\
17.424	-69.444\\
17.606	-69.491\\
17.787	-69.517\\
17.969	-69.530\\
18.150	-69.536\\
};
\addlegendentry{$\log(\fffd/\fffd(0))$}

\addplot [color=black, dotted, line width=1.0pt]
  table[row sep=crcr]{%
0.000	0.000\\
0.182	-5.218\\
0.363	-8.611\\
0.544	-9.856\\
0.726	-11.538\\
0.907	-14.030\\
1.089	-16.230\\
1.270	-17.725\\
1.452	-18.866\\
1.633	-19.767\\
1.815	-20.527\\
1.996	-21.230\\
2.178	-21.914\\
2.359	-22.592\\
2.541	-23.269\\
2.723	-23.944\\
2.904	-24.620\\
3.086	-25.295\\
3.267	-25.970\\
3.449	-26.645\\
3.630	-27.320\\
3.812	-27.996\\
3.993	-28.671\\
4.175	-29.346\\
4.356	-30.021\\
4.538	-30.696\\
4.719	-31.371\\
4.901	-32.046\\
5.082	-32.721\\
5.263	-33.396\\
5.445	-34.072\\
5.626	-34.747\\
5.808	-35.422\\
5.989	-36.097\\
6.171	-36.772\\
6.352	-37.447\\
6.534	-38.122\\
6.715	-38.797\\
6.897	-39.473\\
7.078	-40.148\\
7.260	-40.823\\
7.441	-41.498\\
7.623	-42.173\\
7.804	-42.848\\
7.986	-43.523\\
8.167	-44.198\\
8.349	-44.873\\
8.530	-45.549\\
8.712	-46.224\\
8.893	-46.899\\
9.075	-47.574\\
9.256	-48.249\\
9.438	-48.924\\
9.620	-49.599\\
9.801	-50.274\\
9.983	-50.949\\
10.164	-51.625\\
10.346	-52.300\\
10.527	-52.975\\
10.709	-53.650\\
10.890	-54.325\\
11.072	-55.000\\
11.253	-55.675\\
11.435	-56.350\\
11.616	-57.025\\
11.798	-57.701\\
11.979	-58.376\\
12.161	-59.051\\
12.342	-59.726\\
12.524	-60.401\\
12.705	-61.076\\
12.887	-61.751\\
13.068	-62.426\\
13.250	-63.101\\
13.431	-63.777\\
13.613	-64.452\\
13.794	-65.127\\
13.976	-65.802\\
14.157	-66.477\\
14.339	-67.152\\
14.520	-67.827\\
14.702	-68.502\\
14.883	-69.177\\
15.065	-69.852\\
15.246	-70.527\\
15.428	-71.201\\
15.609	-71.874\\
15.791	-72.545\\
15.972	-73.213\\
16.154	-73.874\\
16.335	-74.522\\
16.517	-75.145\\
16.698	-75.726\\
16.880	-76.238\\
17.061	-76.655\\
17.243	-76.960\\
17.424	-77.161\\
17.606	-77.281\\
17.787	-77.348\\
17.969	-77.384\\
18.150	-77.403\\
};
\addlegendentry{$\log(\gggd/\gggd(0))$}

\end{axis}
\end{tikzpicture}%
\caption{Normalized linearized entropy $\fffd$ (plain) and Fisher information $\gggd$ (dotted) in logscale}
 
  \end{subfigure}
  \begin{subfigure}[t]{\linewidth}
    \centering \setlength\fwidth{14cm} \setlength\fheight{6cm}
    \input{entropyfisherinhomtaux_cas-\casinhom.tex}

    \caption{Evolution of the Linearized Entropy $\fffd$, {\em ie} the square of
      the $\Ldeuxmudvh$-norm of $f$ (left) and of the modified Fisher
      information $\gggd$ defined in \eqref{eq:mfi} (right). In each plot, the
      left-hand scale (plain line) is the linear scale and the right-hand scale
      (dashed line) is the "-log/t" scale that shows the numerical rate of
      convergence in long time. }
  \end{subfigure}
\caption{Numerical simulations of Scheme \eqref{eq:eulerexpldiscrinhomo} with a
  $(x,v)$-radial function
  as initial datum}
\label{fig:inhom2} 
\end{figure} 
We now turn to the implementation of the forward Euler discretization of the
inhomogeneous equation \eqref{eq:eulerexpldiscrinhomo} on a bounded domain in
$v$ and a periodic domain in $x$.

In reference to the homogeneous case, we define the Fisher information as
\begin{equation*}
  \gggd(g)\defegal \norm{g}^2 + \norm{{\Dv} g}^2
+ \norm{{\Dx} g}^2
\end{equation*}
that we know thanks to \eqref{eq:equivdb} to be equivalent to $\hhhd$
and we recall that
\begin{equation*}
  \fffd(g)= \norm{g}^2.
\end{equation*}
According to Theorem \ref{thm:eulerexplicite}, they are expected
to decrease geometrically fast. The tests that are presented here aim at
illustrating this fact in two cases:  
\begin{itemize}
\item the initial datum is a random function in $(x,v)$, with a Gaussian
  envelope in $v$ (see Figure \ref{fig:inhom1}-(A)). The logarithms of the
  entropy $\fffd$ and of the Fisher information $\gggd$ decrease linearly fast
  (see Figure \ref{fig:inhom1}-(B)), with a rate that goes to $2$, as can be
  seen in Figures \ref{fig:inhom1}-(C). The exponential decrease is consistent
  with Theorem \ref{thm:eulerexplicite}, and the rates are consistent with
  Theorem \ref{thm:decrexpb} and Corollary \ref{cor:decrexpb}.  
\item the initial datum is a radial function in $(x,v)$ (see Figure \ref{fig:inhom2}-(A)). The
  logarithms of the entropy $\fffd$ and of the Fisher information $\gggd$
  decrease linearly fast (see Figure \ref{fig:inhom2}-(B)), with a rate that is
  larger than $3$, as can be seen in Figures \ref{fig:inhom2}-(C). The Fisher
  information also seems to decrease in a faster way than the entropy in short time.
\end{itemize}
Again, comparing the two previous test cases, we get a hint that there is a very fast
regularizing effect in short time, as noted in \cite{PZ16}. The second
initial datum is a kind of 1d test case because of its radial nature. A
perspective of our work would be to 
investigate the change of slope at $t=1$ in Figure \ref{fig:inhom2}-(B). Also,
the rate seen on the right-hand side of Figures \ref{fig:inhom2}-(C) is concave,
whereas its behavior as shown to be convex in all three other tests.  We believe
it is also
something worth investigating.
  
\section{Generalizations and Remarks}
\label{sec:ccl}
In Sections \ref{sec:homogeneous} to \ref{sec:eqinhomoboundedvelocity} we
proposed several schemes conserving the basic properties of kinetic
equations. Many direct generalizations are possible, and we list below some of
them among other considerations concerning the proofs and results.

\begin{enumerate}

\item This is clear that the preceding results have their $d$-dimensional
  counterparts, quasi-straightforwardly in the unbounded case or even for
  bounded velocity (tensorized) domains. We did not give the corresponding
  statements in order not to hide the main features of our analysis.

\item Concerning the space variable, direct generalization are also possible,
  since a careful study of the proofs shows that in fact we just need the
  following assumptions concerning the $\Dx$ derivative:
\begin{enumerate} 
\item $\Dx$ is (formally) skew-adjoint,
\item $\norm{\Dx \phi} \geq c_p \norm{ \phi - \seq{\phi}}$ (Poincar\'e inequality).
\end{enumerate}
Note that in particular the full discrete Poincar\'e inequalities presented in
Propositions \ref{lem:fullPoincarecontinu}, \ref{lem:fullPoincarediscret} or
\ref{lem:fullPoincarediscret} remain true.

\item We did not show in details the maximal accretivity of the associated
  operators in the inhomogeneous discrete case (Subsections \ref{sec:inhomsd}
  and \ref{subsec:eqinhomototalementdiscretisee}). We just mention that the
  proof of the continuous case given e.g. in \cite[Proposition 5.5]{HN04} can be
  easily adapted, without even the use of hypoellipticity results since we are
  in a discrete setting. A direct consequence of the maximal accretivity of
  operator $P$ with domain $D(P) \subset H$ in a is that this operator leads to
  a natural semi-group correctly defining the solution $F(t)$ of
  ${\D_t} F + P F = 0$ for initial data even in $H$. This procedure is employed
  many times in this article with $H = \Ldeuxmudv$, $H = \Ldeuxmudvdx$,
  $H=\Hunmudv$, $H=\Hunmudvdx$ etc... and their discrete counterparts (both in
  the unbounded or bounded velocity setting).

\item In this paper, we presented a $H^1$ approach (and not an $L^2$
  one, except in the homogeneous case). Indeed this allows to work only with
  local operators and their finite differences counterparts leading to low
  numerical cost. This could be interesting to see how to extend the result to
  the $L^2$ framework. Anyway, merging the results of \cite{PZ16} in
  short time (to be adapted to our schemes) and the results would give
  indeed the full convergence to the equilibrium in $L^2$ for inhomogeneous
  models.

\item We did not focus on the preservation of the non-negativity of the
  numerical solutions by the schemes we introduced.  However, this preservation
  is straightforward at least in the homogeneous case, for the explicit methods
  (convexity arguments) as well as for implicit methods (monotonicity
  arguments).

\item We did not also prove in details to what extend the Neumann problems of
  Sections \ref{sec:homobounded} and \ref{sec:eqinhomoboundedvelocity} are good
  approximations of the the unbounded ones presented in Sections
  \ref{sec:homogeneous} and \ref{sec:eqinhomo}. This kind of considerations is
  standard in semi-classical analysis and could be done using resolvent identity
  type procedures, as is done e.g. in the study of the tunnelling effect e.g. in
  \cite{DS99}.

\item As a by-product of our analysis, the discrete schemes proposed in the
  preceding sections are naturally asymptotically stable: this is a direct
  consequence of the trend to the equilibrium. They also clearly are consistent
  by construction and therefore convergent.

\end{enumerate}

\bigskip \bigskip

As natural but not straightforward generalizations, we mention the ones below
that are the subject of coming works.

\bigskip We showed in this paper several Poincar\'e inequalities, and perhaps
the first and more surprising one is the one given in Proposition
\ref{prop:poindiscrete}. One interesting direction is to study the corresponding
log-Sobolev inequality in this discrete context, and the consequences on the
exponential decay using standard entropy-entropy dissipation techniques (see
e.g. \cite{Vil09}).

\bigskip In this paper we focused on the Fokker-Planck operator, and the
definition of the velocity derivatives takes deeply into account what
corresponds to incoming and outgoing particles (corresponding to indices
positive or negative in \eqref{eq:defderivintro}).  A natural extension would be to
check how this can be extended to the Landau collision kernel case, which also
involves derivatives, in order to keep positivity and self-adjointness
properties.  In fact it could be also interesting to look at the current
two-direction method also for other collision kernels such as linearized
Boltzmann or BGK ones.

\appendix

\section*{Appendix  : Commutation identities}

\begin{landscape}

\begin{table}[h]
\hspace*{-2cm}
  \centering
\doublespacing
  {\small$\begin{array}[h]{|c||c|c|c|c|c|}\hline
j & -2 & -1 & 0 & 1 & 2\\\hline
    g & g_{-2} & g_{-1} & g_{0} & g_{1} & g_{2} \\
v\Dx  g=\Xzerod  g &v_{-2}\Dx  g_{-2} &v_{-1}\Dx   g_{-1} & 0 &
v_{1}\Dx  g_{1} & v_{2}\Dx  g_{2} \\
\Dv g & \frac{g_{-1}-g_{-2}}{\dvh} & \frac{g_{0}-g_{-1}}{\dvh} & * &
\frac{g_{1}-g_{0}}{\dvh} & \frac{g_{2}-g_{1}}{\dvh}\\
\Dv\Xzerod  g & \frac{v_{-1}\Dx g_{-1}-v_{-2}\Dx g_{-2}}{\dvh} & 
\frac{v_{-1}\Dx g_{-1}}{\dvh} & * & \frac{v_{1}\Dx g_{1}}{\dvh} &
   \frac{v_{2}\Dx g_{2}-v_{1}\Dx g_{1}}{\dvh}\\
\Xzerod \Dv g &  v_{-2}\frac{\Dx g_{-1}-\Dx g_{-2}}{\dvh} & 
v_{-1}\frac{\Dx g_{0}-\Dx g_{-1}}{\dvh} & * & v_1\frac{\Dx g_{1}-\Dx g_{0}}{\dvh}
  & v_2\frac{\Dx g_{2}-\Dx g_{1}}{\dvh}\\
(\Dv\Xzerod -\Xzerod \Dv)g & \Dx g_{-1} & \Dx g_0 & * & \Dx g_0 & \Dx  g_1\\
S g & g_{-1} & g_{0} & * & g_{0} & g_{1} \\
S\Dx g & \Dx g_{-1} & \Dx g_{0} & * & \Dx g_{0} & \Dx g_{1} \\
S\Dx \Xzerod  g &v_{-1}\partial^2_{xx} g_{-1} & 0 & * & 0 & v_{1}\partial^2_{xx} g_{1}  \\
\Xzerod  S\Dx  g & v_{-2}\partial^2_{xx} g_{-1} & v_{-1}\partial^2_{xx} g_{0} 
& * & v_{1}\partial^2_{xx} g_{0}& v_{2}\partial^2_{xx}  g_{1}  \\
(S\Dx \Xzerod -\Xzerod S\Dx )g & h \partial^2_{xx} g_{-1} & h\partial^2_{xx} g_{0} & * &
  h\partial^2_{xx} g_{0}  & h \partial^2_{xx} g_{1}\\
(-\Dvs+v)\Dv g &   -\frac{g_{-3}-2g_{-2}+g_{-1}}{h^2}+v_{-2}\frac{g_{-1}-g_{-2}}{\dvh}
 &-\frac{g_{-2}-2g_{-1}+g_{0}}{h^2}+v_{-1}\frac{g_{0}-g_{-1}}{\dvh} &
  -\frac{g_{-1}-2g_{0}+g_{1}}{h^2} & -\frac{g_{0}-2g_{1}+g_{2}}{h^2}
+v_{1}\frac{g_{1}-g_{0}}{\dvh} &-\frac{g_{3}-2g_{2}+g_{1}}{h^2}+v_{1}\frac{g_{2}-g_{1}}{\dvh}\\
S(-\Dvs+v)\Dv g &-\frac{g_{-2}-2g_{-1}+g_{0}}{h^2}+v_{-1}\frac{g_{0}-g_{-1}}{\dvh}  
&  -\frac{g_{-1}-2g_{0}+g_{1}}{h^2} & * &  -\frac{g_{-1}-2g_{0}+g_{1}}{h^2} 
&  -\frac{g_{0}-2g_{1}+g_{2}}{h^2}+v_{1}\frac{g_{1}-g_{0}}{\dvh} \\
(-\Dvs+v)S g & -\frac{g_{-1}-g_{-2}}{\dvh}+v_{-2}g_{-1} & -\frac{g_{0}-g_{-1}}{\dvh}+v_{-1}g_{0}
  & 0 &  -\frac{g_{1}-g_{0}}{\dvh}+v_{1}g_{0} &   -\frac{g_{2}-g_{1}}{\dvh}+v_{2}g_{1}\\
\Dv(-\Dvs+v)S g &  \frac{g_{-2}-2g_{-1}-g_{0}}{h^2}+\frac{v_{-2}g_{-1}+v_{-1}g_0}{\dvh}  &
 -\frac{g_{0}-g_{-1}}{h^2}+\frac{v_{-1}g_{0}}{\dvh}
 & * &  -\frac{g_{1}-g_{0}}{h^2}+\frac{v_{1}g_{0}}{\dvh} &
    \frac{g_{2}-2g_{1}+g_0}{h^2}+\frac{v_{2}g_{1}+v_{1}g_{0}}{\dvh} \\
(S(-\Dvs+v)\Dv-\Dv(-\Dvs+v)S) g & g_{-1} & g_0+\frac{g_1-g_0}{h^2} 
& * &     g_0-\frac{g_0-g_{-1}}{h^2} &g_1
\\\hline
  \end{array}$ }
  \caption{Summary of the relations}
  \label{tab:comm}
\end{table}
\end{landscape}

\end{document}